\numberwithin{equation}{section}
\numberwithin{figure}{section}
\renewcommand{\subsection}[1]{\hspace{-\parindent}\refstepcounter{subsection}{\bf (\arabic{section}\alph{subsection}) #1.}\addcontentsline{toc}{subsection}{\bf #1.}}
\newenvironment{nouppercase}{%
  \renewcommand{\uppercasenonmath}[1]{}}{}
\theoremstyle{plain}
\newtheorem{thm}{Theorem}[section]
\newtheorem{theorem}[thm]{Theorem}
\newtheorem{addendum}[thm]{Addendum}
\newtheorem{corollary}[thm]{Corollary}
\newtheorem{prop}[thm]{Proposition}
\newtheorem{assumption}[thm]{Assumption}
\newtheorem{remark}[thm]{Remark}
\newtheorem{convention}[thm]{Convention}
\newtheorem{proposition}[thm]{Proposition}
\newtheorem{example}[thm]{Example}
\newtheorem{lemma}[thm]{Lemma}
\newtheorem*{claim*}{Claim} 
\newtheorem*{lemma*}{Lemma}
\newtheorem*{theorem*}{Theorem}
\newtheorem*{conjecture*}{Conjecture}
\newtheorem*{references}{References}
\newcommand{\bC}{{\mathbb C}}
\newcommand{\bK}{{\mathbb K}}
\newcommand{\bN}{{\mathbb N}}
\newcommand{\bQ}{{\mathbb Q}}
\newcommand{\bR}{{\mathbb R}}
\newcommand{\bZ}{{\mathbb Z}}
\newcommand{\scrA}{\EuScript A}
\newcommand{\scrG}{\EuScript G}
\newcommand{\scrH}{\EuScript H}
\newcommand{\scrJ}{\EuScript J}
\newcommand{\scrM}{\EuScript M}
\newcommand{\scrP}{\EuScript P}
\newcommand{\scrS}{\EuScript S}
\newcommand{\scrY}{\EuScript Y}
\newcommand{\frakg}{\mathfrak{g}}
\newcommand{\half}{{\textstyle\frac{1}{2}}}
\newcommand{\quarter}{\textstyle\frac{1}{4}}
\newcommand{\iso}{\cong}
\newcommand{\htp}{\simeq}
\newcommand{\smooth}{C^\infty}
\newcommand{\Id}{\mathbbm{1}}
\newcommand{\qabla}{\nabla\mkern-12mu/\mkern2mu}
\title[LEFSCHETZ FIBRATIONS]{\Large\larger\rm Fukaya $A_\infty$-structures associated to\\ Lefschetz fibrations. IV}
\author{Paul Seidel}
\begin{document}
\begin{nouppercase}
\maketitle
\end{nouppercase}

\begin{abstract}
We consider Hamiltonian Floer cohomology groups associated to a Lefschetz fibration, and the structure of operations on them. As an application, we will (under an important additional assumption) equip those groups with connections, which differentiate with respect to the Novikov variable.
\end{abstract}

\section{Introduction\label{sec:intro}}
This paper continues a discussion \cite{seidel12b,seidel14b,seidel15,seidel16} of pseudo-holomorphic curve theory as applied to Lefschetz fibrations. Ultimately, our sights are set on Fukaya categories; but here, we remain in the ``closed string'' context of Hamiltonian Floer cohomology. 

Given a symplectic Lefschetz fibration (over the disc), one can single out certain classes of time-dependent Hamiltonians on the total space, which give rise to Floer cohomology groups that are invariants of the fibration. More concretely, the outcome is an infinite family of such groups, indexed by the ``rotation number at infinity'' $r \in \bZ$. All of them vanish if the Lefschetz fibration is trivial (has no critical points). Based on intuition from mirror symmetry, one expects these groups to carry a rich algebraic structure. The initial breakthrough in constructing the desired operations is due to Abouzaid-Ganatra (unpublished), who equipped the $r = 1$ group with a ring structure. Here, we streamline and generalize their insight, by introducing a suitable TQFT framework. 
Eventually, one wants to compare Hamiltonian Floer cohomology with certain twisted Hochschild cohomology groups of the associated Fukaya category, and thereby relate our operations to ones in homological algebra; however, to carry that out, one needs a specially adapted version of closed-open string maps, which is beyond the scope of the present paper (the literature contains several constructions relating the open and closed string sectors for Lefschetz fibrations, but none of them seems really practical for this purpose).

Our main motivation for pursuing this direction comes from \cite{seidel16}. There, symplectic cohomology was shown, under a fundamental additional assumption, to carry a one-parameter family of connections, which differentiate with respect to the Novikov formal parameter. That construction relied on the structure of symplectic cohomology as a BV algebra. Using some of the operations we have made available, a version of the same construction can be carried out for Lefschetz fibrations. The outcome, under a similar assumption as in \cite{seidel16}, is that each of our Floer cohomology groups carries a connection; for different $r$, these correspond to different instances of the family of connections on symplectic cohomology. Ultimately, one wants to use these connections, and closed-open string maps, to carry over the enumerative results from \cite{seidel16} to Fukaya categories of Lefschetz fibrations; we refer the reader to \cite[Section 4]{seidel15} for the conjectural outcome and its context.

The structure of this paper may deserve some comment. Floer-theoretic constructions and identities have often been thought of as realizations of abstract TQFT arguments, and that is also true here. Usually, the formal TQFT part is straightforward, and the main work goes into its Floer-theoretic implementation. In our situation, the weight shifts: the TQFT arguments are far from obvious, whereas the Floer theory is quite standard (except possibly for some compactness arguments). Therefore, after outlining our results in Section \ref{sec:results}, the bulk of the paper (Sections \ref{sec:sl2}--\ref{sec:elliptic}) is devoted to TQFT considerations and their geometric prerequisites. After that, we explain the necessary basic Floer theory (Sections \ref{sec:maps-to-the-disc}--\ref{sec:floer}). Putting together the two parts is a routine process, and we will only describe it briefly (Section \ref{sec:end}).

{\em Acknowledgments.} This work was supported by the Simons Foundation, through a Simons Investigator award; by NSF grant DMS-1500954; by the Institute for Advanced Study, through a visiting appointment supported by grants from the Ambrose Monell Foundation and the Simonyi Endowment Fund; and by Columbia University, through an Eilenberg Visiting Professorship. I would like to thank Mohammed Abouzaid and Sheel Ganatra for generously sharing their seminal insights; Nick Sheridan for illuminating conversations on further developments; and Alexander Sukhov for patiently answering my questions concerning pseudoconvexity (see Remark \ref{th:sukhov}).

\section{Main constructions\label{sec:results}}
This section summarizes the structures that will arise from our analysis of Floer-theoretic operations, including connections. This will be done with only the minimal amount of technical details, and without any real attempt at explaining the underlying geometry. At the end, we relate part of our constructions to more familiar ones in symplectic cohomology.

\subsection{Background\label{subsec:background}}
Let
\begin{equation} \label{eq:lefschetz}
\pi: E \longrightarrow B
\end{equation}
be a proper symplectic Lefschetz fibration whose base is the open unit disc $B \subset \bC$, and whose total space is a $2n$-dimensional symplectic manifold. For simplicity, we assume that
\begin{equation} \label{eq:cy}
c_1(E) = 0,
\end{equation}
and choose a trivialization of the anticanonical bundle (the complex line bundle representing the first Chern class), up to homotopy. The smooth fibres are then closed $(2n-2)$-dimensional symplectic manifolds, again with trivialized anticanonical bundle. 

All Floer cohomology groups under discussion will be finite-dimensional $\bZ$-graded vector spaces over the Novikov field $\bK$ with real coefficients. Elements of $\bK$ are of the form
\begin{equation} \label{eq:novikov}
f(q) = c_0 q^{d_0} + c_1 q^{d_1} + \cdots, \quad c_i \in \bR, \;\; d_i \in \bR, \;\; \textstyle \lim_i d_i = +\infty.
\end{equation}
Concretely, to \eqref{eq:lefschetz} we associate a family of such Floer cohomology groups, denoted by
\begin{equation} \label{eq:floer-cohomology}
\mathit{HF}^*(E,r), \;\; r \in \bZ.
\end{equation}
The $r = 0$ group admits a description in purely topological terms, as the cohomology of $E$ ``relative to a fibre at $\infty$''. If we compactify \eqref{eq:lefschetz} to $\bar\pi: \bar{E} \rightarrow \bar{B}$, then the statement is that
\begin{equation} \label{eq:floer-0}
\mathit{HF}^*(E,0) \iso H^*(\bar{E},\bar{E}_w) \quad \text{for some $w \in \partial \bar{B}$.}
\end{equation}
The other groups \eqref{eq:floer-cohomology}, $r \neq 0$, are not classical topological invariants; they encode symplectic information related to the monodromy around $\partial \bar{B}$. In the simplest case $r = 1$, if $\mu$ is that monodromy (a symplectic automorphism of the fibre), with its fixed point Floer cohomology $\mathit{HF}^*(\mu)$, there is a long exact sequence
\begin{equation} \label{eq:b-sequence}
\cdots \rightarrow \mathit{HF}^*(\mu) \longrightarrow H^*(E;\bK) \stackrel{}{\longrightarrow} \mathit{HF}^*(E,1) \longrightarrow \mathit{HF}^{*+1}(\mu) \rightarrow \cdots
\end{equation}
This idea can be generalized to other $r>0$, leading to a version of the spectral sequence from \cite{mclean12}, involving powers of $\mu$ (see \cite[Lemma 6.14]{seidel14b} for the statement). The situation for $r<0$ is dual: there are nondegenerate pairings
\begin{equation} \label{eq:pairing}
\mathit{HF}^*(E,-r) \otimes \mathit{HF}^{2n-*}(E,r) \longrightarrow \bK.
\end{equation}

\subsection{Operations\label{subsec:operations}}
Our main topic is the structure of operations on \eqref{eq:floer-cohomology}. We start with linear maps. Each group $\mathit{HF}^*(E,r)$ has a canonical automorphism $\Sigma$, which generates an action of $\bZ/r$. For $r = 0$, this automorphism admits a classical interpretation as the relative monodromy map, moving $w$ around $\partial \bar{B}$ in \eqref{eq:floer-0}. For $r \neq 0$, one can think of $\mathit{HF}^*(E,r)$ as the Floer cohomology of the $r$-th power of a Hamiltonian automorphism, and then $\Sigma$ is induced by $1/r$ rotation of loops. We also have continuation maps which increase $r$, and which are invariant under the action of $\Sigma$ (on either the left or right):
\begin{align} \label{eq:continuation}
& C: \mathit{HF}^*(E,r) \longrightarrow \mathit{HF}^*(E,r+1), \\
\label{eq:continuation-2}
& D: \mathit{HF}^*(E,r) \longrightarrow \mathit{HF}^{*-1}(E,r+1).
\end{align}
The structure of multilinear operations is more interesting, and does not seem to admit a single concise description. Two simple consequences of the general theory are the following: 

\begin{prop} \label{th:operations-1}
$\mathit{HF}^*(E,1)$ carries the structure of a Gerstenhaber algebra (a commutative product and a bracket of degree $-1$, with suitable relations between them). 
\end{prop}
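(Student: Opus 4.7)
The plan is to extract the Gerstenhaber structure from chain-level Floer operations associated to moduli of Riemann surfaces equipped with maps to $\bar{B}$, as provided by the TQFT framework of Sections \ref{sec:sl2}--\ref{sec:elliptic}. At the level of cohomology the chain-level operations descend to genuine algebraic operations, and the Gerstenhaber axioms correspond to standard codimension-one boundary identifications in moduli of pointed surfaces, reflecting the fact that the relevant parameter spaces model the little 2-discs operad.

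First, I would construct the commutative product from a three-pointed genus-zero surface (pair of pants) decorated with a map to $\bar{B}$ chosen so that all three cylindrical ends carry rotation number 1. That such decorations exist is the topological/winding-number statement which singles out $r=1$ as the unique value on which a product closes up. Counting pseudo-holomorphic sections in the appropriate sense yields a bilinear chain-level operation $\mathit{CF}^*(E,1)^{\otimes 2} \to \mathit{CF}^*(E,1)$. Graded commutativity on cohomology follows by swapping the two inputs through a path in the moduli of decorated pairs of pants, while associativity is obtained from the usual cobordism of the four-pointed genus-zero moduli.

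Second, I would realize the degree-$-1$ bracket as the ``BV-type'' operation associated to the $S^1$-family of decorated pairs of pants rotating the two input punctures around each other. The homotopy for commutativity furnished by one half of this $S^1$ can be antisymmetrized to produce a chain-level antisymmetric map of degree $-1$, whose cohomology-level incarnation is the bracket. The graded Jacobi identity and the Poisson/Leibniz compatibility with the product are then obtained by analyzing codimension-one boundaries of the appropriate two-parameter moduli (three-input surfaces carrying $S^1$-rotation data).

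The main obstacle, and the bulk of the work deferred to the TQFT sections, is to build and control these moduli of decorated surfaces: the existence and consistency of maps to $\bar{B}$ with prescribed rotation numbers, and the correct description of boundary strata in terms of compositions of operations. A secondary but essential point is compactness of the Floer moduli defining the operations, which is the Floer-theoretic content of the later sections and allows the chain-level operations, and hence the Gerstenhaber relations, to be transported to cohomology.
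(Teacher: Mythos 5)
Your outline matches the paper's approach: build an abstract TQFT, show that for rotation number~$1$ the spaces of decorations over the little-discs moduli are homotopically determined by the underlying configurations, so that a single pair-of-pants gives the product and an $S^1$-family gives the bracket, and then implement these via Floer operations. One thing to sharpen: the parametrizing data in the paper's Sections~\ref{sec:sl2}--\ref{sec:elliptic} are nonnegatively curved $\mathfrak{sl}_2(\bR)$-connections on the surfaces (maps to $\bar{B}$ are what the Floer solutions project to near infinity, not the auxiliary data), and the precise fact that makes $r=1$ tractable is not merely the arithmetic $1=1+1-1$ but the contractibility of the decoration space over $\mathit{Conf}_m$, established via Propositions~\ref{th:connections-rel-boundary}, \ref{th:nonnegative-connection-4}, \ref{th:decorations} and the monodromy computation of diagram~\eqref{eq:monodromy-diagram}; it is this that reduces the Gerstenhaber relations to bordism classes in $\mathit{Conf}_m$.
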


\begin{prop} \label{th:operations-1b}
Each $\mathit{HF}^*(E,r)$ is a Gerstenhaber module over $\mathit{HF}^*(E,1)$, in a $\bZ/r$-equi\-va\-riant way (and reducing to the diagonal module for $r = 1$).
\end{prop}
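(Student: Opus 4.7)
The plan is to construct the Gerstenhaber module structure via the same TQFT formalism that yields Proposition \ref{th:operations-1}, but applied to moduli spaces of punctured Riemann surfaces whose ends carry rotation number data $(1, r; r)$ instead of $(1, 1; 1)$. Once the framework developed in Sections \ref{sec:sl2}--\ref{sec:elliptic} is in place --- assigning operations on the family $\{\mathit{HF}^*(E,r)\}_{r \in \bZ}$ to moduli spaces of marked surfaces with rotation-number-decorated ends --- the desired module structure on $\mathit{HF}^*(E,r)$ over $\mathit{HF}^*(E,1)$ will be an essentially formal consequence of the same moduli-theoretic input that produces the algebra structure.

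Concretely, I would define the commutative action $\mathit{HF}^*(E,1) \otimes \mathit{HF}^*(E,r) \to \mathit{HF}^*(E,r)$ via a three-punctured sphere with two positive ends of weights $1$ and $r$ and one negative end of weight $r$, the weight mismatch being absorbed by a suitable sub-closed one-form on the domain, exactly as for the product on $\mathit{HF}^*(E,1)$. The bracket action $\mathit{HF}^*(E,1) \otimes \mathit{HF}^*(E,r) \to \mathit{HF}^{*-1}(E,r)$ would come from varying the weight-$1$ puncture in an $S^1$-family of such pairs of pants, in direct analogy with the construction of the Gerstenhaber bracket on $\mathit{HF}^*(E,1)$. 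The Gerstenhaber module axioms --- associativity of the product action, the Leibniz-type compatibility between action and bracket, and the Jacobi-type relation --- would then follow from standard 1-parameter cobordism arguments on higher-dimensional families of decorated marked surfaces, parallel to (and in most cases recycling) the cobordisms used for Proposition \ref{th:operations-1}.

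The $\bZ/r$-equivariance should be built in automatically: $\Sigma$ is induced by $1/r$-rotation of the weight-$r$ asymptotic end, and the TQFT operations, being coherent with reparametrizations of any end they do not otherwise single out, respect such rotations on both input and output. The reduction to the diagonal module for $r = 1$ is immediate, since then the moduli spaces and auxiliary data coincide with those defining the algebra structure. I expect the main obstacle to lie not in the formal algebraic structure, which is a routine specialization of the general TQFT, but rather in verifying that the required Floer data (Hamiltonians, sub-closed one-forms, and almost complex structures) can be chosen coherently across all the moduli spaces involved, together with establishing the somewhat delicate compactness results specific to the Lefschetz fibration setting; both will be supplied by the later technical sections.
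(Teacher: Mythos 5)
Your overall plan is headed in the right direction — define the module operations via pairs of pants with ends weighted $(1,r;r)$, get the bracket from an $S^1$-family, verify the axioms by cobordisms, and read off the $\bZ/r$-equivariance and the $r=1$ reduction formally — and in broad strokes this is also what the paper does (Proposition \ref{th:gerstenhaber-module}, translated into Floer theory in Section \ref{sec:end}). But there is a genuine mechanism-level gap in the way you propose to realize these families.

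The key phrase is \emph{``the weight mismatch being absorbed by a suitable sub-closed one-form on the domain, exactly as for the product on $\mathit{HF}^*(E,1)$.''} The sub-closed one-form picture (an $\bR$-valued $\beta$ with $d\beta \leq 0$, as in \eqref{eq:d-beta}) only produces operations $\mathit{HF}^*(E,r_1) \otimes \mathit{HF}^*(E,r_2) \to \mathit{HF}^*(E,r_0)$ with $r_0 \geq r_1 + r_2$; see \eqref{eq:standard-product}. For your pair of pants this would require $r \geq 1 + r$, which is impossible. The same obstruction already blocks the product on $\mathit{HF}^*(E,1)$, where one would need $1 \ge 2$. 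This is precisely why the Abouzaid--Ganatra product was nontrivial to discover and why the paper builds the full nonabelian $\mathfrak{sl}_2(\bR)$-connection framework: the operations of Propositions \ref{th:operations-1} and \ref{th:operations-1b} live exactly at the extremal case $\mathrm{rot}(a_0) = \mathrm{rot}(a_1) + \mathrm{rot}(a_2) - 1$ (saturating the Milnor--Wood bound of Proposition \ref{th:bochner}), which is inaccessible to scalar-valued sub-closed one-forms. So the construction you describe would produce no operation at all landing back in $\mathit{HF}^*(E,r)$; you would at best get maps into $\mathit{HF}^*(E, r')$ with $r' > r$, which is not a module structure.

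A second, related gap: even once one uses genuine nonnegatively curved $\mathfrak{sl}_2(\bR)$-connections, the space of admissible ones on a pair of pants is not contractible when the boundary rotation numbers are integers; it is a torsor over $H^1(S,\partial S) \cong \bZ^2$ (Propositions \ref{th:connections-rel-boundary} and \ref{th:nonnegative-connection-4}). Making ``the weight-$1$ puncture rotate in an $S^1$-family in direct analogy with $r=1$'' is therefore not a well-posed instruction: one must single out a consistent component, and the paper's Lemma \ref{th:rot-bracket} shows that the correct choice forces the framing of the weight-$1$ end to rotate exactly $(1-r)$ times per loop, an $r$-dependent twist with no counterpart in the standard picture. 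This twist is what makes the Leibniz and Jacobi-type relations close up (and is also responsible for the extra relation of Proposition \ref{th:strange-relation}); without it, the putative module axioms fail. It is also the mechanism behind the $\bZ/r$-equivariance, via the rotation-number normalization \eqref{eq:rot-mod-2} defining $\scrY^0_{\mathit{flat}}$. The paper's emphasis that the TQFT part is the hard part here, with the Floer analysis comparatively standard, is in tension with your closing assessment; you have the difficulty backwards.
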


The Gerstenhaber algebra structure includes the Abouzaid-Ganatra product. Propositions \ref{th:operations-1} and \ref{th:operations-1b} assign a distingushed role to $\mathit{HF}^*(E,1)$. Operations of this kind are the most important ones for our applications, but they by no means exhaust the structures that are present. Here are two other easily stated results that can be extracted from the general theory.

\begin{prop} \label{th:operations-2}
There is the structure of a bigraded associative algebra on 
\begin{equation} \label{eq:bigraded-1}
\bigoplus_{s \geq 0} \mathit{HF}^*(E,s+1).
\end{equation}
\end{prop}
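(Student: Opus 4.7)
The strategy is TQFT: construct the product using a decorated thrice-punctured sphere, and prove associativity by a standard one-parameter argument on a four-punctured sphere.

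For each pair $s_1,s_2 \geq 0$, I would equip a three-punctured sphere with strip-like ends, Hamiltonian connection, and compatible almost complex structures as prescribed by the framework of Sections \ref{sec:sl2}--\ref{sec:elliptic}, with two positive ends labelled by rotation numbers $s_1+1, s_2+1$ and a single negative end labelled by $s_1+s_2+1$. This labelling pattern is consistent with the combinatorial constraint that governs how rotation numbers are allowed to combine across a punctured surface (the sum of positive weights minus the sum of negative weights being tied to the Euler characteristic, as in the general formalism). Counting pseudo-holomorphic sections then produces a chain-level operation
\[
\mu_{s_1,s_2}: \mathit{CF}^*(E,s_1+1) \otimes \mathit{CF}^*(E,s_2+1) \longrightarrow \mathit{CF}^*(E,s_1+s_2+1),
\]
whose induced map on cohomology $\bar\mu_{s_1,s_2}$ serves as the product. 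Summing over all $s_1, s_2 \geq 0$ and cohomological degrees equips \eqref{eq:bigraded-1} with a bigraded multiplication.

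For associativity, I would consider the moduli of four-punctured spheres with three positive ends of rotation numbers $s_1+1, s_2+1, s_3+1$ and one negative end of rotation number $s_1+s_2+s_3+1$. The compactified moduli space is a closed interval whose two boundary points correspond, via standard gluing, to the two iterated compositions
\[
\bar\mu_{s_1+s_2,s_3}\bigl(\bar\mu_{s_1,s_2}(x,y),z\bigr) \quad \text{and} \quad \bar\mu_{s_1,s_2+s_3}\bigl(x,\bar\mu_{s_2,s_3}(y,z)\bigr).
\]
Counting rigid solutions over the interior of this interval defines a chain homotopy between the two compositions, yielding associativity on cohomology. The same kind of gluing makes the Euler-characteristic bookkeeping transparent: the pair-of-pants weight shifts accumulate additively, so the total negative weight $s_1+s_2+s_3+1$ agrees on both sides of the degeneration.

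The main obstacle is verifying that the rotation-number assignments on both the three- and four-punctured surfaces can be realized within the Lefschetz TQFT, which amounts to exhibiting Hamiltonian connections whose monodromy at each end has the prescribed rotation number and whose behavior over the whole surface is compatible with an admissible almost complex structure. Once this is settled, the standard compactness/transversality package for the Lefschetz setting (Sections \ref{sec:maps-to-the-disc}--\ref{sec:floer}) furnishes both the well-definedness of $\mu_{s_1,s_2}$ and the cobordism argument for associativity as routine consequences.
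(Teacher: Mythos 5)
Your overall strategy is the right one and matches the paper's: define the product by counting pseudo-holomorphic sections over a pair-of-pants with ends weighted $(s_1+s_2+1; s_1+1, s_2+1)$, and establish associativity by a one-parameter family of four-punctured spheres degenerating at both ends. The Euler-characteristic bookkeeping you note ($\sum_i r_i - r_0 = m-1$) is also exactly the condition the paper imposes (equation \eqref{eq:m-minus-1}).

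However, there is a genuine gap in how you frame the ``main obstacle.'' You identify it as \emph{existence}: can one realize the prescribed rotation numbers at the ends by some nonnegatively curved connection? That part is easy. The actual difficulty, and the thing the paper spends most of its effort on, is \emph{uniqueness}: the space $\scrY_{\geq 0}(S,\{a_i\})$ of admissible decorations on a disc configuration is not connected. By Proposition \ref{th:decorations} it is weakly homotopy equivalent to $\prod_{i\ge 1}\Gamma_{r_i}$, so on a two-disc configuration there are $\bZ/(s_1{+}1) \times \bZ/(s_2{+}1)$ worth of path components of choices of connection, each potentially giving a different product. If one simply ``exhibits a Hamiltonian connection with the prescribed boundary monodromies,'' the resulting operation is not well-defined, and there is no reason the three-disc gluing argument should close up into an associativity relation, because the two ways of gluing could land in different components.

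The paper resolves this by severely constraining the geometry: all boundary conditions are taken to be covers $a^{r}$ of a single fixed connection $a$ with rotation number $1$, the disc centers are placed on the imaginary axis, framings are taken trivial ($\rho_i > 0$), and then a preferred connection is singled out by requiring the parallel-transport rotation numbers $\mathrm{rot}_{c_i}(A)$ along reference paths $c_i \subset S \cap \{\mathrm{re}(z) \ge 0\}$ to vanish (this uses Addendum \ref{th:partial-fix}, which depends on Lemma \ref{th:two-are-powers}, hence on the boundary conditions being covers of a common connection). After this, the space of admissible decorations is contractible, the preferred choice is compatible with gluing, and only then does the argument you sketch go through. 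Your proposal as written omits this entire layer and so does not actually produce a well-defined operation. You should also note that unitality of $e$ requires a separate argument, which in the paper is reduced to the Gerstenhaber module structure of Proposition \ref{th:gerstenhaber-module}.
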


\begin{prop} \label{th:operations-3}
There is the structure of a bigraded Lie algebra on 
\begin{equation} \label{eq:bigraded-lie}
\bigoplus_{s \geq 0} \mathit{HF}^{*+1}(E,s+1)^{\bZ/(s+1)}.
\end{equation}
\end{prop}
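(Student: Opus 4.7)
The strategy I would follow is Getzler's, realizing the bracket as a secondary operation built from the bigraded associative product of Proposition \ref{th:operations-2} together with the canonical $\Sigma$-symmetry. The guiding picture is that, at chain level, $\Sigma$ should be generated by a degree-$(-1)$ BV-type operator $\Delta$ arising from a one-parameter family rotating the asymptotic marker at a puncture, and the bracket should measure the failure of $\Delta$ to be a derivation for the product, $[x,y] = \Delta(x \cdot y) - (\Delta x) \cdot y - (-1)^{|x|} x \cdot (\Delta y)$. Restricting to $\bZ/(s+1)$-invariants, where $\Sigma$ acts trivially on cohomology, is precisely what is needed to promote the resulting homotopical identities to strict ones.

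Concretely, within the TQFT formalism of Sections \ref{sec:sl2}--\ref{sec:elliptic}, I would realize the bracket by the same moduli of 3-punctured genus-zero surfaces that underlies the product of Proposition \ref{th:operations-2}, but now enriched with an $S^1$-family of rotations of the asymptotic marker at one input puncture. The associated operation has shifted degree $-1$ and sends $\mathit{HF}^a(E, s_1+1) \otimes \mathit{HF}^b(E, s_2+1)$ to $\mathit{HF}^{a+b-1}(E, s_1+s_2+1)$, which is exactly the bidegree of a Lie bracket on the shifted sum. For $s_1 = s_2 = 0$ the construction should reduce to the Gerstenhaber bracket of Proposition \ref{th:operations-1} (since $\bZ/1$ is trivial), and when one of the inputs is in $\mathit{HF}^*(E,1)$ it should recover the Gerstenhaber module action of Proposition \ref{th:operations-1b} — two useful sanity checks.

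Skew-symmetry should come from the $\bZ/2$-symmetry of the 3-punctured sphere swapping its two inputs, combined with the orientation reversal of the $S^1$ parameter; together these produce the requisite sign. The main obstacle is the Jacobi identity. I would deduce it by analyzing the codimension-one boundary of a 2-parameter moduli of genus-zero surfaces with three inputs and one output, carrying a rotation parameter at one input; the boundary strata decompose as pairs of 3-punctured spheres glued at a puncture, and their contributions assemble into iterated brackets. The delicate part is combinatorial: tracking signs, rotation-number labels, and the three cyclic boundary pieces to verify that their sum is exactly the Jacobiator, and checking that the ambiguity by the $\Sigma$-action at the gluing puncture is killed by passage to invariants. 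Once the TQFT framework of the earlier sections is in hand, everything else should reduce to a careful bookkeeping of the compactification.
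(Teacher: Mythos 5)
Your central construction does not go through, because the operator you want to build the bracket from does not exist in this setting. You propose the Getzler/BV formula
\[
[x,y] \;=\; \Delta(x \cdot y) - (\Delta x)\cdot y - (-1)^{|x|}\, x \cdot (\Delta y),
\]
which presupposes a degree $-1$ endomorphism $\Delta$ on each $\mathit{HF}^*(E,r)$ for $r \in \bZ^{>0}$. But by Proposition \ref{th:auto} (equivalently, Proposition \ref{th:x-annulus}), the abelian group acting on $\mathit{HF}^*(E,r)$ is $\Gamma_r = \bZ/r$ for $r \neq 0$: purely torsion, no $S^1$ factor, hence no degree $-1$ generator. A genuine BV operator does appear for the \emph{elliptic} groups $\mathit{HF}^*(E,r)$, $r \in \bR \setminus \bZ$ (Proposition \ref{th:only-elliptic}(ii)), and at $r=0$ where $\Gamma_0 = \bZ \times S^1$ — but not at the positive integer rotation numbers that appear in \eqref{eq:bigraded-lie}. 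Geometrically, the defect is that a connection with hyperbolic boundary holonomy cannot be $S^1$-invariant, so rotating an asymptotic marker through a full $S^1$ forces one out of the fixed Floer datum; one only recovers an honest finite cyclic action, and only up to homotopy (Example \ref{th:fractional-rotation-2}). The paper itself flags this distinction in Remark \ref{th:open-q}(iii): if a BV algebra exists anywhere nearby, it should live on the mapping cones $S^*(E,r)$ of $\mathit{id}-\Sigma$, not directly on $\mathit{HF}^*(E,r)$ or its invariants; the Lie bracket of Proposition \ref{th:operations-3} is expected to be the \emph{cyclic} counterpart of that conjectural BV bracket, which is a different algebraic animal from the one your formula would produce.

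The paper's actual mechanism is what your proposal is missing. It takes the boundary connections $a^r$ to be $r$-fold covers in the sense of \eqref{eq:a01-cover}, and passes to \emph{multivalued framings}, where each $\rho_i$ is defined only modulo $r_i$-th roots of unity. The bracket then comes from a single $S^1$-family in which two discs orbit each other and \emph{both} input framings rotate by the fractional amounts $1-r_0/r_1$ and $1-r_0/r_2$ (Lemma \ref{th:fractional-loop}), not from rotating one marker. To extract an honest operation one lifts to the $\bZ/r_1 \times \bZ/r_2$-cover where the framings become single-valued, applies the covering axiom $\phi_{\tilde P} = \mu\, \phi_P$ from \eqref{eq:covering-axiom}, and divides by $r_1 r_2$, as in \eqref{eq:divide-operation}; this is what forces $\bQ \subset R$ and what makes the resulting cohomology-level map land in and factor through the $\bZ/r_i$-invariants automatically. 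The Jacobi identity then reduces, after the twisted gluing \eqref{eq:twisted-product}, to the computation \eqref{eq:p12}–\eqref{eq:glue-3}: the glued three-disc families form a ``sun-earth-moon'' configuration whose \emph{positions} in $\mathit{Conf}_3$ are independent of the rotation numbers (only the framings depend on them), so the identity reduces to the classical one in $\mathit{Conf}_3$. None of the subtleties you would actually have to face — fractional rotations of both discs, the twisted product gluing, the covering/averaging, and the chain-vs.-cohomology level issue for $\bZ/r$-actions in Floer theory addressed in Section \ref{sec:end} — surface in your sketch, so while your bidegree bookkeeping is right, the proposed proof does not reach any of the real difficulties.
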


The product from Proposition \ref{th:operations-2} contains the algebra structure from Proposition \ref{th:operations-1} (by restricting to $s = 0$), and also the module structure over that algebra from Proposition \ref{th:operations-1b}, for $r>0$. It may be worth while spelling out what Proposition \ref{th:operations-3} says, in view of the various degree shifts involved. $\mathit{HF}^i(E,r)^{\bZ/r}$ appears in our graded Lie algebra in (cohomological) degree $i-1$, and this determines the signs in the antisymmetry and Jacobi relations. In addition, the bracket is homogeneous with respect to $s = r-1$, hence is given by maps
\begin{equation}
\mathit{HF}^{i_1}(E,r_1)^{\bZ/r_1} \otimes \mathit{HF}^{i_2}(E,r_2)^{\bZ/r_2} \longrightarrow
\mathit{HF}^{i_1+i_2-1}(E,r_1+r_2-1)^{\bZ/(r_1+r_2-1)}.
\end{equation}
The Lie structure from Proposition \ref{th:operations-1} is contained in that of Proposition \ref{th:operations-3}, and so is the $\bZ/r$-invariant part of the Lie module structure from Proposition \ref{th:operations-1b}, for $r>0$. 

The product structures from Propositions \ref{th:operations-1}--\ref{th:operations-2} have a unit
\begin{equation} \label{eq:e-}
e \in \mathit{HF}^0(E,1),
\end{equation}
which can be thought of as the image of the unit in ordinary cohomology under the map from \eqref{eq:b-sequence}. Another important ingredient for us will be the image of the symplectic class, or more precisely of $q^{-1}[\omega_E] \in H^2(E;\bK)$, under that map. We call this image the Kodaira-Spencer class, and write it as
\begin{equation} \label{eq:k-k}
k \in \mathit{HF}^2(E,1).
\end{equation}
Following the model of \cite{seidel16}, vanishing of the Kodaira-Spencer class gives rise to another operation, which is not $\bK$-linear, but instead differentiates with respect to the Novikov parameter $q$:

\begin{prop} \label{th:diff-q}
Suppose that \eqref{eq:k-k} vanishes. Choose a bounding cochain for its cocycle representative, in the Floer cochain complex underlying $\mathit{HF}^*(E,1)$. That choice determines a connection on each group \eqref{eq:floer-cohomology}, which means a map
\begin{equation} \label{eq:nabla-r}
\begin{aligned}
& \nabla: \mathit{HF}^*(E,r) \longrightarrow \mathit{HF}^*(E,r), \\
& \nabla(f x) = f \nabla x + (\partial_q f) x \quad \text{for $f \in \bK$.}
\end{aligned}
\end{equation}
Changing the bounding cochain by adding a cocycle representing some class in $\mathit{HF}^*(E,1)$ has the effect of subtracting the Lie action of that class (in the sense of Proposition \ref{th:operations-1b}) from \eqref{eq:nabla-r}.
\end{prop}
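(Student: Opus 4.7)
The plan is to construct the connection at the chain level, closely following the strategy of \cite{seidel16}. Let $(\mathit{CF}^*(E,r), d)$ denote the Floer cochain complex underlying $\mathit{HF}^*(E,r)$; it is a free $\bK$-module on the Hamiltonian generators. Define the $\bR$-linear operator
\begin{equation*}
\partial_q : \mathit{CF}^*(E,r) \longrightarrow \mathit{CF}^*(E,r)
\end{equation*}
by formal differentiation of the Novikov coefficient of each generator. By construction $\partial_q$ is degree-preserving and satisfies $\partial_q(fx) = f\,\partial_q x + (\partial_q f)\,x$ for $f \in \bK$.

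The operator $\partial_q$ is not a chain map, because the matrix entries of $d$ count pseudo-holomorphic cylinders $u$ weighted by $q^{\int u^*\omega_E}$. Concretely, $[d,\partial_q]$ counts the same cylinders re-weighted by the extra factor $q^{-1}\int u^*\omega_E$. The key geometric input is the identification of this re-weighting with an operation that inserts an interior marked point carrying the class $q^{-1}[\omega_E]$; under the map from \eqref{eq:b-sequence}, that class goes to the Kodaira-Spencer class $k$. Chain-level organisation of this identification yields
\begin{equation*}
[d, \partial_q] \;=\; \mathcal{L}_\kappa,
\end{equation*}
where $\kappa \in \mathit{CF}^2(E,1)$ is a cocycle representing $k$ and $\mathcal{L}_\kappa$ is a chain-level lift of the Lie-module action of $\kappa$ coming from Proposition \ref{th:operations-1b}. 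Verification of this identity is the main geometric work: one builds a one-parameter family of TQFT data interpolating between the two moduli problems (respectively, Floer cylinders with an extra symplectic-area weighting and Floer cylinders with an $\omega$-carrying marked point) and reads off the equality from the boundary of the family.

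Granting this identity together with the hypothesis $k=0$, pick a bounding cochain $b \in \mathit{CF}^1(E,1)$ with $db = \kappa$ and define
\begin{equation*}
\nabla \;=\; \partial_q \;-\; \mathcal{L}_b : \mathit{CF}^*(E,r) \longrightarrow \mathit{CF}^*(E,r).
\end{equation*}
The chain-level Leibniz rule for the bracket gives $[d, \mathcal{L}_b] = \mathcal{L}_{db} = \mathcal{L}_\kappa$, hence $[d, \nabla] = 0$ and $\nabla$ descends to cohomology. Since $\mathcal{L}_b$ is $\bK$-linear and $\partial_q$ satisfies the Leibniz rule with respect to $q$, so does $\nabla$, giving \eqref{eq:nabla-r}. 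If $b' = b + c$ is another bounding cochain then $c$ is a cocycle, the resulting chain-level operator becomes $\nabla - \mathcal{L}_c$, and passing to cohomology recovers exactly the statement that the connection changes by subtracting the Lie action of $[c] \in \mathit{HF}^*(E,1)$.

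The main obstacle is the cochain-level identity $[d,\partial_q] = \mathcal{L}_\kappa$: it requires a careful TQFT construction, using the machinery of Sections \ref{sec:sl2}--\ref{sec:elliptic}, relating $q$-differentiation of counts of Floer cylinders to the moduli of cylinders with an interior $\omega$-marked point, and the matching of orientations, Novikov weights, and degeneration strata is where the essential analytic and combinatorial work sits. A subsidiary but nontrivial point is to fix, also via the TQFT setup, a chain-level lift $\mathcal{L}$ of the Lie-module bracket from Proposition \ref{th:operations-1b} satisfying a genuine Leibniz rule with respect to $d$; once this is in place, the rest of the argument is algebraic bookkeeping.
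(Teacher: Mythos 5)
Your overall scaffolding matches the paper's Section 5 and 9C construction: identify the Kodaira--Spencer class as the obstruction to the naive $q$-derivative $\partial_q$ being a chain map, choose a bounding cochain, and subtract the corresponding Lie action. The change-of-bounding-cochain claim is also handled correctly. However, there is one genuine gap, and it is located exactly where you placed ``the main geometric work.''

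You assert the \emph{strict} chain-level identity $[d,\partial_q]=\mathcal{L}_\kappa$, and build $\nabla=\partial_q-\mathcal{L}_b$ from it. That identity is false on the chain level, and for a structural reason: the two sides are associated to genuinely different families of decorated surfaces. The operator $R=[d,\partial_q]$ corresponds (in the TQFT model, axiom \eqref{eq:differentiation-1}--\eqref{eq:differentiation-2}, and Floer-theoretically \eqref{eq:define-r}) to an $S^1$-family of cylinders with a marked point $\zeta_p$ constrained to lie on $q^{-1}\Omega$, the marked point circling the cylinder once. The operation $\mathcal{L}_\kappa=[k,\cdot]$ comes from a different geometry: an $S^1$-family of pairs-of-pants into one leg of which the once-punctured plane defining the cocycle $k$ has been glued. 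These two moduli problems agree only after ``pulling out the marked point'' (Figure \ref{fig:pull-out}), i.e. they are connected by a two-parameter cobordism. That cobordism produces a chain homotopy $\rho$, and the correct relation is Lemma \ref{th:bracket-with-k}:
\begin{equation*}
d\rho(x)-\rho(dx)-R(x)+[k,x]=0,
\end{equation*}
not an equality $R=[k,\cdot]$. As a consequence the correct chain-level definition of the connection must carry the extra homotopy term,
\begin{equation*}
\nabla x = \qabla x + \rho(x) - [\theta,x],
\end{equation*}
as in \eqref{eq:nabla}; with your $\nabla=\partial_q-\mathcal{L}_b$, the failure of the strict identity means $[d,\nabla]\neq 0$ and the operator does not descend to cohomology. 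Once this term is inserted, the rest of your argument (including the dependence on the choice of $\theta$) goes through unchanged, since $\rho$ does not depend on $\theta$.

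A further subtlety worth noting: carrying out even the homotopy statement needs the TQFT input of Section \ref{sec:tqft}--\ref{sec:diff-axiom}, in particular Proposition \ref{th:x-annulus}(ii) and the unitality argument of Addendum \ref{th:cop-out}, to show that the glued family of annuli obtained in the proof of Lemma \ref{th:bracket-with-k} is classified by a null-homotopic map to $\Gamma_r$; without that, one cannot deform the glued family to the constant family with a circling marked point, and the homotopy $\rho$ would not exist with the asserted source and target. This is the nontrivial content hidden in your phrase ``one builds a one-parameter family of TQFT data interpolating between the two moduli problems,'' and it relies crucially on the nonnegatively-curved connection formalism that the rest of the paper sets up.
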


The theory producing all these results is modelled on an abstract notion of TQFT for surfaces equipped with framed $\mathit{PSL}_2(\bR)$-connections (``framed'' means that we consider only connections on the trivial bundle, and do not quotient out by the gauge group). Here, $\mathit{PSL}_2(\bR)$ is thought of as acting on the base of \eqref{eq:lefschetz} by hyperbolic isometries. Of course, this action cannot be lifted to symplectic automorphisms of the total space, but lifts exist infinitesimally if one only considers the Lefschetz fibration near infinity, and that is sufficient for our purpose. More precisely, the products and Lie brackets mentioned above are all constructed using flat $\mathit{PSL}_2(\bR)$-connections. More generally, one can allow connections whose curvature belongs to the nonnegative cone inside $\mathfrak{sl}_2(\bR)$. Nonnegatively curved connections are required to produce the maps \eqref{eq:continuation}, \eqref{eq:continuation-2}, as well as the classes \eqref{eq:e-}, \eqref{eq:k-k} (and, naturally in view of the latter, play an important role in Proposition \ref{th:diff-q}).

%
At this point, we want to make up for a slight lack of precision in the discussion above. There are many choices involved in setting up $\mathit{HF}^*(E,r)$, but the essential one (other than $r$, of course) is that of a hyperbolic element $g \in \mathit{PSL}_2(\bR)$. One uses the action of $g$ on the disc to determine the structure at infinity of a Hamiltonian automorphism of $E$ (and then, $r$ is used to choose an extension of that automorphism over all of $E$). Different choices of $g$ give rise to isomorphic Floer cohomology groups. However, these isomorphisms are generally unique only up to composition with powers of $\Sigma$ (it doesn't matter which side the composition is taken on, since the isomorphisms are $\Sigma$-equivariant). As an elementary illustration, let's take a second look at the $r = 0$ case. A more precise statement of \eqref{eq:floer-0} would require that $w$ should lie in the interval in $\partial \bar{B}$ bounded by the repelling fixed point of $g$ (on the left) and its attracting fixed point (on the right). For two choices of $g$ within the same one-parameter semigroup (or equivalently, which have the same attracting and repelling fixed points), the resulting relative cohomology groups $H^*(\bar{E},\bar{E}_w)$ can be identified canonically; otherwise, one needs to make a choice, of how to move one $w$ point to the other (or more intrinsically, of a path inside the hyperbolic locus connecting the two choices of $g$). In fact, this is also the situation for general $r$. We will prove that the algebraic structures from Proposition \ref{th:operations-1} and \ref{th:operations-1b} are compatible with those isomorphisms, and a similar argument would apply to Proposition \ref{th:diff-q}. In the interest of brevity, we will not address the parallel uniqueness issue for the structures from Propositions \ref{th:operations-2} and \ref{th:operations-3}, since those are less central for our intended applications.

\begin{remark} \label{th:open-q}
The way in which the various pieces fit together (together with the parallel situation in homological algebra \cite[Section 2f]{seidel14b}) suggests the following possible further developments (none of which will be attempted in this paper).

(i) The relation between Proposition \ref{th:operations-2} and the module structures from Proposition \ref{th:operations-1b} seems to indicate that the algebra structure \eqref{eq:bigraded-1} ought to be extended to $s \in \bZ$.

(ii) There might be an extension of \eqref{eq:bigraded-lie} which would add an $s = -1$ term of the form $\mathit{HF}^*(E,0)[[u]]$, for $u$ a formal variable of degree $2$.

(iii) Take $r>0$. By using a suitably chosen chain level representative of $\Sigma$, one can define the algebraic mapping cone of $\mathit{id}-\Sigma$, which is a graded space $S^*(E,r)$ fitting into a long exact sequence
\begin{equation}
\cdots \rightarrow S^*(E,r) \longrightarrow \mathit{HF}^*(E,r) \xrightarrow{\mathit{id}-\Sigma} \mathit{HF}^*(E,r) \rightarrow \cdots
\end{equation}
Extend this to $r = 0$ by setting $S^*(E,0) = \mathit{HF}^*(E,0)$. Then, the expectation is that
\begin{equation} \label{eq:k-space}
\bigoplus_{s \geq -1} S^*(E,s+1)
\end{equation}
carries the structure of a bigraded Batalin-Vilkovisky (BV) algebra. The Lie structure from Proposition \ref{th:operations-3}, with its extension from (ii), would be thought of as the ``cyclic'' counterpart of the bracket on \eqref{eq:k-space} (compare \cite[Remark 2.10]{seidel14b}). The maps $S^*(E,r) \rightarrow \mathit{HF}^*(E,r)$ would form an algebra homomorphism from \eqref{eq:k-space} to (an extended version of) \eqref{eq:bigraded-1}.
%
\end{remark}

\subsection{Relation with symplectic cohomology}
Symplectic cohomology is a well-known invariant of noncompact symplectic manifolds, which can be easily adapted to our situation \eqref{eq:lefschetz}.

\begin{prop} \label{th:limit}
In the limit $r \rightarrow \infty$, the maps \eqref{eq:continuation} recover the symplectic cohomology of $E$:
\begin{equation} \label{eq:direct-lim}
\underrightarrow{\lim}_r\,\mathit{HF}^*(E,r) \iso \mathit{SH}^*(E).
\end{equation}
\end{prop}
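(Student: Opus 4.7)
The strategy is to realize the family $\{H^{(r)}\}_{r \geq 0}$ of Hamiltonians underlying $\mathit{HF}^*(E,r)$ as a cofinal subsystem of the admissible Hamiltonians computing $\mathit{SH}^*(E)$, and to identify the continuation maps $C$ with the comparison maps in that directed system.

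Fix the hyperbolic element $g \in \mathit{PSL}_2(\bR)$ used to set up $\mathit{HF}^*(E,r)$, and let $H_g$ be a Hamiltonian on the base $B$ whose time-$1$ flow is $g$. Near $\partial \bar{B}$, the Hamiltonian defining $\mathit{HF}^*(E,r)$ is a perturbation of $\pi^*(r H_g)$, so its time-$1$ map covers $g^r$. Step one is to fix a model for $\mathit{SH}^*(E)$ as a direct limit $\underrightarrow{\lim}_\alpha\, \mathit{HF}^*(E; K_\alpha)$ over a cofinal family of admissible Hamiltonians $K_\alpha$ on $E$ (those whose dynamics at infinity is controlled by a positive multiple of $\pi^*H_g$). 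Step two is to observe that since $H_g$ grows linearly in a suitable coordinate on $B$ transverse to the axis of $g$, the family $\{r H_g\}_{r \in \bZ_{\geq 0}}$ is cofinal among positive multiples of $H_g$, and hence after lifting that $\{H^{(r)}\}$ is cofinal among the $K_\alpha$.

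Step three is the identification of the maps. The continuation map $C$ was produced in the paper by a nonnegatively curved $\mathit{PSL}_2(\bR)$-connection on a cylinder interpolating between $g^r$ and $g^{r+1}$; this is a particular instance of the standard monotone continuation map for an increase of slope, so together the $C$'s assemble into the structure maps of the directed system defining $\mathit{SH}^*(E)$. Taking the direct limit then gives \eqref{eq:direct-lim}.

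The main obstacle is the compactness/maximum-principle step that ensures the Floer theory for $H^{(r)}$ really does compute the same thing as for a ``linear at infinity'' Hamiltonian. One must verify that all $1$-periodic orbits of $H^{(r)}$ (and all Floer trajectories and continuation trajectories between $H^{(r)}$ and $H^{(r+1)}$) stay in a compact region of $E$ bounded away from $\partial \bar{B}$. This is where the hyperbolicity of $g$ enters: its attracting and repelling fixed points on $\partial \bar{B}$ push all periodic orbits of the flow into the interior, and the integrated maximum principle for the nonnegatively curved connections on the continuation cylinders keeps trajectories confined. Once this confinement is established, cofinality together with the standard telescope argument identifies the colimit with $\mathit{SH}^*(E)$.
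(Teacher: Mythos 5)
Your proposal misses the central difficulty: the groups $\mathit{HF}^*(E,r)$ for integer $r$ and the groups used to define $\mathit{SH}^*(E)$ are not built from the same kind of Hamiltonian at infinity, and they are not related merely by ``slope''. For integer $r$, the connection at infinity has hyperbolic holonomy $g^r$; the associated Hamiltonian \eqref{eq:hamiltonian} (for hyperbolic $\gamma$) is unbounded in both directions as $|w| \to 1$. For $\mathit{SH}^*(E)$, the paper uses connections with elliptic holonomy (rotations, cf.\ \eqref{eq:standard-connection}), whose Hamiltonians go to $+\infty$ at infinity. Because of this sign difference, a hyperbolic-at-infinity Hamiltonian can never dominate an elliptic one, so there is no monotone homotopy from the latter to the former, and the hyperbolic family $\{rH_g\}$ is \emph{not} cofinal inside the directed system defining $\mathit{SH}^*(E)$ in the sense required by the telescope argument. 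Your ``step one'' silently replaces the paper's definition of $\mathit{SH}^*(E)$ (the direct limit over non-integer $r$, i.e.\ elliptic holonomy) by a different, hyperbolic-modeled direct limit. That replacement is precisely the content that has to be proved, so steps two and three make the conclusion close to tautological without addressing the real assertion.

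The paper's actual argument is an interleaving of the two directed systems. One builds ``mixed'' continuation maps $\mathit{HF}^*(E,r) \to \mathit{HF}^*(E,r')$ whenever $r \in \bZ$ and $r' \in (r,r+1)$, and $\mathit{HF}^*(E,r') \to \mathit{HF}^*(E,r+1)$ whenever $r' \in (r,r+1)$, using nonnegatively-curved connections that pass from hyperbolic to elliptic boundary holonomy and back (Proposition \ref{th:mixed-continuation}). The key nontrivial input is then Proposition \ref{th:mixed-continuation-2}(iii): the \emph{composition} of two such mixed maps, in either order, agrees with the ordinary continuation map on one of the two systems. That in turn rests on the contractibility of the relevant spaces of nonnegatively-curved $\mathfrak{sl}_2(\bR)$-connections on the annulus (Propositions \ref{th:nonnegative-connection-2} and \ref{th:grow-elliptic}). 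Once compositions are identified in this way, the two directed systems interleave and their direct limits coincide, giving \eqref{eq:direct-lim}. Your proposal has none of this; in particular it never engages with the difference between elliptic and hyperbolic behavior at infinity, which is what makes the statement nontrivial in this framework. The compactness discussion in your last paragraph is correct in spirit but is already taken care of in the paper (Section~\ref{sec:floer}) and is not the missing step.
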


We can describe the situation a little more precisely, using versions of symplectic cohomology with ``finite slope''. These are groups
\begin{equation} \label{eq:floer-cohomology-2}
\mathit{HF}^*(E,r), \;\; r \in \bR \setminus \bZ.
\end{equation}
As the notation indicates, they fit in with \eqref{eq:floer-cohomology}, in the sense that there are continuation maps which increase $r$, and can mix the integer and non-integer cases (we will denote all such maps by $C$). By definition, symplectic cohomology is the direct limit of $\mathit{HF}^*(E,r)$ using only non-integer values of $r$. One proves Proposition \ref{th:limit} by interleaving this with the integer values. Another familiar property of \eqref{eq:floer-cohomology-2} is that it comes with a natural map $H^*(E;\bK) \rightarrow \mathit{HF}^*(E,r)$ for positive $r$. If we take $r \in (0,1)$, then composing this map with a continuation map $\mathit{HF}^*(E,r) \rightarrow \mathit{HF}^*(E,1)$ recovers the map from \eqref{eq:b-sequence}.

\begin{remark} \label{th:relate-operations}
It is well-known (see e.g.\ \cite{seidel07, ritter10}) that symplectic cohomology carries the structure of a BV algebra (this notion has already been mentioned in Remark \ref{th:open-q}). The basic operations are: the (graded commutative) product; and the BV operator $\Delta$, of degree $-1$. From them, one constructs a bracket of degree $-1$,
\begin{equation}
[x_1,x_2] = \Delta(x_1 \cdot x_2) - (\Delta x_1) \cdot x_2 - (-1)^{|x_1|} x_1 \cdot (\Delta x_2),
\end{equation}
which yields a Gerstenhaber algebra structure. It is natural to ask how \eqref{eq:direct-lim} relates the operations on \eqref{eq:floer-cohomology} with those in $\mathit{SH}^*(E)$. For instance, it is easy to show that \eqref{eq:continuation-2} turns into $\Delta$ in the limit (see Proposition \ref{th:mixed-continuation-2}(iv) for a sketch of the geometry underlying that argument). While we will not pursue such questions further here, it may be worthwhile summarizing what one can reasonably expect:

(i) The map $\mathit{HF}^*(E,1) \rightarrow \mathit{SH}^*(E)$ should be a homomorphism of Gerstenhaber algebras. 

(ii) For the Gerstenhaber module structure from Proposition \ref{th:operations-1b}, the expected behaviour is slightly more complicated. The product is still compatible with that on symplectic cohomology, but the bracket should fit into a commutative diagram
\begin{equation} \label{eq:lie-lie}
\xymatrix{
\mathit{HF}^*(E,1) \otimes \mathit{HF}^*(E,r) \ar[rr]^-{[\cdot,\cdot]} 
\ar[d]
&& \mathit{HF}^{*-1}(E,r) \ar[d] \\
\mathit{SH}^*(E) \otimes \mathit{SH}^*(E) \ar[rr]^-{[\cdot,\cdot]_c} && \mathit{SH}^*(E)
}
\end{equation}
where the bottom $\rightarrow$ is the following operation on symplectic cohomology, for $c = r-1$:
\begin{equation} \label{eq:two-brackets}
[x_1,x_2]_c = [x_1,x_2] - c (\Delta x_1) \cdot x_2 = \Delta(x_1 \cdot x_2) - (c+1) (\Delta x_1) \cdot x_2 - (-1)^{|x_1|} x_1 \cdot (\Delta x_2).
\end{equation}
We will see the same phenomenon in Proposition \ref{th:two-connections} below (which is indeed closely related).

(iii) One expects similar compatibility statements for Propositions \ref{th:operations-2} and (less obviously) \ref{th:operations-3}.
\end{remark}

For our intended applications, the most important aspect of the relation between \eqref{eq:floer-cohomology} and \eqref{eq:floer-cohomology-2} concerns connections. Let's first summarize the outcome of the construction from \cite{seidel16}:

\begin{prop} \label{th:elliptic-connection}
Fix $r_1 \in \bR^{>0} \setminus \bZ$ and $r_2 \in \bR \setminus \bZ$, such that $r_1 + r_2 \notin \bZ$.
Suppose that the image of $q^{-1}[\omega_E]$ in $\mathit{HF}^2(E,r_1)$ vanishes, and choose a bounding cochain. This gives rise to a family of ``connections'' parametrized by $c \in \bK$, which are maps
\begin{equation} \label{eq:nabla-c}
\begin{aligned}
& \nabla_c: \mathit{HF}^*(E,r_2) \longrightarrow \mathit{HF}^*(E,r_1+r_2), \\
& \nabla_c( fx) = f \nabla_c x + (\partial_q f) C(x),
\end{aligned}
\end{equation} 
related to each other as follows. Associated to the bounding cochain is an $a \in \mathit{HF}^0(E,r_1)$, and 
\begin{equation}
\nabla_c x = \nabla_1 x + (c-1)\, a \cdot x,
\end{equation}
where $a \cdot x$ is the pair-of-pants product $\mathit{HF}^*(E,r_1) \otimes \mathit{HF}^*(E,r_2) \rightarrow \mathit{HF}^*(E,r_1+r_2)$. The maps \eqref{eq:nabla-c} commute with the continuation maps that increase $r_2$. Hence, in the limit, they induce a family of actual connections on $\mathit{SH}^*(E)$.
\end{prop}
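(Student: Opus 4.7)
The construction will follow the pattern of Proposition \ref{th:diff-q} (itself modelled on \cite{seidel16}), reinterpreted in the mixed-slope setting. The geometric input for $\nabla_c$ is a one-parameter family, indexed by $c$, of cylinders with an input of slope $r_2$, an output of slope $r_1 + r_2$, equipped with a framed nonnegatively curved $\mathit{PSL}_2(\bR)$-connection, and carrying an interior marked point decorated with the $2$-form $q^{-1}\omega_E$. Counting pseudo-holomorphic sections over this family defines a chain-level map $\tilde\nabla_c$ between the Floer complexes computing $\mathit{HF}^*(E,r_2)$ and $\mathit{HF}^*(E,r_1+r_2)$. Since $r_1, r_2, r_1+r_2 \notin \bZ$ and $r_1 > 0$, the relevant moduli spaces are of the same kind as those used to define continuation maps in the non-integer regime, so the underlying Floer theory is exactly the mixed-slope variant already needed for Proposition \ref{th:limit}.

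The Leibniz property \eqref{eq:nabla-c} follows because $q^{-1}\omega_E$ evaluates on a section $u$ to $q^{-1}\int u^*\omega_E$, which effectively differentiates the area-weighted factor $q^{\int u^*\omega_E}$ in the counting function; differentiating in $q$ therefore matches the insertion, with the shift in slope from $r_2$ to $r_1 + r_2$ accounting for the factor of $C$ on the right-hand side. Failure of $\tilde\nabla_c$ to be a chain map is controlled precisely by the image of $q^{-1}[\omega_E]$ in $\mathit{HF}^2(E,r_1)$; the standing assumption lets us fix a bounding cochain $\beta$, and inserting $\beta$ as an auxiliary input on the cylinder corrects $\tilde\nabla_c$ into a genuine chain map $\nabla_c$, whose cohomology-level effect depends only on $\beta$. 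Replacing $\beta$ by $\beta + \gamma$ with $\gamma$ a cocycle shifts $\nabla_c$ by the pair-of-pants product with $[\gamma]$, just as in Proposition \ref{th:diff-q}.

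For the $c$-dependence, a standard degeneration of the parameter family splits off a ``capping'' surface carrying the $\beta$-insertion, which produces the distinguished class $a \in \mathit{HF}^0(E,r_1)$; the gluing/cobordism identity then reads $\nabla_c x - \nabla_1 x = (c-1)\,a\cdot x$. Compatibility with the continuation maps that increase $r_2$ is obtained by the usual cobordism between $C \circ \nabla_c$ and $\nabla_c \circ C$, constructed by sliding a continuation cylinder from one end of the $\nabla_c$-surface to the other through a one-parameter family. Taking the direct limit over non-integer slopes $r_2$ and invoking the argument underlying Proposition \ref{th:limit} produces connections on $\mathit{SH}^*(E)$, which by construction recover those of \cite{seidel16}. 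The main obstacle is not the TQFT bookkeeping, which is a routine variant of the $\mathit{PSL}_2(\bR)$-framework developed earlier in the paper, but the Floer-theoretic compactness on the total space of a Lefschetz fibration at non-integer slope combined with the singular insertion $\omega_E$: one must rule out bubbling and escape to infinity along the fibres strongly enough that the counts defining $\tilde\nabla_c$, its correction by $\beta$, and all the one-parameter homotopies are finite and properly oriented.
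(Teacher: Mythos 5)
Your overall strategy---insert the $q$-derivative as a marked point, then fix the resulting failure of chain-map-ness by a bounding cochain---is the right one (and indeed the proposition is a restatement of the construction in \cite{seidel16}). What the proposal misses is the concrete chain-level structure that underlies $\nabla_c$, and in particular how the parameter $c$ enters; this is precisely what the $\mathit{PSL}_2(\bR)$-TQFT machinery of the paper supplies. In the paper's abstraction (Proposition \ref{th:elliptic-connection-2}, realized in Floer theory in Section \ref{subsec:diff-axiom-2}), the chain map is
\[
\nabla_c x = C(\qabla x) + \rho_c(x) - [\theta,x]_c,
\]
where $[\cdot,\cdot]_c$ is the twisted bracket \eqref{eq:twisted-bracket}, defined from a family of pair-of-pants domains whose $\theta$-carrying end rotates as dictated by the integer $c$, and $\rho_c$ is the secondary homotopy witnessing $CR \htp [k,\cdot]_c$ from the interpolating family over $\bR \times S^1$ (Figure \ref{fig:pullout-2}). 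Describing this geometry as ``a one-parameter family of cylinders indexed by $c$'' with correction ``by inserting $\beta$ as an auxiliary input'' conflates these three pieces and, more seriously, obscures the fact that $c \in \bZ$ enters as a discrete framing winding number on the pair-of-pants, not a continuous TQFT parameter; the extension to $c \in \bK$ is then a purely algebraic consequence of $\nabla_c - \nabla_1 = (c-1)\,a\cdot$, not part of the geometric family itself. Relatedly, the aside that changing $\beta$ by a cocycle shifts $\nabla_c$ by the pair-of-pants product with its class is incorrect: the shift is by $-[\gamma,\cdot]_c$, which by \eqref{eq:two-brackets} combines the ordinary bracket with a product term through $\Delta\gamma$. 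Finally, you flag fibrewise compactness in the presence of the ``singular'' $\omega_E$-insertion as the main obstacle, but at non-integer slope this is the standard symplectic-cohomology analysis (Lemma \ref{th:elliptic-cylinder}), and the paper handles the insertion via intersections with a fixed codimension-two cycle $\Omega$ as in \eqref{eq:define-r}, avoiding any singular $2$-form; the genuine content of the proposition lies in the twisted-bracket mechanism and the resulting $c$-dependence, which your writeup does not pin down.
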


To relate this to Proposition \ref{th:diff-q}, one needs to restrict attention to integer values of the parameter $c$. We will show the following (which is only one of the possible ways of formulating the relationship, and not necessarily the most conceptually satisfying one, but sufficient for our purpose):

\begin{prop} \label{th:two-connections}
Suppose that the assumption of Proposition \ref{th:diff-q} holds; then, so does that of Proposition \ref{th:elliptic-connection}, for any $r_1>1$. Choose $r_1 \in (1,2)$ and $r_2 \notin \bZ$, such that $r_1+r_2 \notin \bZ$ and the interval $(r_2,r_1+r_2)$ contains exactly one integer $r \in \bZ$. Then, for suitably correlated choices of bounding cochains, \eqref{eq:nabla-c} for $c = r-1$ factors through \eqref{eq:nabla-r}, as follows:
\begin{equation} \label{eq:two-connections}
\xymatrix{
\ar@/_1.5pc/[rrr]_-{\nabla_c} 
\mathit{HF}^*(E,r_2) \ar[r]^-{C} & \mathit{HF}^*(E,r) 
\ar[r]^-{\nabla}
& \ar[r]^-{C} \mathit{HF}^*(E,r) & 
\mathit{HF}^*(E,r_1+r_2).
}
\end{equation}
\end{prop}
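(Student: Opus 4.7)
My plan is to realize both sides of the asserted equality as the chain-level image of a single compactified moduli space of framed $\mathit{PSL}_2(\bR)$-surfaces, with the factorization emerging from a boundary degeneration of that moduli space.

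\textbf{Step 1: Reinterpret the two connections within the TQFT framework.} By construction (following \cite{seidel16} and the setup of Section \ref{sec:results}), the elliptic connection $\nabla_c$ of Proposition \ref{th:elliptic-connection} arises from a two-pointed annulus equipped with a nonnegatively curved framed connection of rotation $r_2$ at the input end and $r_1 + r_2$ at the output end, carrying a bounding cochain insertion at a distinguished marked point whose ``residual rotation'' is $r_1$; the parameter $c$ encodes the position of that insertion relative to the ends. Similarly, the connection $\nabla$ from Proposition \ref{th:diff-q} arises from the special case in which the two ends both carry the integer rotation $r$, with insertion of a bounding cochain for the Kodaira--Spencer class $k \in \mathit{HF}^2(E,1)$. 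The continuation maps $C$ are the operations attached to nonnegatively curved annuli with no insertions.

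\textbf{Step 2: Correlate the bounding cochains.} Choose a bounding cochain $b$ for the cocycle representative of $k$ used to define $\nabla$. The continuation map from $\mathit{HF}^*(E,1)$ to $\mathit{HF}^*(E,r_1)$ is realized by a chain map which sends the representative of $k$ to a cocycle representative of its image (the latter being cohomologous to zero by the hypothesis of Proposition \ref{th:elliptic-connection}), and which sends $b$ to a bounding cochain $b'$ for that image. Use $b'$ to define $\nabla_c$. This is the meaning of ``suitably correlated.''

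\textbf{Step 3: Construct an interpolating moduli space.} Under the hypothesis that $(r_2, r_1+r_2)$ contains exactly one integer $r$, and that $r_1 \in (1,2)$, build a one-parameter family of framed $\mathit{PSL}_2(\bR)$-surfaces whose generic fibre is the surface defining $\nabla_c$ (for $c = r - 1$), and whose other boundary corresponds to a broken configuration: a continuation annulus of rotations $r_2 \to r$, a ``derivative'' annulus at constant integer rotation $r$ with the bounding cochain $b$ inserted, and a continuation annulus $r \to r_1 + r_2$, glued along circles of integer rotation $r$. The existence of such a family is where the combinatorial constraint on the interval $(r_2, r_1+r_2)$ enters: the integer $r$ in the middle is precisely what allows us to pinch across a circle carrying a periodic orbit and produce the composition $C \circ \nabla \circ C$. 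The compatibility of $b$ and $b'$ from Step 2 guarantees that, after degeneration, the two sides carry compatible algebraic data.

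\textbf{Step 4: Extract the identity.} The TQFT operation attached to the total moduli space is chain-homotopic to the operations at each end, producing a chain homotopy between the operation defining $\nabla_c$ and the composition of operations defining $C \circ \nabla \circ C$. Passing to cohomology gives \eqref{eq:two-connections}.

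\textbf{Expected main obstacle.} The difficult point is Step~3: one must check that the required family of nonnegatively curved framed connections genuinely exists and that, under the specified arithmetic conditions on $r_1, r_2, r$, the degeneration at the integer rotation $r$ is geometrically realized by an honest broken annulus rather than by some exotic configuration (for instance, one involving a sphere bubble or a further breaking). This is essentially a question about the geometry of the space of framed connections on an annulus with prescribed rotation numbers at the ends, and about the compactness properties of the associated Floer moduli spaces at integer slopes. Tracking the derivative in $q$ (which makes $\nabla$ non-$\bK$-linear) through the gluing, and ensuring that the correlation between $b$ and $b'$ is the right one to cancel the correction term $(c-1)\, a \cdot x$ of Proposition \ref{th:elliptic-connection} down to the pure $\nabla$ of Proposition \ref{th:diff-q}, are the essential bookkeeping tasks once Step~3 is in place.
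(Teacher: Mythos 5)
There is a genuine gap, and it is located in Steps~1 and~3: you treat $\nabla_c$ and $\nabla$ as TQFT operations attached to a single decorated surface (an annulus with a marked/capped point), so that comparing them reduces to building one interpolating family of surfaces. But neither connection is of that form. On the chain level, $\nabla_c x = C(\qabla x) + \rho_c(x) - [\theta,x]_c$, where $\qabla$ is the naive $q$-derivative on cochains (not a surface operation), $\rho_c$ is a chain homotopy from a parametrized family, and $[\theta,\cdot]_c$ involves the bounding cochain --- an algebraic element, not a geometric marked-point insertion. There is no single moduli space to degenerate. The paper's proof instead manipulates this three-term formula through a chain of distinct comparisons: it breaks $C^{r_0,r_2} \htp C^{r_0,r}C^{r,r_2}$ (Proposition~\ref{th:grow-elliptic}), uses the differentiation axiom to rewrite $C^{r_0,r_2}\qabla^{r_2}$ as $C^{r_0,r}\qabla^r C^{r,r_2}$ plus the correction $C^{r_0,r}C^{r,r_2}_\bullet$, compares the elliptic and hyperbolic Kodaira--Spencer cocycles via $k^{r_1}=C^{r_1,1}(k^1)+d\kappa^{r_1}$, compares the twisted brackets via $\beta_c$ (Proposition~\ref{th:ehe-pants}, and this is where $c=r-1$ arises from Lemma~\ref{th:rot-bracket}), and finally reduces to a nullhomotopy \eqref{eq:null-last} of a five-term combination of TQFT operations, established by a pentagon of homotopies \eqref{eq:magic-pentagon} filled by a higher homotopy. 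A single interpolating family cannot reproduce that pentagon.

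Your Step~2 is also imprecise in a way that matters: applying a continuation chain map to $\theta^1$ gives a bounding cochain for $C^{r_1,1}(k^1)$, not for $k^{r_1}$, and these differ by an exact term $d\kappa^{r_1}$ coming from the homotopy relating the two geometric constructions of the Kodaira--Spencer class. The correct correlation is $\theta^{r_1}=C^{r_1,1}(\theta^1)+\kappa^{r_1}$; omitting $\kappa^{r_1}$ would leave an error term at the very end. Finally, the obstacle you flag --- ruling out sphere bubbling in the degeneration --- is not where the difficulty lies here; the arithmetic condition on $(r_2,r_1+r_2)$ is used to make the various continuation maps and Propositions~\ref{th:grow-elliptic}, \ref{th:mixed-continuation}, \ref{th:ehe-pants} applicable, not to exclude exotic Gromov limits.
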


%
\section{$\mathfrak{sl}_2(\bR)$\label{sec:sl2}}
The Lie algebras which occur naturally in symplectic geometry are those of Hamiltonian vector fields, which are complicated even in two dimensions. Fortunately, to see the structures emerging in our particular application, it is enough consider a finite-dimensional Lie subalgebra. This section contains the necessary elementary material, much of it borrowed from \cite{lalonde-mcduff97}.

\subsection{Nonnegativity and the rotation number}
Set $G = \mathit{PSL}_2(\bR)$ and $\frakg = \mathfrak{sl}_2(\bR)$.  Call $\gamma \in \frakg$ nonnegative if the associated quadratic form is positive semidefinite:
\begin{equation} \label{eq:quadratic-form}
\mathrm{det}(v, \gamma v) \geq 0 \quad \text{for all $v \in \bR^2$,}
\end{equation}
where $(v, \gamma v)$ is the matrix with columns $v$ and $\gamma v$; an equivalent notation would be $v \wedge \gamma v \geq 0$. Concretely, nonnegative $\gamma$ are of the form
\begin{equation} \label{eq:abc}
\gamma = \left( \begin{smallmatrix} \epsilon & \delta-\alpha \\ \delta+\alpha & -\epsilon \end{smallmatrix} \right), \quad
\alpha \geq \sqrt{\delta^2+\epsilon^2}.
\end{equation}
If we think of $G$ as acting on $\bR P^1$, an element of $\frakg$ is nonnegative iff the associated vector field does not point in clockwise direction anywhere. Write $\frakg_{\geq 0} \subset \frakg$ for the cone of nonnegative elements, which is invariant under the adjoint action. Similarly, we call $\gamma$ positive if \eqref{eq:quadratic-form} is positive definite; those elements constitute the interior $\frakg_{>0} \subset \frakg_{\geq 0}$. Correspondingly, there is the opposite cone $\frakg_{\geq 0} = -\frakg_{\leq 0}$ of nonpositive elements, whose interior consists of the negative ones. 

There is a related notion for parabolic elements $g \in G$ (the terminology in this paper is that a nontrivial element of $G$ is either elliptic, parabolic, or hyperbolic; the identity $\Id$ is considered to be neither). Namely, given such an element, choose the preimage in $\mathit{SL}_2(\bR)$ with $\mathrm{tr}(g) = 2$. We say that $g$ is a nonnegative parabolic if $\mathrm{det}(v,gv) \geq 0$ for all $v$; otherwise, it is a nonpositive parabolic (and those are precisely the two parabolic conjugacy classes in $G$). Equivalently, a parabolic $g$ is nonnegative if it can be written as $g = e^{\gamma}$ for some nonzero nilpotent $\gamma \in \frakg_{\geq 0}$, and similarly for the other sign.

A path $g(t) \in G$ is called nonnegative if
\begin{equation} \label{eq:nonnegative-path}
g'(t) g(t)^{-1} \in \frakg_{\geq 0}, \quad \text{or equivalently} \quad g(t)^{-1} g'(t) \in \frakg_{\geq 0}.
\end{equation}
Geometrically, a path in $G$ is nonnegative iff along it, points on $\bR P^1$ never move clockwise. If $g_1(t)$ and $g_2(t)$ are nonnegative paths, so is their pointwise product, since
\begin{equation} \label{eq:make-positive}
g_0(t) = g_1(t)g_2(t) \;\; \Longrightarrow \;\;
g_0'(t) g_0(t)^{-1} = g_1'(t) g_1(t)^{-1} + g_1(t) \big( g_2'(t) g_2(t)^{-1} \big) g_1(t)^{-1}.
\end{equation}
In parallel, we have notions of positive, nonpositive, and negative paths. If $g(t)$ is nonnegative, both $g(t)^{-1}$ and $g(-t)$ are nonpositive, while $g(-t)^{-1}$ is again nonnegative. If $g_1(t)$ is nonnegative, and $g_2(t)$ positive, their product is positive, by \eqref{eq:make-positive}. In particular, any nonnegative path can be perturbed to a positive one, by multiplying it with $e^{t\gamma}$ for some small positive $\gamma$.

Let $\tilde{G} \rightarrow G$ be the universal cover. A preimage in $\tilde{G}$ of a given $g \in G$ is the same as a lift of the action of $g$ to $\bR \rightarrow \bR/\pi \bZ = \bR P^1$. Any $\tilde{g} \in \tilde{G}$ has a rotation number, defined in terms of the action on $\bR$ by
\begin{equation} \label{eq:rotation}
\mathrm{rot}(\tilde{g}) = {\textstyle \lim_{k \rightarrow \infty}}\; \frac{\tilde{g}^k(x) - x}{\pi k}
\quad \text{for any $x \in \bR$.}
\end{equation}
If one composes $\tilde{g}$ with an element in the central subgroup $\mathit{ker}(\tilde{G} \rightarrow G) \iso \bZ$, the rotation number changes by adding that same integer. The other general properties of rotation numbers are: they are continuous; conjugation invariant; homogeneous, meaning that
\begin{equation} \label{eq:homogeneity}
\mathrm{rot}(\tilde{g}^k) = k\, \mathrm{rot}(\tilde{g}), \quad k \in \bZ;
\end{equation}
they can't decrease along nonnegative paths; and they satisfy the quasimorphism property
\begin{equation} \label{eq:quasimorphism}
|\mathrm{rot}(\tilde{g}_1\tilde{g}_2) - \mathrm{rot}(\tilde{g}_1) - \mathrm{rot}(\tilde{g}_2)| \leq 1.
\end{equation}
One can break down the situation into conjugacy types, as follows. Suppose that $\tilde{g}$ is the lift of an elliptic $g \in G$. Then, $g$ is a rotation with angle $\theta$, where $\theta \in (\bR/\pi\bZ) \setminus \{0\}$ (in our terminology, ``rotations'' can be with respect to any oriented basis of $\bR^2$, hence are not necessarily orthogonal). The choice of $\tilde{g}$ corresponds to a lift $\tilde{\theta} \in \bR$, and the rotation number is $\tilde{\theta}/\pi \in \bR \setminus \bZ$. In the other (hyperbolic or parabolic) case, the rotation number is an integer, and can be computed as follows. Fix an eigenvector of $g$. The choice of $\tilde{g}$ determines a homotopy class of paths from the identity to $g$. The image of the eigenvector along such a path yields a loop in $\bR P^1$, whose degree is the rotation number. We would like to state two consequences of this discussion. For the first one, note that the map $\tilde{G} \rightarrow G$ factors through $\mathit{SL}_2(\bR)$. Case-by-case analysis then directly shows that:

\begin{lemma}
If $g \in \mathit{SL}_2(\bR)$ is the image of $\tilde{g} \in \tilde{G}$, then
\begin{equation} \label{eq:parity}
\left\{
\begin{aligned}
& \mathrm{tr}(g) > 0\; \Longleftrightarrow\; \mathrm{rot}(\tilde{g}) \in 2\bZ + (-\half,\half), \\
& \mathrm{tr}(g) < 0\; \Longleftrightarrow\; \mathrm{rot}(\tilde{g}) \in 2\bZ + 1 + (-\half,\half).
\end{aligned}
\right.
\end{equation}
\end{lemma}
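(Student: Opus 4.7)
The key observation is that since $\mathit{SL}_2(\bR)\to G$ is a double cover, the kernel of the factored map $\tilde G\to\mathit{SL}_2(\bR)$ is precisely the subgroup $2\bZ\subset\bZ=\ker(\tilde G\to G)$. By the behavior of $\mathrm{rot}$ under central multiplication, this means that the residue $\mathrm{rot}(\tilde g) \bmod 2\bZ$ depends only on the image $g\in\mathit{SL}_2(\bR)$. The plan is therefore to verify the equivalences \eqref{eq:parity} by exhibiting, for each conjugacy type in $\mathit{SL}_2(\bR)$, one convenient $\tilde g$ mapping to a representative and computing both sides.

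First I would handle the elliptic case using the description already given in the text: write $g\in G$ as rotation by $\theta\in(\bR/\pi\bZ)\setminus\{0\}$ and pick a lift $\tilde\theta\in\bR$, so that $\mathrm{rot}(\tilde g)=\tilde\theta/\pi$. The corresponding element of $\mathit{SL}_2(\bR)$ is the genuine rotation matrix of angle $\tilde\theta\bmod 2\pi$, whose trace is $2\cos\tilde\theta$. Then $\mathrm{tr}(g)>0$ iff $\tilde\theta\in\bigcup_{k\in\bZ}(2k\pi-\pi/2,\,2k\pi+\pi/2)$, i.e.\ iff $\mathrm{rot}(\tilde g)\in 2\bZ+(-\half,\half)$, and symmetrically for the negative case.

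Next I would handle the hyperbolic and parabolic cases simultaneously via the ``eigenvector path'' recipe from the paragraph preceding the lemma. For a hyperbolic $g\in\mathit{SL}_2(\bR)$ with $\mathrm{tr}(g)>2$ (respectively parabolic with $\mathrm{tr}(g)=2$), the obvious one-parameter subgroup connecting $\Id$ to $g$ inside $\mathit{SL}_2(\bR)$ fixes an eigenline pointwise, so its lift to $\tilde G$ has $\mathrm{rot}=0\in 2\bZ+(-\half,\half)$. The remaining cases $\mathrm{tr}(g)<-2$ and $\mathrm{tr}(g)=-2$ are obtained from the previous ones by multiplying by the central element $-\Id\in\mathit{SL}_2(\bR)$. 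This central element is the image in $\mathit{SL}_2(\bR)$ of the generator of $\ker(\tilde G\to G)$ (corresponding to the loop of rotations by $t\pi$, $t\in[0,1]$, in $G$), which has rotation number $1$. Hence the resulting lifts have rotation number in $1+2\bZ=2\bZ+1+(-\half,\half)$, as claimed.

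The argument is entirely elementary; the only place that requires genuine care is the bookkeeping in Step~1, keeping straight the difference between $\pi_1(\mathit{PSL}_2(\bR))=\bZ$ and $\pi_1(\mathit{SL}_2(\bR))=\bZ$ (the latter sitting as an index-two subgroup of the former). This is also what makes the dichotomy in \eqref{eq:parity} clean: lifting to $\mathit{SL}_2(\bR)$ exactly halves the ambiguity in $\mathrm{rot}(\tilde g)$, from ``mod $1$'' to ``mod $2$'', and the sign of the trace is precisely the invariant that distinguishes the two cosets.
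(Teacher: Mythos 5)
Your proposal is correct and takes essentially the same approach as the paper, which just invokes ``case-by-case analysis'' without further detail. Your way of organizing it — first noting that $\ker(\tilde G \to \mathit{SL}_2(\bR)) = 2\bZ$ so that $\mathrm{rot}(\tilde g)\bmod 2$ depends only on $g \in \mathit{SL}_2(\bR)$, then verifying the parity on one convenient lift per conjugacy type — is a clean and complete way of carrying out exactly that case analysis.
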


For our second observation, let's rewrite \eqref{eq:quasimorphism} more symmetrically as follows:
\begin{equation} \label{eq:triple-trivial}
\tilde{g}_0\tilde{g}_1\tilde{g}_2 = \Id \quad \Longrightarrow \quad
|\mathrm{rot}(\tilde{g}_0) + \mathrm{rot}(\tilde{g}_1) + \mathrm{rot}(\tilde{g}_2)| \leq 1.
\end{equation}

\begin{lemma} \label{th:sharpened-inequality}
(i) In the situation of \eqref{eq:triple-trivial}, suppose that the image $g_k$ of some $\tilde{g}_k$ is elliptic. Then the inequality is strict.

(ii) In the same situation, suppose that $g_k$ is a nonnegative parabolic. Then
\begin{equation}
\mathrm{rot}(\tilde{g}_0) + \mathrm{rot}(\tilde{g}_1) + \mathrm{rot}(\tilde{g}_2) \leq 0.
\end{equation}

(iii) Correspondingly, if $g_k$ is a nonpositive parabolic, one has a lower bound of $0$ for the sum of rotation numbers.
\end{lemma}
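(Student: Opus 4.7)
My plan is to deduce all three parts from refined forms of the quasimorphism inequality \eqref{eq:quasimorphism}, proved via direct analysis of the action of $\tilde G$ on $\bR$ covering the action on $\bR P^1 = \bR/\pi\bZ$. Specifically, I would first establish:
(a) if $\tilde h_2 \in \tilde G$ is a nonneg parabolic, then $\mathrm{rot}(\tilde h_1\tilde h_2) \geq \mathrm{rot}(\tilde h_1) + \mathrm{rot}(\tilde h_2)$ for every $\tilde h_1 \in \tilde G$;
(b) if $\tilde h_2$ is a nonpos parabolic, the reverse inequality holds;
(c) if $\tilde h_2$ is elliptic, the strict bound $|\mathrm{rot}(\tilde h_1\tilde h_2) - \mathrm{rot}(\tilde h_1) - \mathrm{rot}(\tilde h_2)| < 1$ holds.

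For (a), let $\phi_i\colon \bR \to \bR$ be the lift of $\tilde h_i$'s action, so that $\phi_i(x + \pi) = \phi_i(x) + \pi$ and $\mathrm{rot}(\tilde h_i) = \lim_k (\phi_i^k(x) - x)/(\pi k)$. For $\tilde h_2$ nonneg parabolic with rotation number $n$, the nonnegativity forces $\phi_2(x) \geq x + \pi n$ for every $x$, with equality exactly at lifts of the parabolic fixed point. Composing with $\phi_1$ (monotone) and using $\pi$-periodicity, $(\phi_1\phi_2)(x) \geq \phi_1(x) + \pi n$; iterating gives $(\phi_1\phi_2)^k(x) \geq \phi_1^k(x) + k\pi n$, and taking $k \to \infty$ yields $\mathrm{rot}(\tilde h_1\tilde h_2) \geq \mathrm{rot}(\tilde h_1) + n = \mathrm{rot}(\tilde h_1) + \mathrm{rot}(\tilde h_2)$. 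Part (b) is symmetric. Part (c) is the main obstacle: an elliptic $\phi_2$ has no fixed points on $\bR P^1$, so the continuous function $\phi_2(x) - x$ takes values strictly between consecutive multiples of $\pi$ with a uniform gap, and one must show that this gap propagates into a strict bound on the defect. I expect this to require either a Birkhoff-type averaging argument (in the irrational rotation-number case, where the $\phi_1\phi_2$-orbit equidistributes) or a direct analysis of periodic orbits (in the rational case).

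Given (a)--(c), the lemma follows immediately. By the cyclic symmetry of \eqref{eq:triple-trivial} (the hypothesis $\tilde g_0\tilde g_1\tilde g_2 = \Id$ is invariant under cyclic permutation of the triple), I may assume the distinguished element is $\tilde g_2$. The product condition yields $\mathrm{rot}(\tilde g_0) = -\mathrm{rot}(\tilde g_1\tilde g_2)$, so
\[
\mathrm{rot}(\tilde g_0) + \mathrm{rot}(\tilde g_1) + \mathrm{rot}(\tilde g_2) = -\bigl[\mathrm{rot}(\tilde g_1\tilde g_2) - \mathrm{rot}(\tilde g_1) - \mathrm{rot}(\tilde g_2)\bigr].
\]
Applying (c), (a), (b) with $\tilde h_1 := \tilde g_1$ and $\tilde h_2 := \tilde g_2$ then gives parts (i), (ii), (iii), respectively.
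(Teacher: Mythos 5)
Your overall plan is sound: reducing to the distinguished index $k=2$ by cyclic symmetry, rewriting the sum of rotation numbers as minus the quasimorphism defect of the pair $(\tilde g_1,\tilde g_2)$, and proving refined near-additivity with the special element as the right factor. Parts (a) and (b) are complete and correct, and the mechanism is essentially the paper's: the bound $\phi_2(x)\geq x+n\pi$ for a nonnegative parabolic is exactly what the existence of a nonnegative path from $\Id$ to $\tilde h_2$ gives, so (ii) and (iii) follow.

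The gap is in (c), and your plan for it (Birkhoff averaging, equidistribution, or periodic orbits) is both unnecessary and unlikely to close: the uniform gap $\phi_2(x)-x\in[\epsilon,\pi-\epsilon]$ does not survive composition with the monotone but arbitrarily distorting map $\phi_1$, so propagating the gap through the iteration is a dead end. Instead, (c) is a \emph{two-sided} version of (a). Normalize $\mathrm{rot}(\tilde h_2)\in(0,1)$ by a central translation. Because $h_2$ is elliptic, the continuous $\pi$-periodic function $\phi_2(x)-x$ never meets $\pi\bZ$, hence stays in one interval $(m\pi,(m+1)\pi)$; comparing with $\mathrm{rot}(\tilde h_2)$ forces $m=0$, so $x\leq\phi_2(x)\leq x+\pi$ for all $x$. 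Iterating \emph{both} non-strict inequalities exactly as in (a) yields $\mathrm{rot}(\tilde h_1)\leq\mathrm{rot}(\tilde h_1\tilde h_2)\leq\mathrm{rot}(\tilde h_1)+1$, and therefore
\[
\mathrm{rot}(\tilde h_1\tilde h_2)-\mathrm{rot}(\tilde h_1)-\mathrm{rot}(\tilde h_2)\in\bigl[-\mathrm{rot}(\tilde h_2),\,1-\mathrm{rot}(\tilde h_2)\bigr]\subset(-1,1).
\]
The strictness you need comes for free from $\mathrm{rot}(\tilde h_2)$ lying strictly inside $(0,1)$, which is the ellipticity hypothesis itself, not from any dynamical estimate on the $\phi_1\phi_2$-orbit. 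With this substitution the proof closes; it is then close in spirit to the paper's, which normalizes $\mathrm{rot}(\tilde g_0)\in(0,1)$, uses a nonnegative path from $\Id$ to $\tilde g_0$ to get $\mathrm{rot}(\tilde g_0\tilde g_1)\geq\mathrm{rot}(\tilde g_1)$, substitutes $\tilde g_0\tilde g_1=\tilde g_2^{-1}$, and passes to inverses for the other bound.
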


\begin{proof}
(i) Without loss of generality, one can assume that $k = 0$ and $\mathrm{rot}(\tilde{g}_0) \in (0,1)$. Then there is a nonnegative path from the identity to $g_0$, which lifts to a path with endpoint $\tilde{g}_0$. Therefore, 
\begin{equation}
\mathrm{rot}(\tilde{g}_0) + \mathrm{rot}(\tilde{g}_1) + \mathrm{rot}(\tilde{g}_2) \leq 
\mathrm{rot}(\tilde{g}_0) + 
\mathrm{rot}(\tilde{g}_0\tilde{g}_1) + \mathrm{rot}(\tilde{g}_2) = \mathrm{rot}(\tilde{g}_0) < 1.
\end{equation}
Passing to inverses yields the corresponing lower bound.

(ii) has the same proof, except that this time, a nonnegative path from the identity to $\tilde{g}_0$ exists for $\mathrm{rot}(\tilde{g}_0) = 0$. Again, one passes to inverses to obtain (iii).
\end{proof}

\begin{example} \label{th:example-path}
Let $g = \exp(\gamma) \in G$, with $\gamma \in \frakg$ sufficiently small. Then, for any integer $r \geq 1$,
\begin{equation}
g(t) = \big( \begin{smallmatrix} \cos(r \pi t) & -\sin(r \pi t) \\ \sin(r \pi t) & \cos(r \pi t) \end{smallmatrix} \big)\exp(t \gamma) 
\end{equation}
is a positive path from the identity to $g$, because of \eqref{eq:make-positive}. The lift $\tilde{g} \in \tilde{G}$ of $g$ associated to that path differs from the obvious lift of $g$ (close to the identity) by $r \in \bZ \subset \tilde{G}$; hence, $\mathrm{rot}(\tilde{g}) \approx r$, with equality unless $g$ is elliptic.
\end{example}

\begin{example} \label{th:three-classes}
Fix hyperbolic conjugacy classes $C_i$ ($i = 0,1,2$). Up to simultaneous conjugation, there is exactly one triple $(g_0,g_1,g_2)$ with $g_i \in C_i$, $g_0 = g_1g_2$, and such that if we take lifts $\tilde{g}_i$ satisfying $\tilde{g}_0 = \tilde{g}_1\tilde{g}_2$, then 
\begin{equation}\label{eq:-1rot}
\mathrm{rot}(\tilde{g}_0) = \mathrm{rot}(\tilde{g}_1) + \mathrm{rot}(\tilde{g}_2) - 1.
\end{equation}
To see that, let's suppose that our conjugacy classes are determined by $\mathrm{tr}(\tilde{g}_i) = \pm( \lambda_i + \lambda_i^{-1})$ for some $\lambda_i > 1$. By \eqref{eq:parity}, if we choose representatives in $\mathit{SL}_2(\bR)$ such that $\mathrm{tr}(g_1) = \lambda_1 + \lambda_1^{-1}$ and $\mathrm{tr}(g_2) = \lambda_2 + \lambda_2^{-1}$, then $\mathrm{tr}(g_0) = -\lambda_0 - \lambda_0^{-1}$. With that in mind, after common conjugation, the only solutions are
\begin{equation} \label{eq:g1g2}
g_1 = \begin{pmatrix} \lambda_1 & 0 \\ 0 &\lambda_1^{-1} \end{pmatrix},
\quad
g_2 = \begin{pmatrix} \lambda_2 - t & \pm (\lambda_2-\lambda_2^{-1} - t) \\ \pm t & \lambda_2^{-1} + t \end{pmatrix}, 
\text{ where }
t = \frac{\lambda_0 + \lambda_0^{-1} + \lambda_1\lambda_2 + \lambda_1^{-1}\lambda_2^{-1}}{\lambda_1 - \lambda_1^{-1}}.
\end{equation}
By looking at the case where $\lambda_1,\lambda_2 \approx 1$ and deforming the matrices $g_1$, $g_2$ to the identity, one sees that the solution of \eqref{eq:g1g2} with the $-$ sign satisfies \eqref{eq:-1rot} (the solution with the $+$ sign satisfies the version of \eqref{eq:-1rot} with a $+1$ instead of $-1$).
\end{example}

\begin{example} \label{th:2-elliptic}
Consider elements $\tilde{g}_i$ of elliptic conjugacy classes $\tilde{C}_i \subset \tilde{G}$ ($i = 1,2$), with 
\begin{equation}
\mathrm{rot}(\tilde{g}_1) \in (0,1), \;\; \mathrm{rot}(\tilde{g}_2) \in (0,1), \;\;
\mathrm{rot}(\tilde{g}_1) + \mathrm{rot}(\tilde{g}_2) > 1.
\end{equation}
For the product $\tilde{g} = \tilde{g}_1\tilde{g}_2$ and its image $g = g_1g_2$ in $G$, the following is a complete list of possibilities:
\begin{equation} \label{eq:3-possi}
\left\{
\begin{aligned}
& \text{$g$ can be elliptic, with any value $\mathrm{rot}(\tilde{g}) \in (1,\mathrm{rot}(\tilde{g}_1)+\mathrm{rot}(\tilde{g}_2)]$;} \\
& \text{$g$ can be nonnegative parabolic, and $\mathrm{rot}(\tilde{g}) = 1$;} \\
& \text{$g$ can lie in any hyperbolic conjugacy class, and $\mathrm{rot}(\tilde{g}) = 1$.}
\end{aligned}
\right.
\end{equation}
Moreover, each possibility happens in a way which is unique up to overall conjugation. To see that, we will carry out computations in the associated conjugacy classes in $\mathit{SL}_2(\bR)$, which correspond to rotations with angles $\theta_i = \pi \mathrm{rot}(\tilde{g}_i) \in (0,\pi)$. The conjugacy class of rotations with angle $\theta_1$ can be identified with the open unit disc in $\bC$, by writing
\begin{equation}
g_1 = \begin{pmatrix} \cos(\theta_1) - \sin(\theta_1) \frac{2\, \mathrm{im}(w)}{1-|w|^2} &
-\sin(\theta_1) \frac{|1+w|^2}{1-|w|^2} \\
\sin(\theta_1) \frac{|1-w|^2}{1-|w|^2} &
\cos(\theta_1) + \sin(\theta_1) \frac{2\, \mathrm{im}(w)}{1-|w|^2} \end{pmatrix}, \;\; |w| < 1.
\end{equation}
Conjugation with standard (Euclidean) rotations acts by rotating $w$. If $g_2$ is the standard rotation with angle $\theta_2$, we have
\begin{equation} \label{eq:g1g2-trace}
\mathrm{tr}(g_1 g_2) = 2 \cos(\theta_1)\cos(\theta_2) - 2 \sin(\theta_1)\sin(\theta_2) {\textstyle \frac{1+|w|^2}{1-|w|^2}}.
\end{equation}
The values of \eqref{eq:g1g2-trace} are $(-\infty, 2\cos(\theta_1+\theta_2)]$, each being reached exactly once. The maximum is reached when $g_1$ is also a standard rotation ($w = 0$), so that $g_1g_2$ has angle $\theta_1+\theta_2>\pi$; from there, as the trace decreases, so does the angle. The case when $\mathrm{tr}(g_1g_2) = -2$ happens as a limit of rotations whose angle approaches $\pi$ from above. Since $g_1g_2$ can't be $-\Id$, it must therefore be a nonnegative parabolic. We know (from the existence of suitable nonnegative paths) that $\mathrm{rot}(\tilde{g}_i) \leq \mathrm{rot}(\tilde{g}_1\tilde{g}_2) \leq \mathrm{rot}(\tilde{g}_i) + 1$. This, together with \eqref{eq:parity}, implies the statements about rotation numbers made in \eqref{eq:3-possi}.
\end{example}

\subsection{Krein theory}
Let's consider the behaviour of nonnegative paths on the level of conjugacy classes. In the elliptic locus, motion can go in one direction only (Krein's Lemma). Namely, for $\gamma \in \frakg_{\geq 0}$ written as in \eqref{eq:abc}, 
\begin{equation} \label{eq:trace-decreases}
g = \big(\begin{smallmatrix} \cos(\theta) & -\!\sin(\theta)\! \\ \sin(\theta) & \cos(\theta) \end{smallmatrix} \big) \text{ with $\theta \in (0,\pi)$} \quad \Longrightarrow \quad
\mathrm{tr}(g \gamma) = -2\alpha \sin(\theta) \leq 0,
\end{equation}
so the angle of rotation $\theta$ can't decrease along a nonnegative path (and its derivative vanishes exactly when that of the path vanishes). There is no comparable constraint in the hyperbolic locus: 
\begin{equation} \label{eq:trace-moves}
g = \big(\begin{smallmatrix} \lambda & 0 \\ 0 & \lambda^{-1}\end{smallmatrix}\big) \text{ with $\lambda>1$} \quad \Longrightarrow \quad
\mathrm{tr}(g \gamma) = (\lambda - \lambda^{-1}) \epsilon,
\end{equation}
which can have either sign. The two parabolic conjugacy classes are each permeable in one direction (from elliptic to hyperbolic or vice versa) only. This is again elementary: 
\begin{equation}
g =  \left(\begin{smallmatrix} 1 & 0 \\ \pm 1 & 1 \end{smallmatrix}\right) \quad \Longrightarrow \quad
\pm \mathrm{tr}(g \gamma) = \delta - \alpha \leq 0
\end{equation}
(in this case, $\mathrm{tr}(g\gamma) = 0$ iff $\gamma = \left(\begin{smallmatrix} 0 & 0 \\ 2\alpha & 0 \end{smallmatrix} \right)$; hence, a nonnegative path intersects the parabolic locus either transversally, or tangentially to the one-parameter subgroup at the intersection point). Finally, if a nonnegative path goes through the identity in $G$, it must leave that element through either nonnegative parabolic elements, or elliptic elements which are small positive rotations (and conversely, it must arrive through nonpositive parabolic elements, or elliptic elements which are small negative rotations). These observations are summarized in Figure \ref{fig:traffic}. 
\begin{figure}
\begin{centering}
\hspace{-3em} 
\begin{picture}(0,0)%
\includegraphics{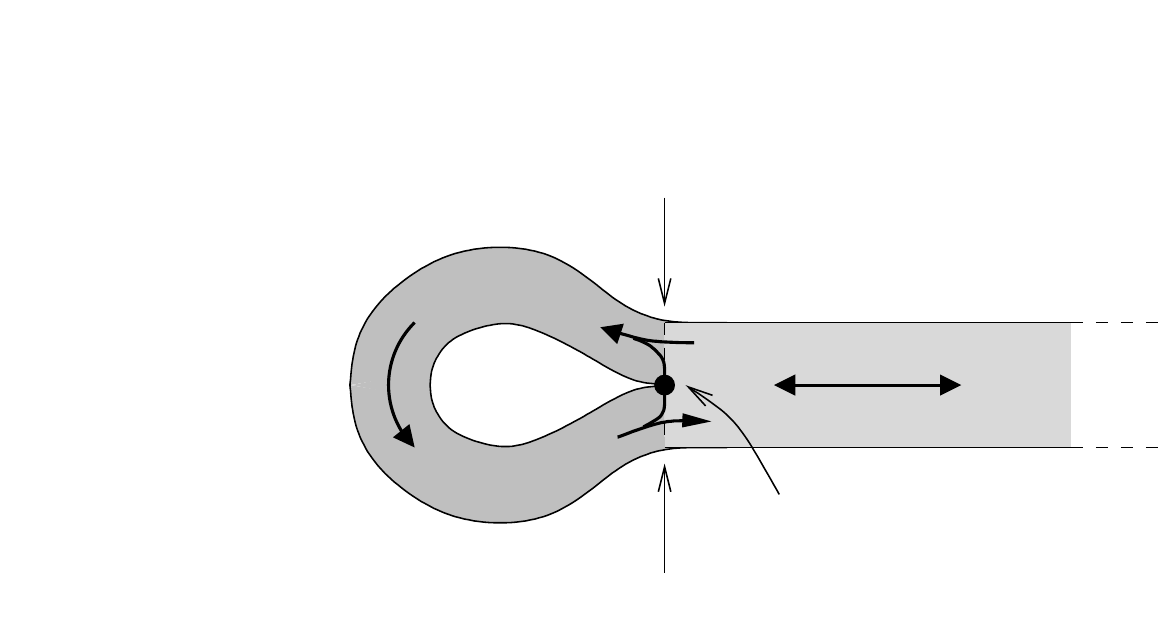}%
\end{picture}%
\setlength{\unitlength}{3947sp}%
\begingroup\makeatletter\ifx\SetFigFont\undefined%
\gdef\SetFigFont#1#2#3#4#5{%
  \reset@font\fontsize{#1}{#2pt}%
  \fontfamily{#3}\fontseries{#4}\fontshape{#5}%
  \selectfont}%
\fi\endgroup%
\begin{picture}(5602,2958)(-39,-1930)
\put(3526,-1486){\makebox(0,0)[lb]{\smash{{\SetFigFont{10}{12.0}{\familydefault}{\mddefault}{\updefault}{identity}%
}}}}
\put(2626,164){\makebox(0,0)[lb]{\smash{{\SetFigFont{10}{12.0}{\familydefault}{\mddefault}{\updefault}{nonnegative parabolic}%
}}}}
\put(976,-811){\makebox(0,0)[lb]{\smash{{\SetFigFont{10}{12.0}{\familydefault}{\mddefault}{\updefault}{elliptic}%
}}}}
\put(3601,-361){\makebox(0,0)[lb]{\smash{{\SetFigFont{10}{12.0}{\familydefault}{\mddefault}{\updefault}{hyperbolic}%
}}}}
\put(2626,-1861){\makebox(0,0)[lb]{\smash{{\SetFigFont{10}{12.0}{\familydefault}{\mddefault}{\updefault}{nonpositive parabolic}%
}}}}
\end{picture}%
\hspace{5em}
\caption{Traffic flow in nonnegative direction on conjugacy classes (in reality, the space of conjugacy classes is one-dimensional and non-Hausdorff; we've drawn it thickened for the sake of legibility).\label{fig:traffic}}
\end{centering}
\end{figure}

Our next topic is the converse question, of constructing nonnegative paths with prescribed motion on conjugacy classes. We will need a technical refinement of the computations from \eqref{eq:trace-decreases} and \eqref{eq:trace-moves}. Writing $g = \left( \begin{smallmatrix} a & c \\ b & d \end{smallmatrix} \right)\in \mathit{SL}_2(\bR)$, the condition $\mathrm{det}(g) = 1$ is equivalent to
\begin{equation} \label{eq:det-1}
(a-d)^2 + (c+b)^2 = \mathrm{tr}(g)^2 - 4 + (c-b)^2.
\end{equation}
As a consequence, if we consider a subset of elements with fixed trace, then $T(g) = c-b$ is a proper function on that subset. Note also that $T(g^2) = T(g)\mathrm{tr}(g)$.

\begin{lemma} \label{th:bound-vector-fields}
(i) Let $C_{\mathit{ell},\theta} \subset \mathit{SL}_2(\bR)$ be the conjugacy class of rotations with angle $\theta \in (0,\pi)$. There is a function
\begin{equation} \label{eq:elliptic-gamma-function}
\left\{
\begin{aligned}
& \gamma_{\theta}: C_{\mathit{ell},\theta} \longrightarrow \frakg_{>0}, \\
& |T(g\gamma_{\theta}(g))| \leq A +B|T(g)|, \text{ for some constants $A,B>0$;} \\
& \mathrm{tr}(g \,\gamma_\theta(g)) \text{ is negative and bounded away from $0$.} \\
\end{aligned}
\right.
\end{equation}

(ii) Let $C_{\mathit{hyp},\lambda} \subset \mathit{SL}_2(\bR)$ be the conjugacy class of hyperbolic elements with eigenvalues $\lambda^{\pm 1}$, for some $\lambda>1$. There are functions
\begin{equation}
\left\{
\begin{aligned}
& \gamma_{\lambda,\pm}: C_{\mathit{hyp},\lambda} \longrightarrow \frakg_{>0}, \\
& |T(g\gamma_{\lambda,\pm}(g))| \leq A +B|T(g)|, \text{ for some constants $A,B>0$;} \\
& \pm\mathrm{tr}(g \,\gamma_{\lambda,\pm}(g)) \text{ is positive and bounded away from $0$.} \\
\end{aligned}
\right.
\end{equation}
 \end{lemma}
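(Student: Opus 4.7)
The plan is to write down explicit formulas in the two cases, with the elliptic case essentially immediate and the hyperbolic one requiring a careful rotation correction. Throughout I abbreviate, for $g = \left(\begin{smallmatrix} a & b \\ c & d \end{smallmatrix}\right)$, $y := a - d$ and $z := b + c$; then the matrix $g - g^{-1}$ has $(\epsilon, \delta, \alpha)$-coordinates $(y,\, z,\, T(g))$ in the notation of \eqref{eq:abc}, and \eqref{eq:det-1} reads $y^2 + z^2 = \mathrm{tr}(g)^2 - 4 + T(g)^2$.

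For part (i) I would take $\gamma_\theta(g) := g - g^{-1}$ and verify directly. The identity above becomes $y^2 + z^2 = T(g)^2 - 4\sin^2\theta$, forcing $|T(g)| \geq 2\sin\theta$; since $C_{\mathit{ell},\theta}$ is connected and the standard rotation has $T = 2\sin\theta > 0$, this upgrades to $T(g) \geq 2\sin\theta > 0$ on the whole class. Strict positivity $\alpha^2 > \delta^2 + \epsilon^2$ reduces to the obvious $T(g)^2 > T(g)^2 - 4\sin^2\theta$, while $\mathrm{tr}(g(g - g^{-1})) = \mathrm{tr}(g^2) - 2 = -4\sin^2\theta$ is a nonzero negative constant, and $g^2 = \mathrm{tr}(g)\,g - I$ gives $T(g(g - g^{-1})) = T(g^2) = 2\cos\theta\cdot T(g)$, linear in $|T(g)|$.

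For part (ii) the naive $\pm(g - g^{-1})$ has real eigenvalues $\pm(\lambda - \lambda^{-1})$ and lies outside $\frakg_{>0}$, so I would correct by a $g$-dependent rotation. Setting $c := (\lambda - \lambda^{-1})^2$ and fixing some $\kappa > 2$ (say $\kappa = 3$), define $\eta(g) := c/\bigl(\kappa(c + T(g)^2)\bigr)$ and
\[
\gamma_{\lambda,\pm}(g) \;:=\; \pm\bigl(g - g^{-1}\bigr) \;+\; \Bigl(\sqrt{c + T(g)^2}\,(1 + \eta(g)) \;\mp\; T(g)\Bigr)\begin{pmatrix} 0 & -1 \\ 1 & 0 \end{pmatrix}.
\]
In coordinates this reads $(\pm y,\, \pm z,\, \sqrt{y^2+z^2}\,(1+\eta))$, so $\alpha > 0$ and $\alpha^2 - \delta^2 - \epsilon^2 = (y^2+z^2)\bigl((1+\eta)^2 - 1\bigr) > 0$, giving $\gamma_{\lambda,\pm}(g) \in \frakg_{>0}$. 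The cancellation $\pm(yz - zy) = 0$ then yields $T(g\,\gamma_{\lambda,\pm}(g)) = -(\lambda + \lambda^{-1})\sqrt{y^2+z^2}\,(1 + \eta(g))$, linear in $|T(g)|$ via $\sqrt{y^2+z^2} \leq |T(g)| + \sqrt{c}$.

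The main obstacle is to show $\pm\mathrm{tr}(g\,\gamma_{\lambda,\pm}(g)) \geq \delta > 0$ uniformly; the relevant formula is
\[
\mathrm{tr}(g\,\gamma_{\lambda,\pm}(g)) \;=\; \pm(y^2 + z^2) \;-\; T(g)\sqrt{y^2+z^2}\,(1 + \eta(g)).
\]
The delicate regime is $T(g) \to \pm\infty$ (one sign for each choice of $\pm$): both terms grow like $T(g)^2$ but their leading parts cancel exactly, and the choice of $\eta$ is calibrated so that the surviving subleading constant is $\pm c(\kappa - 2)/(2\kappa)$, of the correct sign precisely when $\kappa > 2$. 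In the opposite direction, the two terms have the same sign and $|\mathrm{tr}| \geq y^2 + z^2 \geq c$ trivially; a monotonicity check (which reduces to showing that the cubic $u^3 + u^2 + (\kappa-1)u - (\kappa+1)$ is negative for $u := T(g)/\sqrt{y^2+z^2} \in [0,1)$, with root at $u = 1$) shows these asymptotic values are the infima. This calibration of $\eta$ to effect the cancellation is essentially the only nonroutine step.
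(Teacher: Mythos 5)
Your proof is correct and follows the same overall strategy as the paper's: in the elliptic case your $\gamma_\theta(g) = g - g^{-1}$ is exactly $2\sin\theta$ times the paper's $(g - \cos\theta\,\Id)/\sin\theta$, and in the hyperbolic case your ansatz $\pm(g - g^{-1}) + \mu K$ (with $K = \left(\begin{smallmatrix} 0 & -1 \\ 1 & 0\end{smallmatrix}\right)$) reproduces the paper's $\sqrt{r}\,K \pm \left(\begin{smallmatrix} a-d & b+c \\ b+c & -(a-d)\end{smallmatrix}\right)$ when $\eta = 0$.

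The place where you genuinely add something is the hyperbolic case. The paper writes down a $\gamma$ lying on the boundary of $\frakg_{\geq 0}$ and remarks that ``one can perturb it into the interior \dots sufficiently small so as not to disturb its other properties,'' but this is actually delicate: on the noncompact class $C_{\mathit{hyp},\lambda}$, when $T(g) \to \pm\infty$ the two leading terms in $\mathrm{tr}(g\gamma_{\lambda,\pm}(g))$ cancel to leave only the constant $\half(\mathrm{tr}(g)^2-4)$, so a perturbation by a fixed multiple of $K$ (which contributes $sT(g)$ to the trace) would destroy the bound. Your calibration $\alpha = \sqrt{y^2+z^2}\,(1+\eta)$ with $\eta \propto 1/(c+T^2)$ is precisely what makes this work: it pushes the point a uniformly positive distance $\geq 2c/\kappa$ into the interior while keeping the surviving asymptotic constant at $c(\kappa-2)/(2\kappa) > 0$. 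That is a real gap-fill rather than routine.

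Two small remarks. First, with your matrix convention $g = \left(\begin{smallmatrix} a & b \\ c & d\end{smallmatrix}\right)$ and $T(g) = c - b$ one has $T(gK) = +\mathrm{tr}(g)$, so $T(g\gamma_{\lambda,\pm}(g)) = +(\lambda + \lambda^{-1})\sqrt{y^2+z^2}\,(1+\eta)$ rather than with the minus sign you wrote; this is harmless because only $|T|$ enters the statement, but worth noting. Second, I could not reproduce your cubic $u^3 + u^2 + (\kappa-1)u - (\kappa+1)$; after substituting $\rho^2 = c/(1-u^2)$ one gets $\pm\mathrm{tr}(g\gamma_{\lambda,\pm}(g)) = c\left[\tfrac{1}{1\pm u} \mp \tfrac{(\mp u)}{\kappa}\right]$, whose derivative in $u$ is manifestly of one sign, and the bound $\geq c(\kappa-2)/(2\kappa)$ reduces to the quadratic inequality $2u^2 + \kappa u - (\kappa+2) \leq 0$, i.e.\ $(u-1)(2u+\kappa+2) \leq 0$ on $(-1,1)$. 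The monotonicity you need is clearly true, but you may want to re-derive the intermediate polynomial to match.
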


\begin{proof}
(i) Take
\begin{equation}
\gamma_{\theta}(g) = \frac{g - \cos(\theta) \Id}{\sin(\theta)},
\end{equation}
which is conjugate to $\big(\begin{smallmatrix} 0 & - 1 \\ 1 & 0 \end{smallmatrix} \big)$, hence lies in $\frakg_{>0}$. Then
\begin{align}
& \mathrm{tr}(g\gamma_{\theta}(g)) = -2\sin(\theta), \\
& T(g \gamma_{\theta}(g)) = \frac{\cos(\theta)}{\sin(\theta)} T(g).
\end{align}

(ii) If $r = r(g)$ is the quantity on either side of \eqref{eq:det-1},
\begin{equation} \label{eq:test-gamma}
\gamma_{\lambda,\pm}(g) = \sqrt{r} \big(\begin{smallmatrix} 0 & -1 \\ 1 & 0 \end{smallmatrix}\big) \pm 
\big(\begin{smallmatrix} 
a-d
&
b+c
\\
b+c
&
-(a-d)
\end{smallmatrix}\big) \in \frakg_{\geq 0}
\end{equation}
satisfies
\begin{align}
& \mathrm{tr}(g\gamma_{\lambda, \pm}(g)) = (c-b)\sqrt{r}\, \pm\, r
=  \textstyle r \Big( \mathrm{sign}(c-b) \sqrt{1 - \frac{\mathrm{tr}(g)^2 - 4}{r}} \pm 1 \Big)
 \\ \notag
& \quad \quad \Longrightarrow \pm \mathrm{tr}(g\gamma_{\lambda,\pm}(g)) \geq \half (\mathrm{tr}(g)^2-4), \\
& T(g \gamma_{\lambda,\pm}(g)) = - \mathrm{tr}(g) \sqrt{r} = - \mathrm{tr}(g) \sqrt{\mathrm{tr}(g)^2 - 4 + T(g)^2}. 
\end{align}
Our choice \eqref{eq:test-gamma} always lies on the boundary of the nonnegative cone. However, one can perturb it into the interior $\frakg_{>0}$, and this perturbation can be chosen sufficiently small so as not to disturb its other properties.
\end{proof}

\begin{convention} \label{th:weak-homotopy}
When we talk about weak homotopy equivalences for spaces of paths in $G$ (and, later on, for spaces of connections  as well), this is understood to refer to smooth maps from finite-dimensional compact manifolds (with boundary or corners) to the relevant space. For instance, saying that some space $\scrP$ as in \eqref{eq:path-spaces} is weakly contractible means that for every smooth map $P \rightarrow \scrP$ from a compact manifold $P$, there is a smooth extension $P \times [0,1] \rightarrow \scrP$ which is constant on the other endpoint of the interval. This technical clarification may not be essential, but it slightly simplifies some proofs, and is anyway the natural context for our applications. 
\end{convention}

\begin{lemma} \label{th:path}
(i) Fix a nondecreasing function $\theta(t) \in (0,\pi)$ ($0 \leq t \leq 1$), and a $g_0 \in C_{\mathit{ell},\theta(0)}$. Then, the space of nonnegative paths $g(t)$, with $g(0) = g_0$ and such that $g(t) \in C_{\mathit{ell},\theta(t)}$, is weakly contractible. A corresponding result holds for (strictly) increasing functions and positive paths.

(ii) Fix a function $\lambda(t) > 1$ ($0 \leq t \leq 1$), and a $g_0 \in C_{\mathit{hyp},\lambda(0)}$. Then, the space of nonnegative paths $g(t)$, with $g(0) = g_0$ and such that $g(t) \in C_{\mathit{hyp},\lambda(t)}$, is weakly contractible. The same holds for positive paths.
\end{lemma}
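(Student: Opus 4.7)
The plan combines existence, via Lemma \ref{th:bound-vector-fields}, with a convexity argument to deform any path in the space to a canonical one. I treat part (i); part (ii) is analogous, with $\gamma_{\lambda(t),+}$ or $\gamma_{\lambda(t),-}$ chosen according to $\mathrm{sign}(\lambda'(t))$ and patched smoothly (via a partition of unity) at sign changes.

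For existence, set $\bar\gamma(t,g) := \theta'(t)\gamma_{\theta(t)}(g)$ and let $\bar g$ solve the ODE $\bar g'(t) = \bar\gamma(t,\bar g(t))\,\bar g(t)$, $\bar g(0) = g_0$. Since $\theta'(t) \geq 0$ and $\gamma_{\theta} \in \frakg_{>0}$, the velocity lies in $\frakg_{\geq 0}$ (strictly in $\frakg_{>0}$ whenever $\theta$ is increasing). From the formula for $\gamma_\theta$ one computes $\mathrm{tr}(\bar\gamma(t,g)\, g) = -2\sin(\theta(t))\theta'(t) = \frac{d}{dt}[2\cos(\theta(t))]$, so $\bar g(t)$ stays in $C_{\mathit{ell},\theta(t)}$. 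Global existence on $[0,1]$ follows from the linear bound on $|T(g\bar\gamma(t,g))|$ combined with properness of $T$ on each conjugacy class (via \eqref{eq:det-1}).

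For weak contractibility, given a smooth family $\{g_s\}_{s \in P}$, the homotopy $g_{s,u}$ will be built by convex interpolation of velocity fields. The key observation is that at each $(t,g)$ with $g \in C_{\mathit{ell},\theta(t)}$, the set
\[
V(t,g) := \{\gamma \in \frakg_{\geq 0} : \mathrm{tr}(\gamma g) = -2\sin(\theta(t))\theta'(t)\}
\]
of admissible nonnegative velocities is a nonempty convex affine slice of $\frakg_{\geq 0}$ (containing $\bar\gamma(t,g)$). Extend each velocity $\gamma_s(t) := g_s'(t)g_s(t)^{-1}$ to a smooth field $\Gamma_s(t,g) \in V(t,g)$ with $\Gamma_s(t,g_s(t)) = \gamma_s(t)$; then $\Gamma_s^u(t,g) := (1-u)\Gamma_s(t,g) + u\bar\gamma(t,g)$ also lies in $V(t,g)$ by convexity, and solving $\partial_t g_{s,u}(t) = \Gamma_s^u(t,g_{s,u}(t))\,g_{s,u}(t)$ from $g_0$ yields the required homotopy (with $g_{s,0} = g_s$ by uniqueness of the ODE, and $g_{s,1} = \bar g$ by construction).

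The main technical obstacle is constructing $\Gamma_s$: starting from velocities given only along the path $g_s$, one must extend smoothly into (a neighborhood of) the moving conjugacy class while preserving membership in $V(t,g)$. Since $V(t,g)$ varies smoothly and never collapses, this can be achieved by the ansatz $\Gamma_s(t,g) = \chi(t,g)\gamma_s(t) + (1-\chi(t,g))\bar\gamma(t,g) + \kappa(t,g)\gamma_{\theta(t)}(g)$, where $\chi$ is a smooth cutoff equal to $1$ near $g_s$, and $\kappa(t,g)$ is chosen to cancel the trace discrepancy introduced by the interpolation; this discrepancy vanishes identically on $g_s$ (since $\mathrm{tr}(\gamma_s(t)g_s(t)) = -2\sin(\theta(t))\theta'(t)$ already), and $\gamma_{\theta(t)} \in \frakg_{>0}$ sits at positive distance from the boundary of the cone, so $\kappa$ can be made small enough to keep the sum in $\frakg_{\geq 0}$. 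The positive case is then automatic: for $u > 0$ the convex combination is in $\frakg_{>0}$ thanks to $\bar\gamma \in \frakg_{>0}$, and for $u = 0$ positivity is inherited from the input family.
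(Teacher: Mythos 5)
Your strategy is the same as the paper's: use Lemma \ref{th:bound-vector-fields} to construct a reference velocity field, observe that the set of admissible velocity fields is convex, extend the velocities of a given $P$-family of paths to velocity fields, and contract by convex interpolation. The global-existence argument via the linear growth of $T$, the treatment of strict positivity, and the hyperbolic case are all fine. The paper states the extension step in one sentence (``the conditions\dots are convex, allowing us to\dots construct them using partitions of unity''), and you are right to want to make it explicit --- but there is a gap in the specific ansatz you propose.

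The problem is the claim that ``$\gamma_{\theta(t)} \in \frakg_{>0}$ sits at positive distance from the boundary of the cone, so $\kappa$ can be made small enough to keep the sum in $\frakg_{\geq 0}$.'' That reasoning would control a small perturbation \emph{of a fixed interior point}, but here the base point $\gamma_s(t)$ can lie on $\partial\frakg_{\geq 0}$ (e.g.\ when the input path is momentarily tangent to a one-parameter subgroup), and $\kappa(t,g)$ obtained from the trace-matching is genuinely negative on an open set of $g$ near $g_s(t)$: writing $\Gamma_s = \chi\gamma_s + \big[(1-\chi)\theta' + \kappa\big]\gamma_{\theta}$ and noting $\chi = 1$ near the path, nonnegativity forces $\kappa \geq 0$ there, which is false in general since $g_s(t)$ need not minimize $g \mapsto \mathrm{tr}(\gamma_s(t)\,g)$ on $C_{\mathit{ell},\theta(t)}$. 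In short, adding $\kappa\gamma_\theta$ with $\kappa<0$ to a boundary point of the cone leaves the cone, no matter how interior $\gamma_\theta$ itself is; the distance from $\gamma_\theta$ to $\partial\frakg_{\geq 0}$ is simply the wrong quantity. A correct extension near the path should instead transport $\gamma_s(t)$ along the conjugacy class: pick a local smooth map $g \mapsto c(t,g) \in G$ with $c(t,g)\,g_s(t)\,c(t,g)^{-1} = g$ (a local section of $G \to C_{\mathit{ell},\theta(t)}$, which exists since the stabilizer is a circle), and set $\Gamma_{\mathit{near}}(t,g) = \mathrm{Ad}_{c(t,g)}\gamma_s(t)$; this lands in $V(t,g)$ automatically because $\frakg_{\geq 0}$ is $\mathrm{Ad}$-invariant and $\mathrm{tr}\big(\mathrm{Ad}_{c}\gamma_s \cdot c g_s c^{-1}\big) = \mathrm{tr}(\gamma_s g_s)$. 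Then interpolate this with $\bar\gamma$ inside the convex set $V(t,g)$ using the cutoff $\chi$, and your remaining argument goes through as written.
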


\begin{proof}
(i) Let's suppose that we have a smooth family of functions 
\begin{equation} \label{eq:elliptic-gamma}
\left\{
\begin{aligned}
& \; \gamma_t: C_{\mathit{ell},\theta(t)} \longrightarrow \frakg_{\geq 0}, \\
& \; |T(g\gamma_t(g))| \leq A + B|T(g)|, \text{ for some constants $A,B>0$;}\\
& \; \mathrm{tr}(g \,\gamma_t(g)) = -2\sin(\theta(t))\theta'(t).
\end{aligned}
\right.
\end{equation}
The boundedness condition ensures that the ODE
\begin{equation} \label{eq:ode-elliptic}
g'(t) = g(t)\gamma_t(g(t))
\end{equation}
can be integrated for any initial value $g(0) \in C_{\mathit{ell},\theta(0)}$. By construction, each solution is a nonnegative path with $g(t) \in C_{\mathit{ell},\theta(t)}$.

Functions \eqref{eq:elliptic-gamma} can be constructed from those in Lemma \ref{th:bound-vector-fields}(i) by multiplying with the bounded nonnegative function 
\begin{equation} \label{eq:bounded-fn}
g \longmapsto -2\sin(\theta(t))\theta'(t) \mathrm{tr}(g \gamma_{\theta(t)}(g))^{-1}. 
\end{equation}
On the other hand, the conditions in \eqref{eq:elliptic-gamma} are convex, allowing us to interpolate between two such functions, and to construct them using partitions of unity. In particular, given a nonnegative path $g(t)$ with the desired behaviour, one can choose $\gamma_t$ so that this path is a solution of \eqref{eq:ode-elliptic}. More precisely, this kind of argument shows that the space of paths is weakly homotopy equivalent to the space of all \eqref{eq:elliptic-gamma}, which implies the desired result. In the positive version of the same result, one takes $\gamma_t$ which have values in $\frakg_{>0}$, and those exist because \eqref{eq:bounded-fn} is strictly positive.

The proof of (ii) is parallel, using the corresponding part of Lemma \ref{th:bound-vector-fields}.
\end{proof}

\subsection{Spaces of nonnegative paths}
Using Lemma \ref{th:path} as our basic ingredient, we will now discuss the topology of spaces of nonnegative paths with constraints on the endpoints. The general notation will be as follows: if $U_0,U_1$ are subsets of $G$, 
\begin{equation} \label{eq:path-spaces}
\begin{aligned}
& \scrP(G,U_0,U_1) = \{ g: [0,1] \rightarrow G \;:\; g(0) \in U_0, \, g(1) \in U_1\}, \\
& \scrP_{\geq 0}(G,U_0,U_1) = \text{subspace of nonnegative paths}, \\
& \scrP_{>0}(G,U_0,U_1) = \text{subspace of positive paths.}
\end{aligned}
\end{equation}
One can introduce similar notation $\scrP_{\leq 0}(G,U_0,U_1)$, $\scrP_{<0}(G,U_0,U_1)$ for nonpositive and negative paths. We will use superscripts $\scrP(G,U_0,U_1)^r$ to restrict to paths along which the rotation number increases by exactly $r$; and similarly for inequalities, such as $\scrP(G,U_0,U_1)^{<r}$. 

\begin{lemma} \label{th:pre-short}
Fix hyperbolic conjugacy classes $C_0$ and $C_1$. Then, evaluation at either endpoint yields a weak homotopy equivalence
\begin{equation}
\scrP_{\geq 0}(G,C_0,C_1)^0 \stackrel{\htp}{\longrightarrow} C_i.
\end{equation}
\end{lemma}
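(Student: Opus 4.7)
By Convention~\ref{th:weak-homotopy} and the symmetry between the two endpoints, it suffices to show that $\mathrm{ev}_0$ is surjective with weakly contractible fibers, and admits smooth local sections. Local sections arise from eigenvalue interpolation: given $g_0 \in C_0$, choose $g_1 \in C_1$ sharing the eigendirections of $g_0$ (a smooth choice in a neighborhood) and monotonically interpolate dominant eigenvalues; Lemma~\ref{th:path}(ii) produces a realizing nonnegative path. Fix $g_0 \in C_0$ and set $\scrF = \mathrm{ev}_0^{-1}(g_0)$. The first observation is that paths in $\scrF$ never enter the elliptic locus: fixing a lift $\tilde g_0 \in \tilde G$ and lifting any $g \in \scrF$ to $\tilde g$ with $\tilde g(0) = \tilde g_0$, the function $t \mapsto \mathrm{rot}(\tilde g(t))$ is continuous, nondecreasing, and takes the integer value $\mathrm{rot}(\tilde g_0)$ at both endpoints, hence is constantly that integer --- ruling out the elliptic locus, where rotation numbers are non-integer. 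Every $g(t)$ is therefore hyperbolic or parabolic.

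\emph{Stratification by eigenvalue and contractibility of the strictly hyperbolic stratum.} Let $\lambda_i > 1$ denote the dominant eigenvalue of $C_i$. The convex (hence contractible) space $\Lambda = \{\mu \in C^\infty([0,1],[1,\infty)) : \mu(0) = \lambda_0,\, \mu(1) = \lambda_1\}$ has open convex subspace $\Lambda^* = \{\mu > 1 \text{ everywhere}\}$. The dominant-eigenvalue map $\mu: \scrF \to \Lambda$ has preimage $\scrF^* := \mu^{-1}(\Lambda^*)$ consisting of paths entirely in the open hyperbolic locus. Its fibers over $\Lambda^*$ are weakly contractible by Lemma~\ref{th:path}(ii), and a parameterized rerun of that proof (choosing the vector fields $\gamma_t$ coherently via partitions of unity in the parameter direction) shows $\scrF^*$ itself is weakly contractible.

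\emph{Deformation retract $\scrF \rightsquigarrow \scrF^*$.} Given a smooth family $p \mapsto g_p \in \scrF$ over a compact $P$, set
\[
g_p^s(t) := g_p(t)\, \exp\bigl(s\, \chi(t)\, \tilde\gamma_+(g_p(t))\bigr), \quad s \in [0,1],
\]
where $\chi \geq 0$ is a fixed bump vanishing at $t = 0,1$ and positive in between, and $\tilde\gamma_+(g) \in \frakg_{\geq 0}$ is the formula~\eqref{eq:test-gamma} (valid across the parabolic locus since its coefficients depend only on the matrix entries of $g$). Nonnegativity of $g_p^s$ follows from~\eqref{eq:make-positive}; the endpoint conditions $g_p^s(0) = g_0$ and $g_p^s(1) = g_p(1) \in C_1$ hold because $\chi$ vanishes at $0,1$; the rotation-increase stays $0$ because $s \mapsto \tilde g_p^s(1)$ lifts the constant $g_p(1)$ and so is itself constant in the discrete preimage; and for $s > 0$, Lemma~\ref{th:bound-vector-fields}(ii) forces $\mu_{g_p^s}(t) > 1$ at interior $t$, putting $g_p^1$ into $\scrF^*$. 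Combined with the previous step, this proves $\scrF$ is weakly contractible.

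\emph{Main obstacle.} The deformation step carries the technical weight: the perturbation must simultaneously preserve nonnegativity, fix both endpoints (subject to $g(1) \in C_1$), preserve the zero rotation increase, and strictly separate from the parabolic locus at interior times. The tailored vector field \eqref{eq:test-gamma}, lying in $\frakg_{\geq 0}$ while strictly raising the dominant eigenvalue, is what makes this juggling possible, and the bump $\chi$ with double zeros reconciles the endpoint constraints. The earlier arguments are essentially a reduction to, and parameterized application of, Lemma~\ref{th:path}(ii).
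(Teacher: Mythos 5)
Your steps (1)--(3) are essentially the paper's route: local sections by eigenvalue interpolation, ruling out elliptic values via the nondecreasing rotation number, and Lemma~\ref{th:path}(ii) fibered over the convex space of dominant-eigenvalue profiles. The problem is step (4), which you correctly flag as doing the real work, and which has a gap.

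The factor $t \mapsto \exp\bigl(s\,\chi(t)\,\tilde\gamma_+(g_p(t))\bigr)$ is not a nonnegative path in $t$. For small $s$ its logarithmic $t$-derivative is close to $s\chi'(t)\,\tilde\gamma_+(g_p(t)) + s\chi(t)\,\bigl(\text{the $t$-derivative of } \tilde\gamma_+(g_p(t))\bigr)$; wherever $\chi'(t)<0$ the first summand lies in $\frakg_{\leq 0}$, and the second has no reason to lie in $\frakg_{\geq 0}$ either. Since \eqref{eq:make-positive} requires \emph{both} factors to be nonnegative paths, it does not apply, and there is no reason for $g_p^s$ to remain nonnegative. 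The constraint that $\chi$ vanish at both endpoints forces $\chi'$ to change sign, so this cannot be repaired by adjusting $\chi$.

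Fortunately, step (4) is not needed: step (2) can be sharpened to give $\scrF = \scrF^*$ outright, which is what the paper's one-line proof (``By Figure~\ref{fig:traffic}, any such path remains within the hyperbolic locus'') is asserting. The Krein computations show that at a nonnegative parabolic $g$ one has $\mathrm{tr}(g\gamma) \leq 0$ for every $\gamma \in \frakg_{\geq 0}$, while at a nonpositive parabolic the inequality is reversed; so a nonnegative path starting in the hyperbolic locus can leave it only through a nonnegative parabolic, after which the trace cannot rise again and the path must either remain in that parabolic stratum or pass into the elliptic locus. Combined with your rotation-number observation (crossing the elliptic locus and returning raises the rotation number by at least one), a path in the fiber $\scrF$ can therefore never reach the parabolic locus at all. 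Your steps (1)--(3) then complete the proof, with no deformation retract required.
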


\begin{proof}
By Figure \ref{fig:traffic}, any such path remains within the hyperbolic locus, hence falls into the class considered in Lemma \ref{th:path}(ii). The rest is straightforward.
\end{proof}

\begin{lemma} \label{th:pp}
Fix hyperbolic conjugacy classes $C_i$ ($i = 0,\dots,m$, for some $m \geq 2$). Consider $g_i \in C_i$ together with a nonnegative path $h(t)$ from $h(0) = g_1\cdots g_m$ to $h(1) = g_0$, such that if we take lifts of our elements to $\tilde{g}_i \in \tilde{G}$, compatible with the product and path, then
\begin{equation} \label{eq:lose-one}
\mathrm{rot}(\tilde{g}_0) = \mathrm{rot}(\tilde{g}_1) + \cdots + \mathrm{rot}(\tilde{g}_m) + 1-m.
\end{equation}
Any $g_i$ yields a weak homotopy equivalence between the space of all $(g_0,\dots,g_m,h(t))$ and $C_i$.
\end{lemma}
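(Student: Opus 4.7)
The plan is induction on $m$, with Example~\ref{th:three-classes} and Lemma~\ref{th:pre-short} as the key inputs.

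First, I would extract the rigidity forced by \eqref{eq:lose-one}. Iterating the quasimorphism \eqref{eq:quasimorphism} gives
\begin{equation*}
\mathrm{rot}(\widetilde{g_1 \cdots g_m}) \geq \mathrm{rot}(\tilde g_1) + \cdots + \mathrm{rot}(\tilde g_m) - (m-1),
\end{equation*}
and rotation numbers do not decrease along nonnegative paths, so \eqref{eq:lose-one} forces equality throughout. Hence $h$ has zero net rotation change, and every consecutive subproduct $\tilde g_i\cdots \tilde g_j$ sits at the minimum of its iterated bound. By Lemma~\ref{th:sharpened-inequality}(i) no such subproduct can be elliptic; parts (ii)--(iii) of the same lemma, together with the sign check \eqref{eq:parity}, rule out parabolic intermediates, so every $g_i \cdots g_j$ is hyperbolic.

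Next, I would absorb the path. Set $\scrB_m := \{(g_1,\ldots,g_m) \in C_1\times\cdots\times C_m : \text{saturated rotation}\}$. For fixed $(g_1, \ldots, g_m) \in \scrB_m$, the space of $(g_0, h)$ is the $\mathrm{ev}_0$-fiber over $g_1\cdots g_m$ in $\scrP_{\geq 0}(G, C', C_0)^0$ (where $C'$ denotes the hyperbolic class of $g_1\cdots g_m$); by Lemma~\ref{th:pre-short} this fiber is weakly contractible. So the forgetful map $\scrY_m \to \scrB_m$ is a weak equivalence, reducing the problem to showing that each $\mathrm{ev}_i: \scrB_m \to C_i$ ($i=1,\ldots,m$) and the composite $\mathrm{ev}_0: \scrY_m \to C_0$ is a weak equivalence.

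The base case $m=2$ follows from Example~\ref{th:three-classes}: for each hyperbolic class $C'$ with the correct lifted rotation, $\{(g_1,g_2) : g_1 g_2 \in C'\}$ is a single $G$-orbit of dimension $3$; varying $C'$ over its one-parameter family, $\scrB_2$ is swept out as a smooth $4$-manifold on which each $\mathrm{ev}_i$ ($i = 1, 2$) has fibers $\cong Z(g_i) \times \bR \cong \bR^2$. For $\mathrm{ev}_0$ one uses a dual fibration $\scrY_2 \to \bigcup_{C'} \scrP_{\geq 0}(G, C', C_0)^0$, whose target is weakly equivalent to $C_0$ by Lemma~\ref{th:pre-short} and whose fibers are $Z(g_1 g_2)$-orbits $\cong \bR$ by Example~\ref{th:three-classes}. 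For $m \geq 3$, I would fuse a consecutive pair $(g_j, g_{j+1})$ into $g' := g_j g_{j+1}$, hyperbolic by rigidity. The fusion map $\scrB_m \to \scrB_{m-1}^{\text{fam}}$ (with parameter for the class of $g'$) has fibers $\{(g_j, g_{j+1}) : g_j g_{j+1} = g'\} \cong Z(g') \cong \bR$; combined with a parametrized form of the inductive hypothesis, this handles $\mathrm{ev}_i$ for every $i \notin \{j, j+1\}$. Choosing the pair appropriately covers all cases except the singleton $(m,i) = (3,2)$, which I would address directly: the fiber of $\mathrm{ev}_{g_2}: \scrB_3 \to C_2$ decomposes as a $Z(g_2)$-orbit times a 1-parameter intermediate-class (Teichm\"uller-type) moduli, each factor contractible.

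The principal obstacle is homotopy-theoretic bookkeeping: the various forgetful and fusion projections must be shown to behave as weak fibrations in the sense of Convention~\ref{th:weak-homotopy}, so that weakly contractible fibers over a weakly equivalent base yield a weak equivalence of total spaces. A secondary issue is the parametrized extension of the inductive hypothesis to allow the fused conjugacy class to vary in a one-parameter family.
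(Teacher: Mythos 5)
Your attempt is precisely the inductive route via Example~\ref{th:three-classes} that the paper mentions and declines to pursue. After the same rigidity step (hyperbolicity of the products $g_1\cdots g_i$, via \eqref{eq:productminus1} and Lemma~\ref{th:sharpened-inequality}), the paper fixes the product $g = g_1 \cdots g_m$ and invokes Corollary~\ref{th:teichmuller1b} to obtain contractibility of the fiber over $g \in G_{\mathit{hyp}}$ in one stroke, attaches the path via Lemma~\ref{th:pre-short}, and passes between evaluation maps by observing that evaluation at $h(0)$ and at $h(1)$ yield homotopic maps into $G_{\mathit{hyp}}$, together with simultaneous conjugation for the evaluations at the $g_i$. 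The paper writes: ``One can derive this from Example~\ref{th:three-classes} by induction on $m$, and the hyperbolicity of the intermediate products. We omit the details, since there is a more conceptual alternative.'' Your plan fills in those omitted details; it is in principle correct, but more elementary at the cost of exactly the bookkeeping you identify (weak fibrations, a one-parameter-family form of the inductive hypothesis, and a separate case at $(m,i)=(3,2)$), whereas the paper's route buys itself out of all that by importing Goldman's theorem and its Teichm\"uller corollary.

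Two points where your sketch is not yet right. First, the fusion $g' = g_jg_{j+1}$ needs hyperbolicity of \emph{arbitrary} consecutive subproducts $g_i\cdots g_j$, not just initial segments, and your one-line justification ``parts (ii)--(iii) plus \eqref{eq:parity}'' is not the argument that works: \eqref{eq:parity} plays no role, and applying Lemma~\ref{th:sharpened-inequality}(ii)--(iii) to the bare triple $(\tilde g_i, \tilde g_{i+1}\cdots\tilde g_j, (\tilde g_i\cdots\tilde g_j)^{-1})$ only excludes the nonpositive parabolic case. To exclude a nonnegative parabolic you must bring in an adjacent factor --- apply part (ii) of the lemma to a triple built from $g_i\cdots g_j$ and $g_{j+1}$ when $j<m$, or from $g_{i-1}$ and $g_i\cdots g_j$ when $i>1$, obtaining in each case a contradiction with saturation for the larger block, and fall back on the path $h$ only for $(i,j)=(1,m)$. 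This is the same two-step maneuver the paper uses for initial segments, just repeated for every window. Second, to ``absorb the path'' you need not quite Lemma~\ref{th:pre-short} (which asserts a weak equivalence under endpoint evaluation) but the stronger statement that the space of nonnegative paths with $h(0)$ \emph{fixed} and $h(1)\in C_0$ at constant rotation number is weakly contractible; use the fact that such a path stays in $G_{\mathit{hyp}}$, project to the path of conjugacy classes, and apply Lemma~\ref{th:path}(ii) to the fibers.
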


\begin{proof}
Because of \eqref{eq:quasimorphism} and the fact that rotation numbers don't decrease along nonnegative paths, \eqref{eq:lose-one} requires that, for any $i \geq 1$,
\begin{equation} \label{eq:productminus1}
\mathrm{rot}(\tilde{g}_1 \cdots\tilde{g}_i) = \mathrm{rot}(\tilde{g}_1\cdots \tilde{g}_{i-1}) + \mathrm{rot}(\tilde{g}_i) - 1.
\end{equation}
Suppose that there is a smallest $i \geq 2$ for which the product $g_1 \cdots g_i$ is not hyperbolic. By \eqref{eq:productminus1} and Lemma \ref{th:sharpened-inequality}, it would then have to be a nonnegative parabolic. If $i<m$, one would then apply Lemma \ref{th:sharpened-inequality} again, to get
\begin{equation}
\mathrm{rot}(\tilde{g}_1 \cdots \tilde{g}_{i+1}) \geq \mathrm{rot}(\tilde{g}_1 \cdots \tilde{g}_i) + \mathrm{rot}(\tilde{g}_{i+1}),
\end{equation}
which is a contradiction to \eqref{eq:productminus1} for $i+1$. If $i = m$, the path $h$ would have to go from a nonnegative parabolic to a hyperbolic element without raising the rotation number, which is impossible. We have therefore shown that all products $g_1\cdots g_i$ are hyperbolic.

Consider the space of all $(g_1,\dots,g_m)$ such that $g_i \in C_i$, the product $g = g_1\cdots g_m$ is a fixed hyperbolic element, and $\mathrm{rot}(\tilde{g}_1\cdots \tilde{g}_m) = \mathrm{rot}(\tilde{g}_1) + \cdots + \mathrm{rot}(\tilde{g}_m) + 1-m$. Then, that space is contractible. One can derive this from Example \ref{th:three-classes} by induction on $m$, and the hyperbolicity of the intermediate products. We omit the details, since there is a more conceptual alternative, namely to think of it as a special case of Corollary \ref{th:teichmuller1b} below. On the other hand, the space of nonnegative paths $h(t)$ from our fixed hyperbolic $g$ to $C_0$, along which the rotation number remains constant, is weakly contractible by Lemma \ref{th:pre-short}. By taking the product of those spaces, and then letting $g$ vary, one sees that evaluation at $h(0) = g = g_1 \cdots g_m$ gives a weak homotopy equivalence between the space of all $(g_0,g_1,\dots,g_m,h(t))$ and $G_{\mathit{hyp}}$. Since evaluation at any $h(t)$ lands in the hyperbolic locus, we can evaluate at $h(1) = g_0$ instead. By applying simultaneous conjugation, one extends the result to the other evaluation maps.
\end{proof}

One can interpret Lemma \ref{th:pre-short} as follows. Let $\scrP(G_{\mathit{hyp}}, C_0, C_1)$ be the space of all paths from $C_0$ to $C_1$, but which remain inside the hyperbolic locus. $\scrP_{\geq 0}(G, C_0,C_1)^0$ is a subspace of $\scrP(G_{\mathit{hyp}},C_0,C_1)$, and we have shown that the inclusion yields a weak homotopy equivalence
\begin{equation}
\scrP_{\geq 0}(G,C_0,C_1)^0 \stackrel{\htp}{\longrightarrow} \scrP(G_{\mathit{hyp}},C_0,C_1).
\end{equation}
Alternatively, let's think of the case $C_0 = C_1 = C$. In that case, inclusion of the constant paths yields a weak homotopy equivalence
\begin{equation}
C \stackrel{\htp}{\longrightarrow} \scrP_{\geq 0}(G,C,C)^0.
\end{equation}
This second kind of interpretation also applies to Lemma \ref{th:pp}, which turned out to be weakly homotopy equivalent to the subspace for which the path $h(t)$ is constant. 

From now on, when deriving results about path spaces, we will often only give the principal steps of the proof, omitting ``$\epsilon$--$\delta$ level'' details (readers interested in seeing such kind of arguments in full may want to look at \cite{lalonde-mcduff97}).

\begin{lemma} \label{th:short-path}
Fix an elliptic conjugacy class $C_0$, and a hyperbolic conjugacy class $C_1$. Then, evaluation at the hyperbolic endpoint yields a weak homotopy equivalence
\begin{equation}
\scrP_{\geq 0}(G,C_0,C_1)^{<1} \stackrel{\htp}{\longrightarrow} C_1.
\end{equation}
The same applies to $\scrP_{\geq 0}(G,C_1,C_0)^{<1}$, still taking the evaluation map at the hyperbolic endpoint.
\end{lemma}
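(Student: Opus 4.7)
The plan is to combine Lemma \ref{th:path}(i) on the elliptic portion of the path with Lemma \ref{th:path}(ii) on the hyperbolic portion, after establishing that the path has a simple two-phase structure. The overall pattern mimics Lemma \ref{th:pre-short}, but now with two regions glued along a canonical transition through the identity or nonnegative parabolic locus.

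First I would set up the structural decomposition. Lifting $g(t)$ to $\tilde G$ so that $\mathrm{rot}(\tilde g(0)) = \theta_0/\pi \in (0,1)$, where $\theta_0$ is the angle of $C_0$, the non-decrease of rotation along nonnegative paths, the integrality of $\mathrm{rot}(\tilde g(1))$, and the bound $<1$ together force $\mathrm{rot}(\tilde g(1)) = 1$, so the total increase is $1 - \theta_0/\pi$. Using Figure \ref{fig:traffic}, I would then rule out any excursion through nonpositive parabolic elements, since crossing one takes the path hyperbolic $\to$ elliptic, and to reach the hyperbolic endpoint the path would afterwards have to cross a nonnegative parabolic at rotation $2$, violating the $<1$ bound. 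Thus there is a $t_* \in (0,1)$ for which $g(t)$ is elliptic on $[0,t_*)$ with angle $\theta(t)$ nondecreasing from $\theta_0$ toward $\pi$, and $g(t)$ lies in the ``rotation $= 1$'' slice on $[t_*, 1]$, passing through identity or nonnegative parabolic before entering the hyperbolic region and ending at $g(1) \in C_1$.

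Second, I would piece the two parameterizations together via Lemma \ref{th:path}. The space of admissible conjugacy-class data (a transition time $t_*$, a nondecreasing angle function $\theta(\cdot)$ on $[0, t_*]$ with prescribed boundary values, and an eigenvalue function $\lambda(\cdot)$ on $[t_*, 1]$ with $\lambda(t_*) = 1$ and $\lambda(1)$ equal to the eigenvalue of $C_1$) is clearly contractible. For fixed data, Lemma \ref{th:path}(i) identifies the space of elliptic realizations with $C_0$ via evaluation at $t=0$, while a mild extension of Lemma \ref{th:path}(ii) allowing $\lambda(t) \to 1$ at one endpoint does the analogous thing for the hyperbolic segment via evaluation at $t=1$. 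Concatenating these and invoking the $G$-equivariance of the resulting normal form (built from the vector fields of Lemma \ref{th:bound-vector-fields}), I would deduce that $\scrP_{\geq 0}(G, C_0, C_1)^{<1}$ deformation retracts onto a subspace on which evaluation at the hyperbolic endpoint is manifestly a weak equivalence to $C_1$.

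The main obstacle is to make the parabolic/identity transition canonical: one must verify that the matching condition between the elliptic endpoint $g(t_*^-)$ and the hyperbolic starting point $g(t_*^+)$ at the parabolic boundary produces a well-defined pairing between $C_0$ and $C_1$, up to contractible ambiguity. Concretely, this requires checking that the extended ODE of Lemma \ref{th:path}(ii) remains well-posed in the limit $\lambda \to 1$, and that nonnegative paths lying entirely in the rotation-$1$ slice with prescribed endpoints in the hyperbolic locus form a weakly contractible space (a statement parallel to Lemma \ref{th:pre-short}). The second assertion of the lemma, concerning $\scrP_{\geq 0}(G,C_1,C_0)^{<1}$, then follows from the time-reversed symmetric argument: the rotation increase is $\theta_0/\pi$, the path begins in the hyperbolic locus and exits into elliptic via the nonpositive parabolic or identity boundary, and evaluation at the hyperbolic endpoint (now the starting point) gives the desired weak equivalence.
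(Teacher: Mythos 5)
Your overall strategy is the same as the paper's: force a two-phase structure (elliptic, then hyperbolic) glued along a single parabolic transition, and then apply Lemma \ref{th:path} to each piece. But there is a concrete sign error that corrupts the structural decomposition, and a missing reduction step.

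The sign error: you claim that crossing a \emph{nonpositive} parabolic takes a nonnegative path from hyperbolic to elliptic, and accordingly you ``rule out'' such crossings and assert the transition goes ``through identity or nonnegative parabolic.'' This is exactly backwards. The computation following \eqref{eq:trace-moves}, which gives $\pm\mathrm{tr}(g\gamma) = \delta - \alpha \leq 0$ at $g = \left(\begin{smallmatrix} 1 & 0 \\ \pm 1 & 1 \end{smallmatrix}\right)$, shows that a nonnegative path crosses a \emph{nonpositive} parabolic going from elliptic to hyperbolic, and crosses a \emph{nonnegative} one going the other way; this is what Figure \ref{fig:traffic} records. So the forced structure is precisely the one you tried to exclude: $g(t)$ elliptic for $t < T$, $g(T)$ a nonpositive parabolic, $g(t)$ hyperbolic for $t > T$. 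Moreover, the identity is never a valid transition from elliptic to hyperbolic: as noted just before Figure \ref{fig:traffic}, a nonnegative path arriving at $\Id$ (necessarily through elliptic elements with angle near $\pi$, or nonpositive parabolics) can only leave through nonnegative parabolics or small positive rotations, neither of which is hyperbolic. Your downstream ``matching at $\lambda(t_*) = 1$'' is therefore grafted onto the wrong stratum.

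There is also a gap you do not flag: the two-phase decomposition with a unique transition time $T$ is well-defined for \emph{positive} paths, but not in general for nonnegative ones (a nonnegative path may dwell on or touch the parabolic stratum tangentially, or pause at the identity). The paper first proves the result for positive paths ending in the open set $G_{\mathit{hyp}}$ (so that multiplying by $e^{t\gamma}$ with small $\gamma \in \frakg_{>0}$ gives a perturbation staying in the space), and only afterwards constrains the endpoint to $C_1$ using Lemma \ref{th:path}(ii). Your version fixes the endpoint in $C_1$ from the outset and works directly with nonnegative paths; neither the perturbation nor the Lemma \ref{th:path}-based parametrization is then directly applicable without this intermediate step. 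If you swap the two parabolic classes, discard the identity option, and insert the positive-path/open-endpoint reduction, the rest of your gluing argument is essentially the paper's.
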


\begin{proof}[Sketch of proof]
We begin with a slightly different problem. Let's consider positive paths which start in $C_0$, end in $G_{\mathit{hyp}}$, and along which the rotation number increases by less than $1$. Such a path necessarily has the following structure:
\begin{equation}
\text{for some $T \in (0,1)$,} \; \left\{
\begin{aligned}
& \text{$g(t)$ is elliptic for $t < T$;} \\
& \text{$g(T)$ is a nonpositive parabolic;} \\
& \text{$g(t)$ is hyperbolic for $T < t$.}
\end{aligned}
\right.
\end{equation}
If one fixes $T$ and $g(T)$, then by analyzing $g|[0,T-\epsilon]$ and $g|[T+\epsilon,1]$ for small $\epsilon>0$, using Lemma \ref{th:path}, one sees that the resulting space of paths is weakly contractible. 
Allowing $T$ and $g(T)$ to vary yields a path space that is weakly homotopy equivalent to the nonpositive parabolic stratum, hence also to $G$. Instead of using evaluation at $t = T$, one can use the (homotopic) evaluation at $t = 1$.

We can weaken strict positivity to nonnegativity, without changing the weak homotopy type of the space, since any nonnegative path can be perturbed to a positive one (see \eqref{eq:make-positive} and the subsequent discussion); for this to work, it's important that the endpoint condition $g(1) \in G_{\mathit{hyp}}$ is an open one. Next, again without changing the weak homotopy type, we can require that the hyperbolic endpoint should lie in a fixed conjugacy class, thanks to Lemma \ref{th:path}(ii). This brings us to the desired situation. The other direction (from hyperbolic to elliptic) is proved in the same way.
\end{proof}

\begin{lemma} \label{th:short-path-3}
Fix a hyperbolic conjugacy class $C$. Evaluation at the endpoint yields a weak homotopy equivalence
\begin{equation}
\scrP_{\geq 0}(G,\mathit{id},C)^1 \stackrel{\htp}{\longrightarrow} C.
\end{equation}
\end{lemma}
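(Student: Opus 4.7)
The strategy is to mimic the proof of Lemma \ref{th:short-path}, with the starting endpoint $\mathit{id}$ playing the role of a degenerate elliptic conjugacy class $C_0$; the principal new difficulty is the singular initial germ at $\Id$.

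\emph{Structure and normal form.} I first perturb to a positive path using \eqref{eq:make-positive}, which does not change weak homotopy type (strict positivity is an open condition). Since the rotation number is continuous and non-decreasing along nonnegative paths, and integer-valued off the elliptic stratum, the transition from rotation $<1$ to rotation $=1$ occurs through a nonpositive parabolic element, by the traffic flow of Figure \ref{fig:traffic}. Hence there is a unique first time $T \in (0,1)$ with $\mathrm{rot}(\tilde g(T)) = 1$, and $g(T) \in G_{\mathit{par}^-}$; the tail $g|[T,1]$ lies entirely in $G_{\mathit{hyp}}$ (with constant rotation $1$). The head $g|[0,T]$ starts at $\Id$ and lies in the rotation-$<1$ sublevel set. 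After an initial excursion that may pass through nonnegative parabolic and hyperbolic elements at rotation $0$, it must eventually enter the elliptic stratum; since this rotation-$0$ piece lies in a contractible region of $G$, a further homotopy reduces to the normal form where $g(t) \in G_{\mathit{ell}}$ for all $t \in (0,T)$.

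\emph{Local weak contractibility.} Fix $T$ and $g(T)$. The hyperbolic tail $g|[T,1]$, with $g(1) \in C$ also fixed, is weakly contractible by Lemma \ref{th:path}(ii). The elliptic head $g|(0,T]$ is the main technical step, since Lemma \ref{th:path}(i) requires a fixed elliptic starting point that is unavailable at $\Id$. I resolve this as follows. The space of positive path germs $g:[0,\delta] \to G$ with $g(0) = \Id$ is, via the left logarithmic derivative $\gamma(t) = g(t)^{-1}g'(t)$, identified with $C^\infty([0,\delta], \frakg_{>0})$, which is contractible (as $\frakg_{>0}$ is an open convex cone). Prescribing a standard initial germ $g(t) = \exp(t\gamma_0)$ on $[0,\delta]$ for a fixed $\gamma_0 \in \frakg_{>0}$ therefore does not change the weak homotopy type, provided one allows the subsequent segment $g|[\delta,T]$ to adjust to the change in $g(\delta)$. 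Lemma \ref{th:path}(i) then applies to $g|[\delta,T]$, a positive elliptic path with fixed starting point $\exp(\delta\gamma_0)$ whose conjugacy class angle approaches $\pi$ at $t=T$ (a routine extension of the lemma to the parabolic boundary, analogous to the elliptic-to-parabolic transition handled in Lemma \ref{th:short-path}), yielding weak contractibility.

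\emph{Assembly.} With these local contractibility results in hand, letting $T \in (0,1)$, $g(T) \in G_{\mathit{par}^-}$, and $g(1) \in C$ vary, I follow the endgame of Lemma \ref{th:short-path}: the total path space becomes weakly equivalent, via evaluation at $t = T$, to $G_{\mathit{par}^-}$, and the homotopic evaluation at $t = 1$ lands in $G_{\mathit{hyp}}$. Restricting $g(1) \in C$ and using that $C \hookrightarrow G_{\mathit{hyp}}$ is a weak equivalence (both are homotopy equivalent to $S^1$), I obtain $\scrP_{\geq 0}(G,\mathit{id},C)^1 \simeq C$ via evaluation at the hyperbolic endpoint, as claimed. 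The main obstacle throughout is the singular initial germ at $\Id$, handled by the contractibility of the cone $\frakg_{>0}$ of initial directions.
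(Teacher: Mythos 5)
Your proof is correct and follows essentially the same strategy the paper uses: pass to positive paths, identify the stratification by time of transit through the nonpositive parabolic locus, use Lemma~\ref{th:path} piecewise, and then transfer the evaluation map from the crossing time to the endpoint as in Lemma~\ref{th:short-path}. One small inaccuracy: a \emph{positive} path leaving $\Id$ cannot first make an ``excursion'' through nonnegative parabolic or hyperbolic elements at rotation~$0$. Since $g'(0)\in\frakg_{>0}$ and every element of the open cone $\frakg_{>0}$ is an elliptic generator (its determinant, in the $\mathfrak{sl}_2$ realization, is strictly positive), $g(t)$ is immediately elliptic for all small $t>0$, with no homotopy needed to reach your ``normal form''. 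That is exactly the one-line observation the paper makes before cutting at $g|[\epsilon,1]$ and invoking Lemma~\ref{th:short-path}. Your log-derivative argument for the contractibility of the space of initial germs (reduction to $C^\infty([0,\delta],\frakg_{>0})$) is a more explicit rendering of the paper's cut-off at $\epsilon$, and it genuinely fills in a detail that the paper leaves implicit; keeping track of the fact that the restriction map to the head germ and the tail is a weak fibration is the standard point to make here. The final transfer from evaluation at $T$ (landing in the nonpositive parabolic stratum) to evaluation at $t=1$ (landing in $C$) is correct in spirit, but the clean way to pin the endpoint inside the fixed class $C$ is Lemma~\ref{th:path}(ii), which provides the required contractibility fibrewise; the assertion that $C\hookrightarrow G_{\mathit{hyp}}$ is a weak equivalence is true but by itself does not directly upgrade to a statement about the path spaces without that fibration input.
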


\begin{proof}[Sketch of proof]
Consider first positive paths which start at the identity, end in $G_{\mathit{hyp}}$, and along which the rotation number increases by $1$. Such a path has the property that $g(t)$ is elliptic for all sufficiently small $t>0$. By looking at $g|[\epsilon,1]$ for small $\epsilon>0$, one reduces their study to Lemma \ref{th:short-path}. The translation back to the original situation also follows that model. 
\end{proof}

\begin{lemma} \label{th:short-path-2}
Fix two hyperbolic conjugacy classes $C_0$ and $C_1$. Then, evaluation at both endpoints yields a weak homotopy equivalence
\begin{equation}
\scrP_{\geq 0}(G,C_0,C_1)^1 \stackrel{\htp}{\longrightarrow} C_0 \times C_1.
\end{equation}
\end{lemma}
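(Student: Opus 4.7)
The plan is to split paths at an intermediate elliptic conjugacy class and reduce to Lemma \ref{th:short-path}. Using Figure \ref{fig:traffic}, any $g \in \scrP_{\geq 0}(G,C_0,C_1)^1$ exits the hyperbolic locus via a nonnegative parabolic, makes a single elliptic excursion during which the rotation angle sweeps $[0,\pi]$, and re-enters the hyperbolic locus via a nonpositive parabolic (since each full excursion adds exactly $1$ to the rotation number). Fix $C_* := C_{\mathit{ell},\pi/2}$; every such path meets $C_*$.

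Let
\[
\scrC := \scrP_{\geq 0}(G,C_0,C_*)^{<1} \times_{C_*} \scrP_{\geq 0}(G,C_*,C_1)^{<1}
\]
be the fiber product formed by the two ``evaluate at $C_*$'' maps, together with the natural concatenation-with-reparametrization map $\Phi : \scrC \to \scrP_{\geq 0}(G,C_0,C_1)^1$. I would argue in two parts: (i) show that $\Phi$ is a weak homotopy equivalence; (ii) show that $\scrC \htp C_0 \times C_1$ via evaluation at the two hyperbolic endpoints.

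For (ii), Lemma \ref{th:short-path} makes each factor of $\scrC$ weakly equivalent to its hyperbolic conjugacy class via the hyperbolic endpoint evaluation. Simultaneous $G$-conjugation acts on each factor, fibering it equivariantly over $C_* \iso G/\mathit{PSO}(2) \iso \bH^2$ (a transitive $G$-action on the base); since the base is contractible, the fibers are themselves weakly equivalent to the respective hyperbolic classes, so the fiber-product constraint at $C_*$ does not change the weak homotopy type, yielding $\scrC \htp C_0 \times C_1$.

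The main obstacle is (i). An inverse to $\Phi$ should split a path at the time $T$ with $g(T) \in C_*$. For positive paths, the rotation angle $\theta(t)$ strictly increases on the elliptic phase, so $T$ is unique and depends smoothly on $g$, giving a splitting on the positive subspace. Extending to nonnegative paths is delicate because the standard perturbation $g(t) \mapsto g(t) e^{t\gamma}$ from \eqref{eq:make-positive} displaces the endpoints off $C_0, C_1$, which are not open subsets of $G$. Following the template of the proof of Lemma \ref{th:short-path}, one absorbs this correction using the weak contractibility of hyperbolic-class path spaces with fixed endpoint provided by Lemma \ref{th:path}(ii), restoring the endpoints into $C_0$ and $C_1$ without changing the homotopy type.
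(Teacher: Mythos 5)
Your proposal is correct and follows essentially the same route as the paper: both proofs split a path at its unique crossing time through a fixed elliptic conjugacy class, express the space as a fiber product of two ``short'' path spaces of the type in Lemma~\ref{th:short-path} over the contractible class $C_*$, and handle the passage from positive to nonnegative paths (and the endpoint constraints) by invoking Lemma~\ref{th:path}(ii). The only cosmetic difference is the order of operations: the paper first establishes the statement for positive paths with the hyperbolic endpoint conditions left open (where perturbing to positivity is unobstructed) and only then pins down the conjugacy classes, whereas you fix $C_0$, $C_1$ from the outset and absorb the endpoint drift afterward --- but you correctly identify this as the delicate point and cite the right tool from Lemma~\ref{th:path}(ii) to resolve it.
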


\begin{proof}[Sketch of proof]
Once more, we start with a slightly different problem. Namely, consider positive paths, such that both endpoints lie $G_{\mathit{hyp}}$, and along which the rotation number increases by $1$. There is exactly one point $T$ where such a path intersects a fixed elliptic conjugacy class. By breaking up the path into pieces $g|[0,T]$ and $g|[T,1]$, one can think of the whole path space as a fibre product of two spaces of the kind considered in Lemma \ref{th:short-path}. Hence, what we are looking at is the fibre product of two hyperbolic conjugacy classes, formed over a contractible space (the elliptic conjugacy class). As in Lemma \ref{th:short-path}, positivity can be weakened to nonnegativity, and one can use Lemma \ref{th:path}(ii) to replace the condition on the endpoints by one of lying in specified hyperbolic conjugacy classes. 
\end{proof}

One can interpret these results as follows: the inclusion of nonnegative paths into all paths yields weak homotopy equivalences
\begin{equation} \label{eq:into-top}
\begin{aligned}
& \left.\begin{aligned}
& \scrP_{\geq 0}(G,C_0,C_1)^{<1} \stackrel\htp\longrightarrow \scrP(G,C_0,C_1)^{<1} \\
& \scrP_{\geq 0}(G,C_1,C_0)^{<1} \stackrel\htp\longrightarrow \scrP(G,C_1,C_0)^{<1}
\end{aligned}\right\}
&& \text{for Lemma \ref{th:short-path},} 
\\
& \left.\scrP_{\geq 0}(G,\mathit{id},C)^1 \stackrel\htp\longrightarrow \scrP(G,\mathit{id},C)^1\right. && \text{for Lemma \ref{th:short-path-3},} \\
& \left.\scrP_{\geq 0}(G,C_0,C_1)^1 \stackrel{\htp}{\longrightarrow} \scrP(G,C_0,C_1)^1\right. && \text{for Lemma \ref{th:short-path-2}.}
\end{aligned}
\end{equation}
We will see a few more instances of such behaviour in Section \ref{sec:elliptic}. Note that this contrasts with Lemmas \ref{th:pre-short} and \ref{th:pp}, where including nonnegative paths into all paths (with the same rotation behaviour) would not have yielded a homotopy equivalence.

\begin{remark}
One can conjecture that statements similar to \eqref{eq:into-top} should hold more generally for spaces of nonnegative paths which are ``sufficiently long'' (meaning that the rotation number increases by a sufficient amount to force transitions between elliptic and hyperbolic elements). This idea is in tune with the results of \cite{lalonde-mcduff97, slimowitz01}. \end{remark}

The next result can be viewed as a weak version of Lemma \ref{th:short-path-3} under extra constraints. Instead of determining the topology of the path space under consideration, we will argue by an explicit construction.

\begin{lemma} \label{th:unit-path}
Fix two hyperbolic conjugacy classes $C_1$ and $C_2$. Then there is a nonnegative path $g_1(t)$, starting at the identity and ending in $C_1$, along which the rotation number increases by $1$; together with another path $g_2(t) \in C_2$, such that at all times, $g_1(t)g_2(t) \in C_2$. Moreover, one can write 
\begin{equation} \label{eq:conjugating-path}
g_1(t)g_2(t) = k(t) g_2(t) k(t)^{-1}
\end{equation}
for some path $k(t)$ starting at the identity, whose endpoint $k(1)$ is hyperbolic and (if one lifts the path to $\tilde{G}$ in the obvious way) has rotation number $0$.
\end{lemma}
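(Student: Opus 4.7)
I will construct everything explicitly, choosing $g_{2}(t)$ to be constant and $k(t)$ a hyperbolic one-parameter subgroup. With that ansatz most of the required properties become tautological, so that only two real checks remain: the nonnegativity of $g_{1}(t)$, and the identification of its endpoint together with the induced rotation number.

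Conjugating $C_{2}$, I may assume $g_{2}(t) \equiv g_{2} = \bigl(\begin{smallmatrix} \mu & 0 \\ 0 & \mu^{-1}\end{smallmatrix}\bigr)$ with $\mu > 1$. For a parameter $c > 0$ (to be fixed from $C_{1}$ at the end) I set $X_{c} = \bigl(\begin{smallmatrix} 0 & c \\ c & 0 \end{smallmatrix}\bigr) \in \frakg$, $k(t) = \exp(tX_{c})$, and $g_{1}(t) = k(t) g_{2} k(t)^{-1} g_{2}^{-1}$. Then $g_{1}(0) = \Id$, the product $g_{1}(t)g_{2}(t) = k(t) g_{2} k(t)^{-1}$ lies in $C_{2}$ at all times, and \eqref{eq:conjugating-path} holds by construction. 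Since $k(t)$ traces out the hyperbolic one-parameter subgroup $\{\exp(tX_{c})\}_{t \in \bR}$, its obvious lift to $\tilde{G}$ has rotation number identically $0$, and $k(1) = \exp(X_{c})$ is hyperbolic.

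For nonnegativity, a direct differentiation combined with $[X_{c},k(t)]=0$ yields
\begin{equation*}
g_{1}'(t)\, g_{1}(t)^{-1} \;=\; X_{c} - \mathrm{Ad}\bigl(k(t)\,g_{2}\,k(t)^{-1}\bigr)(X_{c}) \;=\; \mathrm{Ad}(k(t))\bigl(X_{c} - \mathrm{Ad}(g_{2})(X_{c})\bigr).
\end{equation*}
By $\mathrm{Ad}$-invariance of $\frakg_{\geq 0}$, this reduces to checking that $X_{c} - \mathrm{Ad}(g_{2})(X_{c}) = c\bigl(\begin{smallmatrix} 0 & 1-\mu^{2} \\ 1-\mu^{-2} & 0 \end{smallmatrix}\bigr)$ lies in $\frakg_{\geq 0}$, and a direct comparison with \eqref{eq:abc} shows this element is in fact in $\frakg_{>0}$ whenever $c > 0$ and $\mu > 1$. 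So $g_{1}$ is a positive path.

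Finally, matrix multiplication gives $\mathrm{tr}(g_{1}(1)) = 2 - \sinh^{2}(c)(\mu-\mu^{-1})^{2}$, a continuous strictly decreasing function of $c > 0$ dropping from $2$ to $-\infty$; hence a unique $c$ realizes $\mathrm{tr}(g_{1}(1)) = -(\lambda_{1}+\lambda_{1}^{-1})$, placing $g_{1}(1)$ in $C_{1}$. The main subtlety — the only step that is not a routine computation — is to show that the induced rotation number $\rho(c) := \mathrm{rot}(\tilde g_{1}(1))$, computed along the path, equals exactly $+1$ rather than some other odd integer. I handle this by varying $c$: $\rho$ is continuous in $c$ with $\rho(0)=0$; for $c \in (0,c^{*})$, where $c^{*}$ is the critical value making $\mathrm{tr}(g_{1}(1)) = -2$, the endpoint is elliptic with rotation angle $\theta(c) \in (0,\pi)$, and since the lift starts at $\tilde{\Id}$ and $\rho$ is continuous, one has $\rho(c) = \theta(c)/\pi \in (0,1)$, continuously increasing to $1$ as $c \nearrow c^{*}$; at $c = c^{*}$ continuity plus \eqref{eq:parity} pin $\rho(c^{*}) = 1$; and in the hyperbolic regime $c > c^{*}$, $\rho$ is continuous and integer-valued by \eqref{eq:parity}, hence constant equal to $1$.
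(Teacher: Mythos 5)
Your construction is correct, and it is a genuinely different (and in some ways cleaner) route from the paper's. Both proofs take $g_2(t) \equiv g_2 = \mathrm{diag}(\mu,\mu^{-1})$ constant and work in the hyperboloid $\{g : \mathrm{tr}(gg_2) = \mathrm{tr}(g_2)\}$, but the order of construction is reversed. The paper builds $g_1(t)$ first, by a Morse-theoretic description of trace restricted to the hyperboloid (travel along a critical level set, descend into the component $U_+$, cross the parabolic locus, continue into the hyperbolic region), then extracts $k(t)$ from the identity $g_1(t)g_2 = k(t)g_2k(t)^{-1}$, and finally has to patch up nonnegativity by a time-dependent conjugation (an appeal to Lemma \ref{th:path}). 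You instead start from the conjugating path $k(t) = \exp(tX_c)$, define $g_1 := k g_2 k^{-1}g_2^{-1}$, and get everything to fall out of direct computation: the path lies in the hyperboloid automatically, $g_1'(t)g_1(t)^{-1} = \mathrm{Ad}(k(t))\bigl(X_c - \mathrm{Ad}(g_2)X_c\bigr)$ lands in $\frakg_{>0}$ on the nose (so no final conjugation step is needed), $k(1)$ is hyperbolic with rotation number $0$ because its eigenvectors are fixed throughout the one-parameter subgroup, and the trace formula $\mathrm{tr}(g_1(1)) = 2 - \sinh^2(c)(\mu-\mu^{-1})^2$ shows every hyperbolic class $C_1$ is hit as $c$ ranges over $(0,\infty)$, all of which I have verified. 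Your treatment of the rotation number is also more careful than the paper's ``one sees easily'': the argument by varying $c$, using continuity of $\mathrm{rot}$, positivity of the path, and \eqref{eq:parity} to pin down the integer in the hyperbolic regime, is complete and correct. What the paper's version buys in exchange is a more geometric picture of the path crossing the parabolic locus exactly once (which is why the paper can assert the rotation number jump with minimal fuss), while your version buys explicitness and avoids the auxiliary conjugation argument entirely.
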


\begin{proof}
Let's set $g_2 = \mathrm{diag}(\lambda,\lambda^{-1})$ for some $\lambda>1$. Consider
\begin{equation} \label{eq:trace-h}
H = \{g \in \mathit{SL}_2(\bR)\;:\; \mathrm{tr} (g g_2) = \lambda+\lambda^{-1} \} \stackrel{\mathrm{tr}}{\longrightarrow} \bR.
\end{equation}
$H$ is a connected hyperboloid in $\bR^3$, and the trace is a Morse function on it, with index $1$ critical points at the identity and $g_2^{-2}$, and corresponding critical levels $2$ and $\lambda^2 +\lambda^{-2} > 2$. The open sublevel set $U = \{\mathrm{tr}(g)<2\} \subset S$ consists of two connected components $U_{\pm}$. If we write $g = \left(\begin{smallmatrix} a & c \\ b & d \end{smallmatrix} \right)$, those components are
\begin{equation}
U_{\pm} = \{ g \in H \;:\; {\pm } b > 0, \text{ or equivalently } {\pm } c < 0 \}.
\end{equation}
We construct a path in $H$ of the following kind. Starting at the identity, the path first moves inside the level set $\{\mathrm{tr}(g) = 2\}$ to $\left( \begin{smallmatrix} 1 & 0 \\ 1 & 1 \end{smallmatrix} \right)$. From there, we strictly decrease the trace, moving into $U_+$, and then further inside that subset until $\{\mathrm{tr}(g) = -2\}$. At that point, the resulting matrix is necessarily conjugate to $\left(\begin{smallmatrix} -1 & 0 \\ 1 & -1 \end{smallmatrix} \right)$, because it still belongs to $U_+$. We continue our path downwards into hyperbolic level sets $\{\mathrm{tr}(g)<-2\}$, until we reach the value of the trace (up to sign) prescribed by our choice of $C_1$. One sees easily that the rotation number increases by $1$ along this path.

Denote the path we have just constructed by $g_1(t)$, and take $g_2(t) = g_2$ to be the constant path. 
Since $g_1(t)g_2$ remains in the conjugacy class of $g_2$, one can write $g_1(t)g_2 = k(t) g_2 k(t)^{-1}$, for some smooth $k(t) = \left( \begin{smallmatrix} p(t) & r(t) \\ q(t) & s(t) \end{smallmatrix} \right)$ with $k(0) = \Id$. Then,
\begin{equation} \label{eq:g2-trace}
\begin{aligned}
\mathrm{tr}(g_1(t)) = \mathrm{tr}( k(t) g_2 k(t)^{-1} g_2^{-1}) & = 
2p(t)s(t) - (\lambda + \lambda^{-2}) q(t)r(t)
\\ & =
(1-p(t)s(t)) (\lambda - \lambda^{-1})^2 + 2.
\end{aligned}
\end{equation} 
By construction $\mathrm{tr}(g_1(1)) < -2$, which with \eqref{eq:g2-trace} yields $p(1) s(1) > 1$, hence
\begin{equation}
\mathrm{tr}(k(1))^2 = (p(1)+s(1))^2 = (p(1)-s(1))^2 + 4 p(1)s(1)> 4. 
\end{equation}
This shows that $k(1)$ must be hyperbolic (one could derive the same result more elegantly from Theorem \ref{th:teichmuller}, applied to a punctured torus). Now, bear in mind that $\mathrm{tr}(g_1(t)) \leq 2$ for all $t$. From this and \eqref{eq:g2-trace}, it follows that $p(t) s(t) > 0$ for all $t$. Hence, both entries $p(t)$ and $s(t)$ remain positive throughout. By tracking how points on the circle move under the action of $k(t)$, one easily concludes that the rotation number of $k(1)$ must vanish. 

At this point, it seems that we have been missing an essential condition, namely, that $g_1(t)$ should be a nonnegative path. However, an inspection of each step of the construction (and the use of Lemma \ref{th:path}) shows that $g_1(t)$ is conjugate to a nonnegative path, by a time-dependent conjugation. One can apply the same conjugation to $g_2(t)$ and $k(t)$, and that yields the desired result.
\end{proof}

\begin{remark}
It would be interesting to consider similar questions for nonnegative paths in the larger group $\mathit{Diff}^+(S^1)$ (this idea was already mooted by Abouzaid and Ganatra). Enlarging the group affords one greater freedom, which may simplify the construction of homotopies. On a more fundamental level, the larger group contains many more conjugacy classes: for instance, the pullbacks of hyperbolic elements by finite covers $S^1 \rightarrow S^1$, which would be relevant to Fukaya categories with multiple fibres as ``stops'' \cite{sylvan16}. 
\end{remark}

\section{$\mathfrak{sl}_2(\bR)$-connections on surfaces\label{sec:connections}}
For our TQFT, we will need to consider connections with nonnegative curvature. There is an obvious relation between such connections and nonnegative paths, and (with one exception, Proposition \ref{th:bochner}) all our results will be derived from the elementary considerations in Section \ref{sec:sl2} in that way. On the other hand, if we restrict attention to flat connections, there is a large body of relevant literature; \cite{goldman88} will be particularly important for us.

\subsection{Basic notions\label{subsec:connections}}
Let $S$ be a connected compact oriented surface with nonempty boundary. We consider $\frakg$-connections on the trivial bundle over $S$, meaning operators 
$\nabla_A = d-A$ for some $A \in \Omega^1(S,\frakg)$ (we will often refer to $A$ as the connection). 
Gauge transformations are maps $\Phi: S \rightarrow G$, and act on connections (covariantly) by
\begin{equation}
\Phi_*A = \Phi A \Phi^{-1} + (d\Phi) \Phi^{-1}.
\end{equation}
With our sign convention for $A$, the curvature is 
\begin{equation} \label{eq:curvature}
F_A = -dA + A \wedge A \in \Omega^2(S,\frakg).
\end{equation}
A connection is said to have nonnegative curvature if it satisfies the (gauge invariant) condition
\begin{equation} \label{eq:nonneg-curvature}
F_A(\xi_1,\xi_2) \in \frakg_{\geq 0} \quad \text{for any oriented basis $(\xi_1,\xi_2) \in TS$.}
\end{equation}
Obviously, there is a parallel notion of nonpositive curvature; one can switch from one to the other by changing the orientation of $S$, or by applying a gauge transformation in $\mathit{PGL}_2^-(\bR)$. Similarly, one could consider connections with (strictly) positive or negative curvature; but we have no use for those notions.

Given a connection, parallel transport along some $c: [0,1] \rightarrow S$ yields a path $g:[0,1] \rightarrow G$, 
\begin{equation} \label{eq:parallel-transport}
g(0) = \Id, \quad g'(t)g(t)^{-1} = A_{c(t)}(c'(t)).
\end{equation}
(If $c$ is a closed loop, $g(1)$ is the holonomy of the connection around $c$.) That path has an obvious lift to $\tilde{G}$, with $\tilde{g}(0)$ the identity; from that, we get a rotation number
\begin{equation}
\mathrm{rot}_c(A) \stackrel{\mathrm{def}}{=} \mathrm{rot}(\tilde{g}(1)) \in \bR. 
\end{equation}
Since $G$ is homotopy equivalent to a circle, the homotopy class of a gauge transformation can be written as $[\Phi] \in H^1(S)$. The effect of gauge transformations on rotation numbers along closed loops $c$ is
\begin{equation}
\label{eq:gauge-c}
\mathrm{rot}_{c}(\Phi_*A) = \mathrm{rot}_{c}(A) + \int_{c} [\Phi].
\end{equation}
Let's denote the boundary circles by $\partial_i S$, where $i \in I = \pi_0(\partial S)$. By \eqref{eq:gauge-c}, the following is a gauge invariant quantity:
\begin{equation} \label{eq:boundary-rotation}
\mathrm{rot}_{\partial S}(A) = \sum_i \mathrm{rot}_{\partial_i S}(A).
\end{equation}

\begin{proposition} \label{th:bochner}
Let $A$ be a connection on $S$, such that the holonomies around the boundary circles are hyperbolic.  If the connection is nonnegatively curved, 
\begin{equation} \label{eq:bochner}
\mathrm{rot}_{\partial S}(A) \leq -\chi(S).
\end{equation}
\end{proposition}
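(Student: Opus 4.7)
My plan is to establish \eqref{eq:bochner} as a Milnor--Wood-type inequality, in two steps: first treat the flat case by combinatorial arguments, then extend to nonnegatively curved $A$ via an index/integral calculation.

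For the flat case, I would pick a pants decomposition of $S$ into $m = -\chi(S)$ pairs of pants (this requires $\chi(S) < 0$; the annulus case $\chi(S) = 0$ is immediate since the two boundary holonomies are mutual inverses, and the disc case $\chi(S) = 1$ is vacuous since a flat disc connection has trivial holonomy, not hyperbolic). On each pair of pants, the three boundary holonomies $\tilde g_0, \tilde g_1, \tilde g_2 \in \tilde G$ can be lifted so that $\tilde g_0 \tilde g_1 \tilde g_2 = \Id$, and the quasimorphism bound \eqref{eq:triple-trivial} gives $\sum_i \mathrm{rot}(\tilde g_i) \leq 1$. Summing over all pants, and noting that the internal circles contribute zero to the total (by cancellation from the opposite induced orientations on the two adjacent pants), yields $\mathrm{rot}_{\partial S}(A) \leq m = -\chi(S)$.

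For the general nonnegatively curved case, the key intuition is that nonneg curvature contributes non-positively to rotation numbers: for a small loop $\gamma$ bounding a disc $D \subset S$, non-abelian Stokes gives $\mathrm{hol}(\gamma) \approx \exp(-\int_D F_A)$, and $-\int_D F_A \in \frakg_{\leq 0}$ since $F_A \in \frakg_{\geq 0}$. To make this global, I would consider the trivial $\bR P^1$-bundle $P = S \times \bR P^1$ with horizontal distribution determined by $A$. The attracting fixed points of the hyperbolic boundary holonomies define horizontal sections $\sigma_+: \partial_i S \to P$ whose windings equal $\mathrm{rot}_{\partial_i S}(A)$. Applying Stokes to $\sigma^*\alpha$ for some extension $\sigma: S \to P$ of $\sigma_+$ (with $\alpha$ the vertical $1$-form that annihilates the horizontal distribution, so that its horizontal restriction satisfies $d\alpha|_{\mathcal H} \geq 0$ by nonnegativity of the vector field associated to $F_A$ at each $\theta$) should then yield the inequality after careful bookkeeping.

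The main obstacle is this bookkeeping in the extension of $\sigma_+$: since $\mathrm{rot}_{\partial S}(A)$ is generally nonzero, $\sigma_+$ does not extend to a continuous section on all of $S$, and one must introduce isolated singularities whose count (via a Poincar\'e--Hopf-style argument for an auxiliary vector field on $S$) produces the $\chi(S)$ contribution. The disc case $(\chi(S) = 1)$ is the most delicate, since no flat comparison is available; I would handle it directly by considering the path of holonomies $h(r)$ around concentric circles shrinking toward the center, which is a nonpositive path in $G$ by the curvature argument, and applying Lemma \ref{th:short-path-3} to the reversed path to conclude $\mathrm{rot}_{\partial S}(A) \leq -1$.
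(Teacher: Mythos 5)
The paper proves this in one stroke via an index--Bochner argument, not by any decomposition: it considers the $\bR$-linear Dolbeault operator $\bar\partial_A=\nabla^{0,1}_A$ on a Riemann surface with tubular ends, uses the index formula $\mathrm{index}(\bar\partial_A)=\chi(S)+\mathrm{rot}_{\mathit{ends}}(A)$, and observes that the Bochner identity $\int_S|\bar\partial_A\xi|^2=\int_S|\nabla\xi|^2+\int_S\det(\xi,F_A\xi)$ forces the kernel to vanish when $F_A$ is nonnegative, hence the index is $\le 0$. Your plan is structurally disjoint from this, and as written it has a real gap. Your Step~1 is not a base case on which Step~2 builds: flat connections are already nonnegatively curved, so if Step~2 worked it would subsume Step~1, and if you meant to deform a general $A$ to a flat one that is also not in the text. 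Step~2 is the whole argument, and the part you yourself flag as ``the main obstacle''---extending the attracting-eigenvector sections $\sigma_+$ over $S$ with isolated singularities, relating their total index to $\chi(S)$, and combining that with a curvature term of the right sign after Stokes---is exactly the content of the proof and is not supplied. In particular you assert nothing about what $\int_{\partial S}\sigma^*\alpha$ evaluates to (it is zero if $\sigma|\partial S=\sigma_+$ is taken horizontal), nor how the $\chi(S)$ contribution enters, nor how to arrange the sign on the residual vertical part of $\sigma^*(d\alpha)$. These are the points where a Milnor--Wood-with-curvature argument lives or dies.

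Your disc case is also not covered by the lemma you cite. Lemma~\ref{th:short-path-3} is a \emph{weak homotopy equivalence} $\scrP_{\geq 0}(G,\mathit{id},C)^1\htp C$; it does not say that \emph{every} nonnegative path from $\Id$ to a hyperbolic element raises the rotation number by at least~$1$. The inequality you need is the stronger traffic-flow fact underlying Figure~\ref{fig:traffic} (a nonnegative path leaving $\Id$ must pass through the elliptic locus before it can reach the hyperbolic locus, gaining a full unit of rotation; equivalently the reverse statement for nonpositive paths), which is closer in spirit to the case analysis in the proof of Lemma~\ref{th:pp}. Your underlying intuition about concentric circles and Lemma~\ref{th:connections-on-the-cylinder} is correct, but the lemma you name does not carry the weight. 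By contrast, the paper's index argument handles the disc case uniformly with all others, with no conjugacy-class case analysis at all, which is one of the advantages of that route.
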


\begin{proof}
We consider a slightly different but equivalent situation. Namely, let $S$ be a Riemann surface with tubular ends, carrying a connection $A$ which, in coordinates $(s,t) \in [0,\infty) \times S^1$ on each end, is of the form $a_{i,t} \mathit{dt}$, and has hyperbolic holonomy around the $S^1$ factor. One can define an analogue of \eqref{eq:boundary-rotation}, which we denote by $\mathrm{rot}_{\mathit{ends}}(A) \in \bZ$. Consider the $\bR$-linear Dolbeault operator associated to the connection,
\begin{equation} \label{eq:dbar-e}
\bar\partial_A = \nabla^{0,1}_A: \smooth_{\mathit{cpt}}(S,\bC) \longrightarrow \Omega_{\mathit{cpt}}^{0,1}(S).
\end{equation}
As stated, this applies to compactly supported smooth functions, but in fact one wants to pass to suitable Sobolev completions, let's say from $W^{1,2}$ to $L^2$. The index theorem (see e.g.\ \cite[Section 3.3]{schwarz95}) says that the completed operator is Fredholm, with
\begin{equation}
\mathrm{index}(\bar\partial_A) = \chi(S) + \mathrm{rot}_{\mathit{ends}}(A).
\end{equation}
The Bochner identity, which applies to any $\xi \in W^{1,2}(S,\bC)$, is
\begin{equation}
\int_S |\bar\partial_A \xi|^2 = \int_S |\nabla \xi|^2  + \int_S \mathrm{det}(\xi, F_A \xi).
\end{equation}
If we assume that our connection has nonnegative curvature, it follows that any $W^{1,2}$ solution of $\bar\partial_A \xi = 0$ must be zero, hence the index is necessarily $\leq 0$.
\end{proof}

By reversing orientation, we get the following:

\begin{corollary} \label{th:mw}
If $A$ is a flat connection with hyperbolic boundary holonomies,
\begin{equation}
|\mathrm{rot}_{\partial S}(A)| \leq -\chi(S).
\end{equation}
\end{corollary}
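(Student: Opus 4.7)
The plan is to deduce the corollary from Proposition \ref{th:bochner} by exploiting that a flat connection is both nonnegatively and nonpositively curved, and that reversing the orientation of $S$ interchanges these two conditions.

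First I would apply Proposition \ref{th:bochner} directly: since $F_A = 0$, condition \eqref{eq:nonneg-curvature} holds trivially, so $\mathrm{rot}_{\partial S}(A) \leq -\chi(S)$. This gives the upper bound.

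Next I would consider $S$ with reversed orientation, call it $\bar S$. The connection $A$ is still flat on $\bar S$, hence still nonnegatively curved with respect to the new orientation (indeed, flatness makes the sign of the orientation irrelevant in \eqref{eq:nonneg-curvature}). The Euler characteristic is unchanged, $\chi(\bar S) = \chi(S)$. However, each boundary circle $\partial_i S$ gets the opposite induced orientation in $\bar S$, and since parallel transport along the reversed loop gives the inverse holonomy, the lift to $\tilde G$ is correspondingly inverted and its rotation number flips sign. Therefore $\mathrm{rot}_{\partial \bar S}(A) = -\mathrm{rot}_{\partial S}(A)$, and Proposition \ref{th:bochner} applied to $\bar S$ yields $-\mathrm{rot}_{\partial S}(A) \leq -\chi(S)$. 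Combining the two inequalities gives $|\mathrm{rot}_{\partial S}(A)| \leq -\chi(S)$.

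There is essentially no obstacle here; the only small points to verify carefully are the sign behavior of the rotation number under orientation reversal of a boundary loop (which follows from $\mathrm{rot}(\tilde g^{-1}) = -\mathrm{rot}(\tilde g)$, a consequence of \eqref{eq:homogeneity}) and the observation that the hyperbolicity hypothesis on boundary holonomies is preserved under orientation reversal (since a hyperbolic element remains hyperbolic upon inversion). Note also that the bound is only nontrivial when $\chi(S) \leq 0$; for $S$ a disc, no flat connection can have hyperbolic boundary holonomy, so the statement is vacuous in that case.
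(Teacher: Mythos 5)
Your proof is correct and matches the paper's argument exactly: the paper derives Corollary \ref{th:mw} from Proposition \ref{th:bochner} precisely ``by reversing orientation,'' and you have filled in the sign bookkeeping (boundary orientation reversal inverts the holonomy, hence negates the rotation number by \eqref{eq:homogeneity}) correctly.
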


We will be interested in spaces of connections with fixed boundary behaviour. Concretely, given $S$, choose $a_i \in \Omega^1(\partial_i S,\frakg)$ for each $i \in I$, with hyperbolic holonomy. We will use the following associated notation:
\begin{equation} \label{eq:a-space-notation}
\begin{aligned}
& g_i && \text{holonomy of $a_i$}, \\
& C_i && \text{conjugacy class of $g_i$ in $G$}, \\
& \tilde{g}_i && \text{preferred lift of $g_i$ to $\tilde{G}$}, \\
& \tilde{C}_i && \text{conjugacy class of $\tilde{g}_i$ in $\tilde{G}$.}
\end{aligned}
\end{equation}
The relevant spaces of connections are
\begin{equation} \label{eq:moduli-spaces-2}
\begin{aligned}
\scrA(S,\{a_i\}) & = \{ A \in \Omega^1(S,\frakg)\;:\; A|\partial_iS = a_i\}, \\
\scrA_{\geq 0}(S,\{a_i\}) & \;\; \text{subspace of nonnegatively curved connections}, \\
\scrA_{\mathit{flat}}(S,\{a_i\}) & \;\; \text{subspace of flat connections.}
\end{aligned}
\end{equation}
These spaces are acted on by the group 
\begin{equation} \label{eq:gauge-2}
\scrG(S,\{a_i\}) = \{\Phi: S \rightarrow G \;:\; (\Phi|\partial_iS)_* a_i = a_i\} \htp H^1(S,\partial S).
\end{equation}
The weak homotopy equivalence in \eqref{eq:gauge-2} comes from the fact that the stabilizer of each $a_i$ in $\smooth(\partial_i S,G)$ is a copy of $\bR$, hence contractible. In the case of flat connections, the quotient
\begin{equation} \label{eq:moduli-1}
\scrM_{\mathit{flat}}(S,\{a_i\}) = \scrA_{\mathit{flat}}(S,\{a_i\})/\scrG(S,\{a_i\})
\end{equation}
has a well-known topological interpretation:
\begin{equation}
\label{eq:rep-spaces}
\scrM_{\mathit{flat}}(S,\{a_i\}) \iso \big\{ \sigma: \pi_1(S)^{\mathit{op}} \rightarrow G \;:\; \sigma(\partial_iS) \in C_i, \text{ and } e_{\mathit{rel}}(\sigma) = \textstyle\sum_i \mathrm{rot}(a_i) \big\}/G. 
\end{equation}
Here, $e_{\mathit{rel}}(\sigma)$ is the relative Euler number of the associated $\bR P^1$-bundle, with its section over $\partial S$ given by an eigenvector for the holonomy; and $G$ acts by overall conjugation.

\begin{remark}
In the discussion above, for instance in \eqref{eq:boundary-rotation}, we have always used $\partial_iS$ with its boundary orientation. When looking at specific examples of surfaces, we will sometimes adopt different orientation conventions on some boundary components (and will warn the reader when that happens).
\end{remark}

\subsection{Spaces of flat connections}
For our purpose, what matters is the weak homotopy type of the spaces \eqref{eq:moduli-spaces-2} (in the same sense as in Convention \ref{th:weak-homotopy}). Flat connections are easier to understand in that respect, because of \eqref{eq:rep-spaces}.

Let's begin with a case in which the topological aspect is trivial, namely that of the annulus $S = [0,1] \times S^1$. Our convention is that we identify each boundary circle $\partial_i S = \{i\} \times S^1$ with $S^1$ in the obvious way (which is orientation-reversing for $i = 0$, and orientation-preserving for $i = 1$). In order for $\scrA_{\mathit{flat}}(S,a_0,a_1)$ to be nonempty, the following condition must be satisfied:
\begin{equation} \label{eq:conjugate-holonomies}
\parbox{38em}{$a_0, a_1$ have conjugate holonomies ($C_0 = C_1$), and the same rotation number $r = \mathrm{rot}(a_0) = \mathrm{rot}(a_1)$. Equivalently, the conjugacy classes in $\tilde{G}$ are the same ($\tilde{C}_0 = \tilde{C}_1$).}
\end{equation}

\begin{prop} \label{th:z-homotopy}
Take $S = [0,1] \times S^1$, with boundary conditions \eqref{eq:conjugate-holonomies}. Then there is a weak homotopy equivalence
\begin{equation} \label{eq:z-homotopy}
\scrA_{\mathit{flat}}(S,a_0,a_1) \htp \bZ,
\end{equation}
unique up to adding a constant, and which is compatible with the action of $\scrG(S,a_0,a_1) \htp \bZ$. To state this more concretely, let $C = C_0 = C_1$. By associating to a connection $A$ its family of holonomies around the loops $\{s\} \times S^1$, one defines a canonical weak homotopy equivalence
\begin{equation} \label{eq:z-homotopy-2}
\scrA_{\mathit{flat}}(S,a_0,a_1) \longrightarrow \scrP(C,g_0,g_1).
\end{equation}
\end{prop}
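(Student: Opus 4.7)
The strategy is to prove \eqref{eq:z-homotopy-2} directly; then \eqref{eq:z-homotopy} follows because $\scrP(C,g_0,g_1) \htp \bZ$. The centralizer $Z(g_0) \subset G$ of a hyperbolic element is a one-parameter subgroup isomorphic to $\bR$, so the orbit map $G \to C = G/Z(g_0)$ is a weak homotopy equivalence and $C \htp G \htp S^1$; hence $\pi_0\,\scrP(C,g_0,g_1) \iso \pi_1(C) \iso \bZ$ with contractible components, and the ``up to a constant'' ambiguity is the choice of basepoint in $\pi_0$.

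For \eqref{eq:z-homotopy-2}, flatness of $A$ ensures that parallel transport along $[s_0,s_1]\times\{0\}$ conjugates $h_A(s_0)$ into $h_A(s_1)$, so the holonomies $h_A(s) \in G$ (based at $(s,0)$) lie in $C$ and interpolate between $h_A(0) = g_0$ and $h_A(1) = g_1$; this defines $\mathrm{hol}: A \mapsto h_A$. To analyze it I lift $A$ to the universal cover $\tilde S = [0,1]\times\bR$: write $A = dF\cdot F^{-1}$ for a unique $F: \tilde S \to G$ normalized by $F(0,0)=\Id$. Deck invariance forces $F(s,t+1) = F(s,t)\,g_0$; the boundary conditions pin down $F(0,t) = F_{a_0}(t)$ and $F(1,t) = F_{a_1}(t)\,k$ for some $k \in G$ satisfying $k g_0 k^{-1} = g_1$ (where $F_{a_i}$ is parallel transport of $a_i$), and $h_A(s) = F(s,0)\,g_0\,F(s,0)^{-1}$.

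Fixing $h \in \scrP(C,g_0,g_1)$, the fibre $\mathrm{hol}^{-1}(h)$ decomposes into: (i) a lift $k(s) = F(s,0)$ of $h$ along the fibration $G \to C,\ g \mapsto g g_0 g^{-1}$, with $k(0)=\Id$, and (ii) an extension of the resulting boundary map $\partial[0,1]^2 \to G$ (determined by $k$, $F_{a_0}$, $F_{a_1}$, and the periodicity) to the whole square. The space in (i) is contractible since the fibre $Z(g_0) \iso \bR$ is. For (ii), the obstruction lies in $\pi_1(G) \iso \bZ$, and I claim it vanishes: the inclusion $i: C \hookrightarrow G$ is zero on $\pi_1$, because a generator of $\pi_1(C,g_0)$ is $\beta(s) = \ell(s)\,g_0\,\ell(s)^{-1}$ for a generator $\ell$ of $\pi_1(G)$, and the homotopy $\Theta(s,u) = \ell(s)\,g_0^u\,\ell(s)^{-1}\,g_0^{1-u}$ (using the one-parameter subgroup through the hyperbolic $g_0$) contracts $\beta$ to the constant loop at $g_0$ in $G$. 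Thus the obstruction is independent of the homotopy class of $h$; since the representation variety \eqref{eq:rep-spaces} is nonempty under \eqref{eq:conjugate-holonomies}, it vanishes. The space of extensions is then a torsor over $\mathrm{Map}_*(S^2,G) = \Omega^2 G \htp *$ (as $G \htp S^1$). Both (i) and (ii) go through in smooth families, yielding the lifting property of Convention \ref{th:weak-homotopy} for $\mathrm{hol}$, so it is a weak homotopy equivalence.

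Compatibility with the $\scrG(S,a_0,a_1)$-action is immediate from \eqref{eq:gauge-c}: a gauge transformation $\Phi$ of class $d \in H^1(S,\partial S) \iso \bZ$ shifts $h_A$ by pointwise conjugation $\Phi(\cdot,0)\,h_A(\cdot)\,\Phi(\cdot,0)^{-1}$, and on $\pi_0$ this matches the natural $\bZ$-action on both sides. The principal technical hurdle is the parameterized contractibility in (ii), i.e., producing fillings $F_p$ of the square depending smoothly on a parameter $p \in P$; this is a routine smooth-selection argument given the pointwise contractibility and the $\Omega^2 G$-torsor structure.
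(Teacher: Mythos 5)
Your argument proves the right thing, but by a genuinely different route than the paper. The paper's proof is short: note that $\scrG(S,a_0,a_1)$ acts transitively on $\scrA_{\mathit{flat}}(S,a_0,a_1)$ with stabilizer $\iso \bR$ at every point (covariantly constant sections of the adjoint bundle, i.e.\ the centralizer of the hyperbolic holonomy), so any orbit map $\scrG \to \scrA_{\mathit{flat}}$ is a weak homotopy equivalence; then $\scrG \htp \bZ$ from \eqref{eq:gauge-2}, and the $\Omega G$-equivariance of \eqref{eq:z-homotopy-2} identifies the map. You instead analyze the fibres of the holonomy map directly via parallel frames $F$ on $[0,1]\times\bR$, splitting the fibre over $h$ into (i) a lift $k$ through the conjugation fibration $G \to C$ (contractible choice, correct) and (ii) an extension/obstruction problem. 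That is a perfectly workable alternative, and it has the advantage of producing \eqref{eq:z-homotopy-2} directly rather than through the transitivity-plus-equivariance detour; the cost is noticeably more bookkeeping.

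Two places need tightening. First, you go from ``$i_*\colon \pi_1(C)\to\pi_1(G)$ is zero'' to ``the obstruction is independent of the component of $h$'' without an argument. The statement is true, but the $i_*=0$ fact (which is correct, and a nice observation) doesn't immediately give it; the more direct reason is a cancellation: changing $[h]$ changes the two arcs of the boundary loop over $[0,1]\times\{0\}$ and $[0,1]\times\{1\}$ in the same way up to right translation by $g_0$, and right translation preserves $\pi_1$-classes, so the two contributions cancel. You should either spell out how $i_*=0$ feeds into this, or replace it by the cancellation argument. Second, the extension problem in (ii) is not really ``fill in a continuous map $\partial[0,1]^2\to G$ over the square'': what you need is a smooth $g_0$-twisted $\bZ$-periodic frame $F$ on the strip, i.e.\ the filled-in square must match to all orders under $(s,t)\mapsto(s,t+1)$, $F\mapsto F g_0$. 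That is standard (work in a collar, smooth approximation), and you do gesture at it at the end, but the gap between ``nullhomotopic boundary loop'' and ``a genuine flat connection on the cylinder exists'' should be acknowledged explicitly, since your parametrized claim (lifting of Convention \ref{th:weak-homotopy}) rests on it.
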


\begin{proof}
Clearly, $\scrG(S,a_0,a_1)$ acts transitively on $\scrA_{\mathit{flat}}(S,a_0,a_1)$, with stabilizer $\bR$ at every point. This implies that each orbit gives a weak homotopy equivalence 
\begin{equation} \label{eq:gauge-orbit}
\scrG(S,a_0,a_1) \stackrel{\htp}{\longrightarrow} \scrA_{\mathit{flat}}(S,a_0,a_1). 
\end{equation}
We know from \eqref{eq:gauge-2} that $\scrG(S,a_0,a_1) \htp \bZ$. The map \eqref{eq:z-homotopy-2} is equivariant with respect to the action of $\Omega G$ (thought of as gauge transformations in the $[0,1]$ variable on the domain, and as acting by pointwise conjugation on the target space). From that and \eqref{eq:gauge-orbit}, it follows that \eqref{eq:z-homotopy-2} is a weak homotopy equivalence.
\end{proof}

\begin{addendum} \label{th:z-homotopy-2}
Suppose that $g_0 = g_1 = g$. Take a path 
\begin{equation} \label{eq:stupid-path}
c: [0,1] \longrightarrow S, \quad c(0) = (0,0), \;\; c(1) = (1,0). 
\end{equation}
For any $A \in \scrA_{\mathit{flat}}(S,a_0,a_1)$, parallel transport along $c$ yields an element of $G$ which commutes with $g$, hence is either trivial or hyperbolic. The rotation number $\mathrm{rot}_c(A) \in \bZ$ then defines an isomorphism $\pi_0(\scrA_{\mathit{flat}}(S,a_0,a_1)) \iso \bZ$.
One can take the path to have winding number zero around the $S^1$ factor, and that gives a preferred choice of \eqref{eq:z-homotopy}, which is the same one gets from identifying $\pi_0(\scrP(C,g,g)) \iso \bZ$ in \eqref{eq:z-homotopy-2}.
\end{addendum}

\begin{addendum} \label{th:z-homotopy-3}
In the situation of Proposition \ref{th:z-homotopy}, let $\tau \in \mathit{Diff}(S,\partial S)$ be the (positive, or right-handed) Dehn twist along $\{\half\} \times S^1$. The action of $\tau^*$ on $\scrA_{\mathit{flat}}(S,a_0,a_1)$ corresponds to subtracting $r$ on the right hand side of \eqref{eq:z-homotopy}. 
\end{addendum}


We omit the proofs, which are straightforward. The interesting point about Addendum \ref{th:z-homotopy-3} is that, even though the topology of the space of flat connections is always the same, the action of the diffeomorphism group depends on the rotation number.

Let's consider more general surfaces $S$. A classical result concerns the case of extremal rotation numbers, by which we mean
\begin{equation} \label{eq:rot-chi}
\sum_i \mathrm{rot}(a_i) = \pm \chi(S).
\end{equation}

\begin{theorem}[Goldman] \label{th:teichmuller}
Suppose that $\chi(S)<0$, and that the $a_i$ satisfy \eqref{eq:rot-chi}. Then, the representation of $\pi_1(S)$ associated to any $A \in \scrM_{\mathit{flat}}(S,\{a_i\})$ is (conjugate to) the holonomy representation of a hyperbolic metric on $S$ with geodesic boundaries of length
$l_i = \cosh^{-1}\big( \half |\mathrm{tr}(g_i)|\big)$.
\end{theorem}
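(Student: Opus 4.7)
My plan is to follow Goldman's developing-map strategy \cite{goldman88}: given $A \in \scrA_{\mathit{flat}}(S, \{a_i\})$ with holonomy $\rho: \pi_1(S)^{\mathit{op}} \to G$, build a $\rho$-equivariant map to the hyperbolic plane and use the equality case of Corollary \ref{th:mw} to force it to be a local isometry.

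First I would construct a $\rho$-equivariant smooth map $f: \tilde{S} \to \bH^2 = G/K$. Since $\bH^2$ is contractible and $\tilde{S}$ has the homotopy type of a wedge of circles (as $\partial S\neq\emptyset$), such an $f$ exists by obstruction theory. After a further equivariant homotopy supported near $\partial \tilde{S}$, I arrange that on each boundary collar the image of $f$ lies on the invariant axis of $g_i$ in $\bH^2$, so that $f$ descends to a parametrization of the unique closed geodesic of length $l_i = \cosh^{-1}(\tfrac{1}{2}|\mathrm{tr}(g_i)|)$ in $\bH^2/\langle g_i\rangle$.

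Next I would relate the pull-back area of $f$ to the relative Euler number. Using the transgression formula for the Euler class of the associated $\bR P^1$-bundle with its eigenvector section on $\partial S$ (as in \eqref{eq:rep-spaces}), one has
\begin{equation*}
\tfrac{1}{2\pi}\int_S f^*\omega_{\bH^2} = -e_{\mathit{rel}}(\rho) = \mp\chi(S),
\end{equation*}
where $\omega_{\bH^2}$ is the (negatively oriented, curvature $-1$) area form on $\bH^2$, and the last equality uses the hypothesis \eqref{eq:rot-chi}. By Gauss-Bonnet, a hyperbolic metric on $S$ with geodesic boundary of lengths $l_i$ has total area $-2\pi\chi(S)$; equipping $S$ with such a metric, the Schwarz-type pointwise bound $|f^*\omega_{\bH^2}|_p \le dA_p$ (saturated precisely when $df_p$ is a linear isometry of the correct orientation) integrates to exactly the displayed identity. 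Consequently pointwise equality must hold everywhere, so $f$ is a local isometry.

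Finally, a complete, $\rho$-equivariant local isometry $\tilde{S}\to\bH^2$ whose boundary components map to geodesics of prescribed lengths descends, by the standard developing-map argument, to a global isometry between $S$ and $\bH^2/\rho(\pi_1(S))$, exhibiting $\rho$ as the holonomy representation of a hyperbolic metric on $S$ with geodesic boundary of lengths $l_i$. The hard part will be the second step: promoting the integrated equality to pointwise equality on a surface with boundary, ruling out critical points of $f$ so that the pointwise Schwarz bound is genuinely applicable, and selecting the auxiliary hyperbolic metric on $S$ compatibly with $f$ near $\partial S$. Compactness of $S$ helps, but the interaction between the boundary behavior of $f$ and the eigenvector section used to compute $e_{\mathit{rel}}$ requires careful analysis.
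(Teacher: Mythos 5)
The paper does not prove this theorem; it cites Goldman directly (the ``References'' paragraph after Corollary \ref{th:teichmuller1b} says ``Theorem \ref{th:teichmuller} is a version of \cite[Theorem 3.4]{goldman88}''), so there is no internal proof to compare against. Your sketch is therefore an independent attempt, and it has a genuine gap at its central step.

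The gap is the pointwise inequality $|f^*\omega_{\bH^2}|_p \le dA_p$. For an \emph{arbitrary} $\rho$-equivariant smooth map $f: \tilde{S}\to \bH^2$, this is simply false: $\|df_p\|$ (hence the Jacobian determinant of $f$ at $p$, which is what $f^*\omega_{\bH^2}/dA$ measures) can be made as large as one likes by reparametrizing $f$. This is not an issue of ``ruling out critical points'' or of boundary behavior, as you suggest in closing; the bound has no content for maps that are merely smooth and equivariant. Consequently the move ``integral equality $\Rightarrow$ pointwise equality $\Rightarrow$ local isometry'' is unsupported, because the integral identity $\frac{1}{2\pi}\int_S f^*\omega_{\bH^2} = \pm e_{\mathit{rel}}(\rho)$ was never obtained as the integral of a valid pointwise inequality. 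In fact, for the same reason an area estimate of this kind cannot by itself prove even the Milnor--Wood inequality (Corollary \ref{th:mw}), which explains in part why the present paper proves that corollary via an index/Bochner argument (Proposition \ref{th:bochner}) rather than via pullback area.

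To repair the strategy you must first restrict to a class of maps for which a pointwise or simplexwise inequality does hold. The classical options are: (a) \emph{straightening} --- fix an (ideal) triangulation of $S$ and geodesically straighten $f$ on each simplex, so that the image of each triangle is a geodesic triangle of area at most $\pi$, saturated exactly for orientation-preserving ideal triangles; this gives Milnor's inequality and, in the extremal case, forces each straightened triangle to be isometric, after which one must still show the triangles glue to a hyperbolic structure; (b) \emph{harmonic maps/Higgs bundles} (Hitchin, Donaldson) --- take $f$ harmonic and apply a Bochner formula, so that maximality of the Euler number forces the Hopf differential to vanish and $f$ to be a holomorphic isometry; (c) \emph{Goldman's own route} in \cite{goldman88} --- identify the components of $\scrM_{\mathit{flat}}(S,\{a_i\})$ by the (relative) Euler number, show that the extremal component is connected, and observe that it contains, hence equals, the Teichm\"uller component of representations. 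Each of these supplies the missing ingredient your sketch glosses over. Your first and last paragraphs (equivariant map to $\bH^2$, boundary collars to geodesics, developing-map descent) are standard and unproblematic; it is only the middle step that needs to be replaced.
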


The holonomy representation assumes that we have represented our hyperbolic surface $S$ as a quotient of the hyperbolic disc. The sign in \eqref{eq:rot-chi} depends on whether this representation agrees with the given orientation of $S$ or not. Here are two useful implications:

\begin{corollary} \label{th:all-hyperbolic}
For a flat connection as in Theorem \ref{th:teichmuller}, the holonomies along non-contractible closed curves is hyperbolic.
\end{corollary}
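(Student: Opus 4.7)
The plan is to invoke Theorem~\ref{th:teichmuller} to translate the problem into a question about Fuchsian groups, and then to apply the standard trichotomy elliptic/parabolic/hyperbolic in $G = \mathit{PSL}_2(\bR)$. By that theorem, the representation $\sigma: \pi_1(S) \to G$ associated to $A$ is conjugate to the holonomy representation of a hyperbolic metric $h$ on $S$ with geodesic boundary. Such a holonomy representation is discrete and faithful, so for any non-contractible closed loop $c$ the element $\sigma(c) \in G$ is non-trivial; the task is then to show that it is neither elliptic nor parabolic.

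Ruling out the elliptic case is immediate: a smooth hyperbolic structure on a surface (no orbifold points) gives a free action of $\pi_1(S)$ on $\bH^2$, so the image group is torsion-free, whereas every elliptic element of $G$ has finite order. The substantive step is ruling out parabolic elements, and this is where the geometric hypotheses of Theorem~\ref{th:teichmuller} are needed: $(S,h)$ is compact with \emph{geodesic} (not cuspidal) boundary, and so has no cusps. Concretely, if $\sigma(c)$ were parabolic with unique fixed point $p \in \partial \bH^2$, a sufficiently small horocycle at $p$ would descend to a simple closed curve in the quotient bounding a non-compact cusp neighborhood embedded in $S$, contradicting compactness (the peripheral classes are already accounted for, since their holonomies $g_i$ are hyperbolic by hypothesis).

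The main obstacle is the parabolic case; the elliptic case is essentially a free tensor pulled out of discreteness plus smoothness, whereas excluding parabolics uses both compactness of $S$ and the fact that each boundary holonomy is hyperbolic rather than parabolic. Alternatively, one can bypass the horocycle argument by citing the standard structure theorem for Fuchsian groups of compact bordered hyperbolic surfaces, which asserts that the only maximal cyclic subgroups not consisting entirely of hyperbolic elements are the peripheral ones; under our hypotheses these are hyperbolic too, so no parabolic elements occur anywhere in the image of $\sigma$.
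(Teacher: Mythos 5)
Your proof proposal is essentially correct and is the standard argument a hyperbolic geometer would supply; the paper states the corollary without proof, treating it as immediate from Theorem \ref{th:teichmuller}, and yours fills in exactly that gap. One small factual slip: it is \emph{not} true that ``every elliptic element of $G$ has finite order'' --- a rotation through an irrational multiple of $\pi$ is elliptic and of infinite order in $\mathit{PSL}_2(\bR)$. What is true (and what your argument actually needs) is either that elliptic elements of a \emph{discrete} subgroup have finite order, or, more directly, that a free action on $\bH^2$ forbids any nontrivial element from fixing a point, and elliptic elements fix a point. The direct form is cleaner and avoids the detour through torsion. Your treatment of the parabolic case --- no cusps since $S$ is compact with geodesic (hyperbolic, not cuspidal) boundary, and boundary holonomies are hyperbolic by hypothesis --- is the right one; equivalently, the image is a convex cocompact Fuchsian group, and such groups contain no parabolics.
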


\begin{corollary} \label{th:teichmuller1b}
In the situation of Theorem \ref{th:teichmuller}, $\scrM_{\mathit{flat}}(S,\{a_i\}) \iso \bR^{-3\chi(S)-|\pi_0(\partial S)|}$.
\end{corollary}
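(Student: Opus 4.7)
The plan is to combine Theorem \ref{th:teichmuller} with the classical Fenchel-Nielsen parametrization of Teichmüller space.

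First, using \eqref{eq:rep-spaces} and Theorem \ref{th:teichmuller}, every point of $\scrM_{\mathit{flat}}(S,\{a_i\})$ is represented by the holonomy of a unique hyperbolic structure on $S$ with totally geodesic boundary of lengths $l_i = \cosh^{-1}(\half|\mathrm{tr}(g_i)|)$. Uniqueness uses the standard rigidity fact that a discrete faithful representation $\pi_1(S) \to G$ determines the hyperbolic structure in its marking class up to isometry; Corollary \ref{th:all-hyperbolic}, together with the boundary conditions, ensures we are in the discrete faithful regime. This yields a homeomorphism
\[
\scrM_{\mathit{flat}}(S,\{a_i\}) \;\iso\; \scrT(S;l_1,\dots,l_n),
\]
where the right-hand side denotes the Teichmüller space of marked hyperbolic structures on $S$ with geodesic boundary of prescribed lengths.

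Next I would invoke Fenchel-Nielsen coordinates. Writing $g$ for the genus and $n = |\pi_0(\partial S)|$, a pants decomposition consists of $-\chi(S) = 2g-2+n$ pairs of pants meeting along $3g-3+n$ interior simple closed curves. Since a hyperbolic pair of pants is rigid given its three boundary lengths, a hyperbolic structure on $S$ with the $n$ boundary lengths already fixed is parametrized by the $3g-3+n$ interior curve lengths in $\bR^{>0}$ together with the same number of twist parameters in $\bR$. This gives a diffeomorphism
\[
\scrT(S;l_1,\dots,l_n) \;\iso\; (\bR^{>0})^{3g-3+n} \times \bR^{3g-3+n} \;\iso\; \bR^{6g-6+2n}.
\]

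The exponent then matches by a direct calculation using $\chi(S) = 2-2g-n$: one has $-3\chi(S) - |\pi_0(\partial S)| = -3(2-2g-n) - n = 6g + 2n - 6$. The argument reduces entirely to two classical inputs (Goldman's theorem, already cited as Theorem \ref{th:teichmuller}, and the Fenchel-Nielsen parametrization), so there is no real obstacle in the corollary itself beyond unpacking those references.
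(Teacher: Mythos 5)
Your argument is correct and is exactly the route the paper intends: the paper gives no proof of this corollary, merely citing Fenchel--Nielsen coordinates (Abikoff, or Luo) for the Teichm\"uller-theoretic input, which is precisely the identification $\scrT(S;l_1,\dots,l_n)\iso\bR^{6g-6+2n}$ that you spell out. Your dimension count $-3\chi(S)-n = 6g-6+2n$ is also the right one.
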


For us, what's important is the following immediate consequence of Corollary \ref{th:teichmuller1b} and \eqref{eq:gauge-2}:

\begin{prop} \label{th:connections-rel-boundary}
In the situation of Theorem \ref{th:teichmuller}, there is a weak homotopy equivalence
\begin{equation} \label{eq:htp}
\scrA_{\mathit{flat}}(S,\{a_i\}) \htp H^1(S,\partial S),
\end{equation}
unique up to addition of a constant, and which is compatible with the action of \eqref{eq:gauge-2}.
\end{prop}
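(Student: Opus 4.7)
The plan is to fix a basepoint $A_0 \in \scrA_{\mathit{flat}}(S,\{a_i\})$ and show that the orbit map
\begin{equation*}
\mu_{A_0}: \scrG(S,\{a_i\}) \longrightarrow \scrA_{\mathit{flat}}(S,\{a_i\}), \quad \Phi \longmapsto \Phi_* A_0,
\end{equation*}
is a $\scrG(S,\{a_i\})$-equivariant weak homotopy equivalence. Composing with the weak equivalence $\scrG(S,\{a_i\}) \htp H^1(S,\partial S)$ of \eqref{eq:gauge-2} then yields \eqref{eq:htp}, and $\scrG$-equivariance is automatic. Different choices of $A_0$ alter $\mu_{A_0}$ by left multiplication in $\scrG(S,\{a_i\})$; under \eqref{eq:gauge-2}, such multiplication corresponds to translation in $H^1(S,\partial S)$, which accounts for the uniqueness up to addition of a constant.

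The first step is to check that the gauge action on $\scrA_{\mathit{flat}}(S,\{a_i\})$ is free. A stabilizer $\Phi$ of $A$ must be covariantly constant, hence determined by its value $g$ at any chosen basepoint, and $g$ must commute with the entire holonomy representation of $A$. By Theorem \ref{th:teichmuller}, that representation is conjugate to the holonomy of a hyperbolic structure on $S$, so it is faithful and non-abelian (since $\chi(S)<0$); moreover, by Corollary \ref{th:all-hyperbolic} every non-contractible holonomy is hyperbolic. Two non-commuting hyperbolic elements have trivial joint centralizer in $G$, so $g = \Id$ and $\Phi = \Id$.

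Next, I would combine freeness with Corollary \ref{th:teichmuller1b}, which identifies the quotient $\scrM_{\mathit{flat}}(S,\{a_i\}) \iso \bR^{-3\chi(S)-|\pi_0(\partial S)|}$ and in particular shows that it is (smoothly) contractible. In the weak sense of Convention \ref{th:weak-homotopy}, the quotient map $q:\scrA_{\mathit{flat}}(S,\{a_i\}) \to \scrM_{\mathit{flat}}(S,\{a_i\})$ then behaves as a fibration with fibre $\scrG(S,\{a_i\}) \cdot A_0 \iso \scrG(S,\{a_i\})$: given a smooth map $f: P \to \scrA_{\mathit{flat}}(S,\{a_i\})$ from a compact manifold, a smooth contraction of $q \circ f$ to $[A_0]$ can be lifted to a smooth homotopy of $f$ inside $\scrA_{\mathit{flat}}(S,\{a_i\})$ by applying a $P \times [0,1]$-family of gauge transformations; freeness of the action ensures that such a lift, once its initial value is fixed, is unique. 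The endpoint of the lifted homotopy lies in the orbit of $A_0$, i.e.\ in the image of $\mu_{A_0}$; an analogous argument for homotopies between maps into that orbit shows that $\mu_{A_0}$ induces a bijection on the relevant homotopy sets, hence is a weak homotopy equivalence.

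The main technical obstacle is producing the parameter-dependent gauge fixing used in the lift, i.e.\ constructing smoothly-varying local sections of $q$ over a compact parameter space $P$. The natural approach is to invoke a slice theorem for the free action of the (infinite-dimensional) gauge group on the (affine) space of flat connections, or equivalently to solve an elliptic gauge-fixing equation transverse to the orbits; the smooth manifold structure on $\scrM_{\mathit{flat}}(S,\{a_i\})$ coming from Teichmüller theory (Corollary \ref{th:teichmuller1b}) guarantees that such slices exist and can be chosen to depend smoothly on parameters. With that in hand, all remaining arguments are formal consequences of freeness plus contractibility of the quotient.
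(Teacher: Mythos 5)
Your proposal is correct and follows essentially the same route as the paper: the paper explicitly states that the proposition is an immediate consequence of Corollary \ref{th:teichmuller1b} (contractibility of the quotient $\scrM_{\mathit{flat}}(S,\{a_i\})$) and \eqref{eq:gauge-2} (the identification $\scrG(S,\{a_i\}) \htp H^1(S,\partial S)$), which is precisely the chain of reductions you carry out, with the freeness of the gauge action and the slice/gauge-fixing step supplying the fibration property needed to make the "obvious" argument rigorous. The only quibbles are cosmetic: different basepoints give orbit maps that differ by \emph{right} translation rather than left (irrelevant once one passes to the abelian $H^1(S,\partial S)$), and one may observe that freeness is a genuine feature of the $\chi(S)<0$ case that distinguishes it from the annulus of Proposition \ref{th:z-homotopy}, where the stabilizer is a (contractible) copy of $\bR$.
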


The statement is along the same lines as our previous Proposition \ref{th:z-homotopy} for the cylinder. In parallel with Addendum \ref{th:z-homotopy-2}, one can sometimes use rotation numbers along paths which connect different boundary components to reduce the ambiguity in the choice of the map \eqref{eq:htp}. This relies on the following:

\begin{lemma} \label{th:two-agree}
In the situation of Theorem \ref{th:teichmuller}, choose two distinct boundary components, and assume that, when we identify them in an orientation-reversing way, the boundary conditions agree. More precisely, denoting our boundary components by ``left'' and ``right'', what we want is:
\begin{equation} \label{eq:left-right}
\begin{aligned}
&
\left\{ 
\begin{aligned} 
& \epsilon_{\mathit{left}}: S^1 \stackrel\iso\longrightarrow \partial_{\mathit{left}} S && \text{orientation-reversing}, \\
& \epsilon_{\mathit{right}}: S^1 \stackrel\iso\longrightarrow \partial_{\mathit{right}} S && \text{orientation-preserving.} 
\end{aligned}
\right.
\\
& \quad \text{such that} \quad \epsilon_{\mathit{left}}^* a_{\mathit{left}} = \epsilon_{\mathit{right}}^* a_{\mathit{right}} \in \Omega^1(S^1,\frakg).
\end{aligned}
\end{equation}
Then, parallel transport along a path
\begin{equation} \label{eq:connecting-path}
c: [0,1] \longrightarrow S, \quad c(0) = \epsilon_{\mathit{left}}(0), \;\; c(1) = \epsilon_{\mathit{right}}(0),
\end{equation}
associates to $A \in \scrA_{\mathit{flat}}(S,\{a_i\})$ a hyperbolic element of $G$.
\end{lemma}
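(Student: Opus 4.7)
The plan is to reduce the statement to Corollary \ref{th:all-hyperbolic} by an HNN-type gluing. Form the quotient surface $S' = S/\!\sim$ in which $\epsilon_{\mathit{left}}(\theta) \sim \epsilon_{\mathit{right}}(\theta)$ for every $\theta \in S^1$. Since one identification reverses and the other preserves orientation, the induced identification of $\partial_{\mathit{left}}S$ with $\partial_{\mathit{right}}S$ is orientation-reversing, so the gluing is compatible with boundary orientations and produces an oriented surface $S'$. The matching condition $\epsilon_{\mathit{left}}^* a_{\mathit{left}} = \epsilon_{\mathit{right}}^* a_{\mathit{right}}$ ensures that $A$ descends to a flat connection $A'$ on $S'$ (after, if one is fussy, a preliminary homotopy making $A$ a collar pullback near the two boundaries, which does not affect holonomies). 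The path $c$ closes up to a loop $c' \subset S'$, whose $A'$-holonomy is precisely the parallel transport element attached to $c$ by $A$.

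Next I would verify that Goldman's extremal hypothesis (Theorem \ref{th:teichmuller}) is inherited by $(S', A')$. First, $\chi(S') = \chi(S) < 0$, since gluing two boundary circles changes Euler characteristic by $-\chi(S^1) = 0$. Second, the boundary holonomies of $S'$ are just the $g_i$ for $i \neq \mathit{left},\mathit{right}$, all hyperbolic by assumption. Third, and this is the key point, the orientation conventions in \eqref{eq:left-right} force $\tilde{g}_{\mathit{right}} = \tilde{g}_{\mathit{left}}^{-1}$ in $\tilde{G}$, because the preferred lift arises from lifting the parallel-transport path traversed in the boundary orientation, and pulling back to $S^1$ these two traversals go in opposite senses. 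Consequently $\mathrm{rot}(a_{\mathit{left}}) + \mathrm{rot}(a_{\mathit{right}}) = 0$, so
\begin{equation*}
\sum_{i \in \pi_0(\partial S')} \mathrm{rot}(a_i) \;=\; \sum_{i \in \pi_0(\partial S)} \mathrm{rot}(a_i) \;=\; \pm \chi(S) \;=\; \pm\chi(S'),
\end{equation*}
and $(S',A')$ lies in the setting of Theorem \ref{th:teichmuller}.

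It remains to see that $c'$ is non-contractible in $S'$, after which Corollary \ref{th:all-hyperbolic} immediately yields that its holonomy, and hence $P_c$, is hyperbolic. Let $\gamma' \subset S'$ denote the image of the glued boundary circle. Since $c(0)$ and $c(1)$ lie on distinct components of $\partial S$ and $c$ meets $\partial S$ only at its endpoints, after a small interior perturbation $c'$ meets $\gamma'$ transversally in a single point. Thus $[c'] \cdot [\gamma'] = \pm 1$ in $H_1(S';\bZ)$, so $[c'] \neq 0$ and $c'$ is non-contractible.

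The only real subtlety in the argument is keeping the orientation bookkeeping straight so that the rotation numbers of the two glued sides cancel, since without this cancellation Goldman's hypothesis would generally fail on $S'$ and the conclusion would be false. Everything else---the descent of the connection to $S'$, the preservation of Euler characteristic under circle gluing, and the homological non-triviality of $c'$---is routine.
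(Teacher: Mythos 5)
Your proof is correct and follows the same route as the paper: glue $\partial_{\mathit{left}}S$ to $\partial_{\mathit{right}}S$, observe that the flat connection descends and that Theorem \ref{th:teichmuller} still applies, and invoke Corollary \ref{th:all-hyperbolic} on the closed-up loop $c'$. You have merely spelled out the routine verifications (preservation of $\chi$, cancellation of the two rotation numbers, nontriviality of $[c']$) that the paper leaves implicit.
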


\begin{proof}
Glue together $\partial_{\mathit{left}} S$ and $\partial_{\mathit{right}} S$ to obtain a surface of genus one higher than $S$. Given any $A \in \scrA_{\mathit{flat}}(S,\{a_i\})$, that surface inherits a flat connection, to which Theorem \ref{th:teichmuller} applies. The holonomy of the glued connection around any non-contractible loop must be hyperbolic, by Corollary \ref{th:all-hyperbolic}; in particular, this holds for the loop obtained by identifying the endpoints of $c$.
\end{proof}

There is a variation of that idea, which involves the following terminology. For any nonzero integer $\mu$, the $\mu$-fold cover $a^\mu$ of some $a \in \Omega^1(S^1,\frakg)$ is defined to be the pullback by the standard $\mu$-fold self-map of the circle:
\begin{equation} \label{eq:a01-cover}
\text{if } a = a_t \mathit{dt}, \quad \text{then } a^\mu = \mu a_{\mu t}\,\mathit{dt}. 
\end{equation}

\begin{lemma} \label{th:two-are-powers}
Suppose that $S$ has at least three boundary components, and that \eqref{eq:rot-chi} holds. Choose parametrizations of two boundary components, as before, but now assume that
\begin{equation}
\left\{
\begin{aligned}
& \epsilon_{\mathit{left}}^*a_{\mathit{left}} = a^{\mu_{\mathit{left}}}, \\
& \epsilon_{\mathit{right}}^*a_{\mathit{right}} = a^{\mu_{\mathit{right}}},
\end{aligned}
\right.
\quad \text{for some $a \in \Omega^1(S^1,\frakg)$ and $\mu_{\mathit{left}},\mu_{\mathit{right}} > 0$.}
\end{equation}
Then, parallel transport along any path \eqref{eq:connecting-path} is again hyperbolic.
\end{lemma}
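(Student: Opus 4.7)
The plan is to reduce to Lemma \ref{th:two-agree} by passing to a finite unbranched cover $\pi:\tilde S\to S$ on which the two relevant boundary connections pull back to the common form $a^{\mu_{\mathit{left}}\mu_{\mathit{right}}}$ on suitable boundary circles. To construct such a cover, set $d=\mu_{\mathit{left}}\mu_{\mathit{right}}$ and define $\phi:\pi_1(S)\to\bZ/d$ by $\phi([\partial_{\mathit{left}} S])=\mu_{\mathit{left}}$, $\phi([\partial_{\mathit{right}} S])=\mu_{\mathit{right}}$, with values on the remaining boundary loops (at least one of which is available by the hypothesis $|\pi_0(\partial S)|\geq 3$) chosen to satisfy $\sum_i\phi([\partial_i S])\equiv 0\pmod d$. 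Let $\tilde S$ be a connected component of the resulting cover. Because $\mu_{\mathit{left}}$ has order $\mu_{\mathit{right}}$ in $\bZ/d$, each boundary circle of $\tilde S$ above $\partial_{\mathit{left}} S$ is a $\mu_{\mathit{right}}$-fold cover and hence carries the pullback connection $a^{\mu_{\mathit{left}}\mu_{\mathit{right}}}$; the symmetric statement holds above $\partial_{\mathit{right}} S$. Multiplicativity of rotation numbers under cyclic covers (see \eqref{eq:homogeneity}) then gives $\sum_{\partial\tilde S}\mathrm{rot}=(\deg\pi)\sum_{\partial S}\mathrm{rot}=\pm(\deg\pi)\chi(S)=\pm\chi(\tilde S)$, so the pulled-back flat connection on $\tilde S$ satisfies the hypothesis of Theorem \ref{th:teichmuller}.

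Next, I would lift $c$ to a path $\tilde c$ in $\tilde S$ starting at some chosen preimage of $\epsilon_{\mathit{left}}(0)$; the endpoint $\tilde c(1)$ is then a specific preimage of $\epsilon_{\mathit{right}}(0)$, lying on a particular boundary circle of $\tilde S$ above $\partial_{\mathit{right}} S$. Following the gluing argument in the proof of Lemma \ref{th:two-agree}, I would glue the lifted circle containing $\tilde c(0)$ to the one containing $\tilde c(1)$ via orientation-reversing and orientation-preserving parametrizations $\epsilon_{\mathit{left}}',\epsilon_{\mathit{right}}':S^1\to\tilde S$ chosen so that (i) both pull back their respective lifted connections to the common form $a^{\mu_{\mathit{left}}\mu_{\mathit{right}}}$ on $S^1$, and (ii) $\epsilon_{\mathit{left}}'(0)=\tilde c(0)$ and $\epsilon_{\mathit{right}}'(0)=\tilde c(1)$. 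The freedom to realize (ii) while preserving (i) is provided by the $\bZ/(\mu_{\mathit{left}}\mu_{\mathit{right}})$-rotational symmetry of $a^{\mu_{\mathit{left}}\mu_{\mathit{right}}}$ on $S^1$ inherent from its definition \eqref{eq:a01-cover}, acting independently on each of the two parametrizations and permuting transitively the $\mu_{\mathit{right}}$ preimages of $c(0)$ on the chosen lifted $\partial_{\mathit{left}}$-circle and the $\mu_{\mathit{left}}$ preimages of $c(1)$ on the chosen lifted $\partial_{\mathit{right}}$-circle. The orientation conventions in \eqref{eq:left-right} ensure that the two glued circles contribute opposite rotation numbers, so the glued surface again satisfies extremal rotation; in it $\tilde c$ becomes a non-contractible loop crossing the new handle, and Corollary \ref{th:all-hyperbolic} gives that the holonomy of this loop, which is the parallel transport along $\tilde c$ and hence along $c$, is hyperbolic.

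The hardest step will be the combinatorial bookkeeping in the cover construction, particularly when $\gcd(\mu_{\mathit{left}},\mu_{\mathit{right}})>1$ and the natural $\bZ/d$-cover is disconnected: one must extract a connected component, verify that its boundary preimages above $\partial_{\mathit{left}} S$ and $\partial_{\mathit{right}} S$ still inherit the common connection $a^{\mu_{\mathit{left}}\mu_{\mathit{right}}}$, and confirm that enough rotational symmetry survives to permit the basepoint alignment. Once this is checked, the remainder is a formal adaptation of the proof of Lemma \ref{th:two-agree}.
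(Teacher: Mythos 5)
Your proposal is correct and follows essentially the same route as the paper: construct a $\bZ/(\mu_{\mathit{left}}\mu_{\mathit{right}})$-cover determined by a homomorphism sending $[\partial_{\mathit{left}}S]\mapsto\mu_{\mathit{left}}$ and $[\partial_{\mathit{right}}S]\mapsto\mu_{\mathit{right}}$ (using the third boundary component to close up the relation), pull back the flat connection so that the two relevant lifted circles both carry $a^{\mu_{\mathit{left}}\mu_{\mathit{right}}}$, and then glue and invoke Corollary \ref{th:all-hyperbolic} as in Lemma \ref{th:two-agree}. The disconnectedness worry you flag at the end resolves automatically: the order of $\phi([\partial_{\mathit{left}}S])$ inside the image subgroup equals its order in $\bZ/d$, so each boundary circle of any connected component over $\partial_{\mathit{left}}S$ is still a $\mu_{\mathit{right}}$-fold cover (and likewise on the right), and the extremal rotation-number count is preserved under any connected finite cover.
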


\begin{proof}
Let $\mu = \mu_{\mathit{left}}\mu_{\mathit{right}}$. There is a $\bZ/\mu$-cover $\tilde{S} \rightarrow S$, such that the preimage of $\partial_{\mathit{left}} S$ has $\mu_{\mathit{left}}$ connected components, and similarly for $\partial_{\mathit{right}} S$ (here, we are using the existence of a third boundary component). Choose one component in each preimage, denoting them by $\partial_{\mathit{left}} \tilde{S}$ and $\partial_{\mathit{right}} \tilde{S}$. The pullback of our boundary conditions to those components can be identified with $a^\mu$, in both cases. Hence, if we have $A \in \scrA_{\mathit{flat}}(S,\{a_i\})$, we can pull it back to $\tilde{S}$, and it will then induce a connection  on surface obtained by gluing together $\partial_{\mathit{left}}\tilde{S}$ and $\partial_{\mathit{right}}\tilde{S}$. One can choose the components and gluing map so that a lift of our path $c$ becomes a loop on the glued surface. With this in mind, the same argument as in Lemma \ref{th:two-agree} goes through.
\end{proof}

Returning to our discussion of \eqref{eq:htp}, these Lemmas see the following use:

\begin{addendum} \label{th:partial-fix}
In the situation of Proposition \ref{th:connections-rel-boundary}, suppose that additionally, Lemma \ref{th:two-agree} or Lemma \ref{th:two-are-powers} applies. Fix a path \eqref{eq:connecting-path}. Then, one can choose the map \eqref{eq:htp} so that its composition with $\int_c: H^1(S,\partial S) \rightarrow \bZ$ equals $\mathrm{rot}_c: \scrA_{\mathit{flat}}(S,\{a_i\}) \rightarrow \bZ$.
\end{addendum}

We now turn to the action of diffeomorphisms $\phi \in \mathit{Diff}(S,\partial S)$. Under \eqref{eq:htp}, the pullback action $\phi^*$ on $\scrA_{\mathit{flat}}(S,\{a_i\})$ corresponds to an affine automorphism, whose linear part is the usual action of $\phi$ on $H^1(S,\partial S)$. In particular, if $\phi$ acts trivially on $H^1(S,\partial S)$, its pullback action on $\scrA_{\mathit{flat}}(S,\{a_i\})$ can be described up to homotopy as a translation by a certain element
\begin{equation} \label{eq:translation-class}
T(\phi) \in H^1(S,\partial S).
\end{equation}
We will need the following computation, which can be reduced to Addendum \ref{th:z-homotopy-3} by looking at a tubular neighbourhood of the twisting curve:

\begin{addendum} \label{th:t-class}
In the situation of Proposition \ref{th:connections-rel-boundary}, let $\tau_d$ be the Dehn twist along a nontrivial simple closed curve $d$ which separates the surface into two parts $S_{\mathit{left}}$ and $S_{\mathit{right}}$; more precisely, we choose an orientation of $d$, and take $S_{\mathit{right}} \subset S$ to be the part for which that orientation agrees with the boundary orientation. Then, the associated class \eqref{eq:translation-class} is
\begin{equation} \label{eq:t-class}
\begin{aligned}
T(\tau_d) 
& = \Big( \pm \chi(S_{\mathit{right}}) - \!\! \sum_{\partial_i S \subset \partial S_{\mathit{right}}} \mathrm{rot}(a_i) \Big)[d]
\\
 & = \Big(\mp \chi(S_{\mathit{left}}) + \!\! \sum_{\partial_i S \subset \partial S_{\mathit{left}}} \mathrm{rot}(a_i) \Big)[d].
\end{aligned}
\end{equation}
\end{addendum}

The coefficient in front of $[d]$ in \eqref{eq:t-class} is the rotation number of our connections along $d$, which is prescribed by \eqref{eq:rot-chi} (the sign there determines those in our formula) and the Milnor-Wood inequality for $S_{\mathit{left}}$, $S_{\mathit{right}}$. 

%
%

\subsection{Spaces of nonnegatively curved connections}
We begin by going back to the basic relationship between curvature and the variation of parallel transport. 

\begin{lemma} \label{th:connections-on-the-square}
Given a nonnegatively curved connection on $[0,1]^2$, let $g(s) \in G$ be obtained by parallel transport along the family of paths $c_s$ from Figure \ref{fig:path}. Then $g(s)$ is a nonpositive path. 
Conversely, fix a nonpositive path $g(s)$, and a connection $a_0 \in \Omega^1([0,1],\frakg)$ whose parallel transport along $[0,1]$ yields $g(0)$. Then there is a nonnegatively curved connection $A$ on $[0,1]^2$, with $A|(\{0\} \times [0,1]) = a_0$, such that parallel transport along each $c_s$ recovers $g(s)$. Moreover, the space of all such $A$ is weakly contractible.
\end{lemma}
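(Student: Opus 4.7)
I would reduce to the temporal gauge $A = b(s,t)\,dt$, which can be attained by the unique gauge transformation that restricts to the identity on $\{0\}\times[0,1]$ and hence preserves $a_0$. In this gauge $a_0 = b(0,t)\,dt$ and the curvature simplifies to $F_A = -\partial_s b\,ds\wedge dt$, so nonnegative curvature becomes the pointwise condition $\partial_s b(s,t) \in \frakg_{\leq 0}$. Interpreting $c_s$ as the vertical family from Figure \ref{fig:path}, the parallel transport $g(s) = g(s,1)$ is determined by solving $\partial_t g = bg$ with $g(s,0) = \Id$. A standard variation-of-parameters calculation, setting $X = (\partial_s g)g^{-1}$ and conjugating by $g$, gives
\begin{equation*}
g'(s)\,g(s)^{-1} \;=\; \int_0^1 \mathrm{Ad}_{g(s)\,g(s,t)^{-1}}\bigl(\partial_s b(s,t)\bigr)\,dt,
\end{equation*}
and since the adjoint $G$-action preserves $\frakg_{\leq 0}$, the forward direction is immediate.

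For the converse, given the nonpositive path $g(s)$ and $a_0$, the data of a temporal-gauge connection with the prescribed properties is equivalent to a smooth map $g\colon [0,1]^2 \to G$ with $g(s,0) = \Id$, $g(s,1) = g(s)$, and $g(0,t)$ equal to the parallel transport of $a_0$ on $[0,t]$, subject to $\partial_s b(s,t) \in \frakg_{\leq 0}$ where $b = (\partial_t g)g^{-1}$. To exhibit such a map I would mimic the construction of Lemma \ref{th:path}: prescribe $\partial_s g = g\cdot \eta_s(t)$ for a smooth family $\eta_s(t) \in \frakg_{\leq 0}$ chosen with $\eta_s(0) = 0$ (to preserve $g(s,0) = \Id$) and $\eta_s(1) = g(s)^{-1}g'(s)$ (which lies in $\frakg_{\leq 0}$ by hypothesis, so that the endpoint condition $g(s,1) = g(s)$ is met). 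The remaining freedom in the $t$-profile of $\eta_s$ is absorbed by a partition-of-unity argument on the convex cone $\frakg_{\leq 0}$. A short calculation analogous to the mixed-partials identity $\partial_s b = \partial_t X + [X,b]$ then verifies that the resulting $\partial_s b$ itself lies in $\frakg_{\leq 0}$ pointwise.

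For the weak contractibility, after gauge-fixing the problem reduces to the space of smooth $g\colon [0,1]^2 \to G$ with prescribed values on three sides of $\partial[0,1]^2$ and satisfying the convex constraint $\partial_s b \in \frakg_{\leq 0}$. The unconstrained space of such $g$ is contractible (maps extending given boundary data on a contractible portion of the boundary), and intersecting with the nonempty closed convex subset $\{\partial_s b \in \frakg_{\leq 0}\}$ preserves weak contractibility by straight-line homotopies in the appropriate parametrization. The main obstacle is the converse existence statement: the hypothesis ``$g(s)$ is nonpositive'' controls only the integrated quantity $g'(s)g(s)^{-1}$, while nonnegative curvature demands the much stronger pointwise condition $\partial_s b(s,t) \in \frakg_{\leq 0}$ for all $(s,t)$. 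Bridging this gap is exactly the kind of work done in Lemma \ref{th:path}, where prescribed motion on conjugacy classes was realized by nonnegative paths; the same ODE/convex-combination toolkit resolves it here.
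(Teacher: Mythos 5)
Your gauge-fixing to the temporal gauge $A = b(s,t)\,dt$, the identification of nonnegative curvature with $\partial_s b \in \frakg_{\leq 0}$, and the variation-of-parameters formula for $g'(s)g(s)^{-1}$ are all correct and coincide with the paper's setup; the forward direction is fine. The gap is in the converse construction: the constraint you impose, $\eta_s(t) := g(s,t)^{-1}\partial_s g(s,t) \in \frakg_{\leq 0}$ pointwise, is the wrong one. The mixed-partials identity you quote actually gives
$\partial_s b(s,t) = \mathrm{Ad}_{g(s,t)}\bigl(\partial_t\eta_s(t)\bigr)$,
so nonnegative curvature is equivalent to $\partial_t\eta_s(t)\in\frakg_{\leq 0}$ pointwise (this is the paper's $\Gamma = \partial_t(\Pi^{-1}\partial_s\Pi)$ lying in the nonpositive cone). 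Together with $\eta_s(0)=0$, that condition is strictly stronger than $\eta_s(t)\in\frakg_{\leq 0}$: a path that stays inside a convex cone with apex $0$ need not have derivative in the cone. For instance, $\eta_s(t) = f(t)\gamma$ with $\gamma\in\frakg_{<0}$, $f(0)=0$, $f(1)=1$, $f\geq 0$ but non-monotone satisfies your constraints, yet $\partial_t\eta_s = f'(t)\gamma$ lies in $\frakg_{\geq 0}\setminus\{0\}$ wherever $f'<0$. So the connection built from such an $\eta_s$ is not nonnegatively curved, and the ``short calculation'' you invoke at the end of that paragraph could not close.

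The same misidentification undermines the contractibility step: the locus $\{\partial_s b\in\frakg_{\leq 0}\}$ is not convex as a subset of the space of maps $g$ (nor of paths $\eta_s$), so ``straight-line homotopies in the appropriate parametrization'' needs the parametrization spelled out. The paper's remedy, which fixes both problems at once, is to parametrize the data by $\Gamma(s,t) = \partial_t\eta_s(t)$ rather than by $\eta_s$ or $g$: the admissible set is then $\{\Gamma(s,t)\in\frakg_{\leq 0}\ \text{for all } (s,t),\ \int_0^1\Gamma(s,t)\,dt = g(s)^{-1}g'(s)\}$, a nonempty convex subset of a vector space, hence contractible; and $(\Gamma, a_0)$ determine $\Pi$ and thus $A$ by integrating $\partial_s\Pi = \Pi\cdot\int_0^t\Gamma\,d\tau$. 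If you replace your pointwise constraint on $\eta_s$ with the constraint on its $t$-derivative, your argument becomes the paper's.
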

\begin{figure}
\begin{center}
\begin{picture}(0,0)%
\includegraphics{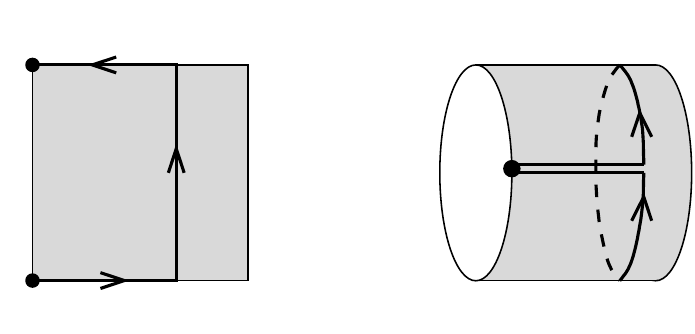}%
\end{picture}%
\setlength{\unitlength}{3947sp}%
\begingroup\makeatletter\ifx\SetFigFont\undefined%
\gdef\SetFigFont#1#2#3#4#5{%
  \reset@font\fontsize{#1}{#2pt}%
  \fontfamily{#3}\fontseries{#4}\fontshape{#5}%
  \selectfont}%
\fi\endgroup%
\begin{picture}(3328,1594)(1036,-1184)
\put(1051,239){\makebox(0,0)[lb]{\smash{{\SetFigFont{10}{12.0}{\familydefault}{\mddefault}{\updefault}{$(0,1)$}%
}}}}
\put(1801,239){\makebox(0,0)[lb]{\smash{{\SetFigFont{10}{12.0}{\familydefault}{\mddefault}{\updefault}{$(s,1)$}%
}}}}
\put(1801,-1111){\makebox(0,0)[lb]{\smash{{\SetFigFont{10}{12.0}{\familydefault}{\mddefault}{\updefault}{$(s,0)$}%
}}}}
\put(1051,-1111){\makebox(0,0)[lb]{\smash{{\SetFigFont{10}{12.0}{\familydefault}{\mddefault}{\updefault}{$(0,0)$}%
}}}}
\end{picture}%
\caption{\label{fig:path}The paths $c_s$ on the square and the cylinder.}
\end{center}
\end{figure} 

\begin{proof}
We can work up to gauge transformations which are trivial on $\{0\} \times [0,1]$ (the group of such gauge transformations is contractible). Hence, we may restrict our attention to connections which are trivial in $s$-direction, meaning of the form $A(s,t) \mathit{dt}$; the curvature is then just given by $-\partial_s A$. Let's write $\Pi(s,t) \in G$ for the parallel transport along the straight line from $(s,0)$ to $(s,t)$. As in \eqref{eq:parallel-transport}, this is given by
\begin{equation} \label{eq:a-from-phi}
\Pi(s,0) = \Id, \quad (\partial_t \Pi) \Pi^{-1} = A. 
\end{equation}
Set 
\begin{equation} \label{eq:big-gamma}
\Gamma(s,t) = \partial_t (\Pi^{-1}\, \partial_s \Pi) = \Pi^{-1} (\partial_s A) \Pi \in \frakg.
\end{equation}
Integrating out yields
\begin{equation} \label{eq:phi-equation}
\Pi^{-1} (\partial_s \Pi) = \int_0^t \Gamma(s,\tau) \mathit{d\tau}.
\end{equation}
If the curvature is nonnegative, $\Gamma$ is nonpositive, which by \eqref{eq:phi-equation} implies that $g(s) = \Pi(s,1)$ is a nonpositive path. 

In the other direction, given a nonpositive path, set $\gamma = g^{-1} \,\partial_s g \in \frakg_{\leq 0}$. For $A$ to have nonnegative curvature and the desired parallel transport maps, we need 
\begin{equation} \label{eq:gamma-constraints}
\Gamma(s,t) \in \frakg_{\leq 0} \quad
\text{and} \quad
\int_0^1 \Gamma(s,t) \mathit{dt} = \gamma(s).
\end{equation}
The space of $\Gamma$ satisfying \eqref{eq:gamma-constraints} is clearly contractible. Given $\Gamma$ and the parallel transport maps $\Pi(0,t)$ coming from the given $a_0$, one gets maps $\Pi(s,t)$ by thinking of \eqref{eq:phi-equation} in reverse. This means that we set $\Xi(s,t) = \int_0^t \Gamma(s,\tau) d\tau$, and then define $\Pi(s,t)$ as the solution of the ODE
\begin{equation}
\partial_s \Pi = \Pi\, \Xi.
\end{equation}
Once one has the family $\Pi(s,t)$, there is a unique connection satisfying \eqref{eq:a-from-phi}, which then automatically restricts to $a_0$ on $\{0\} \times S^1$.
\end{proof}

\begin{lemma} \label{th:connections-on-the-cylinder}
Fix a nonpositive path $g(s)$, and a connection $a_0 \in \Omega^1(S^1,\frakg)$ whose holonomy is $g(0)$. Then there is a nonnegatively curved connection $A$ on $S = [0,1] \times S^1$, with $A|(\{0\} \times S^1) = a_0$, such that the holonomy along each $c_s$ (see again Figure \ref{fig:path}) is $g(s)$. Moreover, the space of all such $A$ is weakly contractible.
\end{lemma}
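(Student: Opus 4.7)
The plan is to imitate the proof of Lemma~\ref{th:connections-on-the-square} essentially verbatim, with the additional bookkeeping that we work on the universal cover $[0,1] \times \bR$ of the cylinder and track an equivariance condition under the deck transformation $(s,t) \mapsto (s,t+1)$. As in the square case, I would first reduce to the gauge $A_s = 0$ via gauge transformations trivial on $\{0\} \times S^1$; these form a contractible group, and the gauge-fixing ODE $\partial_s \Phi = -\Phi A_s$ automatically produces a $\Phi$ periodic in $t$. In this gauge $A = A_t(s,t)\,dt$ with $A_t$ periodic in $t$, and nonnegative curvature is the pointwise requirement $\partial_s A_t \in \frakg_{\leq 0}$.

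Passing to the universal cover, let $\Pi(s,t) \in G$ be the parallel transport defined by $\Pi(s,0) = \Id$ and $\partial_t \Pi = A_t \Pi$, and write $g(s) = \Pi(s,1)$. Uniqueness of ODE solutions together with the periodicity of $A_t$ forces the equivariance $\Pi(s,t+1) = \Pi(s,t) g(s)$. With $\Gamma(s,t) = \partial_t(\Pi^{-1} \partial_s \Pi) = \Pi^{-1} (\partial_s A_t) \Pi$ as before, nonnegative curvature reads $\Gamma \in \frakg_{\leq 0}$; the equivariance of $\Pi$ induces the (adjoint-invariant and hence consistent) relation $\Gamma(s,t+1) = g(s)^{-1} \Gamma(s,t) g(s)$; and integrating yields $g(s)^{-1} g'(s) = \int_0^1 \Gamma(s,t)\,dt$, so $g$ is nonpositive, as claimed.

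For the converse, given a nonpositive $g(s)$ and the initial datum $a_0$, set $\gamma(s) = g(s)^{-1} g'(s) \in \frakg_{\leq 0}$. The space of smooth equivariant $\Gamma$ valued in $\frakg_{\leq 0}$ with $\int_0^1 \Gamma\,dt = \gamma$ is the intersection of an affine subspace with a convex cone, hence convex; it is nonempty (take $\Gamma(s,t) = \rho(t)\gamma(s)$ on $[0,1]^2$ for a nonnegative bump function $\rho$ supported in $(0,1)$ with $\int \rho = 1$, then extend by equivariance, the support condition ensuring smoothness at integer $t$). Recover $\Pi$ from $\Gamma$ and the $\Pi(0,t)$ supplied by $a_0$ by solving $\partial_s \Pi = \Pi \Xi$ with $\Xi(s,t) = \int_0^t \Gamma(s,\tau)\,d\tau$, and set $A_t = (\partial_t \Pi) \Pi^{-1}$. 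The principal obstacle, compared to the square, is verifying that this $A_t$ genuinely descends to the cylinder (i.e.\ is periodic in $t$): this reduces to the equivariance $\Pi(s,t+1) = \Pi(s,t) g(s)$, which one checks by observing that both functions of $t$ satisfy the ODE $\partial_s(\cdot) = (\cdot)\,\Xi(s,t+1)$ with the same initial condition at $s=0$, thanks to the identity $\Xi(s,t+1) = g(s)^{-1}\Xi(s,t)g(s) + \gamma(s)$. The remaining verifications ($\Pi(s,1) = g(s)$, and weak contractibility of the total parameter space of $A$) follow exactly as in the square case, using convexity of the space of admissible $\Gamma$ together with contractibility of the gauge group used in the initial reduction.
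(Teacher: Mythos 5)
Your proposal is correct and follows essentially the same strategy as the paper's own proof: gauge-reduce to $A_s = 0$ (noting the gauge-fixing ODE preserves periodicity in $t$), pass to the universal cover, impose the $\bZ$-equivariance condition $\Gamma(s,t+1) = g(s)^{-1}\Gamma(s,t)g(s)$ on $\Gamma$, and verify $\partial_s$-periodicity of the reconstructed $A$ via the uniqueness argument with $\Xi(s,t+1) = g(s)^{-1}\Xi(s,t)g(s)+\gamma(s)$. The only differences are cosmetic — you spell out the gauge-reduction bookkeeping, the nonemptiness of the space of admissible $\Gamma$, and the two-solutions-of-the-same-ODE comparison, all of which the paper leaves implicit by reference to the square case.
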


\begin{proof}
We follow the strategy from Lemma \ref{th:connections-on-the-square}, but pull everything back to the universal cover $[0,1] \times \bR \rightarrow S$, and work $\bZ$-periodically in $t$-direction. Given $\gamma = g^{-1} \partial_s g$, consider $\Gamma : [0,1] \times \bR \rightarrow \frakg$ which satisfy \eqref{eq:gamma-constraints} as well as the periodicity condition
\begin{equation}
\Gamma(s,t+1) = g(s)^{-1} \Gamma(s,t) g(s).
\end{equation}
This means that for a solution of \eqref{eq:phi-equation},
\begin{equation} \label{eq:s-t-1}
\begin{aligned}
\Pi(s,t+1)^{-1} \partial_s \Pi(s,t+1) & = \gamma(s) + g(s)^{-1} (\Pi(s,t)^{-1} \partial_s \Pi(s,t)) g(s) 
\\ & \quad = (\Pi(s,t) g(s))^{-1} \partial_s (\Pi(s,t) g(s)).
\end{aligned}
\end{equation}
We start with $a_0 \in \Omega^1(\bR,\frakg)$ which is $\bZ$-periodic, and whose parallel transport along $[0,1]$ is $g(0)$. This means that $\Pi(0,t+1) = \Pi(0,t) g(0)$. Because of \eqref{eq:s-t-1}, we then have $\Pi(s,t+1) = \Pi(s,t) g(s)$ everywhere. As a consequence, the connection determined by $\Pi(s,t)$ through \eqref{eq:a-from-phi} is again $\bZ$-periodic.
\end{proof}

With that in hand, let us return to the spaces of nonnegatively curved connections from \eqref{eq:moduli-spaces-2}. We remind the reader of the standing assumption for such spaces, which is that the boundary conditions $a_i$ are connections with hyperbolic holonomy.

\begin{prop} \label{th:nonnegative-connection}
Take $S = S^1 \times [0,1]$. As boundary conditions, fix $a_0,a_1 \in \Omega^1(S^1,\frakg)$ with the same rotation number $\mathrm{rot}(a_0) = \mathrm{rot}(a_1) = r$. Then there is a weak homotopy equivalence
\begin{equation} \label{eq:first-h}
\scrA_{\geq 0}(S,a_0,a_1) \htp \bZ,
\end{equation}
unique up to adding a constant on the right, and which is compatible with the action of $\scrG(a_0,a_1) \htp \bZ$. More explicitly, by associating to a connection $A$ its holonomies around the loops $d_s: S^1 \rightarrow S$, $d_s(t) = (s,t)$, one gets a weak homotopy equivalence
\begin{equation} \label{eq:connections-to-paths}
\scrA_{\geq 0}(S,a_0,a_1) \longrightarrow \scrP(G_{\mathit{hyp}},g_0,g_1).
\end{equation}
\end{prop}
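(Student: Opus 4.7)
The strategy is to show directly that the holonomy map \eqref{eq:connections-to-paths} is a weak homotopy equivalence, with Lemma \ref{th:connections-on-the-cylinder} as the main engine. The two claims of the proposition are compatible because $\scrP(G_{\mathit{hyp}}, g_0, g_1) \htp \Omega G_{\mathit{hyp}} \htp \Omega S^1 \htp \bZ$, so a weak equivalence \eqref{eq:connections-to-paths} automatically produces \eqref{eq:first-h}.

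\textbf{Well-definedness of the map.} Given $A \in \scrA_{\geq 0}(S, a_0, a_1)$, Lemma \ref{th:connections-on-the-cylinder} implies $g_A(s) := \mathrm{hol}_A(d_s)$ is a nonpositive path from $g_0$ to $g_1$. Lifting to $\tilde G$ based at $\tilde g_0$, the rotation number is non-increasing and takes the value $r$ at both endpoints, hence is constantly $r$ along the path. Elliptic lifts have non-integer rotation, so $g_A$ avoids $G_{\mathit{ell}}$. The possibility that $g_A$ touches the closed codimension-one subset $G_{\mathit{par}} \cup \{\Id\}$ can be removed by a small perturbation: adding a strictly positive multiple of an auxiliary nonnegative form (modeled on Lemma \ref{th:bound-vector-fields}) and then correcting by a gauge transformation to restore the boundary conditions pushes the path into $G_{\mathit{hyp}}$ without changing the weak homotopy type.

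\textbf{Weak equivalence by parametric lifting.} For a smooth family $\{g_p\}_{p \in P}$ in $\scrP(G_{\mathit{hyp}}, g_0, g_1)$, I first homotope to a family of nonpositive paths (using that the inclusion $\scrP_{\leq 0}(G_{\mathit{hyp}}, g_0, g_1) \hookrightarrow \scrP(G_{\mathit{hyp}}, g_0, g_1)$ is a weak equivalence, once one accounts for the gauge-induced winding-number shifts described in the obstacle below). Lemma \ref{th:connections-on-the-cylinder} then lifts this family to a weakly contractible family of nonnegatively curved connections $A_p$ with $A_p|_{\{0\}\times S^1} = a_0$ realizing each $g_p$ as holonomy. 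The right boundary $A_p|_{\{1\}\times S^1}$ has holonomy $g_1$ but may differ from $a_1$ by a gauge transformation on $S^1$, unique modulo the $1$-parameter centralizer of $g_1$ (contractible). I apply this gauge, extended into $S$ via a collar neighborhood of $\{1\}\times S^1$, to produce a family in $\scrA_{\geq 0}(S, a_0, a_1)$ whose image under \eqref{eq:connections-to-paths} is homotopic to the original $g_p$. Reversing this construction shows that the fiber over a given path is weakly contractible.

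\textbf{Main obstacle.} The delicate issue is reconciling the apparent one-sided character of nonpositive paths in $G_{\mathit{hyp}}$ (monotone motion of fixed points on $\bR P^1$) with the full $\bZ$-worth of homotopy classes in $\scrP(G_{\mathit{hyp}}, g_0, g_1)$. The resolution is that $\scrG(a_0, a_1) \htp \bZ$ acts on $\scrA_{\geq 0}(S, a_0, a_1)$: a gauge of degree $1 \in H^1(S, \partial S)$ conjugates $g_A(s)$ by a path $\Phi(s, 0)$ in $G$ whose trace in $\pi_1(G)$ is nontrivial, so the net effect is to shift the winding class of $g_A$ by $\pm 1$. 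Combining this action with nonpositive paths covers all components of $\scrP(G_{\mathit{hyp}}, g_0, g_1)$. Verifying this compatibility carefully, together with the parametric coherence of the collar-gauge adjustment in the last step, is the technical heart of the proof; the stated $\scrG(a_0, a_1)$-equivariance of \eqref{eq:first-h} is then immediate from how such gauges restrict and conjugate the family of holonomies.
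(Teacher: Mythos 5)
Your overall plan is sound, and the key structural observation you identify — that the nonpositive paths extracted from $\scrA_{\geq 0}(S,a_0,a_1)$ via Lemma~\ref{th:connections-on-the-cylinder} are winding-constrained (attracting fixed point moves clockwise, repelling anticlockwise, so a nonpositive loop in $G_{\mathit{hyp}}$ is nullhomotopic), and that the gauge action on connections must therefore supply the full $\bZ$ of components — is correct and is also the engine behind the paper's own proof. The paper packages it more cleanly: it introduces the intermediate space $\scrA_{\geq 0}(S,a_0,\tilde C_1)$ (constraining only the conjugacy class of the right-hand holonomy), shows it is weakly contractible by Lemmas~\ref{th:connections-on-the-cylinder} and~\ref{th:pre-short}, and then compares the weak fibration $\scrA_{\geq 0}(S,a_0,a_1) \to \scrA_{\geq 0}(S,a_0,\tilde C_1) \to C_1$ with the path-loop fibration $\Omega G \to P G \to G$ using gauge orbits (the commutative diagram \eqref{eq:fi2}). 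That packaging dispenses with ever having to lift families of paths directly.

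Two spots in your write-up are wrong or muddled. First, the perturbation step is unnecessary and not clearly valid: a nonpositive path between hyperbolic conjugacy classes with constant rotation number is already confined to $G_{\mathit{hyp}}$ (the parabolic strata are one-way permeable, and passing into the elliptic locus would raise the rotation number, as in the proof of Lemma~\ref{th:pre-short}), so $g_A$ never touches $G_{\mathit{par}}\cup\{\Id\}$; moreover, the recipe you describe (add a positive form, then ``correct by a gauge'') is not obviously internal to $\scrA_{\geq 0}(S,a_0,a_1)$. Second, and more seriously, the asserted justification for the first homotopy — that ``the inclusion $\scrP_{\leq 0}(G_{\mathit{hyp}},g_0,g_1)\hookrightarrow \scrP(G_{\mathit{hyp}},g_0,g_1)$ is a weak equivalence'' — is false as stated and is precisely contradicted by the obstacle you then flag: nonpositive paths between fixed hyperbolic endpoints occupy a single winding class, not all of them. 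What is actually true is that the \emph{holonomy map itself} is a weak equivalence; it is not obtained by first lifting a path family into the nonpositive subspace and then including. The holonomy path of a general $A\in\scrA_{\geq 0}(S,a_0,a_1)$, read in the global trivialization, can have arbitrary winding and becomes nonpositive only after the gauge normalization that kills the $ds$-component (as in the proof of Lemma~\ref{th:connections-on-the-square}). Keeping these two things apart — the contractible ``nonpositive shape'' of a normalized holonomy versus the $\bZ$ of gauge windings — and verifying they assemble into a weak equivalence is exactly what \eqref{eq:fi2} does. Making the lifting argument rigorous as you have set it up would require you to reconstruct that structure, particularly the compatibility of the collar-gauge choice across a parametrizing family, which you only sketch.
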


\begin{proof}
Recall our general notation \eqref{eq:a-space-notation}: $C_1$ is the conjugacy class of the holonomy $g_1$ of $A_1$, and $\tilde{C}_1$ its lift to $\tilde{G}$. Consider the space $\scrA_{\geq 0}(S,a_0,\tilde{C}_1)$ of nonnegatively curved connections which restrict to $a_0$ on $\{0\} \times S^1$, and whose lifted holonomy around $\{1\} \times S^1$ lies in $\tilde{C}_1$ (or equivalently, the holonomy lies in $C_1$ and $\mathrm{rot}_{\partial_1S}(A) = r$). By Lemma \ref{th:connections-on-the-cylinder}, there is a weak homotopy equivalence between that and a suitable space of nonpositive paths:
\begin{equation} \label{eq:a-p}
\scrA_{\geq 0}(S,a_0,\tilde{C}_1) \htp \scrP_{\leq 0}(G,g_0,C_1)^0.
\end{equation}
Lemma \ref{th:pre-short} (applied to the reversed path) shows that the path space in \eqref{eq:a-p} is contractible. On the other hand, we have a weak fibration
\begin{equation} \label{eq:fi}
\scrA_{\geq 0}(S,a_0,a_1) \longrightarrow \scrA_{\geq 0}(S,a_0,\tilde{C}_1) \longrightarrow C_1.
\end{equation}
The last space in \eqref{eq:fi} should really be the space of connections on $S^1$ with (lifted) holonomy in $\tilde{C}_1$. By looking at parallel transport, one sees that this is weakly homotopy equivalent to the path space $\scrP(G,\mathit{id},C_1)^r$. In turn, evaluation at the endpoint yields a weak homotopy equivalence between that path space and $C_1$, which is the form in which \eqref{eq:fi} has been stated. It follows that $\scrA_{\geq 0}(S,a_0,a_1) \htp \Omega C_1 \htp \bZ$. For any $A \in \scrA_{\geq 0}(S,a_0,a_1)$, the holonomy around $\{s\} \times S^1$ is hyperbolic. This follows from Lemma \ref{th:connections-on-the-cylinder} and the behaviour of nonnegative paths, as described in Lemma \ref{th:pre-short}. Hence, \eqref{eq:connections-to-paths} makes sense. The proof that it is a homotopy equivalence involves looking at gauge transformations in the $[0,1]$-variable, as in Proposition \ref{th:z-homotopy}). More precisely, if $PG$ is the space of all paths in $G$ starting $\Id$, the gauge action on a fixed connection yields a commutative diagram extending \eqref{eq:fi},
\begin{equation}
\label{eq:fi2}
\xymatrix{
\scrA_{\geq 0}(S,a_0,a_1) \ar[r] & \scrA_{\geq 0}(S,a_0,\tilde{C}_1) \ar[r] & C_1 \\
\Omega G \ar[u] \ar[r] & PG \ar[u] \ar[r] & G \ar[u]
}
\end{equation}
Since the middle and right hand $\uparrow$ are weak homotopy equivalences, so is the left hand one, and that implies the desired statement concerning \eqref{eq:connections-to-paths}.
\end{proof}

\begin{addendum} \label{th:nonnegative-connection-addendum}
In the situation from Proposition \ref{th:nonnegative-connection}, assume additionally that the boundary holonomies belong to the same one-parameter semigroup, meaning that they are of the form
\begin{equation} \label{eq:semigroup}
g_i = e^{t_i \gamma} \quad \text{with } t_i >0 \text{ and a common } \gamma.
\end{equation}
Then there is a preferred choice of isomorphism $\pi_0(\scrP(G_{\mathit{hyp}},g_0,g_1)) \iso \bZ$, by looking at how the eigenvectors rotate; hence, the same holds for \eqref{eq:first-h}.
\end{addendum}

\begin{addendum} \label{th:dehn-twist-pullback}
In the situation of Proposition \ref{th:nonnegative-connection}, let $\tau \in \mathit{Diff}(S,\partial S)$ be the Dehn twist along $\{\half\} \times S^1$. The action of $\tau^*$ on $\scrA_{\geq 0}(S,a_0,a_1)$ corresponds to subtracting $r$ on the right hand side of \eqref{eq:first-h}.
\end{addendum}

The first of these Addenda is obvious. The second one follows from the description \eqref{eq:connections-to-paths} (alternatively, one can deform $a_1$ so that \eqref{eq:conjugate-holonomies} holds, and reduce the computation to Addendum \ref{th:z-homotopy-3}; this works because the inclusion of the flat connections into the nonnegatively curved ones is a weak homotopy equivalence, by Propositions \ref{th:z-homotopy} and  \ref{th:nonnegative-connection}).


\begin{prop} \label{th:nonnegative-connection-2}
Consider the situation from Proposition \ref{th:nonnegative-connection}, except that now,
$\mathrm{rot}(a_0) = \mathrm{rot}(a_1) + 1$. Then $\scrA_{\geq 0}(S,a_0,a_1)$ is weakly contractible.
\end{prop}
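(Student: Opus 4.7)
The plan is to follow the template of the proof of Proposition \ref{th:nonnegative-connection} almost verbatim, but with the rotation-number drop of $1$ replacing the flat case. First, enlarge the relevant space by relaxing the right-hand boundary condition: instead of fixing $a_1$, only require that the holonomy around $\{1\}\times S^1$ lie in $\tilde{C}_1$. One then has a restriction fibration
\begin{equation}
\scrA_{\geq 0}(S,a_0,a_1) \longrightarrow \scrA_{\geq 0}(S,a_0,\tilde{C}_1) \xrightarrow{\; r\;} B,
\end{equation}
where $B$ is the space of connections on $S^1$ with lifted holonomy in $\tilde{C}_1$. The goal is to show that $r$ is a weak homotopy equivalence, which implies that the fiber is weakly contractible.

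Second, apply Lemma \ref{th:connections-on-the-cylinder} to identify $\scrA_{\geq 0}(S,a_0,\tilde{C}_1)$ with a space of nonpositive paths of holonomies: taking $g(s)$ to be the holonomy around $\{s\}\times S^1$ yields a weak equivalence
\begin{equation}
\scrA_{\geq 0}(S,a_0,\tilde{C}_1) \;\stackrel{\htp}{\longrightarrow}\; \scrP_{\leq 0}(G,g_0,C_1)^{-1},
\end{equation}
the superscript $-1$ reflecting $\mathrm{rot}(a_1)-\mathrm{rot}(a_0) = -1$. Time reversal converts this into $\scrP_{\geq 0}(G,C_1,g_0)^{+1}$, which is precisely the fiber over $g_0$ of the evaluation map $\scrP_{\geq 0}(G,C_1,C_0)^1 \to C_0$, where $C_0$ is the conjugacy class of $g_0$. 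By Lemma \ref{th:short-path-2}, evaluation at both endpoints is a weak equivalence to $C_1\times C_0$, so its fiber over $g_0$ is weakly equivalent to $C_1$ via evaluation at the remaining (``$C_1$'') endpoint. Thus $\scrA_{\geq 0}(S,a_0,\tilde{C}_1) \htp C_1$.

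Third, the space $B$ is weakly equivalent to $C_1$ by the parallel-transport argument already used in the proof of Proposition \ref{th:nonnegative-connection} (connections on $S^1$ with lifted holonomy in $\tilde{C}_1$ are equivalent to $\scrP(G,\mathrm{id},C_1)^{\mathrm{rot}(a_1)}$, which in turn deformation-retracts to $C_1$ via evaluation at the endpoint). The key observation, to be checked, is that the composite $\scrA_{\geq 0}(S,a_0,\tilde{C}_1) \to C_1$ obtained by unwinding these equivalences agrees, up to homotopy, with the map $A \mapsto g_1 \in C_1$ where $g_1$ is the holonomy of $A|(\{1\}\times S^1)$, which is also what $r$ computes (after composing with $B\htp C_1$). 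This identification is tautological once one inspects the chain of equivalences: the evaluation at the ``$C_1$'' endpoint of the reversed path $g(s)$ is exactly $g(1)$, namely the holonomy of $a_1$.

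Putting this together: both source and target of $r$ are weakly equivalent to $C_1$, and the equivalences intertwine $r$ with the identity, so $r$ is itself a weak homotopy equivalence, and the fiber $\scrA_{\geq 0}(S,a_0,a_1)$ is weakly contractible. The main technical point (which already appeared in Proposition \ref{th:nonnegative-connection} and will be stated briefly) is the commutativity of the diagram of evaluation and holonomy maps; all the substantive input beyond that is Lemma \ref{th:short-path-2}, which replaces the role that Lemma \ref{th:pre-short} played in the $\mathrm{rot}(a_0)=\mathrm{rot}(a_1)$ case.
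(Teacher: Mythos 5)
Your argument is correct and follows the same route as the paper: identify $\scrA_{\geq 0}(S,a_0,\tilde{C}_1)$ with a space of nonpositive holonomy paths via Lemma~\ref{th:connections-on-the-cylinder}, reduce that to Lemma~\ref{th:short-path-2} by time reversal, and conclude via the weak fibration \eqref{eq:fi}. The only difference from the paper's proof is that you spell out the time-reversal step and the fiber identification explicitly, which the paper leaves implicit.
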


\begin{proof}
The proof is similar to that of Proposition \ref{th:nonnegative-connection}. By Lemmas \ref{th:short-path-2} and \ref{th:connections-on-the-cylinder}, 
\begin{equation}
\scrA_{\geq 0}(S,a_0,\tilde{C}_1) \htp \scrP_{\leq 0}(G,g_0,C_1)^{-1} \htp C_1.
\end{equation}
One has the same fibration \eqref{eq:fi}, and this time it follows that the fibre is weakly contractible.
\end{proof}

\begin{prop} \label{th:nonnegative-connection-3}
Take $S$ to be a disc, with the boundary parametrized clockwise. As boundary condition, fix $a \in \Omega^1(S^1,\frakg)$ with $\mathrm{rot}(a) = 1$. Then $\scrA_{\geq 0}(S,a)$ is weakly contractible.
\end{prop}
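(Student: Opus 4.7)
My plan is to follow the pattern of Proposition \ref{th:nonnegative-connection-2}, viewing the disc as a degenerate cylinder whose inner boundary has been collapsed to a point carrying the trivial holonomy $\Id$.

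First, let $g_0 \in C$ be the hyperbolic holonomy of $a$, and $\tilde C \subset \tilde G$ the lift of $C$ singled out by $\mathrm{rot}(a) = 1$ -- which is in fact the only possible value of the boundary rotation number for any nonnegatively curved connection, by Proposition \ref{th:bochner}. Introduce the auxiliary space $\scrA_{\geq 0}(S,\tilde C)$ of nonnegatively curved connections on $S$ whose boundary restriction has (lifted) holonomy in $\tilde C$, and consider the weak fibration
\begin{equation}
\scrA_{\geq 0}(S,a) \longrightarrow \scrA_{\geq 0}(S,\tilde C) \longrightarrow C
\end{equation}
obtained by boundary restriction, set up exactly as in the proof of Proposition \ref{th:nonnegative-connection}. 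It will suffice to show that the projection is a weak equivalence, for the fibre will then be weakly contractible.

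Next, I would establish a disc analog of Lemma \ref{th:connections-on-the-cylinder}. Using polar coordinates $(r,t)$ and gauge-reducing to connections of the form $A_t(r,t)\,dt$, send each $A \in \scrA_{\geq 0}(S,\tilde C)$ to the family $g(r) \in G$ of holonomies around the loops of constant radius. Smoothness of $A$ at $r=0$ forces $g(0) = \Id$, the boundary condition puts $g(1) \in C$, and the ODE computation underlying Lemma \ref{th:connections-on-the-cylinder} translates nonnegativity of the curvature into nonpositivity of $g$; the constraint $\mathrm{rot}(a) = 1$ fixes the rotation number. The upshot is a weak equivalence
\begin{equation}
\scrA_{\geq 0}(S,\tilde C) \htp \scrP_{\leq 0}(G,\Id,C)^{-1},
\end{equation}
and the latter is weakly equivalent to $C$ via endpoint evaluation: this is the obvious analog of Lemma \ref{th:short-path-3}, obtained either directly or by applying the pointwise inversion $g \mapsto g^{-1}$, which identifies $\scrP_{\leq 0}(G,\Id,C)^{-1}$ with $\scrP_{\geq 0}(G,\Id,C)^{1}$. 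Both equivalences are manifestly compatible with the projection to $C$, so the fibre $\scrA_{\geq 0}(S,a)$ is weakly contractible.

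The main obstacle is the cone-point analog of Lemma \ref{th:connections-on-the-cylinder}. In the cylinder case, the ODE \eqref{eq:phi-equation} is integrated starting from a given connection on the inner boundary circle, whereas here the inner slice has collapsed to a point and its role is played by the smoothness of the global connection at the origin. To realize a prescribed nonpositive path $g(r)$ with $g(0) = \Id$ by a smooth connection on $S$, and to verify that the resulting space of such realizations is weakly contractible, I would work first on an annulus $\{\epsilon \leq r \leq 1\}$ (where Lemma \ref{th:connections-on-the-cylinder} applies verbatim) and then cap off the small central disc $\{r \leq \epsilon\}$ by a connection of the form $f(r)\gamma\,dt$, with $\gamma = g(\epsilon)^{-1}g'(\epsilon) \in \frakg_{\leq 0}$ and $f$ a cut-off function vanishing to sufficient order at $r = 0$ to ensure smoothness in Cartesian coordinates near the origin. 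A routine check shows that the space of such smooth caps is weakly contractible, so the chain of equivalences above goes through.
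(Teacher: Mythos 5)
Your overall strategy -- the fibration over $C$, passing to concentric-loop holonomies to land in a space of nonpositive paths starting at $\Id$, and closing with Lemma \ref{th:short-path-3} -- is the same as the paper's, and the bookkeeping (using $C=C^{-1}$ for hyperbolic classes) is fine. Where you diverge, and where there is a genuine gap, is in establishing the cone-point analog of Lemma \ref{th:connections-on-the-cylinder} by gluing a cap. Your cap $f(r)\gamma\,\mathit{dt}$ on $\{r\le\epsilon\}$ restricts at $r=\epsilon$ to the constant-coefficient one-form $f(\epsilon)\gamma\,\mathit{dt}$, whose holonomy is $\exp(f(\epsilon)\gamma)$. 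For Lemma \ref{th:connections-on-the-cylinder} then to realize $g|[\epsilon,1]$ on the annulus with that inner boundary condition, that holonomy must equal $g(\epsilon)$; but you chose $\gamma = g(\epsilon)^{-1}g'(\epsilon)$, the logarithmic \emph{derivative} of the path, not a logarithm of $g(\epsilon)$, and these are different elements of $\frakg$ in general. So the cap and the annular piece do not fit together, the intended realization of an arbitrary nonpositive path is not actually constructed, and the asserted ``routine check'' that the space of compatible caps is contractible hides real content once one tries to run it in families (which is what weak contractibility requires).

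The paper avoids gluing altogether: it fixes $*\in S\setminus\partial S$ and a family $\phi_t\colon S\to S$ with $\phi_0=\mathit{id}$, $\phi_t(*)=*$, $\phi_t|\partial S=\mathit{id}$, $\det(D\phi_t)\ge 0$ everywhere, and $\phi_1$ collapsing a neighbourhood of $*$ to that point. Since orientation-nondecreasing pullback preserves nonnegative curvature, this retracts $\scrA_{\ge 0}(S,\tilde C)$ onto the subspace of connections which are honestly trivial near $*$; for those, Lemma \ref{th:connections-on-the-cylinder} applies verbatim on the complementary annulus with inner boundary value $0$, and no compatibility issue arises. If you want to salvage the cap approach, you would at minimum need to replace $\gamma$ by an actual logarithm of $g(\epsilon)$, check that it lies in $\frakg_{\le 0}$ (true for small $\epsilon$), check the curvature sign and smoothness at the origin, and then still address the contractibility of fibres and parametrized families; the retraction argument is cleaner. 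Finally, one small correction: Proposition \ref{th:bochner} only gives an inequality, $\mathrm{rot}_{\partial S}(A)\le -\chi(S)$; in your conventions this reads $\mathrm{rot}(a)\ge 1$, so $1$ is the extremal value, not the unique possible one.
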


\begin{proof}
Once more, consider first the space $\scrA_{\geq 0}(S, \tilde{C})$ where we only prescribe the (lifted) conjugacy class of the boundary holonomy. Fix $\ast \in S \setminus \partial S$, and a family of maps $\phi_t: S \rightarrow S$ with $\phi_0 = \mathit{id}$, $\phi_t(\ast) = \ast$, $\phi_t|\partial S = \mathit{id}$, $\mathrm{det}(D\phi_t) \geq 0$ everywhere; and such that $\phi_1$ contracts a neighbourhood of $\ast$ to that point. By pulling back via those maps, one sees that $\scrA_{\geq 0}(S,\tilde{C})$ is weakly homotopy equivalent to the subspace of connections which are trivial near the chosen point. Using this and Lemma \ref{th:connections-on-the-cylinder}, one shows that the holonomy around concentric loops yields a weak homotopy equivalence 
\begin{equation}
\scrA_{\geq 0}(S,\tilde{C}) \htp \scrP_{\leq 0}(G,\mathit{id},C^{-1})^{-1} = \scrP_{\geq 0}(G,\mathit{id},C)^1.
\end{equation}
By combining this with Lemma \ref{th:short-path-3}, one finds that the holonomy around $\partial S$ yields a weak homotopy equivalence $\scrA_{\geq 0}(S,\tilde{C}) \htp C$. Our space $\scrA_{\geq 0}(S,a)$ is homotopically the fibre of that map.
\end{proof}

\begin{prop} \label{th:nonnegative-connection-4}
Let $S$ be a genus zero surface with $m+1 \geq 3$ boundary components, labeled by $I = \{0,\dots,m\}$. We choose identifications $\partial_iS \iso S^1$ which are orientation-reversing for $i = 0$, and orientation-preserving for $i > 0$. As boundary conditions, take $a_i \in \Omega^1(S^1,\frakg)$ which satisfy 
\begin{equation} \label{eq:m-minus-1}
\mathrm{rot}(a_0) = \mathrm{rot}(a_1) + \cdots + \mathrm{rot}(a_m) + 1-m. 
\end{equation}
Then there is a weak homotopy equivalence, unique up to adding a constant, and compatible with the action of \eqref{eq:gauge-2},
\begin{equation} \label{eq:pants-connections}
\scrA_{\geq 0}(S,\{a_i\}) \htp H^1(S,\partial S).
\end{equation}
\end{prop}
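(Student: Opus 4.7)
The strategy is to follow the template of Proposition \ref{th:nonnegative-connection} (the cylinder case), replacing the role of Lemma \ref{th:pre-short} with that of Lemma \ref{th:pp}. The rotation hypothesis \eqref{eq:m-minus-1} is precisely the condition \eqref{eq:lose-one} of Lemma \ref{th:pp}, which is the heart of the matter.

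First I would introduce the auxiliary space $\scrA_{\geq 0}(S,\{\tilde C_i\})$ of nonnegatively curved connections for which only the lifted conjugacy class of each boundary holonomy is prescribed. As in the proof of Proposition \ref{th:nonnegative-connection}, this sits in a weak fibration
\begin{equation}
\scrA_{\geq 0}(S,\{a_i\}) \longrightarrow \scrA_{\geq 0}(S,\{\tilde C_i\}) \longrightarrow \prod_{i=0}^m C_i,
\end{equation}
where the second map records the boundary connections (each factor of the base being weakly equivalent to $C_i$ by the path-space argument in Proposition \ref{th:nonnegative-connection}).

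The main technical step is to identify the total space with the path space appearing in Lemma \ref{th:pp}. To do this I would fix a basepoint $\ast \in S \setminus \partial S$, choose disjoint arcs from $\ast$ to each $\partial_i S$, and gauge-normalize to kill parallel transport along these arcs. Then I would sweep out $S$ by a family of loops based at $\ast$ that progressively encircle the inner boundaries $\partial_1 S,\dots,\partial_m S$ and eventually the outer $\partial_0 S$. Parallel transport along this family yields (essentially by the pants version of Lemma \ref{th:connections-on-the-cylinder}, obtained by applying that lemma to successive annular sub-regions) a nonnegative path $h(t)$ running from $g_1 \cdots g_m$ to $g_0$. The resulting map to the configuration space of Lemma \ref{th:pp} is a weak equivalence; the hypothesis \eqref{eq:m-minus-1} matches \eqref{eq:lose-one} exactly, so by Lemma \ref{th:pp} the total space is weakly equivalent to any single conjugacy class $C_i \simeq S^1$.

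Plugging this into the fiber sequence gives $\scrA_{\geq 0}(S,\{a_i\}) \to S^1 \to (S^1)^{m+1}$. A direct inspection shows the connecting map $\bZ \to \bZ^{m+1}$ on $\pi_1$ is given by the vector of rotation degrees of evaluation at the various $g_i$'s, and is injective (one can see this, for instance, from Addendum \ref{th:nonnegative-connection-addendum} applied to a collar of $\partial_0 S$). The long exact sequence of the fibration then forces $\pi_k = 0$ for $k \geq 1$ and $\pi_0 \iso \bZ^{m+1}/\bZ \iso \bZ^m \iso H^1(S,\partial S)$. Compatibility with the action of $\scrG(S,\{a_i\}) \simeq H^1(S,\partial S)$ and uniqueness up to an additive constant are verified as in Propositions \ref{th:z-homotopy} and \ref{th:nonnegative-connection}, by examining the commutative diagram relating the gauge orbit to the fibration.

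The hard part is the second step, setting up the sweeping construction as an actual weak equivalence at the level of smooth families (in the sense of Convention \ref{th:weak-homotopy}). This is where the genus-zero topology matters: the sweeping family of loops exists because $S$ is planar, so the parallel transports organize into an honest nonnegative path in $G$ rather than a more complicated gadget. The verification that this assignment has a homotopy inverse parallels the argument in Lemma \ref{th:connections-on-the-cylinder}, prescribing a $\Gamma$-field with fixed integrals along the sweeping annuli and patching them together.
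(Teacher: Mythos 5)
Your global architecture matches the paper's: introduce $\scrA_{\geq 0}(S,\{\tilde C_i\})$, exhibit it as the total space of a weak fibration over $\prod_{i=0}^m C_i$ with fiber $\scrA_{\geq 0}(S,\{a_i\})$, identify the total space with the path space of Lemma \ref{th:pp} so that it is weakly $\htp C_0$, and then read off the fiber from the long exact sequence of homotopy groups. That part is right. The problem is the ``main technical step'' — the identification of $\scrA_{\geq 0}(S,\{\tilde C_i\})$ with the Lemma \ref{th:pp} space — where your argument has a genuine gap.

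You propose to ``sweep out $S$ by a family of loops based at $\ast$ that progressively encircle the inner boundaries and eventually $\partial_0 S$,'' and then read off a nonnegative path $h(t)$ from parallel transport along this family, citing a ``pants version of Lemma \ref{th:connections-on-the-cylinder} obtained by applying that lemma to successive annular sub-regions.'' But a genus-zero surface with $m+1 \geq 3$ boundary circles is not a one-parameter family of embedded circles: any attempt to interpolate between small loops around individual $\partial_i S$ and one large loop around all of them passes through a singular (non-embedded, or pinched) curve, and the intermediate regions are not annuli. So there is no ``sweeping family'' on which to run Lemma \ref{th:connections-on-the-cylinder}, and your appeal to planarity to manufacture one does not hold for $m\geq 2$. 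This is precisely where the paper resorts to a different device: it splits $S = S_{\mathit{left}} \cup S_{\mathit{right}}$, with $S_{\mathit{left}}$ a collar of $\partial_0 S$, and chooses a family of self-maps $\phi_t : S \to S$, fixing $\partial S$ pointwise, with $\mathrm{det}(D\phi_t) \geq 0$ everywhere and $\phi_1$ collapsing $S_{\mathit{right}}$ onto a one-complex. Because nonnegativity of curvature is preserved under pullback by maps of nonnegative Jacobian, $\phi_t^*$ deforms $\scrA_{\geq 0}(S,\{\tilde C_i\})$ into the subspace of connections that are \emph{flat} on $S_{\mathit{right}}$ (since $\phi_1$ has rank $\leq 1$ there). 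Flat connections on $S_{\mathit{right}}$ with prescribed boundary conjugacy classes are just tuples $(g_1,\dots,g_m)\in\prod_{i>0}C_i$, and extending such a flat connection across the annular collar $S_{\mathit{left}}$ to a nonnegatively curved one is exactly Lemma \ref{th:connections-on-the-cylinder}, yielding the nonnegative path $h(t)$ from $g_1\cdots g_m$ to some $g_0\in C_0$. The ``nonnegative-Jacobian retraction'' trick — not a sweeping by circles — is the ingredient you are missing, and without it you cannot reduce the pants geometry to a single nonnegative path. Everything downstream of that step in your argument (the computation of degrees, the LES, the comparison with the gauge orbit) is essentially sound and matches the paper.
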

\begin{figure}
\begin{centering}
\begin{picture}(0,0)%
\includegraphics{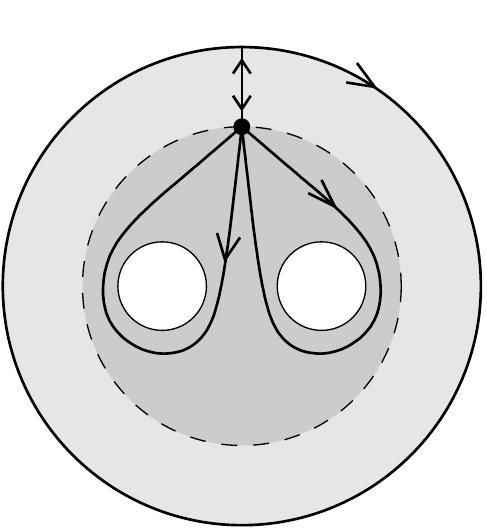}%
\end{picture}%
\setlength{\unitlength}{3355sp}%
\begingroup\makeatletter\ifx\SetFigFont\undefined%
\gdef\SetFigFont#1#2#3#4#5{%
  \reset@font\fontsize{#1}{#2pt}%
  \fontfamily{#3}\fontseries{#4}\fontshape{#5}%
  \selectfont}%
\fi\endgroup%
\begin{picture}(2730,2972)(2086,-2025)
\put(3276,-1336){\makebox(0,0)[lb]{\smash{{\SetFigFont{10}{12.0}{\rmdefault}{\mddefault}{\updefault}{$S_{\mathit{right}}$}%
}}}}
\put(3826,-1186){\makebox(0,0)[lb]{\smash{{\SetFigFont{10}{12.0}{\rmdefault}{\mddefault}{\updefault}{$c_2$}%
}}}}
\put(2926,-1186){\makebox(0,0)[lb]{\smash{{\SetFigFont{10}{12.0}{\rmdefault}{\mddefault}{\updefault}{$c_1$}%
}}}}
\put(3751,764){\makebox(0,0)[lb]{\smash{{\SetFigFont{10}{12.0}{\rmdefault}{\mddefault}{\updefault}{$c_0$}%
}}}}
\put(3276,-1861){\makebox(0,0)[lb]{\smash{{\SetFigFont{10}{12.0}{\rmdefault}{\mddefault}{\updefault}{$S_{\mathit{left}}$}%
}}}}
\end{picture}%
\caption{\label{fig:holes}The loops $c_i$ from the proof of Proposition \ref{th:nonnegative-connection-4}, for $m = 2$.}
\end{centering}
\end{figure}%

\begin{proof}
Let's start with the space $\scrA_{\geq 0}(S,\{\tilde{C}_i\})$ where only the conjugacy classes of the (lifted) boundary holonomies are fixed. As in \eqref{eq:fi}, this fits into a weak fibration
\begin{equation} \label{eq:fi3}
\scrA_{\geq 0}(S,\{a_i\}) \longrightarrow \scrA_{\geq 0}(S,\{\tilde{C}_i\}) \longrightarrow \prod_{i=0}^m C_i.
\end{equation}
Let's think of $S$ as being glued together, rather trivially, from two parts: $S_{\mathit{left}}$ is a collar neighborhood of $\partial_0 S$, so that the other part $S_{\mathit{right}} \subset S$ is a deformation retract, containing $\partial_i S$ for $i>0$ (see Figure \ref{fig:holes}). One can find a family of self-maps $(\phi_t)$ of $S$, starting at the identity, which are all the identity on $\partial S$, such that $\mathrm{det}(D\phi_t) \geq 0$ everywhere, and where $\phi_1$ retracts $S_{\mathit{right}}$ to a one-dimensional skeleton. From that, it follows (as in the proof of Proposition \ref{th:nonnegative-connection-3}) that $\scrA_{\geq 0}(S,\{\tilde{C}_i\})$ is weakly homotopy equivalent to the subspace of those connections which are flat on $S_{\mathit{right}}$.

The space of flat connections on $S_{\mathit{right}}$, with holonomy around $\partial_i S$ ($i>0$) lying in $\tilde{C}_i$, is weakly homotopy equivalent to $\prod_{i=1}^m C_i$, where we can think of the homotopy equivalence as being given by holonomies around loops $c_i$ with a common base point (see again Figure \ref{fig:holes}). In a slight break with our usual notation, we will use $g_i$ to denote those holonomies, and $\tilde{g}_i$ for their lifts to $\tilde{C}_i$. Extending such a connection on $S_{\mathit{right}}$ to a nonnegatively curved one on the whole of $S$, with the desired behaviour over $\partial_0 S$, amounts (up to homotopy, thanks to Lemma \ref{th:connections-on-the-cylinder}) to choosing a nonnegative path from  $\tilde{g}_1\cdots \tilde{g}_m$ to some point $\tilde{g}_0 \in \tilde{C}_0$ (the holonomy around $c_0$). To conclude, we have now shown that $\scrA_{\geq 0}(S,\{\tilde{C}_i\})$ is weakly homotopy equivalent to the space considered in Lemma \ref{th:pp}, hence to $C_0$. By looking at \eqref{eq:fi3}, one sees that $\scrA_{\geq 0}(S,\{a_i\}) \htp \prod_{i>0} \Omega C_i = \bZ^m$. Inspection of the argument shows that each gauge orbit yields a weak homotopy equivalence $\scrG(S,\{a_i\}) \rightarrow \scrA_{\geq 0}(S,\{a_i\})$, and from that we get the statement in its desired form.
\end{proof}

We have arranged the results above roughly in order of complexity of their proofs, which was convenient since they all follow the same strategy. However, this might not be the best way of understanding their meaning. As the reader may have noticed, Proposition \ref{th:nonnegative-connection} was a close cousin of the previous discussion concerning flat connections, in Proposition \ref{th:z-homotopy}; similarly, one can compare Propositions \ref{th:connections-rel-boundary} (specialized to genus zero) and \ref{th:nonnegative-connection-4}. The outcome is that we have weak homotopy equivalences
\begin{equation} \label{eq:compare-flat}
\begin{aligned}
& \scrA_{\mathit{flat}}(S,a_0,a_1) \stackrel{\htp}{\longrightarrow} \scrA_{\geq 0}(S,a_0,a_1) && \text{ for Proposition \ref{th:z-homotopy} (as mentioned before)},
\\ &
\scrA_{\mathit{flat}}(S,\{a_i\}) \stackrel{\htp}{\longrightarrow} \scrA_{\geq 0}(S,\{a_i\}) && \text{ for Proposition \ref{th:nonnegative-connection-4}}.
\end{aligned}
\end{equation}
The other two spaces of nonnegatively curved connections that we have analyzed show a different behaviour: they are contractible, which one could interpret as a comparison with the spaces of all connections,
\begin{equation} \label{eq:compare-all}
\begin{aligned}
& \scrA_{\geq 0}(S,a_0,a_1) \stackrel{\htp}{\longrightarrow} \scrA(S,a_0,a_1) && \text{ for Proposition \ref{th:nonnegative-connection-2},}
\\ &
\scrA_{\geq 0}(S,a) \stackrel{\htp}{\longrightarrow} \scrA(S,a) && \text{ for Proposition \ref{th:nonnegative-connection-3}.}
\end{aligned}
\end{equation}
This reflects our previous distinction between spaces of ``short paths'' (whose topology could be related to constant paths) and ``long paths'' (related to spaces of all paths).

\begin{remark}
It is tempting to think that a comparison result such as \eqref{eq:compare-flat} should hold for higher genus surfaces, as long as the rotation numbers are chosen extremal, meaning that they saturate the bound in Proposition \ref{th:bochner}; and similarly for closed surfaces, with rotation numbers replaced by the Euler class.
\end{remark}

\section{TQFT considerations\label{sec:tqft}} 
We now introduce our toy model formalism, and explore its implications. In fact, we will go over similar terrain three times: first in a way that only covers TQFT operations in a standard sense; then (in Section \ref{sec:diff-axiom}) with additional features that lead to the construction of connections; and finally, in a mild generalization which allows elliptic holonomies as well (Section \ref{sec:elliptic}). Overall, the discussion roughly follows \cite[Sections 5--6]{seidel16}.

\subsection{Disc configurations}
Our formalism is built as a version of the classical framed little disc operad \cite{getzler94b, salvatore-wahl03}, enriched with connections (but not in the gauge theory sense: we keep a fixed trivialization of the bundle).
\begin{itemize}
\itemsep.5em
\item
Write $D \subset \bC$ for the closed unit disc. An $m$-disc configuration, for $m \geq 0$, is an ordered collection of (closed round) discs $D_1,\dots,D_m$, which are contained in the interior of $D$, and are pairwise disjoint. To such a configuration, we can associate the (genus zero oriented) surface
\begin{equation} \label{eq:surface}
S = D \setminus \bigcup_{i=1}^m (D_i \setminus \partial D_i),
\end{equation}
whose boundary circles we label by $\partial_0 S = \partial D$, and $\partial_i S = \partial D_i$ for $i=1,\dots,m$. We also want to allow one exceptional case, called the identity configuration, which consists of a single disc $D_1 = D$. For that configuration, \eqref{eq:surface} becomes a circle, thought of as an annulus of thickness zero.

\item
A framing of an $m$-disc configuration consists of boundary parametrizations
\begin{equation} \label{eq:boundary-parametrizations}
\epsilon_i: S^1 \longrightarrow \partial_i S, \quad
\epsilon_i(t) = \rho_i e^{-2\pi i t} + \zeta_i, \quad i = 1,\dots,m.
\end{equation}
Here, $\zeta_i$ is the center of $D_i$, and $|\rho_i| > 0$ is its radius, so only $\mathrm{arg}(\rho_i)$ is a free parameter; for the identity configuration, we specify that the parameter must be $\rho_1 = 1$. To the \eqref{eq:boundary-parametrizations} we add a fixed parametrization of the boundary of $D$,
\begin{equation} \label{eq:boundary-parametrizations-0}
\epsilon_0: S^1 \longrightarrow \partial_0 S, \quad
\epsilon_0(t) = e^{-2\pi it}.
\end{equation}
The clockwise parametrization convention means that $\epsilon_0$ is inverse to the boundary orientation, while the other $\epsilon_i$ are compatible with it. 

\item
Suppose that we have a framed disc configuration, together with $a_i \in \Omega^1(S^1,\frakg)$ for $i = 0,\dots,m$, which have hyperbolic holonomy (as before, we will write $g_i$ for the holonomies, and $C_i$ for their conjugacy classes). A compatible connection is given by an $A \in \Omega^1(S,\frakg)$ with nonnegative curvature, and whose pullback by \eqref{eq:boundary-parametrizations}, \eqref{eq:boundary-parametrizations-0} equals $a_i$. We also impose the following technical condition: for every boundary circle $\partial_iS$, let $r_i$ be the radial retraction of a neighbourhood onto that circle; then,  $A = r_i^*(A|\partial_i S)$ near $\partial_iS$. This means that the behaviour of $A$ close to $\partial S$ is entirely determined by the $a_i$. Again, we have to mention the exceptional case of the identity configuration, where necessarily $a_0 = a_1$, and there is no additional freedom in choosing $A$.
\end{itemize}
A decorated disc configuration is a disc configuration together with a choice of framing and compatible connection. We write $Y = (\epsilon_1,\dots,\epsilon_m,A)$ for the decoration, and usually represent the decorated disc configuration by the pair $(S,Y)$.

There is a gluing process for decorated disc configurations. Start with an $m_1$-configuration and an $m_2$-configuration. Choose some $i \in \{1,\dots,m_1\}$, and insert the second configuration into the first one in place of the $i$-th disc, rescaling and rotating it as prescribed by $i$-th boundary parametrization. In terms of the associated surfaces, this means that we glue them together along one of their boundary circles, using the given parametrizations. Assuming that the boundary conditions for the connections agree, those will give rise to a connection on the glued surface. The outcome is a decorated configuration consisting of $m = m_1+m_2-1$ discs. The gluing construction is strictly associative. One can formally include the identity configuration in the gluing process, for which (as the name suggests) it is the neutral element.

We will use families of disc configurations, parametrized by smooth manifolds (possibly with boundary or corners) $P$, and equipped with the same kind of decorations. Aside from one exception, this notion is straightforward:
\begin{itemize} \itemsep.5em
\item The disc configurations vary smoothly depending on the parameter, giving rise to a fibration with fibres \eqref{eq:surface}, which we denote by $\underline{S} \rightarrow P$.
\item Similarly, the framings vary smoothly with $P$. The connection is given by a one-form in fibre direction on $\underline{S}$. The boundary conditions $(a_0,\dots,a_m)$ are always constant (independent of where we are on $P$). We usually denote such a decoration by $\underline{Y} = (\underline{\epsilon}_1,\dots,\underline{\epsilon}_m,\underline{A})$.
\end{itemize}
The exceptional case occurs when we have a family of annuli with the same boundary conditions ($m = 1$ and $a_0 = a_1$). That could include instances of the identity configuration, which means that $\underline{S}$ will be singular in general. This is potentially a problem for the definition of connection on the fibres: to avoid that, we assume that in a neighbourhood of the subset of $P$ where the identity configuration occurs, the framing should be trivial (no rotation), and  the connection should be pulled back from radial projection to $S^1$. The gluing construction extends to families (one starts with a family over $P_1$ and another over $P_2$, and ends up with one over $P_1 \times P_2$). 

\subsection{Spaces of decorations}
Fix an $m$-disc configuration, which is not the identity configuration, and boundary values $(a_0,\dots,a_m)$. Write
\begin{equation} \label{eq:y-space}
\begin{aligned}
\scrY_{\geq 0}(S,\{a_i\}) & \;\; \text{space of all decorations $Y = (\epsilon_1,\dots,\epsilon_m,A)$ of $S$,} \\
\scrY_{\mathit{flat}}(S,\{a_i\}) &\;\; \text{subspace where the connection is flat.}
\end{aligned}
\end{equation}
With respect to the previously considered spaces $\scrA_{\geq 0}(S,\{a_i\})$ and $\scrA_{\mathit{flat}}(S,\{a_i\})$, the only difference (apart from certain conventions concerning boundary orientations) is the additional datum given by the framings. Nevertheless, we find it convenient to explicitly translate the outcome of the discussion from Section \ref{sec:connections} into the language of \eqref{eq:y-space}, since that will be relevant for our applications. The most obvious case is that of the empty disc configuration ($m = 0$), where there is no framing, and Proposition \ref{th:nonnegative-connection-3} says the following:

\begin{proposition} \label{th:the-disc}
Let $S$ be a disc, with $\mathrm{rot}(a_0) = 1$. Then $\scrY_{\geq 0}(S,a_0)$ is contractible.
\end{proposition}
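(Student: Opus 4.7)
Since the disc configuration is empty ($m=0$), a decoration $Y$ of $S$ reduces to a single datum: a nonnegatively curved connection $A$ with $A|\partial S = a_0$ (via $\epsilon_0$), subject additionally to the collar condition $A = r_0^*(A|\partial S)$ in some neighbourhood of $\partial S$. Thus $\scrY_{\geq 0}(S,a_0)$ sits inside $\scrA_{\geq 0}(S,a_0)$ as the subspace cut out by the collar condition, and the plan is to show that this inclusion is a weak homotopy equivalence. Combined with Proposition \ref{th:nonnegative-connection-3}, which asserts the weak contractibility of $\scrA_{\geq 0}(S,a_0)$, this immediately yields the result.

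To exhibit $\scrY_{\geq 0}(S,a_0)$ as a homotopy retract of $\scrA_{\geq 0}(S,a_0)$, I would choose a smooth family of self-maps $\phi_t: S \to S$, $t \in [0,1]$, satisfying $\phi_0 = \mathit{id}$, $\phi_t|\partial S = \mathit{id}$, $\mathrm{det}(D\phi_t) \geq 0$ everywhere, and such that $\phi_1$ coincides with the radial projection $r: U \to \partial S \hookrightarrow S$ on some fixed collar $U$ of $\partial S$. Such a family is easy to produce explicitly in polar coordinates, e.g.\ $\phi_t(\rho e^{i\theta}) = f_t(\rho)\, e^{i\theta}$ for a suitable family of smooth nondecreasing functions $f_t: [0,1] \to [0,1]$ with $f_0(\rho) = \rho$ and $f_1 \equiv 1$ on a subinterval near $\rho = 1$. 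Pullback then defines a homotopy $\Phi_t(A) = \phi_t^* A$ on $\scrA_{\geq 0}(S,a_0)$: the boundary condition is preserved because $\phi_t$ fixes $\partial S$, and at $t = 1$ we have $\phi_1^* A|_U = r^*(A|\partial S) = r^* a_0$, so $\Phi_1(A)$ automatically lies in $\scrY_{\geq 0}(S,a_0)$.

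Given this, weak contractibility of $\scrY_{\geq 0}(S,a_0)$ follows in the sense of Convention \ref{th:weak-homotopy}: any smooth map $P \to \scrY_{\geq 0}(S,a_0)$ from a compact manifold composes with inclusion to a map into $\scrA_{\geq 0}(S,a_0)$, which Proposition \ref{th:nonnegative-connection-3} contracts through a smooth family over $P \times [0,1]$; applying $\Phi_1$ fibrewise then pushes the contraction back into $\scrY_{\geq 0}(S,a_0)$. The main obstacle to make rigorous is checking that pullback by the non-diffeomorphism $\phi_1$ (which collapses $U$ onto $\partial S$) actually preserves nonnegative curvature and produces a smooth connection: at points where $\mathrm{det}(D\phi_t) > 0$ an oriented basis maps to an oriented basis, while at points where it vanishes the differential factors through a line and $F_{\phi_t^* A}$ vanishes identically there, so in both cases the values remain in $\frakg_{\geq 0}$; smoothness follows from the general fact that the pullback of a smooth form by a smooth (not necessarily invertible) map is smooth. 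With these two verifications the plan is essentially routine.
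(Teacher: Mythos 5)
Your proposal is substantially correct, and it addresses a point that the paper passes over without comment. The paper introduces Proposition~\ref{th:the-disc} with the remark that for $m=0$ ``there is no framing'' and then cites Proposition~\ref{th:nonnegative-connection-3} as though it were a restatement; but as you observe, the space $\scrY_{\geq 0}(S,a_0)$ is strictly smaller than $\scrA_{\geq 0}(S,a_0)$ because of the collar (radial retraction) condition imposed near $\partial S$, so an argument really is needed to pass from one to the other. Your chosen tool --- pulling back by a smooth family $\phi_t: S \to S$ fixing $\partial S$, with $\det(D\phi_t)\geq 0$, and with $\phi_1$ collapsing a collar onto $\partial S$ --- is exactly the mechanism the paper itself uses in the proofs of Propositions~\ref{th:nonnegative-connection-3} and~\ref{th:nonnegative-connection-4}, and the two verifications you flag (pullback of a nonnegatively curved connection by such a map is again nonnegatively curved, and is smooth) are both correct for the reasons you give.

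There is one small gap in your final paragraph. You construct a nullhomotopy of $\Phi_1\circ g$ inside $\scrY_{\geq 0}(S,a_0)$ by pushing forward a nullhomotopy of $g$ in $\scrA_{\geq 0}(S,a_0)$, but the conclusion you want is that $g$ itself is nullhomotopic in $\scrY_{\geq 0}(S,a_0)$. To close the loop you also need $g \simeq \Phi_1\circ g$ through maps into $\scrY_{\geq 0}(S,a_0)$, which amounts to the additional claim that each $\Phi_t$ preserves $\scrY_{\geq 0}(S,a_0)$ (on a possibly smaller collar). That is true, but it requires a mildly stronger choice of $f_t$ than you state: one wants $f_t \geq \mathrm{id}$ for all $t$, for instance $f_t = (1-t)\,\mathrm{id} + t f_1$ with $f_1 \geq \mathrm{id}$, $f_1\equiv\mathrm{id}$ near $0$ and $f_1\equiv 1$ near $1$. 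With that choice each $\phi_t$ maps every collar $\{1-\epsilon \leq |z| \leq 1\}$ into itself, and a direct computation in polar coordinates shows the pullback of a connection in collar form remains in collar form. Prepending the homotopy $\Phi_t\circ g$ to the nullhomotopy of $\Phi_1\circ g$ then gives the desired contraction of $g$. With that addendum your argument is complete, and is a cleaner justification than the paper's one-line assertion.
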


Next, consider the case of the annulus ($m = 1$). Let's introduce the following abelian topological group (depending on $r \in \bZ$):
\begin{equation}
\Gamma_r = \begin{cases} \bZ/r & r \neq 0, \\ \bZ \times S^1 & r  = 0. \end{cases}
\end{equation}

\begin{prop} \label{th:x-annulus}
(i) Let $S$ be an annulus, with boundary conditions which satisfy \eqref{eq:conjugate-holonomies}. Then there is a weak homotopy equivalence
\begin{equation} \label{eq:x-annulus}
\scrY_{\mathit{flat}}(S,a_0,a_1) \htp \Gamma_r 
\end{equation}
which is unique up to (homotopy and) adding an element of $\bZ/r$. To state this more precisely, choose an identification $[0,1] \times S^1 \iso S$ which restricts to $\epsilon_i$ along $\{i\} \times S^1$. Then, the holonomy around the circles $\{s\} \times S^1$ gives a map
\begin{equation} \label{eq:explicit-mod-r}
\scrY_{\mathit{flat}}(S,a_0,a_1) \longrightarrow \pi_0(\scrP(C,g_0,g_1)) \times_{\bZ} \bZ/r,
\end{equation}
where $C$ is the common conjugacy class of the $g_i$, and $\bZ = \pi_1(C)$ acts by composition of paths with loops. The map \eqref{eq:explicit-mod-r} is canonical (independent of the choice of identification), and yields the discrete part of \eqref{eq:x-annulus}; for $r = 0$, $\mathrm{arg}(\rho_1)$ yields the continuous part.

(ii) Take boundary conditions $a_0,a_1$ which have the same rotation number $r$. Then there is a homotopy equivalence
\begin{equation} \label{eq:x-annulus-2}
\scrY_{\geq 0}(S,a_0,a_1) \htp \Gamma_r.
\end{equation}
This has the same uniqueness property as \eqref{eq:x-annulus}, and can be described as in \eqref{eq:explicit-mod-r}, except that the map now takes values in $\pi_0(\scrP(G_{\mathit{hyp}},g_0,g_1)) \times_{\bZ} \bZ/r$. In the more restricted case \eqref{eq:conjugate-holonomies}, the inclusion $\scrY_{\mathit{flat}}(S,a_0,a_1) \subset \scrY_{\geq 0}(S,a_0,a_1)$ is a weak homotopy equivalence, and our construction reduces to that in (i).

(iii) Take boundary conditions $a_0,a_1$ such that $\mathrm{rot}(a_0) = \mathrm{rot}(a_1) + 1$. Then $\mathrm{arg}(\rho_1)$ yields a weak homotopy equivalence
\begin{equation}
\scrY_{\geq 0}(S,a_0,a_1) \htp S^1.
\end{equation}
\end{prop}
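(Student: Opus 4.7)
The plan is to view each of the spaces $\scrY_\bullet(S,a_0,a_1)$ (with $\bullet$ being $\mathit{flat}$ or ${\geq 0}$) as the total space of a fibration over $S^1$, the circle of framings parametrized by $\mathrm{arg}(\rho_1)$, whose fibers are the connection spaces of Section~\ref{sec:connections}, and then to compute the homotopy type from the monodromy. Concretely, fix once and for all an identification $[0,1] \times S^1 \iso S$ restricting to $\epsilon_0$ on $\{0\} \times S^1$ and to the reference framing $\epsilon_1^0$ (with $\mathrm{arg}(\rho_1) = 0$) on $\{1\} \times S^1$. Forgetting the connection gives a fibration $\scrY_\bullet(S, a_0, a_1) \to S^1$, $(\epsilon_1, A) \mapsto \mathrm{arg}(\rho_1)$, whose fiber is $\scrA_\bullet(S, a_0, a_1)$. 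By Propositions~\ref{th:z-homotopy} (for (i)), \ref{th:nonnegative-connection} (for (ii)), or \ref{th:nonnegative-connection-2} (for (iii)), this fiber is weakly equivalent to $\bZ$ in cases (i) and (ii), and weakly contractible in case (iii).

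The key computation is the monodromy. A loop in the base $S^1$ lifts by pulling back the connection along a one-parameter family of self-diffeomorphisms of $S$ supported in a collar of $\partial_1 S$, rotating that boundary by a full turn; such a family is isotopic rel $\partial S$ to the Dehn twist $\tau$ along $\{\tfrac{1}{2}\} \times S^1$. Addenda~\ref{th:z-homotopy-3} and~\ref{th:dehn-twist-pullback} then identify the action of $\tau^*$ on $\pi_0(\text{fiber}) = \bZ$ as translation by $-r$. The total space in (i) and (ii) is therefore weakly equivalent to the mapping torus $(\bR \times \bZ)/\bZ$ with $\bZ$ acting by $k\cdot(t,n) = (t+k, n-kr)$. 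For $r=0$ this is $\bZ \times S^1 = \Gamma_0$; for $r \neq 0$ it has $|r|$ connected components (orbits of $\bZ$ acting on $\bZ$ by translation by $r$, so labeled by $\bZ/r$), each one a free $\bZ$-quotient of $\bR$ and hence contractible, yielding $\Gamma_r = \bZ/r$. In (iii) the fiber is contractible so the total space is weakly equivalent to $S^1$, with $\mathrm{arg}(\rho_1)$ itself providing the equivalence.

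For the explicit description in (i), the holonomy path $s \mapsto g(s) \in C$ along the loops $\{s\} \times S^1$ (computed via the chosen identification) represents a class in $\pi_0(\scrP(C, g_0, g_1))$; changing the identification in the interior by a Dehn twist alters this class by composition with a power of a generator of $\pi_1(C) = \bZ$, while rotating the framing shifts it through the monodromy computation above. The well-defined invariant therefore lives in $\pi_0(\scrP(C, g_0, g_1)) \times_\bZ \bZ/r$, giving \eqref{eq:explicit-mod-r}. For (ii), the inclusion $\scrY_{\mathit{flat}} \hookrightarrow \scrY_{\geq 0}$ is a weak equivalence under \eqref{eq:conjugate-holonomies} because it is already so fiberwise (both Propositions~\ref{th:z-homotopy} and~\ref{th:nonnegative-connection} give $\bZ$, with mutually compatible descriptions via holonomy paths); the rest of the argument follows the model of (i), with the path class now lying in $\pi_0(\scrP(G_{\mathit{hyp}}, g_0, g_1))$.

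The main obstacle is the monodromy identification: one has to verify carefully that rotating $\epsilon_1$ by $2\pi$ in the total space really corresponds on the level of connections to pullback by $\tau$ (in particular, checking the sign), so that Addenda~\ref{th:z-homotopy-3} and~\ref{th:dehn-twist-pullback} can be invoked with the correct coefficient. Once that is established, the remaining work is classical fibration-theoretic bookkeeping and the computation of the mapping torus above.
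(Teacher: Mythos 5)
Your argument is exactly the one the paper uses: the fibration \eqref{eq:rho-fibration-1} over $S^1$ by $\mathrm{arg}(\rho_1)$, with fibre $\scrA_{\bullet}(S,a_0,a_1)$ computed via Propositions~\ref{th:z-homotopy}, \ref{th:nonnegative-connection}, \ref{th:nonnegative-connection-2}, and monodromy identified as translation by $-r$ via Addenda~\ref{th:z-homotopy-3} and \ref{th:dehn-twist-pullback}. You spell out the resulting mapping-torus computation a little more explicitly than the paper does (one tiny phrasing slip: each component for $r\neq 0$ is homeomorphic to $\bR$ rather than literally ``a free $\bZ$-quotient of $\bR$,'' which would be $S^1$), but the substance and the conclusion $\Gamma_r$ are correct.
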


The proofs are straightforward, given our previous discussion. Each of them is based on considering the fibration
\begin{equation} \label{eq:rho-fibration-1}
\scrA_{\mathit{flat}}(S,a_0,a_1) \longrightarrow \scrY_{\mathit{flat}}(S,a_0,a_1) \xrightarrow{\mathrm{arg}(\rho_1)} S^1,
\end{equation}
or its counterpart for $\scrY_{\geq 0}(S,a_0,a_1)$. For (i), Proposition \ref{th:z-homotopy} tells us that the fibre is homotopy equivalent to $\bZ$, and Addendum \ref{th:z-homotopy-3} shows that going around the $S^1$ base corresponds to subtracting $r$. The same goes for (ii) and (iii), using Proposition \ref{th:nonnegative-connection} with Addendum \ref{th:dehn-twist-pullback}, respectively Proposition \ref{th:nonnegative-connection-2}. 

\begin{addendum} \label{th:x-annulus-a}
(i) In the situation of Proposition \ref{th:x-annulus}(i), suppose that $g_0 = g_1 = g$. Then, following Addendum \ref{th:z-homotopy-2}, there is a preferred choice of \eqref{eq:x-annulus}. One can see that from \eqref{eq:explicit-mod-r}; or equivalently, it can be described as the map 
\begin{equation} \label{eq:rot-mod-r}
(\epsilon_1,A) \longmapsto \mathrm{rot}_c(A) \in \bZ/r, 
\end{equation}
where $c$ is any path from $\epsilon_0(0)$ to $\epsilon_1(0)$ (the rotation number mod $r$ is independent of the choice of path).

(ii) In the situation of Proposition \ref{th:x-annulus}(ii), suppose that $g_0$ and $g_1$ lie in the same one-parameter semigroup. Then, there is again a preferred choice of \eqref{eq:x-annulus-2}, see Addendum \ref{th:nonnegative-connection-addendum} (but it is unclear whether the alternative description \eqref{eq:rot-mod-r} would still work).
\end{addendum}

\begin{example} \label{th:fractional-rotation}
Let $r \neq 0$. Consider the configuration consisting of a disc centered at zero, with parametrization $\epsilon_1(t) = \rho_1 e^{-2\pi i (t+k/r)}$, for some $k \in \bZ/r$. Suppose that $a \in \Omega^1(S^1,\frakg)$ is an $r$-fold cover, in the sense of \eqref{eq:a01-cover}. Radial projection from the disc to the circle equips $S$ with a flat connection $A$, whose boundary values are $a_0 = a_1 = a$. By looking at \eqref{eq:rot-mod-r}, one sees that this belongs to the $k$-th component of $\scrY_{\mathit{flat}}(S,a,a)$.
\end{example}

\begin{prop} \label{th:decorations}
(i) Consider a configuration of $m \geq 2$ discs, with boundary conditions such that \eqref{eq:m-minus-1} holds, and write $r_i = \mathrm{rot}(a_i)$. Then, there is a weak homotopy equivalence
\begin{equation} \label{eq:x-space}
\scrY_{\mathit{flat}}(S,\{a_i\}) \htp \prod_{i=1}^m \Gamma_{r_i},
\end{equation}
unique up to adding an element of $\prod_{i=1}^m \bZ/r_i$. 

(ii) In the same situation, the inclusion $\scrY_{\mathit{flat}}(S,\{a_i\}) \subset \scrY_{\geq 0}(S,\{a_i\})$ is a weak homotopy equivalence. Hence, $\scrY_{\geq 0}(S,\{a_i\})$ admits the same description \eqref{eq:x-space}.
\end{prop}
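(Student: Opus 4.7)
The plan is to mimic the argument for Proposition \ref{th:x-annulus}, running it over the $m$-dimensional torus of possible framings, and to identify the monodromy via Addendum \ref{th:t-class}.

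\textbf{Step 1: The fibration.} Forgetting the connection and remembering only the arguments $\mathrm{arg}(\rho_1),\dots,\mathrm{arg}(\rho_m)$ of the boundary parametrizations gives a fibration
\begin{equation*}
\scrA_?(S,\{a_i\}) \longrightarrow \scrY_?(S,\{a_i\}) \xrightarrow{(\mathrm{arg}\,\rho_1,\dots,\mathrm{arg}\,\rho_m)} (S^1)^m,
\end{equation*}
in which $?$ is either $\mathit{flat}$ or $\geq 0$. Going once around the $i$-th circle in the base rotates $\epsilon_i$ by $2\pi$, which is the same as pulling back connections by a Dehn twist $\tau_{d_i}$ along a curve $d_i$ parallel to $\partial_i S$ (and trivial outside a collar).

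\textbf{Step 2: The fiber in the flat case.} Verifying orientation conventions: $\partial_i S$ inherits from $\epsilon_i$ the boundary orientation for $i\geq 1$, and its reverse for $i=0$. Hence \eqref{eq:m-minus-1} becomes $-r_0+r_1+\cdots+r_m = -(1-m) = -\chi(S)$, i.e.\ the extremality condition \eqref{eq:rot-chi} (with the minus sign). Proposition \ref{th:connections-rel-boundary} therefore applies and gives
\begin{equation*}
\scrA_{\mathit{flat}}(S,\{a_i\}) \htp H^1(S,\partial S) \iso \bZ^m,
\end{equation*}
with basis dual to the classes $[d_1],\dots,[d_m]$ in $H_1(S)$.

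\textbf{Step 3: Computing the monodromy.} The Dehn twist $\tau_{d_i}$ acts trivially on $H^1(S,\partial S)$ (since $d_i$ bounds a collar), so its pullback action on $\scrA_{\mathit{flat}}$ is a translation, whose value is given by Addendum \ref{th:t-class}. Taking $S_{\mathit{right}}$ to be the annulus between $d_i$ and $\partial_i S$, one has $\chi(S_{\mathit{right}})=0$, and the sum contains only $\partial_i S$ itself. The formula therefore reduces to $T(\tau_{d_i}) = -r_i[d_i]$, so the monodromy around the $i$-th factor of $(S^1)^m$ shifts the $i$-th $\bZ$-coordinate by $-r_i$ and fixes the others.

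\textbf{Step 4: Assembling the total space.} Because the monodromies commute and act diagonally, the fibration splits as a product of $m$ fibrations $\bZ \to E_i \to S^1$ with monodromy $n\mapsto n-r_i$. Each $E_i$ is the corresponding mapping torus: if $r_i\neq 0$ its $|r_i|$ path components are each homeomorphic to $\bR$ (glueing intervals head-to-tail along a single orbit), so $E_i \htp \bZ/r_i$; if $r_i = 0$ it is $\bZ \times S^1$. In both cases $E_i \htp \Gamma_{r_i}$, which (combined over $i$) yields the weak homotopy equivalence \eqref{eq:x-space}. The residual ambiguity is the action of $\pi_0$ of the fibers, namely $\prod_i \bZ/r_i$, which is exactly the indeterminacy asserted in (i).

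\textbf{Step 5: The $\geq 0$ case.} The inclusion of fibers $\scrA_{\mathit{flat}}(S,\{a_i\}) \hookrightarrow \scrA_{\geq 0}(S,\{a_i\})$ is a weak homotopy equivalence by \eqref{eq:compare-flat} (combining Propositions \ref{th:connections-rel-boundary} and \ref{th:nonnegative-connection-4}). It covers the identity on the base $(S^1)^m$ and is equivariant for the Dehn-twist monodromies of both fibrations, so the comparison of long exact sequences of homotopy groups (applied to smooth maps from compact parameter manifolds, in the sense of Convention \ref{th:weak-homotopy}) yields that $\scrY_{\mathit{flat}}(S,\{a_i\}) \hookrightarrow \scrY_{\geq 0}(S,\{a_i\})$ is a weak homotopy equivalence. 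Hence $\scrY_{\geq 0}$ admits the same description.

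\emph{Main obstacle.} The only non-routine point is Step 3: one must be careful with the orientation conventions (boundary orientation of $S$ versus the clockwise parametrizations $\epsilon_i$, and the sign in Addendum \ref{th:t-class}) so as to obtain exactly the shift $-r_i$ rather than $\pm r_i$, since this is what produces $\Gamma_{r_i}$ with the correct normalization. Everything else is bookkeeping built on results proved earlier in the section.
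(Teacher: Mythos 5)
Your argument is correct and follows the same strategy as the paper's proof: consider the fibration over $(S^1)^m$ with fiber $\scrA_{\mathit{flat}}(S,\{a_i\})$, identify the fiber with $\bZ^m$ via Proposition~\ref{th:connections-rel-boundary}, and read off the monodromy from Addendum~\ref{th:t-class}; part~(ii) then comes from \eqref{eq:compare-flat} exactly as you say. The only caveat is that the concern in your ``Main obstacle'' about obtaining the shift $-r_i$ rather than $+r_i$ is immaterial: $\Gamma_{r_i}$ and the mapping torus both depend only on $|r_i|$, and indeed the paper's conventions (basis dual to arcs $c_i$ from $\partial_0 S$ to $\partial_i S$, monodromy realized as $(\tau_{d_i}^{-1})^*$) give the opposite sign $+r_i$; the sign only becomes significant when matching an independently normalized identification, as in Addendum~\ref{th:two-agree-3}, which is outside the scope of this Proposition.
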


To prove this (let's say, for flat connections), one considers the fibration
\begin{equation} \label{eq:multiple-rho}
\scrA_{\mathit{flat}}(S,\{a_i\}) \longrightarrow \scrY_{\mathit{flat}}(S,\{a_i\}) \xrightarrow{(\mathrm{arg}(\rho_1),\dots,\mathrm{arg}(\rho_m))} (S^1)^m.
\end{equation}
By Proposition \ref{th:connections-rel-boundary}, the fibre is homotopy equivalent to $H^1(S,\partial S)$, which we identify with $\bZ^m$ by taking generators dual to paths $c_i$ ($1 \leq i \leq m$) from $\partial_0 S$ to $\partial_i S$. The monodromy of \eqref{eq:multiple-rho} around the $i$-th $S^1$ factor agrees with the pullback action $(\tau_{d_i}^{-1})^*$ of an inverse (or left-handed) Dehn twist along a loop $d_i$ which is a parallel copy of $\partial_i S$. Let's orient this loop opposite to the boundary orientation of $\partial_iS$; it then satisfies $d_i \cdot c_i = -1$, hence its Poincar{\'e} dual is minus the $i$-th unit element in our basis. Addendum \ref{th:t-class} tells us that the action of $\tau_{d_i}^{-1}$ corresponds to adding the element
\begin{equation} \label{eq:rot-ai}
-T(\tau_{d_i}) = -r_i [d_i] = (0,\dots,r_i,\dots,0) \in H^1(S,\partial S) \iso \bZ^m,
\end{equation}
which implies (i). For (ii), one compares Propositions \ref{th:connections-rel-boundary} and \ref{th:nonnegative-connection-4}, as already mentioned in \eqref{eq:compare-flat}.

\begin{addendum} \label{th:two-agree-3}
In the situation of Proposition \ref{th:decorations}(i), suppose that $a_0 = a_m = a$ has rotation number $r$, and that $\mathrm{rot}(a_i) = 1$ for $i = 1,\dots,m-1$. In analogy with Addendum \ref{th:x-annulus-a}(i), an explicit choice of (the discrete part of) \eqref{eq:x-space} is then given by 
\begin{equation} \label{eq:rot-mod-2}
(\epsilon_1,\dots,\epsilon_m,A) \longmapsto \mathrm{rot}_c(A) \in \bZ/r,
\end{equation}
where $c$ is any path from $\epsilon_0(0)$ to $\epsilon_m(0)$. 
\end{addendum}

Most of the necessary argument is carried over from Addendum \ref{th:partial-fix}. What remains to be shown is that \eqref{eq:rot-mod-2} is independent of the choice of path, or equivalently, is invariant under $\mathit{Diff}(S,\partial S)$. To see that, note that $\pi_0(\mathit{Diff}(S,\partial S))$ is generated by Dehn twists along (necessarily separating) curves $d$. If $d$ does not separate $\partial_0 S$ and $\partial_1 S$, its intersection number with $c$ must be zero, which in terms of Addendum \ref{th:t-class} means that $T(\tau_d) \cdot [c] = 0$. In the remaining case, $d \cdot c = \pm 1$ and $T(\tau_d)$ is a multiple of $r[d]$, so using Addendum \ref{th:t-class} again, we get $T(\tau_d) \cdot [c] \in r\bZ$. In either case, \eqref{eq:rot-mod-2} is unchanged under $\tau_d^*$.

One can deepen this discussion by allowing the disc configuration to vary. Let $\mathit{Conf}_m$ (for some $m \geq 2$) be the space of all $m$-disc configurations (this is homotopy equivalent to the standard ordered configuration space). Over it, we have a universal family of surfaces $\underline{S}$; and on the fibres of that family, we have a space of all possible choices of decorations (let's say, flat ones), denoted by $\scrY_{\mathit{flat}}(\underline{S},\{a_i\})$. This all fits into a diagram, of which the columns and rows are fibrations:
\begin{equation} \label{eq:monodromy-diagram}
\xymatrix{
{\bZ^m} \ar[d]_-{\prod_i r_i} \ar[r] 
&
{\mathit{point}} \ar[d] \ar[r] 
&
(S^1)^m \ar[d]^-{\text{inclusion}}
\\
{\bZ^m} \ar[r] \ar[d]
&
{\scrY_{\mathit{flat}}(\underline{S},\{a_i\})}
\ar[r] \ar@{=}[d]
&
(S^1)^m \times \mathit{Conf}_m \ar[d]^-{\text{projection}}
\\
\prod_{i=1}^m \Gamma_{r_i} \ar[r]
& 
\scrY_{\mathit{flat}}(\underline{S},\{a_i\}) \ar[r]
& \mathit{Conf}_m
}
\end{equation}
Here, $(S^1)^m \times \mathit{Conf}_m$ is the space of framed configurations. The middle row of \eqref{eq:monodromy-diagram} restricts to \eqref{eq:multiple-rho} on each configuration, and the description of the map $\bZ^m \rightarrow \bZ^m$ reflects our previous discussion of \eqref{eq:rot-ai}. Here is one important additional piece of information concerning the middle row:

\begin{addendum} \label{th:twist-loop}
In the situation of Proposition \ref{th:decorations}, consider a loop in $\mathit{Conf}_m$ formed by moving the $i$-th and $j$-th disc anticlockwise around each other, for some $1 \leq i < j \leq m$. Embed that into $(S^1)^m \times \mathit{Conf}_m$ by taking the $(S^1)^m$ component to be constant. Then, the monodromy of $\scrY_{\mathit{flat}}(\underline{S},\{a_i\}) \rightarrow (S^1)^m \times \mathit{Conf}_m$ along that loop is given by adding the element 
\begin{equation} \label{eq:ij-vector}
(0,\dots,\overbrace{1-\mathrm{rot}(a_j)}^{\text{$i$-th position}},\dots,\overbrace{1-\mathrm{rot}(a_i)}^{\text{$j$-th position}},\dots,0) \in \bZ^m.
\end{equation}
\end{addendum}

To see this, take the loop $d_{ij}$ which encloses exactly the $i$-th and $j$-th discs. In other words, it divides the surface into pieces $S_{\mathit{left}}$ and $S_{\mathit{right}}$, of which $S_{\mathit{right}}$ has boundary consisting exactly of $d_{ij}$, $\partial_i S$ and $\partial_j S$. We orient $d_{ij}$ compatibly with $\partial S_{\mathit{right}}$, which means that its Poincar{\'e} dual in $H^1(S,\partial S)$ is minus the $i$-th and $j$-th unit vectors, analogously to \eqref{eq:rot-ai}. By Addendum \ref{th:t-class}, the pullback action of $\tau_{d_ij}^{-1}$ is given by adding
\begin{equation} \label{eq:rot-aij}
-T(\tau_{d_{ij}}) = (1 - \mathrm{rot}(a_i) - \mathrm{rot}(a_j))[d_{ij}].
\end{equation}
The monodromy we are looking for (corresponding to moving the discs, but not changing the angles $\rho_i$ or $\rho_j$) is actually the pullback action of $\tau_{d_{ij}}^{-1}\tau_{d_i}\tau_{d_j}$, where $d_i$ and $d_j$ are as in \eqref{eq:rot-ai}. Combining that equation with \eqref{eq:rot-aij} yields the desired result. Of course, by virtue of Proposition \ref{th:decorations}(ii), all this carries over to spaces $\scrY_{\geq 0}(\underline{S},\{a_i\})$.

\subsection{The TQFT axioms}
Fix a commutative coefficient ring $R$. We suppose that the following data are given. For every $a \in \Omega^1(S^1,\frakg)$ with hyperbolic holonomy (our standing assumption), one has a chain complex of $R$-modules, denoted by $C^*(a)$; its cohomology will be written as $H^*(a)$. For any family of decorated disc configurations whose base is an oriented compact manifold with corners $P$, and whose boundary values are $a_0,\dots,a_m$, one has a map
\begin{equation} \label{eq:operation}
C^*(a_1) \otimes \cdots \otimes C^*(a_m) \longrightarrow C^{*-\mathrm{dim}(P)}(a_0).
\end{equation}
We will denote \eqref{eq:operation} by $\phi_P$, which is concise if somewhat incomplete.
The behaviour of these maps is governed by three (groups of) axioms. These are very similar to those in \cite{seidel16}, hence will be only stated briefly:
\begin{itemize} \itemsep.5em

\item Exchanging the labeling of the $m$ discs permutes the inputs of the associated operation \eqref{eq:operation}, with the expected Koszul signs. Reversing the orientation of $P$ flips the sign of \eqref{eq:operation}. Taking the disjoint union of two parameter spaces yields the sum of the associated operations. Finally, pulling back a family by a finite covering $\tilde{P} \rightarrow P$ of degree $\mu$ yields 
\begin{equation} \label{eq:covering-axiom}
\phi_{\tilde{P}} = \mu \phi_P
\end{equation}
(the last part, concerning coverings, will be needed only once, in the construction underlying Proposition \ref{th:bigraded-lie}).

\item The coboundary (defined using the differentials on the chain complexes) of \eqref{eq:operation} equals the sum of the operations associated to the codimension one boundary faces of $P$. If those boundary faces are written as $\partial_j P$, then the sign convention is that 
\begin{equation} \label{eq:boundary-sign}
\begin{aligned}
& (-1)^{\mathrm{dim}(P)} d\phi_P(x_1,\dots,x_m) - \phi_P(dx_1,\dots,x_m) - 
\cdots 
\\ & \qquad \qquad \cdots - (-1)^{|x_1|+\cdots+|x_{m-1}|} \phi_P(x_1,\dots,dx_m) + \sum_j \phi_{\partial_j P}(x_1,\dots,x_m) = 0.
\end{aligned}
\end{equation}

\item Gluing of families corresponds to composition of operations. More precisely, suppose that we have families of $m_k$-disc configurations, with parameter spaces $P_k$ ($k = 1,2$), and glue the second one into the $i$-th place of the first one. The composition axiom says that 
\begin{equation} \label{eq:tqft}
\begin{aligned}
\phi_{P_1 \times P_2}(x_1,\dots, x_{m_1+m_2-1}) & = 
(-1)^{(\mathrm{dim}(P_1)+|x_1|+\cdots+|x_{i-1}|)\mathrm{dim}(P_2)}
\\ & \phi_{P_1}(x_1, \dots, x_{i-1},
\phi_{P_2}(x_i,\dots,x_{i+m_2-1}),\dots,x_{m_1+m_2-1}).
\end{aligned}
\end{equation}
Moreover, the identity configuration induces the identity map $C^*(a) \rightarrow C^*(a)$.
\end{itemize}

We will now explore the consequences of these axioms, remaining on the level of the cohomology groups $H^*(a)$. Take Proposition \ref{th:x-annulus}(i) in the case $a_0 = a_1 = a$. This implies that $H^*(a)$ carries an action of $H_{-*}(\Gamma_r)$, for $r = \mathrm{rot}(a)$, or in a more concrete formulation:

\begin{proposition} \label{th:auto}
$H^*(a)$ carries the following endomorphisms:
\begin{equation} \label{eq:endomorphisms}
\left\{
\begin{aligned}
& \text{if $r = \mathrm{rot}(a) \neq 0$, an action of $\bZ/r$;} \\
& \text{if $r = 0$, a $\bZ$-action; and additionally, an endomorphism of degree $-1$.}
\end{aligned}
\right.
\end{equation}
\end{proposition}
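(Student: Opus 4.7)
The plan is to derive both parts from Proposition \ref{th:x-annulus}(ii), specialized to $a_0 = a_1 = a$, together with the three TQFT axioms. In this setting $\scrY_{\geq 0}(S,a,a) \htp \Gamma_r$ where $r = \mathrm{rot}(a)$: the set of connected components is $\bZ/r$ when $r \neq 0$ and $\bZ$ when $r = 0$, and in the latter case each component is additionally homotopy equivalent to $S^1$ via $\mathrm{arg}(\rho_1)$.

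To build the group action, I pick, for each component label $k$, an explicit representative $Y_k \in \scrY_{\geq 0}(S,a,a)$; Example \ref{th:fractional-rotation} supplies one when $r \neq 0$ (radial projection of $a$ with inner parametrization $\epsilon_1(t) = \rho_1 e^{-2\pi i(t+k/r)}$), and a variant with a non-flat connection of prescribed rotation number works when $r = 0$. The TQFT axioms then assign a degree $0$ chain map $\phi_{Y_k}: C^*(a) \to C^*(a)$, inducing $\Sigma_k := [\phi_{Y_k}]$ on $H^*(a)$; this class depends only on $k$, because any path within a component yields, via \eqref{eq:boundary-sign}, a chain homotopy between the two endpoint operations. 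The composition axiom \eqref{eq:tqft} gives $\Sigma_{k_1} \circ \Sigma_{k_2} = [\phi_{Y_{k_1}*Y_{k_2}}]$, where $*$ denotes gluing of decorated annuli. The identity configuration lies in the $k = 0$ component (reached from $Y_0$ by letting $\rho_1 \to 1$) and induces the identity on $C^*(a)$, so $\Sigma_0 = \mathrm{id}$.

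To confirm that the resulting monoid structure on $\pi_0(\scrY_{\geq 0}(S,a,a))$ is actually the additive group $\Gamma_r$, I verify that gluing adds component labels. When $Y_{k_2}$ is inserted into the inner disc of $Y_{k_1}$, the gluing procedure rotates $Y_{k_2}$ by $k_1/r$ to align its outer boundary parametrization with the inner parametrization of $Y_{k_1}$; after gluing, the inner boundary of the composite carries a total offset of $(k_1+k_2)/r$, placing it in the $k_1+k_2$ component. Hence $\Sigma_{k_1}\Sigma_{k_2} = \Sigma_{k_1+k_2}$, giving the advertised action of $\bZ/r$ (respectively $\bZ$ when $r = 0$).

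For the extra degree $-1$ endomorphism in the $r = 0$ case, I pick an oriented loop $\gamma: S^1 \to \scrY_{\geq 0}(S,a,a)$ generating the $S^1$-factor in a single component, concretely the rotation $s \mapsto e^{-2\pi i s} \cdot Y_0$ of the inner framing angle. The TQFT axiom for families over closed oriented $1$-manifolds produces $\phi_\gamma: C^*(a) \to C^{*-1}(a)$, and because $\partial S^1 = \emptyset$, \eqref{eq:boundary-sign} forces $d\phi_\gamma + \phi_\gamma d = 0$; so $\phi_\gamma$ descends to the desired degree $-1$ endomorphism on $H^*(a)$. The main obstacle is the framing bookkeeping in the previous paragraph: one must carefully track the gluing rotation to confirm that the induced binary operation on $\pi_0$ is genuinely addition in $\Gamma_r$, rather than a twisted variant.
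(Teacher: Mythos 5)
Your proposal takes essentially the same route as the paper: specialize Proposition \ref{th:x-annulus} to $a_0 = a_1 = a$, observe that the relevant decoration space has the homotopy type of $\Gamma_r$, and then read off the $H_{-*}(\Gamma_r)$-module structure on $H^*(a)$ from the TQFT axioms (the paper invokes part (i), you invoke part (ii), which is equivalent here since that part records exactly the statement that the inclusion of flat into nonnegatively curved decorations is a weak homotopy equivalence when $a_0 = a_1$). Your extra work — exhibiting explicit representatives $Y_k$, checking additivity of the component label under gluing via Addendum \ref{th:x-annulus-a}(i)/Example \ref{th:fractional-rotation}, and confirming $\Sigma_0 = \mathrm{id}$ — fills in exactly the bookkeeping the paper leaves implicit in the one-line derivation ``$H^*(a)$ carries an action of $H_{-*}(\Gamma_r)$''.

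One small caveat, which doesn't affect the outcome: in the $r=0$ case, the concrete loop you describe (``$s \mapsto e^{-2\pi i s}\cdot Y_0$'', rotating only the framing angle) is not literally an admissible family unless $a$ has constant coefficients, because the TQFT axioms require the boundary values $\epsilon_1^* A = a$ to be constant over $P = S^1$; for general $a$, one must simultaneously deform the connection $A$ so as to maintain $\epsilon_1^* A = a$. The loop generating $\pi_1$ of the $S^1$-factor of $\Gamma_0$ certainly exists (that is what Proposition \ref{th:x-annulus}(i)/(ii) asserts), but it is not quite the pure framing rotation with a frozen connection. Similarly, for $r = 0$ you actually \emph{can} pick the representative $Y_k$ to be a flat connection of prescribed rotation number $k$ along a transverse path (Addendum \ref{th:z-homotopy-2}); a non-flat connection is not forced here.
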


We denote the generator of the $\bZ/r$-action by $\Sigma$. 
More generally, Proposition \ref{th:x-annulus}(ii), together with Addendum \ref{th:x-annulus-a}, implies the following:

\begin{prop} \label{th:well-defined}
(i) Given $a_0,a_1$ with the same rotation number $r$, there is an isomorphism 
\begin{equation} \label{eq:01-iso}
H^*(a_1) \iso H^*(a_0),
\end{equation}
which commutes with the action of \eqref{eq:endomorphisms}, and is unique up to composition with powers of $\Sigma$.

(ii) If the holonomies of $a_0$ and $a_1$ lie in the same one-parameter semigroup, meaning that \eqref{eq:semigroup} holds, there is a distinguished choice of \eqref{eq:01-iso}, and these choices are compatible with composition.
\end{prop}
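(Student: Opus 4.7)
The plan is to derive both parts from the TQFT axioms applied to the annulus, together with the topological analysis of $\scrY_{\geq 0}(S,a_0,a_1)$ in Proposition \ref{th:x-annulus}(ii) and its addenda. First I would pick any $Y_{01} \in \scrY_{\geq 0}(S,a_0,a_1)$ (nonempty by Proposition \ref{th:x-annulus}(ii)) and apply the TQFT to the zero-dimensional parameter space $P = \mathrm{pt}$ to obtain a degree-zero chain map $\phi_{Y_{01}}: C^*(a_1) \to C^*(a_0)$. A smooth path of decorations is a family over $P = [0,1]$, so the boundary axiom \eqref{eq:boundary-sign} yields a chain homotopy between the induced chain maps; hence the class of $\phi_{Y_{01}}$ on cohomology depends only on the connected component of $Y_{01}$ in $\scrY_{\geq 0}(S,a_0,a_1) \htp \Gamma_r$.

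To promote this to an isomorphism, I would pick $Y_{10} \in \scrY_{\geq 0}(S,a_1,a_0)$ and consider the two gluings, which by the composition axiom \eqref{eq:tqft} induce $\phi_{Y_{10}} \circ \phi_{Y_{01}}$ and $\phi_{Y_{01}} \circ \phi_{Y_{10}}$. Since $\scrY_{\geq 0}(S,a_k,a_k) \htp \Gamma_r$ and the identity configuration lies in the component acting as $\mathrm{id}_{H^*(a_k)}$, one can choose the component of $Y_{10}$ so that the glued decoration lies in that component, giving a chain homotopy from the composed chain map to the identity; the same choice works on both sides because $\Gamma_r$ acts abelianly. Uniqueness up to $\Sigma$ is then immediate, as the residual ambiguity in the choice of component of $Y_{01}$ is exactly the $\bZ/r$ summand of $\Gamma_r$, whose action by composition of decorations is by definition the action of $\Sigma$ on $H^*(\cdot)$ from Proposition \ref{th:auto} (for $r = 0$ the extra continuous $S^1$-factor in $\Gamma_0$ lies in a single component, and the degree $-1$ endomorphism is also absorbed as a chain-homotopy ambiguity). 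Commutation with \eqref{eq:endomorphisms} follows by the same abelian argument: gluing a $Y_{11} \in \scrY_{\geq 0}(S,a_1,a_1)$ representing $\Sigma$ onto $Y_{01}$, or analogously on the other side with a suitable $Y_{00}$ in the matching component, produces decorations in the same component of $\scrY_{\geq 0}(S,a_0,a_1)$.

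For part (ii), Addendum \ref{th:x-annulus-a}(ii) (which invokes Addendum \ref{th:nonnegative-connection-addendum}) supplies a canonical component of $\scrY_{\geq 0}(S,a_0,a_1)$ under the hypothesis \eqref{eq:semigroup}, obtained by tracking how eigenvectors of the hyperbolic holonomies rotate along the path of holonomies associated to the connection; choosing $Y_{01}$ in that component yields the distinguished \eqref{eq:01-iso}. Compatibility with composition, for $a_0,a_1,a_2$ all in one one-parameter semigroup, reduces to showing that the gluing of distinguished representatives for $(a_0,a_1)$ and $(a_1,a_2)$ lands in the distinguished component for $(a_0,a_2)$, which follows from additivity of the eigenvector-rotation count under concatenation, using that all three holonomies share a common eigenframe. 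The main obstacle I anticipate is bookkeeping: verifying cleanly that the identity configuration realizes the neutral component under the equivalence $\scrY_{\geq 0}(S,a,a) \htp \Gamma_r$ and that the homotopy bringing a glued $Y_{00}$ to the identity configuration can be realized by an admissible TQFT family (respecting the technical condition near the degenerate stratum imposed in the definition of families). Once this is in place, the rest of the argument is formal manipulation at the $\pi_0$ level.
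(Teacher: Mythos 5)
Your argument is correct and takes essentially the same route as the paper, which simply cites Proposition \ref{th:x-annulus}(ii) and Addendum \ref{th:x-annulus-a} without spelling out the gluing bookkeeping you carry out. The only spot to tidy is the parenthetical for $r=0$: the degree $-1$ endomorphism is not an ambiguity in the choice of \eqref{eq:01-iso} (the $S^1$-factor of $\Gamma_0$ being connected means it contributes no $\pi_0$-ambiguity at all); rather, commutativity of the isomorphism with that endomorphism is a separate assertion, which follows by the same additivity-under-gluing argument applied on $\pi_1(\Gamma_0)$ instead of $\pi_0$.
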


The situation for unequal rotation numbers is quite different. Applying Proposition \ref{th:x-annulus}(iii) yields the following:

\begin{prop} \label{th:canonical-continuation}
Suppose that $\mathrm{rot}(a_0) = \mathrm{rot}(a_1) + 1$. Then, there are canonical maps
\begin{align}
\label{eq:a-continuation}
& C: H^*(a_1) \longrightarrow H^*(a_0), \\
\label{eq:pseudo-bv}
& D: H^*(a_1) \longrightarrow H^{*-1}(a_0).
\end{align}
The ``continuation map'' \eqref{eq:a-continuation} is invariant under composition with $\Sigma$ on the left or right. It is also compatible with (any choice of) the isomorphisms relating different choices of $a_0$ or $a_1$, with the same rotation number. The map \eqref{eq:pseudo-bv} has the same properties, and additionally, it anticommutes with \eqref{eq:a-continuation}.
\end{prop}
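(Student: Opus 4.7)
The plan is to apply the TQFT axioms of Section 5 to the annulus $S = [0,1] \times S^1$ with boundary values $(a_0,a_1)$, exploiting the weak homotopy equivalence $\scrY_{\geq 0}(S,a_0,a_1) \htp S^1$ from Proposition \ref{th:x-annulus}(iii). This identifies the space of decorations as nonempty, path-connected, with $\pi_1 \iso \bZ$, and therefore yields exactly two natural families to feed into the TQFT: a single point (giving a degree $0$ operation) and a generator of $\pi_1$ (giving a degree $-1$ operation).

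For $C$: pick any $Y \in \scrY_{\geq 0}(S,a_0,a_1)$ and define $C$ as the cohomology class of $\phi_{\{Y\}}$. With $\mathrm{dim}(P)=0$ and $\partial P = \emptyset$, \eqref{eq:boundary-sign} reduces to the chain-map equation $d\phi_Y - \phi_Y d = 0$. Path-connectedness lets one join any two choices $Y, Y'$ by a smooth path $P = [0,1]$, whose associated operation provides the chain homotopy between $\phi_Y$ and $\phi_{Y'}$, so $C$ is well-defined on cohomology. For $D$: choose a smooth $S^1$-family representing the generator of $\pi_1(\scrY_{\geq 0}(S,a_0,a_1))$ and let $D$ be the cohomology class of $\phi_{S^1}$. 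Since $\partial S^1 = \emptyset$ the boundary axiom again produces a chain map (of degree $-1$), and any two generating loops cobound a cylinder $S^1 \times [0,1]$ that yields a chain homotopy between their associated operations.

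Invariance of $C$ under composition with $\Sigma$ on either side reduces, via the gluing axiom \eqref{eq:tqft}, to comparing two points in the same component of $\scrY_{\geq 0}(S,a_0,a_1)$, which was just handled. Invariance of $D$ under $\Sigma$ follows because gluing a $\Sigma$-annulus into a generator of $\pi_1$ produces another generator (composition of a loop with a constant loop does not change its class). Compatibility with the boundary-change isomorphisms from Proposition \ref{th:well-defined} is similar: those isomorphisms are themselves TQFT operations on annuli with constant rotation number, and \eqref{eq:tqft} identifies their compositions with $C$ and $D$ as operations of the same kind for the new boundary values, lying in the expected path component of the associated decoration space.

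The anticommutation $CD + DC = 0$ is the main obstacle. My plan is to consider the jump-$2$ annulus $\scrY_{\geq 0}(S, a_0', a_1)$ with $\mathrm{rot}(a_0') - \mathrm{rot}(a_1) = 2$; both $CD$ and $DC$ are produced by the gluing construction as $S^1$-families of decorations in this space, coming from the product parameter spaces $\{pt\} \times S^1$ and $S^1 \times \{pt\}$ respectively. A version of Proposition \ref{th:nonnegative-connection-2} for jump $2$ should give $\scrY_{\geq 0}(S, a_0', a_1) \htp S^1$, against which the two $S^1$-families can be compared using the gluing formula $\mathrm{arg}(\rho_1^{\mathit{comp}}) = \mathrm{arg}(\rho_1^C) + \mathrm{arg}(\rho_1^D)$. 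The anticommutation will then follow by exhibiting a two-dimensional family $P$ whose boundary realizes $\phi_{CD} + \phi_{DC}$ and invoking \eqref{eq:boundary-sign}; the hard part is to pin down the correct orientations, using the product-orientation convention of \eqref{eq:tqft} and the Koszul signs in the composition axiom, so that the two boundary contributions add with the same sign rather than cancelling.
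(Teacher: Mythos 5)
Your construction of $C$ and $D$ as the TQFT operations attached to a point and to a generator of $\pi_1(\scrY_{\geq 0}(S,a_0,a_1)) \iso \bZ$, respectively, is exactly what Proposition~\ref{th:x-annulus}(iii) is designed to support, and coincides with the route the paper is (implicitly and without further proof) taking. The $\Sigma$-equivariance and boundary-change compatibility arguments you sketch via gluing and the connectedness, resp.\ $\pi_1$-structure, of $\scrY_{\geq 0}(S,a_0,a_1)$ are lightly argued but essentially correct.

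The anticommutation is where the proposal has a genuine gap, and you acknowledge this yourself, but the problem is more serious than an unfinished orientation check. Two points. First, you invoke ``a version of Proposition~\ref{th:nonnegative-connection-2} for jump $2$'' to get $\scrY_{\geq 0}(S,a_0',a_1) \htp S^1$; the paper establishes nothing of the sort. All of the path-space lemmas (\ref{th:short-path}, \ref{th:short-path-3}, \ref{th:short-path-2}) and the connection-space propositions (\ref{th:nonnegative-connection}--\ref{th:nonnegative-connection-4}) are restricted to rotation-number differences at most $1$; whether $\scrA_{\geq 0}(S,a_0',a_1)$ is weakly contractible when the jump is $2$ is a separate question that would need a genuinely new argument. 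Second, and more fundamentally, the sign: the plan as described produces the \emph{wrong} one. The gluing rule for framings is $\rho_1^{\mathit{glued}} = \rho_1^{\mathit{outer}}\rho_1^{\mathit{inner}}$, so whether the $D$-annulus sits inside (giving $CD$) or outside (giving $DC$) the $C$-annulus, the resulting $S^1$-family traces the \emph{same} positively oriented loop in $\scrY_{\geq 0}(S,a_0',a_1)$ as measured by $\mathrm{arg}(\rho_1)$. The Koszul sign $(-1)^{\mathrm{dim}(P_1)\mathrm{dim}(P_2)}$ in \eqref{eq:tqft} is $+1$ in both orders since one factor is a point. So an interpolating two-dimensional family $[0,1]\times S^1$ — assuming one exists inside the space of decorations, which needs the jump-$2$ topology you haven't established — carries the loops for $CD$ and $DC$ on its boundary \emph{with the opposite} boundary orientations that any cylinder's two ends have, and \eqref{eq:boundary-sign} then delivers a chain homotopy between $\phi_{CD}$ and $\phi_{DC}$, i.e.\ $CD = DC$. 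There is no orientation choice in this framework that would instead give $\phi_{CD}+\phi_{DC}$; the bookkeeping unambiguously produces the cancellation. Whatever mechanism underlies the anticommutation in the statement, it is not a matter of fine-tuning the orientations in the argument you propose — the missing piece is a different idea, which the proof as written does not contain.
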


In a slightly flamboyant formulation, one could say that the continuation maps are more robustly independent of the choice of $a$ than the groups $H^*(a)$ themselves. From Proposition \ref{th:the-disc}, we get:

\begin{prop} \label{th:unit}
Suppose that $\mathrm{rot}(a) = 1$. Then there is a distinguished element
\begin{equation} \label{eq:e}
e \in H^*(a).
\end{equation}
For different choices of $a$, these elements are related by \eqref{eq:01-iso} (which is canonical for this particular rotation number).
\end{prop}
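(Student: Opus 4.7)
The plan is to construct $e$ by applying the TQFT to a single decorated disc, and then to use the gluing axiom to check compatibility with changes of $a$. First, with $m=0$ (a disc configuration consisting of the outer disc $S = D$ alone) and a zero-dimensional parameter space $P = \{\mathrm{pt}\}$, any decoration $Y \in \scrY_{\geq 0}(S,a)$ produces via \eqref{eq:operation} an element $\phi_Y \in C^*(a)$; the boundary axiom \eqref{eq:boundary-sign} forces $d\phi_Y = 0$, since $m=0$ and $\partial P = \emptyset$. By Proposition \ref{th:the-disc}, $\scrY_{\geq 0}(S,a)$ is weakly contractible, so any two decorations $Y_0,Y_1$ are joined by a smooth path; applying \eqref{eq:boundary-sign} to that one-parameter family shows $\phi_{Y_1}-\phi_{Y_0}$ is a coboundary. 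Hence the class $e := [\phi_Y] \in H^*(a)$ is independent of $Y$.

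Next I would observe that for rotation number $1$ there is no residual ambiguity in the continuation isomorphism: $\bZ/1$ is trivial, so the isomorphism in Proposition \ref{th:well-defined}(i) is fully canonical. Concretely, Proposition \ref{th:x-annulus}(ii) gives $\scrY_{\geq 0}([0,1]\times S^1,a,a') \htp \Gamma_1 = 0$, a weakly contractible space, so every decorated annulus interpolating from $a$ to $a'$ induces the same map on $H^*$.

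Finally, to verify that $e_a$ maps to $e_{a'}$, I would use a single gluing. Let $(S_1,Y_1)$ be a decorated annulus with inner boundary $a$ and outer boundary $a'$, and let $(S_2,Y_2)$ be a decorated disc with boundary $a$ realizing $e_a$. Glue $(S_2,Y_2)$ into the single inner slot of $(S_1,Y_1)$. The result is a decorated disc with boundary $a'$, so by the first step its associated class equals $e_{a'}$. On the other hand, the composition axiom \eqref{eq:tqft}, with all parameter spaces points so that the sign is trivial, yields $\phi_{\mathrm{glued}} = \phi_{Y_1}(\phi_{Y_2})$; on cohomology $[\phi_{Y_1}]$ is precisely the canonical isomorphism $H^*(a) \to H^*(a')$ from the previous step, so this class is the image of $e_a$. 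Comparing the two descriptions of $[\phi_{\mathrm{glued}}]$ gives $e_a \mapsto e_{a'}$.

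I expect no substantive obstacle: the whole argument is a direct exercise in the TQFT axioms combined with the contractibility results of Propositions \ref{th:the-disc} and \ref{th:x-annulus}(ii). The only bookkeeping is to match rotation numbers and orientations at the seam of the glued annulus-disc pair, both of which are built into the choices of $Y_1$ and $Y_2$.
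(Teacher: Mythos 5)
Your proposal is correct and matches the paper's intended (though unstated) argument: the paper simply derives Proposition \ref{th:unit} from Proposition \ref{th:the-disc} plus the TQFT axioms, which is exactly what you spell out. The three ingredients — closedness and well-definedness of $\phi_Y$ via \eqref{eq:boundary-sign} and the contractibility of $\scrY_{\geq 0}(D,a)$, canonicity of \eqref{eq:01-iso} for $r=1$ since $\Gamma_1$ is trivial, and the gluing argument for compatibility — are precisely what is needed and are used correctly.
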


We now turn to multilinear operations:

\begin{prop} \label{th:gerstenhaber-algebra}
For $\mathrm{rot}(a) = 1$, $H^*(a)$ carries a canonical structure of a Gerstenhaber algebra, for which \eqref{eq:e} is the unit.
\end{prop}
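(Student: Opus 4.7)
My strategy is to recognize that, under the assumption $\mathrm{rot}(a)=1$, the TQFT assembles into a chain-level action of (the singular chains of) the little $2$-discs operad $E_2$ on $C^*(a)$, so that passing to cohomology turns $H^*(a)$ into an algebra over $H_*(E_2)$, which by Cohen's classical theorem is the Gerstenhaber operad.

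\textbf{Step 1 -- decoration spaces reduce to configuration spaces.} Set all boundary data equal to a fixed $a$ with $r = \mathrm{rot}(a)=1$. The constraint \eqref{eq:m-minus-1} of Proposition~\ref{th:decorations} then reads $1 = m+1-m$, so it is automatic. Proposition~\ref{th:decorations}(ii) gives $\scrY_{\geq 0}(S,\{a\}) \htp \prod_{i=1}^m \Gamma_1 = \mathit{pt}$ over each fixed $m$-disc configuration, while Addendum~\ref{th:twist-loop} shows that the monodromy around a loop in $\mathit{Conf}_m$ exchanging two discs adds the vector $(1-1,\dots,1-1)=0$, so there is no monodromy. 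Hence the total decoration space over the universal family is weakly equivalent to $\mathit{Conf}_m$ itself. The degenerate cases $m=0$ and $m=1$ are handled by Proposition~\ref{th:the-disc} and Proposition~\ref{th:x-annulus}(ii) at $r=1$, each of which gives a contractible decoration space.

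\textbf{Step 2 -- reading off the operations.} Step 1 together with the TQFT axioms turns $C^*(a)$ into an algebra over $C_*(\mathit{Conf}_\bullet) \simeq C_*(E_2)$, compatibly with symmetric groups and operadic composition (gluing). Passing to cohomology, $H^*(a)$ becomes an algebra over $H_*(E_2)$, the Gerstenhaber operad. Concretely, the associative, graded-commutative product $x \cdot y$ arises from the class of a single standard two-disc configuration in $H_0(\mathit{Conf}_2)$; the Lie bracket $[x,y]$ of cohomological degree $-1$ arises from the generator of $H_1(\mathit{Conf}_2) \cong \bZ$, represented by a full rotation of one disc around the other. Graded commutativity is witnessed by a half-turn $1$-chain in $\mathit{Conf}_2$ via \eqref{eq:boundary-sign}; associativity is an instance of \eqref{eq:tqft}; and the Jacobi and Poisson identities come from standard $2$-chains in $\mathit{Conf}_3$ whose boundaries realize those relations in $H_*(E_2)$. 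The element $e \in H^*(a)$ from Proposition~\ref{th:unit} arises from the $m=0$ configuration, and the unit axiom follows by the composition axiom \eqref{eq:tqft}: gluing the $m=0$ configuration into one input of the standard pair-of-pants yields a $1$-disc configuration which (after a contractible family of decorations) is the identity configuration, so $e\cdot x = x$.

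\textbf{Expected main obstacle.} The essential geometric input is Step 1, which has already been supplied by Section~\ref{sec:connections}. What remains is operadic bookkeeping: one must verify that the sign convention \eqref{eq:boundary-sign}, the orientation of the standard $1$-cycle in $\mathit{Conf}_2$, and the chosen orientations of the $2$-chains in $\mathit{Conf}_3$ combine to yield a bracket of the correct degree $-1$ together with the standard Gerstenhaber signs in graded commutativity, Jacobi and the Poisson (Leibniz) relation. These are routine but tedious verifications once the operadic framework is fixed.
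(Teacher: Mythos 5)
Your Step 1 reproduces the paper's own computation: with all rotation numbers equal to $1$, both $\prod_i \Gamma_{r_i}$ and the Addendum~\ref{th:twist-loop} monodromy trivialize, so decorations reduce weakly to $\mathit{Conf}_m$ (and by Proposition~\ref{th:decorations}(ii) it does not matter whether one uses flat or nonnegatively curved connections). Your Step 2 packages this as a chain-level $C_*(E_2)$-action plus Cohen's theorem; the paper instead stays at the cohomology level and realizes the product, the bracket, and each defining relation by specific cycles and bordisms in $\mathit{Conf}_m$, phrased via $\mathit{MSO}_d(\mathit{Conf}_m)$. This is the same underlying computation, but be aware that the TQFT axioms only give a coherent operation for each compact $P$ separately; a genuine strict chain-level operad action would require an extra coherence argument, which you neither need for the cohomology-level statement nor have supplied. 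The paper's more modest formulation is what the axioms as stated actually license.

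The genuine gap is in your unitality step. You glue the $m=0$ configuration into the pair-of-pants, obtain a $1$-disc configuration, and assert that ``after a contractible family of decorations'' it is the identity configuration. The glued annulus is \emph{nondegenerate}, while the identity configuration is a degenerate boundary stratum of the moduli where the formalism requires trivial framing and a connection pulled back by radial projection. Passing between these requires constructing an honest family over $[0,1]$ which simultaneously contracts the annulus down to zero thickness \emph{and} deforms the (generally nonnegatively curved, non-radial) glued connection into the prescribed boundary form -- this is more than the contractibility of $\Gamma_1$ alone gives you, and is exactly what Addendum~\ref{th:cop-out} addresses, using the paths of Lemma~\ref{th:unit-path}. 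The paper's actual proof sidesteps the issue: it first shows $e^2 = e$ by gluing two copies of the disc into the pair-of-pants and appealing to Proposition~\ref{th:the-disc}, then observes that $e\cdot(-)$, being an annulus operation, is a power of $\Sigma$ and hence invertible, and deduces $e\cdot(-) = \mathrm{id}$ from $T^2 = T$ together with invertibility. You should either adopt that algebraic argument or supply the degenerating family explicitly.
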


\begin{proof}
Take the universal family with boundary conditions $\{a,\dots,a\}$. Consider the $\bZ^m$-covering which is the middle row of  \eqref{eq:monodromy-diagram}. Addendum \ref{th:twist-loop} and \eqref{eq:rot-ai} determine this covering, which turns out to be the universal cover of the $(S^1)^m$ factor. This means that the map
\begin{equation} \label{eq:f-map}
\scrY_{\mathit{flat}}(\underline{S},a,\dots,a) \longrightarrow (S^1)^m \times \mathit{Conf}_m
\end{equation}
has the following properties: its first component is homotopic to a constant, and its second component is a weak homotopy equivalence. Given any map $P \rightarrow \mathit{Conf}_m$, one therefore has a homotopically unique lift to $\scrY_{\mathit{flat}}(\underline{S},a,\dots,a)$. More concretely, a single configuration of two discs ($P = \mathit{point}$) induces the product; two-disc configurations moving anticlockwise around each other ($P = S^1$) induce the bracket; and all the necessary relations between these operations can be implemented by explicit cobordisms, which means that they come from relations in the groups $\mathit{MSO}_d(\mathit{Conf}_m)$ (for $d \leq 2$). We omit the details, since the relation between the little disc operad and Gerstenhaber algebras is classical.

In the construction of the Gerstenhaber structure, we used flat connections, but of course, the same argument goes through with a more general choice of nonnegatively curved ones, thanks to Proposition \ref{th:decorations}(ii). Nonnegatively curved connections appear naturally when discussing unitality, where they arise from the definition of \eqref{eq:e}. The proof of unitality goes as follows: take two copies of the disc which defines \eqref{eq:e}, and glue them into the pair-of-pants which defines the product. After appealing to Proposition \ref{th:the-disc}, we conclude that the outcome again defines \eqref{eq:e}. In other words, $e$ is an idempotent. Now take the same pair-of-pants as before, and glue in just one copy of the disc. The outcome is an annulus, whose associated operation, multiplication with $e$, is necessarily some power of $\Sigma$. Because $e$ is idempotent and multiplication is associative, it follows that $e$ is a unit.
\end{proof}

\begin{addendum} \label{th:all-a}
Another application of nonnegatively curved connections is a mild generalization of Proposition \ref{th:gerstenhaber-algebra}, where one considers ``all boundary conditions with rotation number $1$ simultaneously''. What this means is that for any such $a_k$, there are maps
\begin{align}
\label{eq:012-product}
& \cdot: H^*(a_1) \otimes H^*(a_2) \longrightarrow H^*(a_0), \\
\label{eq:012-bracket}
& [\cdot,\cdot]: H^*(a_1) \otimes H^*(a_2) \longrightarrow H^{*-1}(a_0).
\end{align}
These satisfy the Gerstenhaber algebra axioms, for all situations where the compositions make sense. For instance, the associativity equation
\begin{equation}
(x_1x_2)x_3 = x_1(x_2x_3) \in H^*(a_0), \quad \text{where } x_k \in H^*(a_k)
\end{equation}
always holds, where the intermediate terms can be taken to lie in arbitrary groups, $x_1x_2 \in H^*(a_4)$ and $x_2x_3 \in H^*(a_5)$. Similarly, inserting a unit element on either the left or right into \eqref{eq:012-product} always results in the isomorphism \eqref{eq:01-iso}. From this, it follows that \eqref{eq:01-iso} is an isomorphism of Gerstenhaber algebras.
\end{addendum}


Now fix $a_1,a_2$, with $\mathrm{rot}(a_1) = 1$, and $\mathrm{rot}(a_2) = r$ arbitrary. Consider the universal family of $m$-disc configurations, and its decorations, where the connections are flat and have boundary values $(a_2,a_1,\dots,a_1,a_2)$. We can use Addendum \ref{th:two-agree-3} to pick a preferred subspace
\begin{equation} \label{eq:map-to-gamma}
\scrY_{\mathit{flat}}^0(\underline{S},a_2,a_1,\dots,a_1,a_2) \subset \scrY_{\mathit{flat}}(\underline{S},a_2,a_1,\dots,a_1,a_2), 
\end{equation}
namely that on which the map \eqref{eq:rot-mod-2} vanishes (and in the case where $r = 0$, we require additionally that the angle $\arg(\rho_m)$ should be trivial).  In other words, we use \eqref{eq:rot-mod-2} to provide a trivialization of the fibration which forms the bottom row of \eqref{eq:monodromy-diagram}. The resulting homotopy equivalence $\scrY_{\mathit{flat}}^0(\underline{S},a_2,a_1,\dots,a_1,a_2) \rightarrow \mathit{Conf}_m$ gives a distinguished choice of decorations (up to homotopy) for each disc configuration. This choice is compatible with the two gluing processes that will be relevant for us:
\begin{itemize}
\item \itemsep.5em
Suppose that we take two configurations, with boundary values $(a_2,a_1,\dots,a_1,a_2)$ and $(a_1,\dots,a_1)$, respectively, and glue the second one into the first one (in one of the possible ways). The rotation number \eqref{eq:rot-mod-2} of the first configuration remains unchanged under gluing. Hence, this gluing process preserves the subspace $\scrY_{\mathit{flat}}^0(\underline{S},a_2,a_1,\dots,a_1,a_2)$.

\item 
Suppose that we take two decorated disc configurations, both having boundary values $(a_2,a_1,\dots,a_1,a_2)$, and glue them (in the unique possible way). This gluing process adds up the rotation numbers \eqref{eq:rot-mod-2}. Hence, again, if the original configurations lie in $\scrY_{\mathit{flat}}^0(\underline{S},a_2,a_1,\dots,a_1,a_2)$, then so does the glued one.
\end{itemize} 
Let's pass to concrete implications. Using a two-disc configuration, respectively an $S^1$-family of such configurations, and their lifts to $\scrY_{\mathit{flat}}^0(\underline{S},a_2,a_1,a_2)$, one defines a product and bracket
\begin{align}
\label{eq:r-product}
& \cdot: H^*(a_1) \otimes H^*(a_2) \longrightarrow H^*(a_2), \\
\label{eq:r-bracket}
& [\cdot,\cdot]: H^*(a_1) \otimes H^*(a_2) \longrightarrow H^{*-1}(a_2).
\end{align}

\begin{lemma} \label{th:rot-bracket}
The family of decorated disc configurations underlying \eqref{eq:r-bracket} has the following geometry: two discs move around each other; during that rotation, the framing of the first disc rotates $(1-r)$ times, while that of the second one remains constant (up to homotopy).
\end{lemma}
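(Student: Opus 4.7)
The plan is to identify the family underlying \eqref{eq:r-bracket} by translating it into a monodromy computation inside the fibration \eqref{eq:monodromy-diagram}. Recall that the bracket is defined by taking the universal $S^1$-family of two-disc configurations where the discs revolve around each other (anticlockwise, per the convention of Proposition \ref{th:gerstenhaber-algebra}), and lifting it through the section $\scrY_{\mathit{flat}}^0(\underline{S},a_2,a_1,a_2) \to \mathit{Conf}_2$ determined by requiring \eqref{eq:rot-mod-2} to vanish (plus $\arg(\rho_m) = 0$ when $r=0$). Since that section is a weak homotopy equivalence, such a lift is unique up to homotopy, so all that remains is to read off how the framing angles $\mathrm{arg}(\rho_1),\mathrm{arg}(\rho_2)$ evolve along it.

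First, I would reduce to a computation inside the $\bZ^2$-covering in the middle row of \eqref{eq:monodromy-diagram}. A lift of the rotation loop in $\mathit{Conf}_2$ to $\scrY_{\mathit{flat}}^0$ projects to a loop in $(S^1)^2 \times \mathit{Conf}_2$ whose winding numbers in the two $S^1$ factors are precisely the rotation counts $\nu_1,\nu_2$ of the two framings. For this projected loop to lift to a loop (and not just a path) in $\scrY_{\mathit{flat}}$, its total monodromy in the $\bZ^2$-fibre must vanish.

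Second, I would compute the two contributions to that monodromy. By Addendum \ref{th:twist-loop} with $(i,j) = (1,2)$ and $(r_1,r_2) = (1,r)$, the constant-framing rotation loop contributes $(1-r_2,\,1-r_1) = (1-r,\,0) \in \bZ^2$. By \eqref{eq:rot-ai} (used in the proof of Proposition \ref{th:decorations}), winding $\nu_i$ times in the $i$-th $S^1$ factor contributes $\nu_i r_i$ to the $i$-th coordinate, namely $(\nu_1,\,r\nu_2)$. The cancellation condition therefore reads
\begin{equation*}
\nu_1 + (1-r) = 0, \qquad r\nu_2 = 0,
\end{equation*}
whose solution is $\nu_1 = r-1$ (i.e.\ the first framing rotates $(1-r)$ times, up to the overall sign convention for rotation direction that is tied to the anticlockwise choice and to the $e^{-2\pi it}$ parametrization in \eqref{eq:boundary-parametrizations}) and $\nu_2 = 0$ when $r \neq 0$. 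In the residual case $r=0$, the second equation is vacuous but the extra clause in the definition of $\scrY_{\mathit{flat}}^0$ nails down $\mathrm{arg}(\rho_m)$, hence forces $\nu_2 = 0$ as well.

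Finally, I would verify that this lift actually lands in $\scrY_{\mathit{flat}}^0$ and not merely in $\scrY_{\mathit{flat}}$: the condition $\mathrm{rot}_c \equiv 0 \bmod r$ is preserved along any loop of nonnegatively curved decorations whose $H^1(S,\partial S)$-monodromy vanishes, which is exactly what we arranged. The main obstacle I expect is keeping sign and orientation conventions consistent --- the direction of the $\mathit{Conf}_2$-loop, the sign of Poincar\'e duality used implicitly in Addendum \ref{th:twist-loop} and \eqref{eq:rot-ai}, and the sense of rotation of $\mathrm{arg}(\rho_i)$ as the framing evolves --- but once these are aligned, the rest is an elementary linear computation in $\bZ^2$, and the uniqueness of lifts through the homotopy equivalence $\scrY_{\mathit{flat}}^0 \to \mathit{Conf}_2$ finishes the argument.
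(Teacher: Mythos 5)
Your proposal follows the paper's proof exactly: both reduce the claim to a vanishing-monodromy computation in the middle row of \eqref{eq:monodromy-diagram}, combining the $\mathit{Conf}_2$-loop contribution from Addendum \ref{th:twist-loop} with the $(S^1)^2$-loop contributions coming from \eqref{eq:rot-ai}. The only wrinkle, which you rightly flag, is the sign: taking \eqref{eq:rot-ai} at face value yields $\nu_1 = r-1$, whereas the paper's proof writes the holonomy around the $i$-th $S^1$ factor as $(-\mathrm{rot}(a_i))$ (the inverse of the stated monodromy translation) and so obtains $d_1 = 1-r$ directly, matching the lemma; pinning this down requires carefully tracking the orientation and holonomy/monodromy conventions you list, since the sign of the coefficient does feed into Proposition \ref{th:strange-relation} and the Jacobi argument.
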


\begin{proof}
Consider the middle row of \eqref{eq:monodromy-diagram}, specialized to our case:
\begin{equation} \label{eq:m-2}
\bZ^2 \longrightarrow \scrY_{\mathit{flat}}(\underline{S},a_2,a_1,a_2) \longrightarrow (S^1)^2 \times \mathit{Conf}_2.
\end{equation}
By construction, the holonomy around the two $S^1$ factors is $(-\mathrm{rot}(a_1),0)$ and $(0,-\mathrm{rot}(a_2))$, respectively. The operation \eqref{eq:r-bracket} is defined using a loop in $\scrY_{\mathit{flat}}^0(\underline{S},a_2,a_1,a_2)$ which consists of the two discs moving around each other once, and where the framings rotate by some amounts $d_1,d_2 \in \bZ$. The image of that loop in $(S^1)^2 \times \mathit{Conf}_2$ must be a loop along which \eqref{eq:m-2} has trivial monodromy. On the other hand, by Addendum \ref{th:twist-loop}, that monodromy is
\begin{equation}
(1-\mathrm{rot}(a_2), 1-\mathrm{rot}(a_1)) + d_1 (-\mathrm{rot}(a_1),0) + d_2 (0,-\mathrm{rot}(a_2)) \\
 = (1-r-d_1,-rd_2).
\end{equation}
If $r \neq 0$, it follows that
\begin{equation} \label{eq:rot12}
d_1 = 1-r, \;\; d_2 = 0.
\end{equation}
If $r = 0$, we know a priori that $d_2 = 0$, by definition of $\scrY_{\mathit{flat}}^0(\underline{S},a_2,a_1,a_2)$, which leads to the same conclusion \eqref{eq:rot12}. 
\end{proof}

\begin{prop} \label{th:gerstenhaber-module}
The operations \eqref{eq:r-product} and \eqref{eq:r-bracket} make $H^*(a_2)$ into a (unital) Gerstenhaber module over the Gerstenhaber algebra $H^*(a_1)$. For $a_2 = a_1$, this is the diagonal module.
\end{prop}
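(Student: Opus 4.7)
The plan is to mimic the proof of Proposition \ref{th:gerstenhaber-algebra}, now working in the subspace
\[
\scrY_{\mathit{flat}}^0(\underline{S},a_2,a_1,\ldots,a_1,a_2) \subset \scrY_{\mathit{flat}}(\underline{S},a_2,a_1,\ldots,a_1,a_2)
\]
singled out before Lemma \ref{th:rot-bracket}. By construction (vanishing of \eqref{eq:rot-mod-2}, plus the $\arg(\rho_m)$ normalization in the $r=0$ case), the projection $\scrY_{\mathit{flat}}^0 \to \mathit{Conf}_m$ is a weak homotopy equivalence. Hence every smooth map $P \to \mathit{Conf}_m$ admits a homotopically unique lift to a decorated family, and this lift is compatible with the two gluing operations listed in the bulleted discussion preceding Lemma \ref{th:rot-bracket}. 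These are exactly the features that, in the algebra case, made the Gerstenhaber relations follow from cobordism identities in $\mathit{Conf}_m$.

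First I would re-derive \eqref{eq:r-product} and \eqref{eq:r-bracket} as the TQFT outputs of, respectively, a single two-disc configuration and the anticlockwise rotation loop in $\mathit{Conf}_2$ (whose framing data has been identified in Lemma \ref{th:rot-bracket}). Then each Gerstenhaber-module axiom—associativity $a \cdot (b \cdot m) = (a \cdot b) \cdot m$, the Lie-module Jacobi identity, and the two Leibniz relations of the form $[a, b \cdot m] = [a,b] \cdot m \pm b \cdot [a,m]$—is witnessed by an oriented surface-with-corners in $\mathit{Conf}_m$, lifted canonically to $\scrY_{\mathit{flat}}^0$. Feeding each such cobordism through \eqref{eq:boundary-sign} and \eqref{eq:tqft} converts it into the desired identity in cohomology; compatibility of the canonical lift with gluing ensures that iterated operations really do match the algebraic compositions. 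For unitality I would mimic the argument in Proposition \ref{th:gerstenhaber-algebra}: gluing the disc defining $e$ into an $a_1$-leg of the pair-of-pants produces an annulus with boundary values $(a_2,a_2)$, and by Proposition \ref{th:the-disc} the outcome stays in $\scrY_{\mathit{flat}}^0$; the resulting endomorphism of $H^*(a_2)$ is therefore some power of $\Sigma$, and idempotency of $e$ together with associativity forces it to be the identity. For the diagonal case $a_2 = a_1$, both the trivialization chosen here (via \eqref{eq:rot-mod-2}) and the one implicit in Addendum \ref{th:all-a} are canonical and live over the connected base $\mathit{Conf}_m$, so they agree up to homotopy, identifying the present operations with those from Proposition \ref{th:gerstenhaber-algebra}.

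The main obstacle is verifying that the framing-twist data on the two sides of each Leibniz and Jacobi cobordism actually match when lifted to $\scrY_{\mathit{flat}}^0$. A bracket involving the $a_2$-leg demands $(1-r)$ rotations of the $a_1$-framing (Lemma \ref{th:rot-bracket}), while a bracket between two $a_1$-inputs demands none; both counts are forced automatically by Addendum \ref{th:twist-loop} together with the vanishing of \eqref{eq:rot-mod-2} along the lifted loops. Because the lift to $\scrY_{\mathit{flat}}^0$ is unique up to homotopy, these framing totals balance identically on either side of every cobordism, so no further bookkeeping is required and each cobordism in $\mathit{Conf}_m$ translates cleanly into the claimed algebraic identity.
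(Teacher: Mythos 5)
Your broad strategy is the same as the paper's: use the weak homotopy equivalence $\scrY_{\mathit{flat}}^0(\underline{S},a_2,a_1,\dots,a_1,a_2) \to \mathit{Conf}_m$ to lift each axiom to a bordism statement in $\mathit{Conf}_m$, and exploit the compatibility of the normalization with gluing. Your unitality argument is also essentially the paper's (glue in the unit disc, get an annulus which is a power of $\Sigma$, use idempotency plus associativity), modulo the small but real precaution that the disc defining $e$ carries a nonnegatively curved --- not flat --- connection, so one has to pass to the $\scrY_{\geq 0}^0$ variant that the paper explicitly spells out.

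The real gap is in your last paragraph, where you claim that ``the framing totals balance identically on either side of every cobordism, so no further bookkeeping is required.'' This misidentifies where the difficulty lies. Uniqueness of the lift $P \to \scrY_{\mathit{flat}}^0$ means there is no ambiguity in lifting a given family $P \to \mathit{Conf}_m$; but the issue in the Jacobi identity is different. The families over $T^2$ arising from gluing are determined, and their \emph{projections to $\mathit{Conf}_3$} depend on $r$: inserting the algebra-bracket family into the $a_1$-slot of the module-bracket family means the inner pair of discs is rotated according to the $(1-r)$-fold rotation of that slot's framing (Lemma~\ref{th:rot-bracket}), producing a $\mathit{Conf}_3$-valued map
\begin{equation}
(p_1,p_2) \longmapsto \bigl(0,\ \tfrac12\, e^{2\pi i (1-r)p_1 + 2\pi i p_2},\ e^{2\pi i p_1}\bigr),
\end{equation}
which genuinely depends on $r$. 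The Jacobi identity therefore requires showing that this $r$-twisted $T^2$-family, together with the other two terms (whose $\mathit{Conf}_3$ projections do \emph{not} depend on $r$), is null-bordant in the appropriate sense --- and this does not follow from the uniqueness-of-lift statement alone. The paper dispatches it with the explicit observation that the reparametrization $p_2 \mapsto p_2 + (r-1)p_1$ removes the $r$-dependence, reducing to the standard $r=1$ argument already used for $H^*(a_1)$. That reparametrization is exactly the ``bookkeeping'' your proposal claims to be unnecessary, and without it the Jacobi case is not complete.
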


\begin{proof}
Let's consider the desired relations (the defining properties of a Gerstenhaber module; see e.g.\ \cite[Definition 1.1]{kowalzig-kraehmer14}, which however uses different sign conventions), for $x_1,x_2 \in H^*(a_1)$ and $x_3 \in H^*(a_2)$:
\begin{align}
\label{eq:r-relation-1}
& (x_1 x_2) x_3 = x_1 (x_2 x_3), \\
\label{eq:r-relation-2}
& 
[x_1,x_2x_3] =  [x_1,x_2] x_3 + (-1)^{(|x_1|-1)|x_2|} x_2[x_1,x_3], \\
\label{eq:r-relation-4}
& [[x_1,x_2],x_3] + (-1)^{|x_1|} [x_1,[x_2,x_3]] + (-1)^{(|x_1|-1)|x_2|} [x_2,[x_1,x_3]] = 0.
\end{align}
Because the choices of decorations, taken to lie in $\scrY_{\mathit{flat}}^0(\underline{S},a_2,a_1,a_1,a_2)$, are homotopically unique, all we need to show is that the underlying relations hold up to bordism in $\mathit{Conf}_3$. 

The only nontrivial case is \eqref{eq:r-relation-4}, since there the framings, as analyzed in Lemma \ref{th:rot-bracket}, affect the families obtained from the gluing process. To spell this out in the simplest possible terms, let's temporarily ignore the sizes of discs, and just think of $\mathit{Conf}_3$ as the ordered three-point configuration space in $\bC$. The only thing that depends on $r$ is the first term in \eqref{eq:r-relation-4}, which (up to homotopy) comes from the family
\begin{equation}
S^1 \times S^1 \longrightarrow \mathit{Conf}_3, \quad
(p_1,p_2) \longmapsto (0, \half e^{2\pi i (1-r) p_1 + 2\pi i p_2}, e^{2\pi i p_1}).
\end{equation}
Even there, the $r$-dependence can be removed by a change of parameters $p_2 \mapsto p_2 + (r-1)p_1$. The remaining argument is therefore the same as in the case $r = 1$, which is standard (and indeed, already entered into Proposition \ref{th:gerstenhaber-algebra}).

Even though our definition of \eqref{eq:map-to-gamma} was restricted to flat connections, there is an alternative formulation which works more generally for nonnegatively curved ones. Namely, given a decorated $m$-disc configuration with boundary values $(a_2,a_1,\dots,a_1,a_2)$, glue in $(m-1)$ discs with boundary value $a_1$. The outcome is an annulus, with boundary values $(a_2,a_2)$, to which one can associate an element in $\bZ/r$ as in Proposition \ref{th:x-annulus-a}(ii). The subspace $\scrY_{\geq 0}^0(\underline{S},a_2,a_1,\dots,a_1,a_2)$ is defined by asking that to be zero. One can use this approach to prove unitality (as in Proposition \ref{th:gerstenhaber-algebra}), and also to prove that the Gerstenhaber module structure is independent of the choice of $a_1$ and $a_2$ (as in Addendum \ref{th:all-a}). More precisely, it is compatible with the canonical isomorphisms between different choices of $a_1$; the isomorphisms between different choices of $a_2$ are not canonical, but any such choice is a map of Gerstenhaber modules.
\end{proof}

\begin{addendum} \label{th:cop-out}
The indirect algebraic argument for unitality (based on first showing $e^2 = e$) used in Propositions \ref{th:gerstenhaber-algebra} and \ref{th:gerstenhaber-module} may strike the reader as a cop-out. We will therefore outline a geometric alternative, which uses the paths constructed in Lemma \ref{th:unit-path}. Take the pair-of-pants surface $S_1$, and equip it with a flat connection whose holonomies around the boundary components, parametrized as in \eqref{eq:boundary-parametrizations}, are $(g_2(1),g_1(1),g_2(1))$. More precisely, this connection should have boundary values $(a_2,a_1,a_2)$, where $\mathrm{rot}(a_1) = 1$; and additionally, parallel transport along a path from $\epsilon_0(0)$ to $\epsilon_1(0)$ is $k(1)$, whereas parallel transport along a path from $\epsilon_1(0)$ to $\epsilon_2(0)$ is the identity $\Id$ (we refer to Figure \ref{fig:unit}, with $t$ set to $1$, for the precise picture). The TQFT operation associated to $S_2$ defines the product $H^*(a_1) \otimes H^*(a_2) \rightarrow H^*(a_2)$. Using the nonnegative path $g_1(t)$, one can equip the disc $S_2$ with a nonnegatively curved connection with boundary value $a_1$. Gluing together $S_1$ and $S_2$ yields the geometric description of the action of \eqref{eq:e} on $H^*(a_2)$ (to check this, one uses the fact that $k(1)$ has rotation number $0$, which is part of Lemma \ref{th:unit-path}).

There is a one-parameter family of deformations  of this construction, where the boundary holo\-no\-mies on $S_1$ are $(g_2(t),g_1(t),g_2(t))$, and the parallel transport maps are $k(t)$ resp.\ the identity (see again Figure \ref{fig:unit}). Correspondingly, on $S_2$ one has a connection with boundary holonomy $g_1(t)$. After gluing, one gets a one-parameter family of connections on the cylinder, whose boundary values can still be assumed to be $(a_2,a_2)$ (since the conjugacy class of the holonomies remains the same). For $t = 0$, the outcome is a flat connection on the cylinder, with the additional property that parallel transport along a path joining the two boundary circles is $k(0) = \Id$. Hence, the associated operation is the identity map on $H^*(a_2)$.
\end{addendum}
\begin{figure}
\begin{centering}
\begin{picture}(0,0)%
\includegraphics{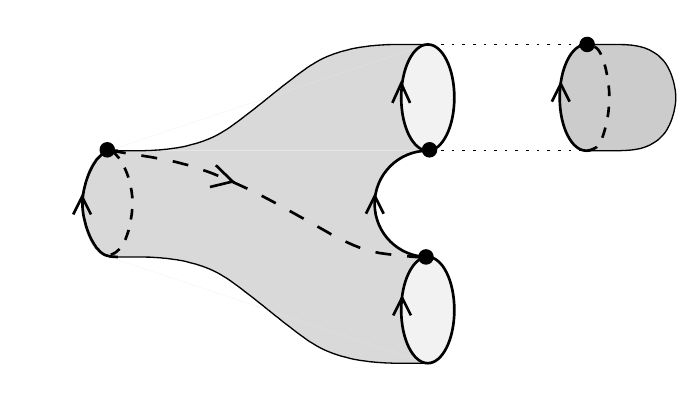}%
\end{picture}%
\setlength{\unitlength}{3355sp}%
\begingroup\makeatletter\ifx\SetFigFont\undefined%
\gdef\SetFigFont#1#2#3#4#5{%
  \reset@font\fontsize{#1}{#2pt}%
  \fontfamily{#3}\fontseries{#4}\fontshape{#5}%
  \selectfont}%
\fi\endgroup%
\begin{picture}(3827,2274)(1186,-2084)
\put(2551,-1186){\makebox(0,0)[lb]{\smash{{\SetFigFont{9}{10.8}{\rmdefault}{\mddefault}{\updefault}{$k(t)$}%
}}}}
\put(3506,-2011){\makebox(0,0)[lb]{\smash{{\SetFigFont{9}{10.8}{\rmdefault}{\mddefault}{\updefault}{$g_2(t)$}%
}}}}
\put(3511, 19){\makebox(0,0)[lb]{\smash{{\SetFigFont{9}{10.8}{\rmdefault}{\mddefault}{\updefault}{$g_1(t)$}%
}}}}
\put(1201,-1186){\makebox(0,0)[lb]{\smash{{\SetFigFont{9}{10.8}{\rmdefault}{\mddefault}{\updefault}{$g_2(t)$}%
}}}}
\put(4351,-836){\makebox(0,0)[lb]{\smash{{\SetFigFont{9}{10.8}{\rmdefault}{\mddefault}{\updefault}{$g_1(t)$}%
}}}}
\put(3401,-1011){\makebox(0,0)[lb]{\smash{{\SetFigFont{9}{10.8}{\rmdefault}{\mddefault}{\updefault}{$\Id$}%
}}}}
\end{picture}%
\caption{\label{fig:unit}The geometric version of the proof that $e$ is a unit (Remark \ref{th:cop-out}). The flat connection on the pair-of-pants with the given parallel transport maps exists thanks to \eqref{eq:conjugating-path}.}
\end{centering}
\end{figure}%

There are more restrictive notions of Gerstenhaber module than that used above, which are obtained by imposing a relation on $[x_1 x_2,x_3]$. The relation that holds in our context depends on the rotation number:

\begin{prop} \label{th:strange-relation}
The Gerstenhaber module structure from Proposition \ref{th:gerstenhaber-module} satisfies
\begin{equation} \label{eq:strange-relation}
[x_1x_2,x_3] = (1-r)[x_1,x_2]x_3 + (-1)^{(|x_1|-1)|x_2|} x_2[x_1,x_3] + (-1)^{|x_1|} x_1[x_2,x_3].
\end{equation}
\end{prop}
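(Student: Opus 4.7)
The plan is to follow the strategy used for the Jacobi identity \eqref{eq:r-relation-4} in Proposition \ref{th:gerstenhaber-module}: represent each of the four operations in \eqref{eq:strange-relation} by a 1-cycle in $\scrY_{\geq 0}^0(\underline{S},a_2,a_1,a_1,a_2)$, and verify the identity by exhibiting a bordism in that space. By Proposition \ref{th:decorations}(ii) and Addendum \ref{th:two-agree-3}, the bordism class of any such 1-cycle is determined by its image in $H_1(\mathit{Conf}_3 \times (S^1)^3)$, with the $\scrY^0$ constraint controlled by the monodromies of Addendum \ref{th:twist-loop}.

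First I would write down the explicit cycle for the LHS. Gluing the 0-dimensional product operation $x_1x_2$ into the 1-dimensional module bracket $[\,\cdot\,,x_3]$, whose combined disc has framing rotating $(1-r)$ times by Lemma \ref{th:rot-bracket}, produces
\begin{equation*}
\gamma_{\mathrm{LHS}}(p) = \bigl(\tfrac{1}{2} e^{2\pi i p} + \delta e^{2\pi i (1-r) p},\; \tfrac{1}{2} e^{2\pi i p} - \delta e^{2\pi i (1-r) p},\; 0;\; e^{2\pi i (1-r) p},\; e^{2\pi i (1-r) p},\; 1\bigr).
\end{equation*}
A direct winding-number computation in $H_1(\mathit{Conf}_3)\iso \bZ\langle \alpha_{1,3},\alpha_{2,3},\alpha_{1,2}\rangle$ (braid generators), combined with the framings, yields $\gamma_{\mathrm{LHS}} = \alpha_{1,3}+\alpha_{2,3}+(1-r)\alpha_{1,2}$ with total framing rotations $(1-r,1-r,0)$ on $(\rho_1,\rho_2,\rho_3)$. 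A parallel analysis of the RHS terms, applying Lemma \ref{th:rot-bracket} to each inner bracket (and noting that the bracket $[x_1,x_2]$ appearing in $[x_1,x_2] x_3$ is the $r=1$ Gerstenhaber-algebra bracket on $H^*(a_1)$, so its combined disc does not rotate), gives $\gamma_{[x_1,x_2]x_3}=\alpha_{1,2}$ with framings $(0,0,0)$, $\gamma_{x_2[x_1,x_3]}=\alpha_{1,3}$ with framings $(1-r,0,0)$, and $\gamma_{x_1[x_2,x_3]}=\alpha_{2,3}$ with framings $(0,1-r,0)$. Summing with the coefficients in \eqref{eq:strange-relation} reproduces $\gamma_{\mathrm{LHS}}$ on the nose.

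Next I would lift this $H_1$-identity to an actual 2-chain in $\scrY_{\geq 0}^0$, obtained by parametrizing the family over the torus $T^2 = S^1_{\mathrm{outer}} \times S^1_{\mathrm{inner}}$ (outer loop: combined disc around $x_3$; inner loop: disc 1 around disc 2), in which $\gamma_{\mathrm{LHS}}$ is realized as the $(1,1-r)$-cycle and the RHS terms as $(1,0)$ and $(0,1)$ cycles. As in the proof of \eqref{eq:r-relation-4}, the $r$-dependence of the framings is absorbed by the change of variables $p_2 \mapsto p_2+(r-1)p_1$, which reduces the remaining step to the $r=1$ case, where the assertion reduces to the standard Leibniz rule for the Gerstenhaber algebra $H^*(a_1)$ acting on itself (already implicit in Proposition \ref{th:gerstenhaber-algebra} and Addendum \ref{th:all-a}). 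Feeding the resulting bordism through the TQFT axiom \eqref{eq:boundary-sign} yields \eqref{eq:strange-relation}; the Koszul signs are forced by the orientations of the cycles and the rule \eqref{eq:tqft}, and are pinned down by matching the $r=1$ specialization with the classical Gerstenhaber Leibniz identity.

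The main technical obstacle will be verifying that the 2-chain on $T^2$ actually lifts to $\scrY_{\geq 0}^0$ rather than to a neighboring component of $\scrY_{\geq 0}$: this amounts to checking that the monodromies \eqref{eq:ij-vector} around the braid loops $\alpha_{1,3},\alpha_{2,3},\alpha_{1,2}$ are exactly canceled by the framing rotations prescribed above, for each of the four cycles. This is routine once one organizes the bookkeeping around Addendum \ref{th:twist-loop}, but it is the only nontrivial point; the $H_1$-computation and the $r=1$ reduction then close out the argument in the same spirit as the Jacobi case.
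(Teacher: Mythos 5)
Your approach is essentially the paper's: compute the class of the LHS loop in $H_1(\mathit{Conf}_3)$ as $(1-r)\lambda_{12}+\lambda_{13}+\lambda_{23}$ using Lemma \ref{th:rot-bracket}, and realize each summand by the corresponding RHS term. The one thing worth streamlining is that the ``main technical obstacle'' you single out is already dissolved by the setup: since $\scrY^0_{\geq 0}(\underline{S},a_2,a_1,a_1,a_2) \rightarrow \mathit{Conf}_3$ is a weak homotopy equivalence (Proposition \ref{th:decorations}(ii) and Addendum \ref{th:two-agree-3}, as the paper invokes in the proof of Proposition \ref{th:gerstenhaber-module}), it suffices to verify the bordism identity in $\mathit{Conf}_3$ alone; the framing rotations you tabulate are not a separate constraint to be cancelled against the monodromies of Addendum \ref{th:twist-loop}, but are automatically forced once you lift to $\scrY^0$, so the $(S^1)^3$-coordinates carry no additional information.
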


\begin{proof}
The idea is the same as for \eqref{eq:r-relation-4}, with simpler underlying geometry. For $1 \leq i < j \leq 3$, let $\lambda_{ij} \in H_1(\mathit{Conf}_3)$ be the class of a loop where the $i$-th and $j$-th point rotate anticlockwise once around each other (while the other point is far away and remains constant). Because of Lemma \ref{th:rot-bracket}, the geometric object underlying $[x_1x_2,x_3]$ is a loop of configurations whose homology class is $(1-r) \lambda_{12} + \lambda_{13} + \lambda_{23}$. The equality \eqref{eq:strange-relation} then amounts to giving each of those three summands a separate geometric representation.
\end{proof}

\begin{remark}
As an analogue of Proposition \ref{th:strange-relation} for $r = 0$, consider the Gerstenhaber algebra $A^* = \Lambda^*(TX)$ of polyvector fields on a manifold $X$, and the Gerstenhaber module structure of the space $M^* = \Omega^{-*}(X)$ of differential forms. The first nontrivial instances of this structure (omitting signs) are as follows:
\begin{equation} \label{eq:schouten}
\begin{array}{l|ll}
& \text{algebra action on $\theta \in M^*$} & \text{Lie action on $\theta \in M^*$} \\
\hline 
\text{functions $f \in A^0$} & \theta \mapsto f\theta & \theta \mapsto df \wedge \theta \\
\text{vector fields $Z \in A^1$} & \theta \mapsto i_Z\theta & \theta \mapsto L_Z\theta
\end{array}
\end{equation}
Then, \eqref{eq:strange-relation} for $x_1 = f$, $x_2 = Z$, $x_3 = \theta$ (and $r = 0$) becomes
\begin{equation}
L_{fZ} \theta = (Z.f) \theta - i_Z (df \wedge \theta) + f (L_Z \theta),
\end{equation}
which is indeed an easy consequence of the standard Cartan formalism. More generally, 
suppose that we look at a version of the Gerstenhaber module which consists of differential forms twisted by the $(-r)$-th power of canonical bundle (the bundle of top degree forms). With respect to \eqref{eq:schouten}, the difference is that the Lie action of $Z \in A^1$ is now written as (with respect to a local volume form $\eta$) 
\begin{equation}
\theta \otimes \eta^{-r} \longmapsto \Big( L_Z \theta -  r \theta \frac{L_Z\eta}{\eta} \Big) \otimes \eta^{-r}.
\end{equation}
This satisfies an instance of \eqref{eq:strange-relation} for the given value of $r$. If, instead of differential topology, one considers the same formulae in algebraic geometry, this would be (roughly speaking) the $B$-model mirror of our symplectic geometry constructions.
\end{remark}

For the next construction of operations, we switch to a more constrained setup. Fix some $a$ with $\mathrm{rot}(a) = 1$. All our boundary conditions will be covers $a^{r_i}$, where the degrees $r_i>0$ satisfy $r_1 + \dots + r_m - r_0 = m-1$. Moreover, the disc configurations will be such that all discs have centers on the imaginary axis, ordered (say) from bottom to top. We also restrict to the trivial choice of framings, meaning that $\rho_i > 0$. For each $i = 1,\dots,m$, fix a path $c_i$, going from $\epsilon_0(0) = 1 \in \partial_0 S$ to $\epsilon_i(1)$, and which lies in $S \cap \{\mathrm{re}(z) \geq 0\}$; this assumption fixes the homotopy class of the path (rel endpoints). As explained in Addendum \ref{th:partial-fix}, we can use such paths to define a map
\begin{equation} \label{eq:m-rotations}
(\mathrm{rot}_{c_1}, \dots, \mathrm{rot}_{c_m}): \scrA_{\mathit{flat}}(S,\{a_i\}) \longrightarrow \bZ^m
\end{equation}
which, by Proposition \ref{th:connections-rel-boundary}, is a weak homotopy equivalence. For any framed disc configuration in our class, there is a choice of $A$ for which \eqref{eq:m-rotations} vanishes. Such choices are unique up to homotopy, and moreover, they are compatible with gluing discs.  This leads to the following:

\begin{prop} \label{th:bigraded-algebra}
Fix some $a$ with $\mathrm{rot}(a) = 1$. Then, $\bigoplus_{s \geq 0} H^*(a^{s+1})$ has the structure of a bigraded associative algebra, for which \eqref{eq:e} is the unit.
\end{prop}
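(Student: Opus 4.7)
The plan is to directly apply the setup described in the paragraph preceding the proposition, viewing it as producing a homotopically canonical family of decorations over the contractible parameter space of our restricted disc configurations, and then deducing the algebra axioms from the TQFT axioms.

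First I would set up the operations. Restrict throughout to $m$-disc configurations whose centers lie on the imaginary axis, ordered from bottom to top, with $\rho_i>0$; call this parameter space $\mathit{Conf}_m^{\mathit{axis}}$. This space is contractible (it is an open simplex). Over it, consider decorations with boundary values $(a^{r_0},a^{r_1},\dots,a^{r_m})$ where $r_1+\cdots+r_m-r_0 = m-1$, whose flat connection $A$ satisfies $\mathrm{rot}_{c_i}(A)=0$ for all $i=1,\dots,m$, with $c_i$ chosen in the half-plane $\{\mathrm{re}(z)\ge 0\}$ as in the preamble. By \eqref{eq:m-rotations} and the remark that follows it, the fibre of the decoration space over any point of $\mathit{Conf}_m^{\mathit{axis}}$ is weakly contractible, so the total space is itself weakly contractible. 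Applying the TQFT operation to a point of this space yields, on cohomology, a well-defined map
\begin{equation*}
H^*(a^{r_1}) \otimes \cdots \otimes H^*(a^{r_m}) \longrightarrow H^*(a^{r_0}),
\end{equation*}
independent of all choices. For $m=2$ this gives the bigraded product on $\bigoplus_{s\ge 0} H^*(a^{s+1})$, with the bidegree in $s$ additive because $s_0=s_1+s_2$ when $r_0=r_1+r_2-1$.

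Next I would verify associativity. The two triple products $(x_1x_2)x_3$ and $x_1(x_2x_3)$ arise via the gluing axiom \eqref{eq:tqft} from two specific points of the three-disc decoration space over $\mathit{Conf}_3^{\mathit{axis}}$; by the compatibility of the $\mathrm{rot}_{c_i}=0$ normalization with gluing (stated just before the proposition), both points lie in that space. Since the space is weakly contractible, the two points are connected by a path, and applying the TQFT boundary axiom \eqref{eq:boundary-sign} over a one-dimensional family interpolating between them produces the associativity identity on cohomology.

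For unitality I would argue as in Proposition \ref{th:gerstenhaber-algebra}. The element $e\in H^*(a)$ arises from a zero-disc configuration with boundary $a$; gluing this disc into either slot of a two-disc configuration with boundary values $(a^r,a,a^r)$ (resp.\ $(a^r,a^r,a)$) produces a one-disc configuration (annulus) with boundary values $(a^r,a^r)$ whose flat decoration satisfies $\mathrm{rot}_c=0\in \bZ/r$ along any path from $\epsilon_0(0)$ to $\epsilon_1(0)$. By Proposition \ref{th:x-annulus}(ii) together with Addendum \ref{th:x-annulus-a}, this is precisely the decoration inducing the canonical self-isomorphism (the identity) on $H^*(a^r)$. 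The main obstacle is this unitality step: one must check that the gluing of the distinguished disc really lands in the component labelled $0\in\bZ/r$, and that the $\mathrm{rot}_c=0$ convention we chose on the pair-of-pants is compatible with the $\mathrm{rot}_c=0$ convention on the annulus under this gluing. This is where the choice of paths $c_i$ in $\{\mathrm{re}(z)\ge 0\}$ pays off, because after gluing in a disc along $\partial_1$ or $\partial_2$ the residual path still lies in the prescribed half-plane, so the invariant transports as required. Everything else (signs, Koszul rule, compatibility of operations across choices) is forced by the axioms in \eqref{eq:boundary-sign} and \eqref{eq:tqft}.
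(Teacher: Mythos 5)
Your construction of the product and your associativity argument match the paper's proof: restrict to configurations with centers on the imaginary axis and trivial framings, use the rotation-number normalization $\mathrm{rot}_{c_i}(A)=0$ from \eqref{eq:m-rotations} to get a weakly contractible space of decorations, and then read off associativity from the contractibility of the three-disc case together with \eqref{eq:tqft} and \eqref{eq:boundary-sign}. That part is fine.

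The unitality step has a genuine gap, and it is exactly the one you flagged as the ``main obstacle,'' but your proposed fix does not resolve it. After gluing the $e$-defining disc of Proposition \ref{th:the-disc} into a slot of the pair-of-pants, the resulting annulus does not carry a flat decoration: the connection on the glued disc is nonnegatively curved but not flat (it cannot be flat, since the boundary value has rotation number $1$ and a flat connection on a disc has trivial holonomy). So the criterion $\mathrm{rot}_c = 0$, which Addendum \ref{th:x-annulus-a}(i) defines for \emph{flat} annulus decorations, does not apply; and Addendum \ref{th:x-annulus-a}(ii) explicitly says that it is unclear whether the $\mathrm{rot}_c$ description \eqref{eq:rot-mod-r} still identifies the distinguished component in the nonnegatively curved setting. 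Your remark that the paths $c_i$ stay in the right half-plane only controls the flat part of the picture upstream of the gluing; it does not address the fact that the annulus you end up with is no longer flat. The paper sidesteps this entirely: its proof of Proposition \ref{th:bigraded-algebra} simply observes that the product $H^*(a)\otimes H^*(a^{s+1})\to H^*(a^{s+1})$ coincides with the module action of Proposition \ref{th:gerstenhaber-module}, whose unitality was already established (by the indirect algebraic argument $e^2=e$ plus associativity, as in Proposition \ref{th:gerstenhaber-algebra}). The genuinely geometric route you are reaching for does exist, but it is the content of Addendum \ref{th:cop-out} and hinges on the specific nonnegative paths built in Lemma \ref{th:unit-path}; without that input, one cannot conclude that the glued annulus lands in the identity component of $\Gamma_r$.
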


\begin{proof}
Because of our restrictions, the spaces of choices involved are weakly homotopy equivalent to ordered configuration spaces of points on the real line, which are of course contractible. A single configuration of two discs, with $(r_0 = s_1+s_2+1,\, r_1 = s_1+1,\, r_2 = s_2+1)$, defines the desired product structure. Associativity follows by gluing and considering the resulting three-disc configurations. Since the product $H^*(a) \otimes H^*(a^{s+1}) \rightarrow H^*(a^{s+1})$ agrees with the one previously defined in Proposition \ref{th:gerstenhaber-module}, we already know that $e$ is a left unit; the proof for right unitality is parallel.
\end{proof}

\begin{remark}
Even though we will not discuss that explicitly, a refined version of our arguments coud be used to construct algebraic structures on the cochain level: for instance, by looking at families parametrized by associahedra, one can get a bigraded $A_\infty$-algebra structure underlying that from Proposition \ref{th:bigraded-algebra}.
\end{remark}

\subsection{Multivalued framings\label{subsec:multivalued}}
For the final piece of algebraic structure we want to extract from our TQFT, it is convenient to think of disc configurations in a slightly different way. 
\begin{itemize} \itemsep.5em
\item
When choosing boundary values $a \in \Omega^1(S^1,\frakg)$, we allow only ones with $r = \mathrm{rot}(a) \geq 1$. Moreover, each such $a$ must be invariant under $1/r$-rotation of the circle. Equivalently, it must be an $r$-fold cover, in the sense of \eqref{eq:a01-cover} (of some underlying connection, which necessarily has rotation number $1$).

\item 
Our disc configurations will come with multivalued framings, which are maps like \eqref{eq:boundary-parametrizations}, but for which the $\rho_i$ are determined only up to multiplication with $r_i$-th roots of unity; here, $r_i$ is the rotation number of the associated boundary value $a_i$.
\end{itemize}
Given a disc configuration, let $\scrY_{\mathit{multi}, \geq 0}(S,\{a_i\})$ be the space of multivalued decorations, and $\scrY_{\mathit{multi, flat}}(S,\{a_i\})$ the subspace where the connection is flat. The analogue of Proposition \ref{th:decorations} (with a similar proof, which we omit) is the following:

\begin{prop} \label{th:multi-contractible}
If \eqref{eq:m-minus-1} holds, both $\scrY_{\mathit{multi,flat}}(S,\{a_i\})$ and $\scrY_{\mathit{multi},\geq 0}(S,\{a_i\})$ are weakly contractible.
\end{prop}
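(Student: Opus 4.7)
The plan is to parallel the proof of Proposition \ref{th:decorations}, exploiting the fact that forgetting which $r_i$-th root of unity represents $\rho_i$ realizes a principal $\prod_i \bZ/r_i$-covering
\begin{equation*}
\scrY_{\mathit{flat}}(S,\{a_i\}) \longrightarrow \scrY_{\mathit{multi,flat}}(S,\{a_i\}),
\end{equation*}
and similarly in the nonnegatively curved case. So the multivalued spaces are quotients of ones we already understand.

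First I would set up the analogue of the fibration \eqref{eq:multiple-rho}:
\begin{equation*}
\scrA_{\mathit{flat}}(S,\{a_i\}) \longrightarrow \scrY_{\mathit{multi,flat}}(S,\{a_i\}) \xrightarrow{(\rho_1^{r_1}/|\rho_1^{r_1}|,\dots,\rho_m^{r_m}/|\rho_m^{r_m}|)} (S^1)^m,
\end{equation*}
whose base records the well-defined quantities $\rho_i^{r_i}$ (normalized). By Proposition \ref{th:connections-rel-boundary} the fibre is weakly equivalent to $H^1(S,\partial S) \cong \bZ^m$. The crux is computing the monodromy. Observe that \eqref{eq:multiple-rho} is the pullback of the above fibration along the covering $p:(S^1)^m \to (S^1)^m$ sending $z_i \mapsto z_i^{r_i}$ componentwise; on the total-space level this reads $\scrY_{\mathit{flat}} \simeq (S^1)^m \times_{(S^1)^m} \scrY_{\mathit{multi,flat}}$. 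Since $p_\ast$ sends the $i$-th generator of $\pi_1((S^1)^m) = \bZ^m$ to $r_i e_i$, while the monodromy of \eqref{eq:multiple-rho} translates the fibre by $r_i e_i$ on the $i$-th generator (by \eqref{eq:rot-ai}), the monodromy $\rho'$ on the multivalued side must satisfy $\rho'(r_i e_i) = r_i e_i$. As translations of $\bZ^m$ form a torsion-free group, this forces $\rho'(e_i) = e_i$ for all $i$.

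Consequently, $\scrY_{\mathit{multi,flat}}(S,\{a_i\})$ is weakly equivalent to the $\bZ^m$-torsor bundle over $(S^1)^m$ whose classifying map $\pi_1((S^1)^m) \to \bZ^m$ is the identity: this is the universal cover $\bR^m$, which is contractible. The nonnegatively curved case follows by the same argument, with Proposition \ref{th:nonnegative-connection-4} replacing Proposition \ref{th:connections-rel-boundary} in the identification of the fibre (alternatively, one appeals to the genus-zero comparison \eqref{eq:compare-flat}).

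The main (and rather modest) subtlety is keeping straight that ``going once around'' the $i$-th $S^1$ means different things on the two sides of $p$: upstairs it requires $\rho_i$ to rotate a full $2\pi$, while downstairs it only requires $\rho_i^{r_i}$ to rotate $2\pi$, i.e.\ $\rho_i$ to rotate by $2\pi/r_i$. This reduction from $r_i e_i$ to $e_i$ in the monodromy is precisely what cancels the twist responsible for $\scrY_{\mathit{flat}}$ having $\prod_i \bZ/r_i$ components, producing a contractible total space.
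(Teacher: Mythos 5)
Your proposal is correct and follows the route the paper has in mind (the paper only says the argument is ``similar'' to that of Proposition~\ref{th:decorations}, i.e.\ the fibration-over-$(S^1)^m$ plus monodromy calculation you carry out). The pullback observation --- realizing $\scrY_{\mathit{flat}}(S,\{a_i\})$ as the pullback of $\scrY_{\mathit{multi,flat}}(S,\{a_i\})$ along the covering $p(z_1,\dots,z_m)=(z_1^{r_1},\dots,z_m^{r_m})$ --- is a clean packaging of the monodromy computation.

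The one step that deserves to be made explicit is the division by $r_i$. To pass from $\rho'(e_i)^{r_i}=$ (translation by $r_i e_i$) to $\rho'(e_i)=$ (translation by $e_i$), you must first know that $\rho'(e_i)$ lies in the \emph{translation} subgroup of the affine automorphisms of $\bZ^m$: an affine automorphism whose $r_i$-th power is a translation need not itself be one, so ``torsion-freeness of translations'' alone is not enough. In the paper's framework (Addendum~\ref{th:t-class}), the needed fact is that the $1/r_i$-fractional twist --- a diffeomorphism of $S$ supported in a collar of $\partial_i S$, realizing the downstairs monodromy --- acts trivially on $H^1(S,\partial S)$. This is immediate, since shrinking the rotation angle to zero exhibits a homotopy through maps of pairs $(S,\partial S)\to(S,\partial S)$ to the identity. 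With that one-line observation supplied, your argument is complete; the nonnegatively curved case is handled exactly as you describe, via Proposition~\ref{th:nonnegative-connection-4} or the comparison \eqref{eq:compare-flat}.
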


The relevant version of the gluing process goes as follows. Start with two families of configurations with multivalued decorations, parametrized by $P_1$ and $P_2$, respectively, and whose associated rotation numbers are $(r_{1,0},\dots,r_{1,m_1})$ and $(r_{2,0},\dots,r_{2,m_2})$. Assume additionally that $P_2$ comes with an (orientation-preserving) action of $\bZ/r_{2,0}$, which is such that acting by the generator corresponds to rotating the disc configuration (including multivalued decorations) by $1/r_{2,0}$. Suppose that we have an $1 \leq i \leq m_1$ such that $r_{1,i} = r_{2,0}$, and such that the relevant boundary values agree. Gluing produces a family over a twisted product, which is a fibration 
\begin{equation} \label{eq:twisted-product}
P_2 \longrightarrow P_1 \tilde\times_i P_2 \longrightarrow P_1.
\end{equation}
More precisely, there is a $\bZ/r_{1,i}$-covering $\tilde{P}_{1,i} \rightarrow P_1$, which corresponds to choosing lifts of the multivalued framing of the $i$-th disc to an actual framing. One then takes the quotient
\begin{equation} \label{eq:twisted-product-2}
\tilde{P}_{1,i} \times P_2 \longrightarrow P_1 \tilde\times_i P_2 \stackrel{\text{def}}{=} \tilde{P}_{1,i} \times_{\bZ/r_{2,0}} P_2.
\end{equation}
It is straightforward to see that the gluing process, defined a priori on $\tilde{P}_{1,i} \times P_2$, indeed descends to the quotient. Finally, note that if $P_1$ also carries an action of $\bZ/r_{1,0}$ which rotates disc configurations, then the glued family inherits that action.

\begin{lemma} \label{th:fractional-loop}
Consider two discs moving around each other, parametrized by $p \in S^1$. Using Proposition \ref{th:multi-contractible}, we can equip these discs with multivalued decorations, which vary smoothly along our loop, for any  rotation numbers $(r_0, r_1, r_2)$ such that $r_0 = r_1+r_2-1$. The associated multivalued framings are, up to homotopy,
\begin{equation} \label{eq:exact-framings}
\left\{
\begin{aligned}
& \epsilon_1(t) = \rho_1 e^{2\pi i (1-r_2)r_1^{-1} p - 2\pi i t} = 
\rho_1 e^{2\pi i (1-r_0r_1^{-1}) p - 2\pi i t}, \\
& \epsilon_2(t) = e^{2\pi i p} \zeta_2 + \rho_2 e^{2\pi i (1-r_1)r_2^{-1} p - 2\pi i t} \rho_2 z = 
e^{2\pi i p} \zeta_2 + \rho_2 e^{2\pi i (1-r_0r_2^{-1}) p - 2\pi i t}.
\end{aligned}
\right.
\end{equation}
(For the formula to make sense, $|\rho_1|, |\rho_2|$ need to be sufficiently small, and $\zeta_2 \neq 0$). In words, as the discs move, the first one rotates by a fractional amount $1-r_0/r_1$, and the second one by $1-r_0/r_2$.
\end{lemma}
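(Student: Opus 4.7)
The strategy is to combine the monodromy computations already at hand (Addendum \ref{th:twist-loop} and Proposition \ref{th:decorations}) with the weak contractibility of multivalued flat decorations (Proposition \ref{th:multi-contractible}). By the latter, the total space of multivalued flat decorations over the universal two-disc family projects to $\mathit{Conf}_2$ by a weak homotopy equivalence, so the given loop of two-disc configurations lifts to a loop of multivalued flat decorations, unique up to homotopy. All that remains is to identify the framing component of this lift.

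For this, I pass to the single-valued picture, allowing the framings to rotate by real amounts $(d_1,d_2)$ along the loop. By Proposition \ref{th:decorations}, the fiber of
\[
\scrY_{\mathit{flat}}(\underline{S},\{a_i\}) \longrightarrow (S^1)^m \times \mathit{Conf}_m
\]
is $H^1(S,\partial S) \cong \bZ^m$. Addendum \ref{th:twist-loop} tells us that moving the two discs once anticlockwise around each other, with framings held fixed, contributes the vector $(1-r_2,\,1-r_1)$ to this fiber, while from \eqref{eq:rot-ai} (see also the monodromy computation carried out in the proof of Lemma \ref{th:rot-bracket}) a full rotation of the $i$-th framing contributes $-r_i$ in the $i$-th slot. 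The total fiber monodromy along our loop is therefore
\[
\bigl(1 - r_2 - r_1 d_1,\; 1 - r_1 - r_2 d_2\bigr).
\]

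For the loop to close up in the space of flat decorations, this vector must vanish. Using the constraint $r_0 = r_1 + r_2 - 1$ we get
\[
d_1 \;=\; \frac{1 - r_2}{r_1} \;=\; \frac{r_1 - r_0}{r_1} \;=\; 1 - \frac{r_0}{r_1}, \qquad d_2 \;=\; 1 - \frac{r_0}{r_2},
\]
and since $r_i d_i = r_i - r_0 \in \bZ$, these rotations are integer multiples of $1/r_i$, which means they do descend to a closed loop of \emph{multivalued} framings. The explicit formulas in \eqref{eq:exact-framings} realize such a loop (one checks that as $p$ runs from $0$ to $1$, the argument of $\rho_i$ increases by $2\pi d_i$, i.e.\ the $i$-th framing rotates by $d_i$ full turns); by the homotopy-uniqueness from the first paragraph, any other lift agrees with this one up to homotopy.

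The only real obstacle is sign bookkeeping in assembling the two monodromy contributions, but Lemma \ref{th:rot-bracket} (the specialization to $r_1 = 1$, where it gives $d_1 = 1 - r$ and $d_2 = 0$) provides a reassuring consistency check. Beyond that, the argument reduces to a linear calculation in $\bZ^2$ and uses no new ingredients.
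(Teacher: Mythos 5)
Correct and essentially the paper's intended argument: the paper omits the proof, stating only that it is "proved in the same way [as Lemma \ref{th:rot-bracket}], by appealing to Lemma \ref{th:twist-loop}," and your write-up carries out precisely that monodromy computation with $\operatorname{rot}(a_1)=r_1$, $\operatorname{rot}(a_2)=r_2$ in place of $1$, $r$. The only point worth flagging is that "set the total fiber monodromy to zero" is applied formally for fractional $d_i$; you rightly observe that $r_i d_i = r_i - r_0 \in \bZ$ (from $r_0 = r_1+r_2-1$) is what makes this legitimate at the level of multivalued framings, but that remark is doing real work and deserves slightly more emphasis than a closing observation.
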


This is the analogue of Lemma \ref{th:rot-bracket}, and is proved in the same way, by appealing to Lemma \ref{th:twist-loop}; we omit the argument. 

\begin{addendum} \label{th:fractional-lie-family}
We may in fact choose the framings to be exactly given by \eqref{eq:exact-framings}. Then, rotating such a configuration by $1/r_0$ yields
\begin{equation} \label{eq:rotated-conf}
\left\{
\begin{aligned}
& e^{2\pi i r_0^{-1}} \epsilon_1(t) = \rho_1 e^{2\pi i r_1^{-1}} e^{2\pi i (1-r_0r_1^{-1}) (p+r_0^{-1}) - 2\pi i t}, \\
& e^{2\pi i r_0^{-1}} \epsilon_2(t) = e^{2\pi t}(p+r_0^{-1}) \zeta_2 + \rho_2 e^{2\pi i r_2^{-1}} e^{2\pi i (1-r_0r_2^{-1}) (p+ r_0^{-1}) - 2\pi i t}.
\end{aligned}
\right.
\end{equation}
By definition, the $e^{2\pi i r_k^{-1}}$ factors on the right hand side of \eqref{eq:rotated-conf} are irrelevant for multivalued framings. It follows that, after a suitable choice of connections, our family admits a $\bZ/r_0$-action of the kind needed in the gluing process, namely $p \mapsto p+r_0^{-1}$.
\end{addendum}

Let's assume that the coefficient ring of our TQFT satisfies $\bQ \subset R$. Consider families of disc configurations with multivalued decorations. Given such a family, with parameter space $P$, there is a cover $\tilde{P} \rightarrow P$ with structure group $\bZ/r_1 \times \cdots \times \bZ/r_m$, $r_i = \mathrm{rot}(a_i)$, on which the framings become single-valued. We use those covers and our usual TQFT operation \eqref{eq:operation} to define
\begin{equation} \label{eq:divide-operation}
\phi_{\mathit{multi},P} = r_1^{-1}\cdots r_m^{-1} \phi_{\tilde{P}}.
\end{equation}
The induced cohomology level map
\begin{equation} \label{eq:multivalued-operation}
H^*(\phi_{\mathit{multi},P}): H^*(a_1) \otimes \cdots \otimes H^*(a_m) \longrightarrow H^{*-\mathrm{dim}(P)}(a_0)
\end{equation}
is invariant under the action of $\bZ/r_1 \times \cdots \times \bZ/r_m$ on its source. If we assume in addition that our family carries an action of $\bZ/r_0$ rotating the discs, then \eqref{eq:multivalued-operation} invariant under the action of that group on its target. To understand the behaviour of these invariants under gluing, we consider the operation associated to \eqref{eq:twisted-product-2}. If we take the families of decorated disc configurations over $\tilde{P}_1$ and $\tilde{P}_2$ and glue them together, then the outcome descends to $\tilde{P}_1 \times_{\bZ/r_{2,0}} \tilde{P}_2$. As a consequence of the axiom \eqref{eq:covering-axiom} concerning coverings, and the composition axiom, we get, in abbreviated notation,
\begin{equation}
\phi_{\tilde{P}_1 \times_{\bZ/r_{2,0}} \tilde{P}_2} = r_{2,0}^{-1} \phi_{\tilde{P}_1 \times \tilde{P}_2} = r_{2,0}^{-1} \phi_{\tilde{P}_1} \phi_{\tilde{P}_2}, 
\end{equation}
and hence
\begin{equation}
\phi_{\mathit{multi},P_1 \tilde{\times}_i P_2} = \phi_{\mathit{multi},P_1} \phi_{\mathit{multi},P_2}.
\end{equation}
Here, ``abbreviated notation'' means that these composition formulae are shorthand for appropriate analogues of \eqref{eq:tqft} (with the same signs). We have been heading for the following application:

\begin{prop} \label{th:bigraded-lie}
Suppose that the coefficient ring $R$ is a $\bQ$-algebra. For each $r > 0$, fix some $a^r \in \Omega^1(S^1,\frakg)$ with rotation number $r$, and which is an $r$-fold cover (not necessarily of the same underlying connection). Then, $\bigoplus_{s \geq 0} H^{*+1}(a^{s+1})^{\bZ/(s+1)}$ has the structure of a bigraded Lie algebra.
\end{prop}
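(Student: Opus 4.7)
The plan is to construct the bracket geometrically from an $S^1$-family of two-disc configurations with multivalued decorations, then check antisymmetry by reversing the $S^1$-orientation and Jacobi by a two-parameter cobordism argument in the configuration space of three points.

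\textbf{Construction of the bracket.} Fix $r_1, r_2 > 0$ and set $r_0 = r_1 + r_2 - 1$, so that \eqref{eq:m-minus-1} holds for $m = 2$. Take the $S^1$-family of two-disc configurations described in Lemma \ref{th:fractional-loop}, equipped with the multivalued framings \eqref{eq:exact-framings}. By Proposition \ref{th:multi-contractible}, the space of compatible multivalued decorations over any such configuration is weakly contractible, so we may extend this to a smoothly-varying family of multivalued decorations, unique up to homotopy. Applying \eqref{eq:divide-operation}--\eqref{eq:multivalued-operation}, where the hypothesis $\bQ \subset R$ makes the averaging factor $r_1^{-1} r_2^{-1}$ meaningful, yields a map
\begin{equation*}
[\cdot,\cdot]: H^*(a^{r_1}) \otimes H^*(a^{r_2}) \longrightarrow H^{*-1}(a^{r_0}).
\end{equation*}
By construction it is invariant under $\bZ/r_1 \times \bZ/r_2$ on the source, and by Addendum \ref{th:fractional-lie-family} our family admits a $\bZ/r_0$-action of the sort required for gluing, so the image lies in the $\bZ/r_0$-invariants. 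Relabelling $s_i = r_i - 1$, the bracket is homogeneous of $s$-degree $0$ (with $s_0 = s_1 + s_2$) and cohomological degree $-1$; after the shift $L^i = H^{i+1}$, this is exactly a Lie bracket of bidegree $(0,0)$.

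\textbf{Antisymmetry.} Swapping the labels of the two discs is realized on the parameter circle by the orientation-reversing involution $p \mapsto -p$ (followed, if necessary, by a rotation covered by Addendum \ref{th:fractional-lie-family}). Combined with the Koszul sign for the tensor transposition, this gives the shifted antisymmetry $[y_1, y_2] = -(-1)^{|y_1||y_2|}[y_2, y_1]$ in the grading where $y_k \in H^{|y_k|+1}(a^{s_k+1})^{\bZ/(s_k+1)}$.

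\textbf{Jacobi identity and main obstacle.} For $r_1, r_2, r_3 > 0$ and $r_0 = r_1 + r_2 + r_3 - 2$, condition \eqref{eq:m-minus-1} holds for $m = 3$, and the space of multivalued decorations on any three-disc family is contractible by Proposition \ref{th:multi-contractible}. Each of the three iterated brackets $[[x_1,x_2],x_3]$, $[x_1,[x_2,x_3]]$, $[[x_3,x_1],x_2]$ is, via the composition rule and the twisted product \eqref{eq:twisted-product-2}, realized by a two-parameter family over a $\bZ/r$-twisted product of two copies of $S^1$; the contractibility of decoration spaces means that, at the level of $H_2(\mathit{Conf}_3)$, only the homology class of the underlying loop of configurations matters. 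The Jacobi identity then follows by exhibiting an explicit three-chain in $\mathit{Conf}_3$ bounding the signed sum of these classes, mirroring the classical argument for the little discs operad.

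The main obstacle is verifying that the fractional framing rotations forced on each pair of discs by \eqref{eq:exact-framings} do not spoil this cobordism. The precedent is the analysis of \eqref{eq:r-relation-4} in the proof of Proposition \ref{th:gerstenhaber-module}: there, an explicit change of variables on the $(S^1)^2$-base absorbed the $r$-dependent framing rotations and reduced the computation to the standard Jacobi relation for the framed little discs operad. One expects the same to work here, with the reparametrization chosen to respect the $\bZ/r_1 \times \bZ/r_2 \times \bZ/r_3$ equivariance and to descend compatibly through the twisted products; keeping this bookkeeping consistent, and confirming that the sign conventions from \eqref{eq:boundary-sign} reproduce the Jacobi signs in the shifted grading, is the delicate part of the argument.
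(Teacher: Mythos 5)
Your outline follows the same route as the paper: build the bracket from the $S^1$-family of Lemma~\ref{th:fractional-loop}, use Proposition~\ref{th:multi-contractible} to reduce the Jacobi identity to a bordism-theoretic statement in $\mathit{Conf}_3$, and absorb the fractional framing rotations by an appropriate reparametrization. But you then explicitly defer the central computation --- ``one expects the same to work here\dots\ keeping this bookkeeping consistent\dots\ is the delicate part of the argument'' --- and that is precisely the step the paper carries out in detail, so your proposal has a genuine, acknowledged gap rather than being a complete argument.

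Concretely, the paper does the following work that you leave as a promise: for $[x_1,[x_2,x_3]]$ it computes the covering $\tilde{P}_{1,2} = \{(\tilde p_1,p_1) : (r_2+r_3-1)\tilde p_1 = (1-r_1)p_1\}$, identifies the $\bZ/(r_2+r_3-1)$-action by which one must quotient in the twisted product \eqref{eq:twisted-product-2}, passes to the coordinates $(p_1, \tilde p_2 = \tilde p_1 + p_2)$ on the quotient, and writes out the glued family \eqref{eq:glue-3} explicitly. The punchline is that, once framings are stripped away, the resulting three-point loop is a sun--earth--moon system whose positions are $r$-independent, and the other two Jacobi terms are its permuted versions; only then does it ``reduce to a standard argument.'' That is the analogue of the change of variables used for \eqref{eq:r-relation-4}, and it is not a formal consequence of that earlier computation: the twisted product over a $\bZ/(r_2+r_3-1)$-action introduces a new coordinate change whose compatibility with the various covers and with $\bZ/r_1\times\bZ/r_2\times\bZ/r_3$-equivariance has to be checked, exactly as you note.

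Two smaller remarks. Phrasing the reduction in terms of ``the homology class of the underlying loop in $H_2(\mathit{Conf}_3)$'' conflates the parameter torus with a loop; what you want is the class of the $T^2$-family (or, in the paper's language for Proposition~\ref{th:gerstenhaber-algebra}, a relation in $\mathit{MSO}_2(\mathit{Conf}_3)$). Your antisymmetry paragraph is not in the paper's proof (it is subsumed in the ``standard argument''), but it is a defensible addition; just be careful that the orientation-reversing involution $p\mapsto -p$ does land inside the allowed class of decorated families, i.e.\ that Addendum~\ref{th:fractional-lie-family} really covers the required reparametrization.
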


\begin{proof}
Consider a loop of two-disc configurations as in Lemma \ref{th:fractional-loop}. One gets a map
\begin{equation}
H^*(\phi_{\mathit{multi},P}): H^*(a^{r_1}) \otimes H^*(a^{r_2}) \longrightarrow H^{*-1}(a^{r_0}),
\end{equation}
where $r_0 = r_1+r_2-1$. By definition, this map is $\bZ/r_1$-invariant, which means that it remains the same if we compose it with projection (defined by averaging over orbits) to the invariant part of $H^*(a^{r_1})$. In other words, no information is lost if we restrict it to that invariant part. The same applies to $\bZ/r_2$ acting on $H^*(a^{r_2})$. Finally, the map takes value in the $\bZ/r_0$-invariant part of $H^*(a^{r_0})$. We use it to define our Lie bracket. 

Each term in the Jacobi identity corresponds to a family of disc configurations with multivalued decorations, parametrized by $T^2$. Thanks to Proposition \ref{th:multi-contractible}, we only have to check that these families satisfy the desired relation in the configuration space $\mathit{Conf}_3$. 
Let's look at the expression $[x_1,[x_2,x_3]]$. Geometrically, this is given by gluing together two copies of the family from Lemma \ref{th:fractional-loop}, with rotation numbers $(r_{1,0} = r_0, r_{1,1} = r_1, r_{1,2} = r_2+r_3-1)$ and $(r_{2,0} = r_2+r_3-1, r_{2,1} = r_2, r_{2,2} = r_3)$ and parameters $p_1,p_2$. Again using Lemma \ref{th:fractional-loop}, the relevant covering is
\begin{equation}
\tilde{P}_{1,2} = \{ (\tilde{p}_1, p_1) \in S^1 \times S^1 \;:\; (r_2+r_3-1)\tilde{p}_1 = (1-r_1) p_1 \}.
\end{equation}
With this at hand, the families over $\tilde{P}_{1,2}$ and $P_2$ are:
\begin{equation} \label{eq:p12}
\begin{aligned} 
&
\left\{
\begin{aligned}
& \epsilon_{1,1}(t) = \rho_{1,1} e^{2\pi i (2-r_2+r_3) r_1^{-1} p_1 - 2\pi i t}, \\
& \epsilon_{1,2}(t) = e^{2\pi i p_1} \zeta_{1,2} + \rho_{1,2} e^{2\pi i \tilde{p}_1 - 2\pi i t}.
\end{aligned}
\right.
\\ &
\left\{
\begin{aligned}
& \epsilon_{2,1}(t) = \rho_{2,1} e^{2\pi i (1-r_3)r_2^{-1} p_2 - 2\pi i t} , \\
& \epsilon_{2,2}(t) = e^{2\pi i p_2} \zeta_{2,2} + \rho_{2,2} e^{2\pi i (1-r_2)r_3^{-1} p_2 - 2\pi i t}.
\end{aligned}
\right.
\end{aligned}
\end{equation}
The common action of $\bZ/(r_2+r_3-1)$ by which we have to divide in \eqref{eq:twisted-product} is generated by $(\tilde{p}_1,p_1, p_2) \mapsto (\tilde{p}_1 + (r_2+r_3-1)^{-1}, p_1, p_2 - (r_2+r_3-1)^{-1})$. Hence, the quotient is parametrized by $(p_1, \tilde{p}_2 = \tilde{p}_1+p_2)$. The glued family is then of the form
\begin{equation} \label{eq:glue-3}
\left\{
\begin{aligned}
& \epsilon_1(t) = \rho_1 e^{2\pi i (2-r_2+r_3) r_1^{-1} p_1 - 2\pi i t}, \\
& \epsilon_2(t) = e^{2\pi i p_1} \zeta_2 + \rho_2 e^{2\pi i ((1-r_3) \tilde{p}_2 + (1-r_1) p_1)r_2^{-1} - 2\pi i t}, \\
& \epsilon_3(t) = e^{2\pi i p_1} \zeta_2 + e^{2\pi i \tilde{p}_2} \zeta_3 + \rho_3 e^{2\pi i ((1-r_1) p_2 + (1-r_2) \tilde{p}_2)r_3^{-1} - 2\pi i t}.
\end{aligned}
\right.
\end{equation}
We have not spelled out the relation between the constants appearing here and those in \eqref{eq:p12}; the only relevant point is that $|\zeta_3| < |\zeta_2|$, so that the three discs in our configuration (in the given order) form a sun-earth-moon system. In particular, if we ignore framings, and just focus on the position of the celestial bodies, then those are independent of the rotation number. The same holds for the other terms of the Jacobi identity, which are just permuted versions of \eqref{eq:glue-3}; hence, the proof of that identity reduces to a standard argument.
\end{proof}

\section{The differentiation axiom\label{sec:diff-axiom}}

Following the model of \cite{seidel16}, we now build an additional differentiation operator into the TQFT setup. Because of the intended application, there is also a shift in emphasis, from the cohomology level to the chain level (still remaining fairly ad hoc, rather than giving a description of all the resulting chain level operations).

\subsection{The formalism}
Assume that our coefficient ring $R$ comes with a derivation $\partial: R \rightarrow R$; and that all $C^*(a)$ come with additive graded endomorphisms
\begin{equation} \label{eq:qabla}
\begin{aligned}
& \qabla: C^*(a) \longrightarrow C^*(a), \\
& \qabla(fx) = f (\qabla x) + (\partial f) x \quad \text{for $f \in R$, $x \in C^*(a)$.}
\end{aligned}
\end{equation}
Note that we do not ask that $\qabla$ should commute with the differential. These endomorphisms interact with the geometry as follows:
\begin{itemize}
\itemsep.5em
\item The surface \eqref{eq:surface} associated to a decorated disc configuration is allowed to optionally carry a marked point $Z \in S$ (in the exceptional case of the identity configuration, this is a point on the circle.) This additional datum behaves in the obvious way with respect to gluing (where of course only one of the two surfaces involved may carry a marked point). For a family of configurations over $P$, we similarly allow an optional smooth section $\underline{Z}: P \rightarrow \underline{S}$. If our family includes instances of the identity configuration, we ask that in some neighbourhood of the subset of $P$ where that configuration occurs, the section $\underline{Z}$ should take values in the boundary component $\partial_0\underline{S}$. For the associated TQFT operations, we keep the same axioms as before, treating the additional marked point as formally having degree $2$. This means that a family over a compact oriented $P$ with a $\underline{Z}$ section gives rise to an operation
\begin{equation} \label{eq:phi-map-with-z-point}
\phi_P: C^*(a_1) \otimes \cdots \otimes C^*(a_m) \longrightarrow C^{*-\mathrm{dim}(P)+2}(a_0).
\end{equation}

\item
Consider the identity configuration, for some $a$, and turn this into a family over $P = S^1$ by adding the point $Z_p = e^{2\pi i p}$. We denote the associated operation \eqref{eq:phi-map-with-z-point} by
\begin{equation} \label{eq:r-map}
R: C^*(a) \longrightarrow C^{*+1}(a), 
\end{equation}
and require that it should measure the failure of \eqref{eq:qabla} to be a chain map:
\begin{equation} \label{eq:differentiation-1}
R= \qabla d - d \qabla.
\end{equation}

\item
Consider any family $\underline{S} \rightarrow P$, where none of the fibres is the identity configuration, and which does not come with a marked point (section). We can associate to this a new family, denoted by $\underline{S}_\bullet \rightarrow P_\bullet$, as follows. The new parameter space is the total space of the original family, $P_\bullet = \underline{S}$, and $\underline{S}_\bullet$ is the pullback of $\underline{S}$ to that space. This comes with a tautological section $\underline{Z}$. We call this process ``adding a marked point in all possible places'', and require that the associated operation should be related to the original one by
\begin{equation} \label{eq:differentiation-2}
\phi_{P_\bullet}(x_1,\dots,x_m) = \qabla \phi_P(x_1,\dots,x_m) - \phi_P(\qabla x_1,\dots,x_m) - \cdots - \phi_P(x_1,\dots,\qabla x_m).
\end{equation}
\end{itemize}

The second and third part of the requirements above form the ``differentiation axiom''. We will now explore some immediate consequences. Take the empty configuration ($m = 0$, which means that $S = D$), with the point $Z = 0 \in S$. After equipping this with a nonnegatively curved connection whose boundary value $a$ has rotation number $1$ (Proposition \ref{th:the-disc}), we get a cocycle
\begin{equation} \label{eq:ks}
k \in C^2(a).
\end{equation}
We call this the Kodaira-Spencer cocycle, and its class in $H^2(a)$ the Kodaira-Spencer class. 

\begin{lemma} \label{th:bracket-with-k}
Fix $a_1,a_2$ as in Proposition \ref{th:gerstenhaber-module}. Consider a chain level version of the bracket constructed there,
\begin{equation} \label{eq:chain-bracket}
[\cdot,\cdot]: C^*(a_1) \otimes C^*(a_2) \longrightarrow C^{*-1}(a_2).
\end{equation}
Let $k$ be the Kodaira-Spencer cocycle in $C^2(a_1)$. Then there is a chain homotopy
\begin{equation} \label{eq:rho-homotopy}
\begin{aligned}
& \rho: C^*(a_2) \longrightarrow C^*(a_2), \\
& d\rho(x) - \rho(dx) - R(x) + [k,x] = 0.
\end{aligned}
\end{equation}
\end{lemma}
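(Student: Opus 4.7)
The plan is to realize $\rho$ as the TQFT operation $\phi_P$ of a two-parameter family of decorated $1$-disc (annulus) configurations carrying a single marked point, whose parameter space is $P = [0,1] \times S^1$ (with coordinates $s,p$), and whose two boundary circles $\{s=0\}\times S^1$ and $\{s=1\}\times S^1$ compute $R(x)$ and $[k,x]$, respectively.

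As preparation, I first re-express both operations as TQFT operations of $S^1$-families of decorated annuli with one marked point. For $[k,\cdot]$, the composition axiom \eqref{eq:tqft}, applied to the bracket family of $2$-disc configurations (Lemma~\ref{th:rot-bracket}) and the $0$-disc Kodaira--Spencer configuration underlying $k$, produces an $S^1$-family $P_1 \to S^1$ of decorated annuli, each carrying a marked point at the former centre of disc $1$; tautologically, $\phi_{P_1}(x) = [k,x]$. For $R(x)$, the definition gives the identity configuration with $Z_p = e^{2\pi i p}$ on $\partial_0$. The composition axiom, combined with the fact that the identity configuration is neutral for gluing, allows one to present $R$ equivalently as the operation of an $S^1$-family $P_0$ obtained by gluing the identity-with-marker family into the $\partial_0$-slot of an annulus whose decoration represents the identity operation on $C^*(a_2)$ (in the sense of Addendum~\ref{th:x-annulus-a}(i)); concretely, $P_0$ is an $S^1$-family of decorated annuli of fixed shape whose marked point traces $\partial_0$ once as $p$ varies.

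Next, I construct the interpolating family $P$. Fibrewise, the ambient annulus retains a fixed underlying surface $S$ carrying the boundary values $(a_2,a_2)$; the framings and nonnegatively curved connection, together with the position of the marked point, are deformed so that at $s = 1$ one recovers $P_1$ and at $s = 0$ one recovers $P_0$. The marked point is moved radially outward from its interior location to $\partial_0$ along $s$, while the decorations on $S$ are connected through $\scrY_{\geq 0}(S,a_2,a_2)$. Existence of such a homotopy uses the weak contractibility of each component of $\scrY_{\geq 0}(S,a_2,a_2)$ (Proposition~\ref{th:x-annulus}(ii)), combined with the verification that $P_1$ and $P_0$ lie in the same connected component; the latter is a matching of the discrete invariant of Addendum~\ref{th:x-annulus-a}(i) (the rotation number mod $r$ along a path from $\epsilon_0(0)$ to the marked point locus), which is determined on both sides by the rotation combinatorics of Lemma~\ref{th:rot-bracket} and of the Kodaira--Spencer disc. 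Applying the boundary axiom \eqref{eq:boundary-sign} with $\mathrm{dim}(P) = 2$ and $m = 1$ then yields
\[
d\phi_P(x) - \phi_P(dx) + \phi_{\{1\}\times S^1}(x) - \phi_{\{0\}\times S^1}(x) = 0,
\]
and substituting $\phi_{\{1\}\times S^1} = [k,\cdot]$, $\phi_{\{0\}\times S^1} = R$, and $\rho = \phi_P$ gives the claimed identity.

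The main obstacle lies in the rigorous interpretation of the $s = 0$ boundary: the marked point reaching $\partial_0$ is only a legal configuration when the ambient surface is the identity configuration, so one must realize the limit as the gluing of the $R$-family into the outer slot of an annulus whose decoration induces the identity operation. This is a discrete component-matching problem, controlled entirely by the tools of Section~\ref{sec:connections} and by Addendum~\ref{th:x-annulus-a}; once the correct component is identified, the geometric construction of $P$ is routine, and no further ingredients beyond the TQFT axioms of Section~\ref{sec:tqft} are required.
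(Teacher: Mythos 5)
Your overall strategy — present $R$ and $[k,\cdot]$ as operations of $S^1$-families of decorated annuli with a marked point, then build an interpolating family over $[0,1]\times S^1$ and invoke the boundary axiom — is exactly the paper's. But the step you dismiss as routine is in fact where the real difficulty lies, and your justification for it is not valid.

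You claim the component-matching is ``controlled entirely by the tools of Section 4 and by Addendum~\ref{th:x-annulus-a}'' and is determined by ``rotation combinatorics.'' The glued annulus (Kodaira--Spencer disc glued into the bracket family) carries a \emph{nonnegatively curved, non-flat} connection, since \eqref{eq:ks} is built from a nonnegatively curved connection on the disc. For such configurations the discrete invariant $\mathrm{rot}_c(A)$ from \eqref{eq:rot-mod-r} is not known to classify the component: Addendum~\ref{th:x-annulus-a}(ii) says explicitly that ``it is unclear whether the alternative description \eqref{eq:rot-mod-r} would still work'' outside the flat case. So the very addendum you cite warns against the argument you propose. Lemma~\ref{th:rot-bracket} only records how framings rotate in the bracket family; it does not settle which element of $\Gamma_r$ the glued annulus represents, and the algebraic unitality statement ($e\cdot x = x$ on cohomology, from Propositions~\ref{th:gerstenhaber-algebra} and~\ref{th:gerstenhaber-module}) is also insufficient, since $\Sigma$ could in principle act trivially on $H^*(a_2)$ without the corresponding annulus being in the trivial component of $\scrY_{\geq 0}(S,a_2,a_2)$.

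The paper circumvents this by invoking the \emph{geometric} proof of unitality from Remark~\ref{th:cop-out}, which uses the explicit nonnegative path construction of Lemma~\ref{th:unit-path} to actually exhibit a deformation of the glued annulus to the identity configuration within $\scrY_{\geq 0}$, thereby pinning down the component without appealing to $\mathrm{rot}_c$. In the $r=0$ case there is also a continuous direction in $\Gamma_0 = \bZ\times S^1$, and there Lemma~\ref{th:rot-bracket} does enter, to kill the $\pi_1$ obstruction; but that is a supplementary point, not a substitute for the discrete one. So the gap: you need an additional ingredient (Lemma~\ref{th:unit-path} via Remark~\ref{th:cop-out}) to identify the component, and the claim that ``no further ingredients beyond the TQFT axioms of Section~\ref{sec:tqft} are required'' is not correct.
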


\begin{proof}
Take \eqref{eq:ks} for $a = a_1$, and the family underlying \eqref{eq:chain-bracket}; glue them together; and then (temporarily) forget the marked point. The outcome is a family, still parametrized by $p \in P = S^1$, in which the surfaces $S_p$ are all annuli, carrying boundary parametrizations and nonnegatively curved connections, with boundary values $(a_2,a_2)$. Proposition \ref{th:x-annulus}(ii) and Addendum \ref{th:x-annulus-a}(ii) say that up to homotopy, our family is classified by a map
\begin{equation} \label{eq:nullhomotopic-s1}
P \longrightarrow \Gamma_r. 
\end{equation}
Assume first that $r \neq 0$, so that $\Gamma_r$ is discrete. Let's look at a single value of the parameter. Then, the gluing construction precisely describes the module action of \eqref{eq:e} from Proposition \ref{th:gerstenhaber-module}. In Remark \ref{th:cop-out}, we have given a direct geometric proof of the unitality of that action; that argument implies that gluing yields the trivial element of $\Gamma_r$, hence that \eqref{eq:nullhomotopic-s1} is trivial. In the remaining case $r = 0$, what we have just said shows that \eqref{eq:nullhomotopic-s1} takes values in the trivial connected component of $\Gamma_0 = \bZ \times S^1$; one can then use Lemma \ref{th:rot-bracket} to show that the map $\pi_1(P) \rightarrow \pi_1(\Gamma_0)$, which describes the $p$-dependence of the boundary parametrization of our annuli, is also trivial. 

We now know that the glued family can be deformed to a one that is constant in $p$ except for the marked point, which moves once around the annulus. For concreteness, let's give a specific formula for what we can deform our family to (as decorated disc configurations):
\begin{equation} \label{eq:124}
\left\{
\begin{aligned}
& D_1 = \quarter D, \\
& \epsilon_1(t) = \quarter e^{-2\pi i t}, \\
& A = \text{the pullback of $a_2$ by radial projection}, \\
& Z_p = \half e^{2\pi i p}.
\end{aligned}
\right.
\end{equation}
By deforming the constants $\quarter$ and $\half$ to $1$, \eqref{eq:124} can be connected to the family underlying $R$. Combining this with the previous deformation yields a family parametrized by an annulus, whose associated TQFT operation $\rho$ precisely satisfies \eqref{eq:rho-homotopy}, as an instance of \eqref{eq:boundary-sign}.
\end{proof}


In principle, these constructions should be supplemented by suitable uniqueness statements (since we are on the cochain level, this always means uniqueness up to chain homotopy). As a particularly simple instance, \eqref{eq:ks} is homotopically unique, by which we mean the following: if $k$ and $\tilde{k}$ are the cochains arising from two choices of decorations, then
\begin{equation} \label{eq:secondary-k}
\tilde{k}-k = d\kappa;
\end{equation}
and $\kappa \in C^1(a)$, which comes from a family of decorations parametrized by $[0,1]$, is itself unique up to adding coboundaries of elements in $C^0(a)$ (one could continue this with higher homotopies). This follows immediately from the contractibility statement in Proposition \ref{th:the-disc}.
Similarly (but with a slightly more complicated argument), $\rho$ is homotopically unique, and there is a relation between the maps $\rho$ obtained from different choices of cochain representative $k$. Finally, $k$ and $\rho$ are compatible, in a suitable sense, with the quasi-isomorphisms relating the complexes $C^*(a)$ for different choices of $a$. We omit the details.

\subsection{The connection}
From now on, our discussion no longer applies to the TQFT formalism in general, but requires a special additional assumption:

\begin{assumption} \label{th:ks-vanishes}
Fix some $a \in \Omega^1(S^1,\frakg)$ with rotation number $1$. Assume that the class of $k$ in $H^2(a)$ is nullhomologous. Given that, we choose a bounding cochain $\theta \in C^1(a)$, $d\theta = k$. Two such cochains will be considered equivalent if their difference is a nullhomologous cocycle.
\end{assumption}

By adding arbitrary cocycles $t \in C^1(a)$ to $\theta$, the set of equivalence classes becomes an affine space over $H^1(a)$. Note that equivalence classes of $\theta$ can be transferred from one representative of $k$ to another: given \eqref{eq:secondary-k} and a choice of $\theta$ such that $d\theta = k$, one sets $\tilde\theta = \theta + \kappa$. Similarly, one can pass between different $a$.

\begin{proposition} \label{th:hyperbolic-connection}
Suppose that Assumption \ref{th:ks-vanishes} holds. Then, any group $H^*(a_2)$ carries a differentiation operator
\begin{equation} \label{eq:nabla-h}
\begin{aligned}
& \nabla: H^*(a_2) \longrightarrow H^*(a_2), \\
& \nabla(fx) = f \nabla x + (\partial f) x.
\end{aligned}
\end{equation}
This operator depends on the equivalence class of the bounding cochain $\theta$; changing that by some $t \in H^1(a)$ has the effect of adding the Lie action $x \mapsto -[t,x]$ to \eqref{eq:nabla-h}.
\end{proposition}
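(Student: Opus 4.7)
The plan is to define, on the cochain level,
\begin{equation*}
\nabla x \;:=\; \qabla x \,+\, \rho(x) \,-\, [\theta, x],
\end{equation*}
where $\rho$ is the chain homotopy supplied by Lemma \ref{th:bracket-with-k} (applied with $a_1 = a$ and the given $a_2$). Only the first term fails to be $R$-linear, so the Leibniz identity \eqref{eq:nabla-h} is inherited directly from \eqref{eq:qabla}. The key verification is that $[d,\nabla] = 0$, so that $\nabla$ descends to an additive endomorphism of $H^*(a_2)$. Axiom \eqref{eq:differentiation-1} gives $[d,\qabla] = -R$; Lemma \ref{th:bracket-with-k} gives $[d,\rho] = R - [k,\cdot]$; and the TQFT boundary identity \eqref{eq:boundary-sign} applied to the $S^1$-family of two-disc configurations underlying \eqref{eq:r-bracket}, specialized to the first entry $\theta$ with $d\theta = k$, yields $[d,[\theta,\cdot]] = -[k,\cdot]$. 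Summing, the three anomalies cancel.

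For independence of the auxiliary choices I would argue that all of them (the representative $k$ of the Kodaira--Spencer class, the cochain $\theta$ within its equivalence class, and the homotopy $\rho$) live in contractible spaces of TQFT data, by Propositions \ref{th:the-disc} and \ref{th:nonnegative-connection-2} together with the homotopical uniqueness remarks following Lemma \ref{th:bracket-with-k}. A path between two such sets of data produces a one-parameter family of TQFT configurations whose associated operation is a chain homotopy between the corresponding two versions of $\nabla$. The more elementary sub-case, where $\theta$ is shifted by an exact cochain $d\sigma$ within the same equivalence class, changes $\nabla$ on cochains by $-[d\sigma,\cdot]$, which is a coboundary by the same graded Leibniz rule used above, hence trivial in cohomology.

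Finally, shifting $\theta$ by a cocycle $t \in C^1(a)$ shifts the cochain-level $\nabla$ by exactly $-[t,\cdot]$, which on cohomology is the Lie action $x \mapsto -[t,x]$ in the Gerstenhaber module structure of Proposition \ref{th:gerstenhaber-module}; this is the formula asserted in the statement. I expect the main obstacle to be not any single algebraic cancellation (each follows transparently from \eqref{eq:differentiation-1}, Lemma \ref{th:bracket-with-k}, and the graded Leibniz rule for the chain-level bracket) but the secondary-homotopy book-keeping required in the uniqueness argument; the overall strategy is directly modelled on the BV construction of \cite{seidel16}, and can be transcribed from there.
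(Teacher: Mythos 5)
Your proposal is correct and takes essentially the same route as the paper: the chain-level formula $\nabla x = \qabla x + \rho(x) - [\theta,x]$ is exactly \eqref{eq:nabla}, and the cancellation of the three anomalies (from \eqref{eq:differentiation-1}, Lemma \ref{th:bracket-with-k}, and the Leibniz rule for the chain-level bracket applied to $d\theta = k$) is precisely the observation the paper summarizes when it says one "can remedy that failure." You spell out the sign bookkeeping that the paper leaves to the reader, and your treatment of the $d\sigma$ shift and the $t$-shift matches the statement; the uniqueness discussion you sketch is also the one the paper alludes to and omits.
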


\begin{proof}
The differentiation axiom and Lemma \ref{th:bracket-with-k} show that the Kodaira-Spencer class encodes the essential failure of $\qabla$ to be a chain map. Correspondingly, under Assumption \ref{th:ks-vanishes}, one can remedy that failure:
\begin{equation} \label{eq:nabla}
\begin{aligned}
& \nabla: C^*(a_2) \longrightarrow C^*(a_2), \\
& \nabla x = \qabla x + \rho(x) - [\theta,x]
\end{aligned}
\end{equation}
is a chain map. We define \eqref{eq:nabla-h} to be the induced operation on cohomology.
\end{proof}

There is a uniqueness statement for \eqref{eq:nabla-h}, proving the independence of $\nabla$ from all choices made its construction, and its compatibility with the isomorphisms between the groups $H^*(a_2)$ for different choices of $a_2$. The argument is relatively straighforward, and we omit it. Going further along the lines of \cite{seidel16}, one could consider the compatibility of $\nabla$ with each of the previously defined algebraic structures; since we have no immediate need or applications for such compatibility results, we prefer not to pursue that line of inquiry here.

\section{Elliptic holonomy\label{sec:elliptic}}
We now enlarge our TQFT by additionally allowing, as boundary values, connections with elliptic (which by definition still means nontrivial) holonomy. This allows us to incorporate the parallel theory from \cite{seidel16}. There is little new in the strategies of proof; we will therefore focus on a few basic technical statements, and then on the main result, which is the comparison theorem for connections (Proposition \ref{th:comparison-of-connections}).

\subsection{Spaces of nonnegative paths, revisited\label{subsec:additional}}
We temporarily return to the subject of Section \ref{sec:sl2}. Our first observation concerns the issue of perturbing nonnegative paths to positive ones, while keeping the conjugacy class of the endpoints the same. We have encountered this implicitly before, for instance, in the proof of Lemma \ref{th:short-path}: there, our path ended in the hyperbolic locus, and we used the fact that positive motion on hyperbolic conjugacy classes is unconstrained. Here is an alternative approach, which uses the elliptic locus instead:

\begin{lemma} \label{th:make-positive}
Let $g(t)$ be a nonconstant nonnegative path, such that $g(1)$ is elliptic. Then there is a deformation $g_s(t)$, with $g_0(t) = g(t)$, such that:
\begin{equation} \label{eq:per-pos}
\left\{
\begin{aligned}
& g_s(0) = g(0), \\
& \text{$g_s(1)$ lies in the same conjugacy class as $g(1)$,} \\
& \text{$g_s(t)$ is a positive path, for all $s>0$.}
\end{aligned}
\right.
\end{equation}
The same statement applies to families of paths.
\end{lemma}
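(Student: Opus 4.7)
The plan is to exploit the combination of Krein's Lemma \eqref{eq:trace-decreases} and the nonconstancy of $g$ to find an intermediate time $T < 1$ at which $g(T)$ is elliptic with angle strictly less than the angle $\theta$ of $g(1)$, and then to build the deformation in two pieces: a standard positive perturbation on $[0,T]$, and an application of the positive version of Lemma \ref{th:path}(i) on $[T,1]$.

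To locate $T$, note that since the elliptic locus is open and $g(1)$ is elliptic, $g$ is elliptic on a maximal right-interval $(T^*, 1] \subset [0,1]$, and on this interval Krein's Lemma implies the angle function $\theta(t)$ is non-decreasing with $\theta(1) = \theta$. If $\theta$ were constantly equal to $\theta$ on $(T^*,1]$, the equivalence ``$\theta'(t) = 0 \iff g'(t) = 0$'' would force $g \equiv g(1)$ there; then $T^* > 0$ would give $g(T^*) = g(1)$ elliptic by continuity, extending the interval and contradicting maximality, while $T^* = 0$ would make $g$ constant on $[0,1]$, contradicting the nonconstancy hypothesis. Hence $\theta(T) < \theta$ for some $T \in (T^*, 1)$, which I fix. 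On $[0,T]$, I would set $\tilde g_s(t) = g(t) \exp(ts\gamma)$ for a fixed small $\gamma \in \frakg_{>0}$; the identity $\tilde g_s'(t) \tilde g_s(t)^{-1} = g'(t) g(t)^{-1} + s\,\mathrm{Ad}(g(t))\gamma$ shows this is positive for $s > 0$, with $\tilde g_s(0) = g(0)$, and $\tilde g_s(T)$ still elliptic for $s$ small, with angle $\tilde\theta_s(T)$ close to $\theta(T)$ (hence strictly less than $\theta$).

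On $[T, 1]$, I would choose a smooth family of strictly increasing $\theta_s : [T,1] \to (0,\pi)$ with $\theta_s(T) = \tilde\theta_s(T)$, $\theta_s(1) = \theta$, and $\theta_0 = \theta$ pointwise; this can be arranged because $\theta$ is non-decreasing and strictly increases somewhere on $[T,1]$ by the choice of $T$, leaving enough room to absorb the $O(s)$ boundary correction at $t = T$. Now apply the positive version of Lemma \ref{th:path}(i) to produce a positive path on $[T,1]$ lying in $C_{\mathit{ell}, \theta_s(t)}$ with initial value $\tilde g_s(T)$, and concatenate at $t = T$ to obtain $g_s$ satisfying all three conditions in \eqref{eq:per-pos}.

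The main obstacle is the family version. For a smooth family of nonconstant nonnegative paths parametrized by a compact manifold $P$, the infimum $T^*$ depends discontinuously on the parameter, so the choice of $T$ cannot simply be made pointwise. I would handle this by a compactness-and-cover argument, finding a finite open cover of $P$ with locally valid $T$'s, and then patching the resulting local families of deformations using the weak contractibility assertions for spaces of positive paths in Lemma \ref{th:path} to obtain a globally smooth family of positive paths.
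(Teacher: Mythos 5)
Your argument is in the same family of ideas as the paper's: locate a time $T$ after which the path is elliptic and the angle genuinely increases somewhere, perturb positively on $[0,T]$, and build a positive piece on $[T,1]$ using the positive version of Lemma~\ref{th:path}(i). Your justification that such a $T$ exists (via the ``$\theta'(t) = 0 \iff g'(t) = 0$'' consequence of Krein's Lemma and the maximality of the elliptic right-interval) is correct and more explicit than what the paper offers at that step. The multiplicative perturbation $g(t)\exp(ts\gamma)$ on $[0,T]$ is also fine.

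However, there is a genuine gap at the concatenation. By splitting the domain and constructing the two pieces independently, you create a matching problem at $t = T$: the germ of $\tilde g_s$ at $T^-$ is $g'(T)g(T)^{-1} + s\,\mathrm{Ad}(g(T))\gamma$ (plus higher jets), whereas the path on $[T,1]$ produced by Lemma~\ref{th:path}(i) is only constrained by its value $\tilde g_s(T)$ and its conjugacy-class profile $\theta_s$; nothing in the lemma lets you prescribe its germ at the left endpoint. The concatenated $g_s$ will therefore generically fail to be smooth at $t = T$, and there is a related issue across $s = 0$: citing Lemma~\ref{th:path}(i) as a black box produces individual positive paths for each $s > 0$, not a smooth family in $s$ reducing to $g|[T,1]$ at $s = 0$ (your constraint ``$\theta_s$ strictly increasing for all $s$'' with ``$\theta_0 = \theta$'' is also formally inconsistent when the original $\theta$ is only non-decreasing). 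Both issues are repaired by doing what the paper does: don't split the domain at all. Instead, write $g$ as a solution of an ODE $g'(t) = g(t)\gamma_t(g(t))$ for a suitable choice of $\gamma_t$ as in \eqref{eq:elliptic-gamma}, interpolate toward a choice $\tilde\gamma_t$ that is in $\frakg_{>0}$ wherever $\theta'(t)>0$, multiplied by a cutoff vanishing on $[0,T]$, and then apply the ``redistribution of positivity'' rotation correction \eqref{eq:redistribute}. Since everything is built from a single family of ODEs on $[0,1]$, smoothness in $(t,s)$---and also in the parameters, for the family version---is automatic; your compactness-and-cover patching is then absorbed into choosing the $\gamma$-data via partitions of unity.
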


\begin{proof}
This is an adaptation of the argument from Lemma \ref{th:path}. Let's first assume, for simplicity, that our path remains entirely inside the elliptic locus, so that $g(t)$ is a rotation with angle $\theta(t) \in (0,\pi)$. Because the path is nonnegative and not constant, we know from \eqref{eq:trace-decreases} that $\theta(1) > \theta(0)$. Consider time-dependent functions \eqref{eq:elliptic-gamma}. There is such a $\gamma_t$ which additionally satisfies $g'(t) = g(t)\gamma_t(g(t))$. There is also one more choice $\tilde\gamma_t$, obtained from Lemma \ref{th:bound-vector-fields}(i) by rescaling as in \eqref{eq:bounded-fn}, which has the additional property that $\tilde\gamma_t$ takes values in $\frakg_{> 0}$ whenever $\theta'(t) > 0$. Take the paths $\tilde{g}_s(t)$ obtained by solving the ODE
\begin{equation}
\tilde{g}_s(0) = g(0), \quad \tilde{g}_s'(t) = \tilde{g}_s(t)\big((1-s)\gamma_t(g_s(t)) + s\tilde{\gamma}_t(g_s(t))\big).
\end{equation}
By construction, these move through the same conjugacy classes as the original path, to which they specialize for $s = 0$. For $s>0$, the path $\tilde{g}_s$ is positive at all times $t$ where $\theta'(t)>0$. Now we ``redistribute the positivity'':
\begin{equation}
\begin{aligned} \label{eq:redistribute}
& g_s(t) =\begin{pmatrix} \cos(\alpha_s(t)) & -\sin(\alpha_s(t)) \\ \sin(\alpha_s(t)) & \cos(\alpha_s(t))
\end{pmatrix} \tilde{g}_s(t), \\
&
\text{where} \quad
\left\{ 
\begin{aligned}
& \alpha_0(t) = 0, \\
& \alpha_s(0) = \alpha_s(1) = 0, \\
& \text{for $s>0$, $\alpha_s'(t)>0$ at all $t$ where $\tilde{g}_s(t)$ is not (strictly) positive.}
\end{aligned} 
\right.
\end{aligned}
\end{equation}
If we choose $\alpha_s$ sufficiently small, \eqref{eq:redistribute} will be a positive path for all $s>0$, satisfying \eqref{eq:per-pos}. 

In the general case, fix some $T$ such that $g|[T,1]$ is elliptic; one can also arrange that there is a point in $[T,1]$ where the angle of rotation has positive derivative. Now, apply the first part of the argument to $g|[T,1]$, but multiplying $\tilde{\gamma}_t$ by a cutoff function which vanishes on $[0,T]$. The outcome is a family of paths $\tilde{g}_s$ which agree with the original $g$ on $[0,T]$, and which are positive somewhere in $[T,1]$. We then apply the same ``redistribution'' argument to those paths. The parametrized situation is similar.
\end{proof}

\begin{lemma} \label{th:elliptic-to-elliptic}
Fix elliptic conjugacy classes $C_0,C_1 \subset G$. Let $r_0,r_1 \in \bR \setminus \bZ$ be rotation numbers of some lifts of those classes to $\tilde{G}$, chosen so that $(r_0,r_1)$ contains exactly one integer, and set $\rho = r_1-r_0$. Then, $\scrP_{\geq 0}(G,C_0,C_1)^\rho$ is weakly contractible.
\end{lemma}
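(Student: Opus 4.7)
Following the pattern of Lemmas \ref{th:short-path}--\ref{th:short-path-3}, the plan is to first establish weak contractibility for the subspace $\scrP_{>0}(G,C_0,C_1)^\rho$ of positive paths, and then deduce the nonnegative case via Lemma \ref{th:make-positive}. Write $n \in \bZ$ for the unique integer in $(r_0,r_1)$. By Krein's lemma together with the traffic-flow picture of Figure \ref{fig:traffic}, any positive path with the prescribed endpoint and rotation data is forced into the following shape: there exist times $0 < T_0 \le T_1 < 1$ such that $g|[0,T_0)$ is elliptic with lifted rotation $< n$, $g|(T_1,1]$ is elliptic with lifted rotation $> n$, and $g|[T_0,T_1]$ lies in the locus of lifted rotation exactly $n$. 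In the generic case $T_0 < T_1$, one has $g(T_0)$ a nonpositive parabolic, $g(T_1)$ a nonnegative parabolic, and $g|(T_0,T_1)$ hyperbolic; the degenerate case $T_0 = T_1$ forces $g(T_0) = \Id$ by the discussion of the traffic rules at the identity following \eqref{eq:trace-decreases}.

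Using this decomposition, I would present $\scrP_{>0}(G,C_0,C_1)^\rho$ as the total space of a weak fibration with contractible base and fibers. Fix the transition data $(T_0, T_1, g(T_0), g(T_1))$. The first elliptic segment is a positive path from $C_0$ to the prescribed parabolic endpoint $g(T_0)$ with rotation increase $n - r_0 < 1$; a minor variant of Lemma \ref{th:short-path} (with the hyperbolic endpoint replaced by a parabolic one) shows this space is weakly contractible, the proof being identical modulo applying Lemma \ref{th:path}(i) to elliptic angles approaching $n\pi$ from below. The third segment is handled by the symmetric argument. The middle segment is a nonnegative path with constant rotation $n$ between the two fixed parabolics, confined to the hyperbolic-parabolic locus; it is weakly contractible by Lemma \ref{th:pre-short} together with Lemma \ref{th:path}(ii), and reduces to a single point in the degenerate case $T_0 = T_1$. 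The space of admissible transition data is a closed $2$-simplex in $(T_0, T_1)$ fibered over conjugacy-class-restricted choices of parabolic pairs, hence contractible. Finally, Lemma \ref{th:make-positive} applied at each elliptic endpoint shows that the inclusion $\scrP_{>0} \hookrightarrow \scrP_{\ge 0}$ is a weak homotopy equivalence.

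The principal technical obstacle is the uniform treatment of the degenerate stratum $T_0 = T_1$ with $g(T_0) = \Id$, where the hyperbolic middle collapses to a point, so the fibration above is not strictly locally trivial. The expected resolution is either a stratified argument showing that the degenerate stratum attaches consistently (the parabolic pairs and hyperbolic stretches degenerate to the identity in a controlled way compatible with the lower-dimensional fiber), or, more robustly, a parametrized perturbation argument showing that any finite-dimensional smooth family of positive paths can be homotoped so that $T_0 < T_1$ everywhere, reducing the problem to the open generic stratum. Both approaches are routine given Lemma \ref{th:make-positive} and the parametric form of Lemma \ref{th:path}.
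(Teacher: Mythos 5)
Your decomposition of positive paths into an initial elliptic arc, a hyperbolic middle segment, and a final elliptic arc is the right starting geometry, but the global claim — that $\scrP_{>0}(G,C_0,C_1)^\rho$ fibres over a contractible base of transition data with contractible fibres — is false, and the error is not confined to the degenerate stratum. Once you fix nondegenerate transition times $T_0 < T_1$, the transition point $g(T_0)$ ranges over a nonpositive parabolic conjugacy class, and $g(T_1)$ over a nonnegative one; each such class is $G/\bR \htp S^1$, not contractible. In fact the whole open (nondegenerate) stratum is weakly homotopy equivalent to the hyperbolic locus $G_{\mathit{hyp}} \htp S^1$: evaluating at a point of $(T_0,T_1)$, one factors it as a fibre product of two copies of (the positive-path version of) Lemma~\ref{th:short-path} over $G_{\mathit{hyp}}$. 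This already rules out your proposed resolution (b): if one could homotope any finite-dimensional family into the stratum $T_0 < T_1$, then $\scrP_{>0}(G,C_0,C_1)^\rho$ would be weakly equivalent to that open stratum, i.e.\ to $S^1$, contradicting the lemma.

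The degenerate stratum $\scrS_{\mathit{id}}$ (paths passing through $\Id$) is therefore not a technical nuisance to be perturbed away but the essential ingredient. It is a codimension-two submanifold, it is itself weakly contractible (a first-order analysis at the time $T$ with $g(T)=\Id$, plus two applications of Lemma~\ref{th:path}(i) to $g|[0,T-\epsilon]$ and $g|[T+\epsilon,1]$), and the point of the proof is to show that the link of $\scrS_{\mathit{id}}$ inside $\scrP_{>0}(G,C_0,C_1)^\rho$ is a circle mapping by a weak equivalence onto $\scrS_{\mathit{open}}$. A local model checks this: perturbing a path through $\Id$ in the two normal directions, say
\begin{equation}
g_{\delta,\epsilon}(t) = \exp \left( \begin{smallmatrix} \epsilon & \delta-t \\ \delta+t & -\epsilon \end{smallmatrix} \right),
\end{equation}
produces for $(\delta,\epsilon)\neq 0$ a hyperbolic element $g_{\delta,\epsilon}(0)$ whose attracting eigendirection sweeps once around $\bR P^1$ as $(\delta,\epsilon)$ traverses a small circle. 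Contractibility of the total space then follows by attaching the contractible normal tube of $\scrS_{\mathit{id}}$ to $\scrS_{\mathit{open}}\htp S^1$ along this weak equivalence. So your resolution (a), in the form of ``showing that the degenerate stratum attaches consistently'', is closer to the truth, but it requires recognizing that $\scrS_{\mathit{open}}$ is genuinely noncontractible and carrying out the link computation; presenting it as a fibration with contractible base and fibres obscures exactly this step.
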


\begin{proof}[Sketch of proof]
Lemma \ref{th:make-positive} shows that $\scrP_{\geq 0}(G,C_0,C_1)^\rho$ is weakly homotopy equivalent to its subspace $\scrP_{>0}(G,C_0,C_1)^\rho$ of positive paths, so we'll consider that instead. From Figure \ref{fig:traffic}, one sees that paths in $\scrP_{>0}(G,C_0,C_1)^\rho$ can meet the trivial element $\Id$ at most once. Let's write
\begin{equation} \label{eq:pcc}
\scrP_{>0}(G,C_0,C_1)^\rho = \scrS_{\mathit{open}} \cup \scrS_{\mathit{id}},
\end{equation}
where $\scrS_{\mathit{id}}$, the subspace of paths which go through $\Id$, is a codimension two submanifold; and $\scrS_{\mathit{open}}$ is its complement. 

{\bf Claim (i):} {\em $\scrS_{\mathit{id}}$ is weakly contractible.}

Except for the time $T \in (0,1)$ where $g(T) = \Id$, a path $g(t)$ in $\scrS_{\mathit{id}}$ must remain in the elliptic locus. To first order near $T$, the path is determined by $g'(T) \in \frakg_{>0}$, which is a contractible choice. On the other hand, Lemma \ref{th:path}(i) applies to $g|[0,T-\epsilon]$ and $g|[T+\epsilon,1]$ for any $\epsilon>0$, which are therefore determined uniquely up to homotopy by their endpoints $g(T \pm \epsilon)$. Combining these two ideas yields the claimed fact.

{\bf Claim (ii):} {\em $\scrS_{\mathit{open}}$ is weakly homotopy equivalent to a circle.}

Any path $g(t)$ in $\scrS_{\mathit{open}}$ has the following structure:
\begin{equation}
\text{for some $0<T_0<T_1< 1$,}\; 
\left\{
\begin{aligned}
& \text{$g(t)$ is elliptic for $t<T_0$;} \\
& \text{$g(T_0)$ is a nonpositive parabolic;} \\
& \text{$g(t)$ is hyperbolic for $T_0<t<T_1$;} \\
& \text{$g(T_1)$ is a nonnegative parabolic;} \\
& \text{$g(t)$ is elliptic for $T_1<t$.}
\end{aligned}
\right.
\end{equation}
A more precise form of our claim says that, by evaluating such paths at a point in $(T_0,T_1)$ (smoothly depending on the path), we get a weak homotopy equivalence between $\scrS_{\mathit{open}}$ and the hyperbolic locus $G_{\mathit{hyp}}$. Indeed, by breaking up the path at such a point, we can reduce our claim to (a fibre product of) two copies of Lemma \ref{th:short-path}, in a version for positive rather than nonnegative paths.

With that settled, we will now show that $\scrP_{>0}(G,C_0,C_1)^\rho$ is weakly contractible. Take a path in $\scrS_{\mathit{id}}$, and a two-parameter family of perturbations of that path, which moves it in all directions transverse to $\scrS_{\mathit{id}}$. As a local model near the identity element, if the original path $g(t)$ is a standard rotation by $t$, the family could be 
\begin{equation}
g_{\delta,\epsilon}(t) = \exp \left( \begin{smallmatrix} \epsilon & \delta-t \\ \delta+t & -\epsilon \end{smallmatrix} \right). 
\end{equation}
For $(\delta,\epsilon)$ nontrivial, $g_{\delta,\epsilon}(0)$ is a hyperbolic element, whose eigenvector for the larger eigenvalue $\exp(\sqrt{\delta^2+\epsilon^2})$ is 
\begin{equation}
\begin{pmatrix} (\sqrt{\delta^2+\epsilon^2} + \epsilon)^{1/2} \\ \mathrm{sign}(\delta) (\sqrt{\delta^2+\epsilon^2} - \epsilon)^{1/2}\end{pmatrix}.
\end{equation}
As $(\delta,\epsilon)$ moves in a small circle around $0$, that eigenvector goes once (up to sign, since there's no preferred orientation of the $(\delta,\epsilon)$ plane) around $\bR P^1$. The same holds for general paths and perturbations (by a deformation argument). This shows that the link of the stratum $\scrS_{\mathit{id}}$ yields a weak equivalence from the circle to $\scrS_{\mathit{open}}$. Weak contractibility of $\scrP_{>0}(G,C_0,C_1)^\rho$ follows from that and the previous claims.
\end{proof}

\begin{lemma} \label{th:ehe}
Fix conjugacy classes $C_0,C_1,C_2$ which are, respectively, elliptic, hyperbolic, and elliptic. For the elliptic ones, we require that they correspond to rotations with angles $\theta_0 > \theta_2$ in $(0,\pi)$. Consider $g_i \in C_i$ together with a nonnegative path $h(t)$ from $h(0) = g_1g_2$ to $h(1) = g_0$, such that if we take lifts of our elements to $\tilde{g}_i \in \tilde{G}$, compatible with the product and path, the numbers $\mathrm{rot}(\tilde{g}_0)$ and $\mathrm{rot}(\tilde{g}_1)+\mathrm{rot}(\tilde{g}_2)$ lie in the same component of $\bR \setminus \bZ$. Then, the space of all such $(g_0,g_1,g_2,h(t))$ is weakly homotopy equivalent to $C_1$, by projecting to $g_1$.
\end{lemma}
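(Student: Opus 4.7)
The map $\mathcal{S} \to C_1$ sending $(g_0, g_1, g_2, h) \mapsto g_1$ is equivariant under simultaneous $G$-conjugation, and $G$ acts transitively on $C_1$, so it suffices to show the fiber $\mathcal{F}(g_1)$ over a single $g_1$ is weakly contractible. The approach has three main steps: (1) use Lemma~\ref{th:sharpened-inequality} and the rotation-number hypothesis to classify the possible conjugacy types of $g := g_1 g_2$; (2) use Lemma~\ref{th:path}, together with Krein's ``traffic flow'' analysis (Figure~\ref{fig:traffic}), to contract the freedom in $h$ and $g_0$; (3) identify the remaining space of admissible $g_2$ as a hyperbolic half-disk in an explicit chart on $C_2$.

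\textbf{Step 1.} Put $m = \mathrm{rot}(\tilde g_1) + \lfloor \mathrm{rot}(\tilde g_2) \rfloor$, so $\mathrm{rot}(\tilde g_1) + \mathrm{rot}(\tilde g_2) \in (m, m+1)$; by hypothesis $\mathrm{rot}(\tilde g_0) \in (m,m+1)$, and the constraint $\theta_0 > \theta_2$ amounts to $\mathrm{rot}(\tilde g_0) > \mathrm{rot}(\tilde g_1) + \mathrm{rot}(\tilde g_2)$. Lemma~\ref{th:sharpened-inequality}(i) applied to $\tilde g_1 \tilde g_2 (\tilde g_1 \tilde g_2)^{-1} = \mathrm{Id}$ gives the strict bound $|\mathrm{rot}(\tilde g) - \mathrm{rot}(\tilde g_1) - \mathrm{rot}(\tilde g_2)| < 1$, while $\mathrm{rot}(\tilde g) \leq \mathrm{rot}(\tilde g_0) < m+1$ together with Lemma~\ref{th:sharpened-inequality}(ii) leaves only three possibilities for $g$: elliptic with $\mathrm{rot}(\tilde g) \in (m, m+1)$; nonpositive parabolic with $\mathrm{rot}(\tilde g) = m$; or hyperbolic with $\mathrm{rot}(\tilde g) = m$.

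\textbf{Step 2.} In each of the three cases, Figure~\ref{fig:traffic} shows that a nonnegative path from $g$ to an element of $C_0$ with rotation number in $(m,m+1)$ must enter (or remain in) the elliptic stratum and stay there. Lemma~\ref{th:path}(i), supplemented by a Lemma~\ref{th:make-positive}-style perturbation at the transitions through parabolic and hyperbolic strata in the latter two cases, shows that the space of such paths, with $g_0$ allowed to vary in $C_0$, is weakly contractible. Hence $\mathcal{F}(g_1)$ is weakly homotopy equivalent to the open region
\[
U(g_1) = \{\, g_2 \in C_2 \,:\, g_1 g_2 \text{ has one of the three allowed types above}\,\} \subset C_2.
\]

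\textbf{Step 3.} Conjugate so that $g_1 = \mathrm{diag}(\lambda, \lambda^{-1})$, $\lambda > 1$, and parametrize $C_2$ by $w \in \bD$ via the chart from Example~\ref{th:2-elliptic}. A direct calculation yields
\[
\mathrm{tr}(g_1 g_2) = (\lambda + \lambda^{-1})\cos\theta_2 - (\lambda - \lambda^{-1})\sin\theta_2 \cdot \frac{2\,\mathrm{Im}(w)}{1 - |w|^2},
\]
an affine function of the single quantity $u = 2\,\mathrm{Im}(w)/(1 - |w|^2)$ with coefficient $B = (\lambda - \lambda^{-1})\sin\theta_2 > 0$. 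The elliptic condition $\theta_{g_1 g_2} \leq \theta_0$ becomes $\mathrm{tr}(g_1 g_2) \geq 2\cos\theta_0$; the nonpositive-parabolic stratum (trace $=2$) and hyperbolic stratum with rotation $m$ (trace $>2$) extend this inequality continuously, while the excluded strata (nonnegative-parabolic at trace $=-2$, hyperbolic at trace $<-2$, which would carry rotation $m+1 > \mathrm{rot}(\tilde g_0)$) lie on the opposite side. Thus $U(g_1) = \{\, u \leq (A - 2\cos\theta_0)/B\,\} \cap \bD$; since level sets of $u$ are hyperbolic geodesics, this is a hyperbolic half-plane, topologically a disk and hence contractible.

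\textbf{Main obstacle.} The most delicate point is pinning down the integer part of $\mathrm{rot}(\tilde g_1 \tilde g_2)$ in the elliptic component of $U(g_1)$: a priori the strict quasimorphism bound only gives three possible integer parts ($m-1$, $m$, or $m+1$). To force $n = m$, I would exhibit the (unique) point in the elliptic region where $\theta_{g_1 g_2} = \theta_2$, so that $\mathrm{rot}(\tilde g_1 \tilde g_2) - \mathrm{rot}(\tilde g_1) - \mathrm{rot}(\tilde g_2)$ is an integer of absolute value $< 1$, hence $0$, giving $n = m$ at that point. Connectedness of the elliptic stratum then propagates this across all of $U(g_1)$, and continuity of rotation numbers across the parabolic boundaries handles the hyperbolic stratum.
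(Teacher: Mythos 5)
Your proposal follows a genuinely different decomposition. The paper's proof replaces $h(t)$ by $h(t)g_2^{-1}$, a nonnegative path from the \emph{fixed} element $g_1$ to $g_0g_2^{-1}$, and then stratifies $\scrS$ by the conjugacy type of $g_0g_2^{-1}$; Example~\ref{th:2-elliptic} applied to $(\tilde{g}_0,\tilde{g}_2^{-1})$ identifies the $(g_0,g_2)$-space on each stratum as contractible. You instead stratify by the type of $g=g_1g_2$ and project to a region $U(g_1)\subset C_2$. The paper's decomposition is slightly more economical in two ways: all the converted paths share the fixed source $g_1$, so Lemma~\ref{th:short-path} (with both endpoints in prescribed classes) directly contracts the path choice once $(g_0,g_2)$ are fixed; and the three strata of the $(g_0,g_2)$-space consist of two open pieces and a separating hypersurface, with no further boundary. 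In your version, $U(g_1)$ has an additional closed boundary at $\theta_{g_1g_2}=\theta_0$, over which the fiber $\{(g_0,h)\}$ degenerates to a single constant path; this is not an error, but it is an extra degenerate locus in the quasi-fibration argument that you do not comment on (and the ``from $g$ to the class $C_0$'' contractibility you invoke needs a small parametrized extension of Lemma~\ref{th:path}(i), since that lemma fixes the angle profile).

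There is a concrete sign error in Step~1 that feeds into the labels of Step~3. From $\tilde{g}_1\tilde{g}_2\tilde{g}^{-1}=\Id$ with $g=g_1g_2$: if $g$ were a \emph{nonpositive} parabolic, then $g^{-1}$ is a nonnegative one, so Lemma~\ref{th:sharpened-inequality}(ii) gives $\mathrm{rot}(\tilde{g}_1)+\mathrm{rot}(\tilde{g}_2)-\mathrm{rot}(\tilde{g})\le 0$, hence $\mathrm{rot}(\tilde{g})\ge m+1$; combined with $\mathrm{rot}(\tilde{g})\le\mathrm{rot}(\tilde{g}_0)<m+1$, this is a contradiction, so it is the nonpositive case that gets \emph{excluded}. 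The \emph{nonnegative} parabolic ($g^{-1}$ nonpositive, use part~(iii)) has $\mathrm{rot}(\tilde{g})=m$ and \emph{is} allowed; checking in your chart, this is what occurs at $\mathrm{tr}(g_1g_2)=2$, while $\mathrm{tr}=-2$ is the nonpositive parabolic with $\mathrm{rot}=m+1$. So the three occurring types are: elliptic with $\mathrm{rot}\in(m,m+1)$, \emph{nonnegative} parabolic with $\mathrm{rot}=m$, and hyperbolic with $\mathrm{rot}=m$; and the labels on the $\mathrm{tr}=\pm2$ strata in Step~3 must be swapped. Luckily the Step~3 reasoning is actually anchored to trace values and rotation numbers rather than to the parabolic sign, so the identification $U(g_1)=\{u\le(A-2\cos\theta_0)/B\}\cap\bD$ and its contractibility stand (though the level sets of $u$ are circular arcs through $\pm1\in\partial\bD$, not hyperbolic geodesics; the region is still visibly contractible since $u$ has no critical points). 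Your resolution of the ``main obstacle'' --- evaluating at the unique elliptic point with $\theta_{g_1g_2}=\theta_2$ and invoking the strict bound of Lemma~\ref{th:sharpened-inequality}(i) --- is correct.
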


\begin{proof}[Sketch of proof]
We will consider positive paths instead of nonnegative ones. This does not affect the weak homotopy type, by the same argument as in Lemma \ref{th:short-path}, applied to the path $h(t)g_2^{-1}$ (which goes from the hyperbolic element $g_1$ to $g_0g_2^{-1}$). Suppose (without loss of generality) that $\mathrm{rot}(\tilde{g}_0) \in (0,1)$, $\mathrm{rot}(\tilde{g}_2) \in (-1,0)$, which means that  $\mathrm{rot}(\tilde{g}_1) = 1$. One can apply the elementary considerations from Example \ref{th:2-elliptic} to the pair $(\tilde{g}_0,\tilde{g}_2^{-1})$, and that describes all the possible options for $\tilde{g}_0\tilde{g}_2^{-1}$. In particular, we see that the function $(\tilde{g}_0,\tilde{g}_2) \mapsto \mathrm{tr}(g_0g_2^{-1})$ is a submersion everywhere except at its maximum value (which is achieved when $g_0,g_2$ commute). Let's decompose the space $\scrS$ of all $(g_0,g_1,g_2,h(t))$ into three parts $\scrS_{\mathit{ell}}$, $\scrS_{\mathit{para}}$, $\scrS_{\mathit{hyp}}$, depending on the conjugacy type of $g = g_0g_2^{-1}$. The two and last subsets are open, and $\scrS_{\mathit{para}}$ is a hypersurface separating them.

{\bf Claim (i):} {\em The projection $\scrS_{\mathit{hyp}} \rightarrow C_1$ is a weak homotopy equivalence.}

Given any hyperbolic element $g$, there is a contractible space of positive paths from $C_1$ to $g$ along which the rotation number is constant, by (the positive path version of) Lemma \ref{th:pre-short}. On the other hand, by Example \ref{th:2-elliptic}, there is an $\bR$ worth of choices of $(g_0,g_2)$ such that $g_0g_2^{-1} = g$. Hence, the map $g_0g_2^{-1}: \scrS_{\mathit{hyp}} \rightarrow G_{\mathit{hyp}}$ is a weak homotopy equivalence; but it is also homotopic to the map appearing in our claim.

{\bf Claim (ii):} {\em The projection $\scrS_{\mathit{para}} \rightarrow C_1$ is a weak homotopy equivalence.}

This is similar to the previous case. It requires a statement about positive paths from hyperbolic to nonnegative parabolic elements, but that can be obtained from Lemma \ref{th:pre-short} and a separate consideration of positive paths near the parabolic endpoint, as in Lemma \ref{th:short-path-3}.

{\bf Claim (iii):} {\em The projection $\scrS_{\mathit{ell}} \rightarrow C_1$ is a weak homotopy equivalence.} 

Let's work with a fixed hyperbolic $g_1$. Given an elliptic $g$, the space of positive paths connecting $g_1$ to $g$, along which the rotation number increases by less than $1$, is weakly contractible, by (the positive path version of) Lemma \ref{th:short-path}. Hence, the homotopy fibre of $\scrS_{\mathit{ell}} \rightarrow C_1$ is weakly homotopy equivalent to the space of $(g_0,g_2)$ such that $g_0g_2^{-1}$ is elliptic. By the computation in Example \ref{th:2-elliptic}, this space is an open disc.

Putting together the three claims above yields the desired result.
\end{proof}

\subsection{Spaces of connections, revisited}
As in Section \ref{sec:connections}, statements about nonnegative paths translate into ones about nonnegatively curved connections. We use the same notation $\scrA_{\geq 0}(S,\{a_i\})$ as in \eqref{eq:moduli-spaces-2}, but now allowing boundary values $a_i$ with elliptic as well as hyperbolic holonomy.

\begin{prop} \label{th:grow-elliptic}
Let $S = [0,1] \times S^1$. As boundary conditions, fix $a_0,a_1 \in \Omega^1(S^1,\frakg)$ with elliptic holonomy, such that $\mathrm{rot}(a_0) \geq \mathrm{rot}(a_1)$, and where the interval $(\mathrm{rot}(a_1), \mathrm{rot}(a_0))$ contains at most one integer. Then $\scrA_{\geq 0}(S,a_0,a_1)$ is contractible.
\end{prop}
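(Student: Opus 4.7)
The plan is to adapt the fibration argument from the proofs of Propositions~\ref{th:nonnegative-connection}, \ref{th:nonnegative-connection-2}, and \ref{th:nonnegative-connection-3}. Set $r_i = \mathrm{rot}(a_i)$, and consider the weak fibration
\begin{equation*}
\scrA_{\geq 0}(S,a_0,a_1) \longrightarrow \scrA_{\geq 0}(S,a_0,\tilde{C}_1) \longrightarrow C_1,
\end{equation*}
where the base (the space of connections on $S^1$ with lifted holonomy in $\tilde{C}_1$) is weakly equivalent to $C_1$ via the endpoint map. The crucial new feature of the elliptic case is that $C_1$ is now contractible (an open disc, by Example~\ref{th:2-elliptic}), hence $\Omega C_1$ is contractible as well. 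It therefore suffices to show that the middle space is weakly contractible; the conclusion will then follow from the long exact sequence of the fibration.

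To analyze the middle space, I would apply Lemma~\ref{th:connections-on-the-cylinder}: prescribing $a_0$ on the left boundary but only the lifted conjugacy class $\tilde{C}_1$ on the right matches up with nonpositive paths starting at $g_0$ and ending somewhere in $C_1$, with rotation change $r_1 - r_0$. Reversing time, this identifies the middle space (up to weak equivalence) with $\scrP_{\geq 0}(G,C_1,g_0)^{\rho}$ for $\rho = r_0 - r_1 \geq 0$.

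I then split into two sub-cases according to the hypothesis on $(r_1,r_0)$. If that open interval contains exactly one integer, Lemma~\ref{th:elliptic-to-elliptic} shows that the larger space $\scrP_{\geq 0}(G,C_1,C_0)^{\rho}$, in which the final endpoint is also free in $C_0$, is weakly contractible. Evaluating at $t=1$ fibres it over the contractible base $C_0$, and the fibre over $g_0$ is precisely $\scrP_{\geq 0}(G,C_1,g_0)^{\rho}$, which is therefore weakly contractible as well. If the interval contains no integers, then $r_0,r_1 \in (n,n+1)$ for some single integer $n$; monotonicity of rotation numbers along nonnegative paths, together with \eqref{eq:parity}, forces every intermediate holonomy to be elliptic. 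I would then run the ODE construction from Lemma~\ref{th:path}(i), parametrised over the convex space of nondecreasing functions $\theta(t) \in (0,\pi)$ with given endpoints, to conclude weak contractibility in this case as well.

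The main technical obstacle is the second sub-case: Lemma~\ref{th:path}(i) was stated for a fixed initial point and a fixed angle function, whereas here both $g(0)$ is allowed to vary in $C_1$ and $\theta(t)$ varies in a contractible space of admissible profiles. The convexity of the conditions in \eqref{eq:elliptic-gamma-function} defining the driving vector fields, together with Lemma~\ref{th:make-positive} to reduce to strictly positive paths when needed, should let the argument go through essentially verbatim.
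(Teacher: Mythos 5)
Your proof is correct and follows essentially the same route as the paper, which likewise runs the fibration argument from Proposition~\ref{th:nonnegative-connection} and cites Lemma~\ref{th:path}(i) or Lemma~\ref{th:elliptic-to-elliptic} according to whether $(\mathrm{rot}(a_1),\mathrm{rot}(a_0))$ contains an integer or not. The subtlety you flag in the integer-free case -- generalising Lemma~\ref{th:path}(i) to a varying start point and angle profile -- is the same ``straightforward'' step used tacitly in the proofs of Lemma~\ref{th:pre-short} and Proposition~\ref{th:nonnegative-connection}, and your convexity fix is the intended one; you do not actually need Lemma~\ref{th:make-positive} here, since Lemma~\ref{th:path}(i) already handles nonnegative paths directly.
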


\begin{prop} \label{th:mixed-continuation}
(i) Let $S = [0,1] \times S^1$. As boundary conditions, fix $a_0 \in \Omega^1(S^1,\frakg)$ with hyperbolic holonomy, $a_1 \in \Omega^1(S^1,\frakg)$ with elliptic holonomy, and such that $\mathrm{rot}(a_1) \in (\mathrm{rot}(a_0)-1,\mathrm{rot}(a_0))$. Then $\scrA_{\geq 0}(S,a_0,a_1)$ is weakly contractible.

(ii) The same is true if $a_0$ has elliptic holonomy, $a_1$ has hyperbolic holonomy, and $\mathrm{rot}(a_0) \in (\mathrm{rot}(a_1),\mathrm{rot}(a_1)+1)$.
\end{prop}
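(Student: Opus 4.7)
The plan is to follow the template of Propositions \ref{th:nonnegative-connection}--\ref{th:nonnegative-connection-3}. In both cases I introduce the intermediate space $\scrA_{\geq 0}(S,a_0,\tilde C_1)$ of nonnegatively curved connections with $A|(\{0\}\times S^1)=a_0$ and whose lifted holonomy around $\{1\}\times S^1$ lies in $\tilde C_1$. This fits into a weak fibration
\[
\scrA_{\geq 0}(S,a_0,a_1) \longrightarrow \scrA_{\geq 0}(S,a_0,\tilde C_1) \longrightarrow B,
\]
where $B$ is the space of connections on $S^1$ with lifted holonomy in $\tilde C_1$, weakly equivalent to $C_1$ via endpoint evaluation of parallel transport (as in the proof of Proposition \ref{th:nonnegative-connection}).

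By Lemma \ref{th:connections-on-the-cylinder}, the middle space is weakly equivalent to $\scrP_{\leq 0}(G,g_0,C_1)^{r_1-r_0}$, and after reversing paths, to $\scrP_{\geq 0}(G,C_1,g_0)^{r_0-r_1}$. The hypothesis in both (i) and (ii) amounts to $r_0-r_1\in(0,1)$, so these are nonnegative paths connecting elliptic to hyperbolic strata (or vice versa) with rotation increase strictly less than $1$; Lemma \ref{th:short-path} therefore applies.

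In case (i), $g_0$ is hyperbolic and $C_1$ is elliptic. Lemma \ref{th:short-path} says that evaluation at the hyperbolic endpoint is a weak equivalence from the ambient path space (where only the conjugacy classes of endpoints are fixed) to the hyperbolic class $C_0$ of $g_0$; restricting to the fiber over $g_0$ gives that $\scrA_{\geq 0}(S,a_0,\tilde C_1)$ is weakly contractible. The base $B\simeq C_1$ is an elliptic conjugacy class; $G=\mathit{PSL}_2(\bR)$ acts transitively on it with stabilizer a maximal compact subgroup, identifying $C_1$ with the hyperbolic plane of fixed points, so $B$ is contractible. The long exact sequence of the fibration then gives weak contractibility of $\scrA_{\geq 0}(S,a_0,a_1)$.

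In case (ii) the roles are reversed: $g_0$ is elliptic and $C_1$ is hyperbolic. Lemma \ref{th:short-path} now gives that the ambient path space is weakly equivalent to $C_1$ via evaluation at the hyperbolic (\emph{starting}) endpoint; since the elliptic class of $g_0$ is contractible, the subspace with fixed right-hand endpoint $g_0$ includes into the ambient space as a weak equivalence. Thus $\scrA_{\geq 0}(S,a_0,\tilde C_1)\simeq C_1\simeq B$, both equivalences being realized by the map that reads off the holonomy $g_1$ of $A$ around $\{1\}\times S^1$; the projection to $B$ is compatible with this reading, so it is a weak equivalence, and the long exact sequence again yields weak contractibility of $\scrA_{\geq 0}(S,a_0,a_1)$. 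The main point requiring care, analogous to diagram \eqref{eq:fi2} in Proposition \ref{th:nonnegative-connection}, is this last identification of the fibration projection as realizing the equivalence given by the holonomy reading.
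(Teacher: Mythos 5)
Your proposal is correct and follows the same approach the paper indicates: the paper disposes of Proposition \ref{th:mixed-continuation} in a single sentence, saying it "is proved by the same strategy as Proposition \ref{th:nonnegative-connection}" with "the relevant results about path spaces [being] ... Lemma \ref{th:short-path}," which is exactly the fibration, path-space identification via Lemma \ref{th:connections-on-the-cylinder}, and appeal to Lemma \ref{th:short-path} that you spell out. Your more careful treatment of case (ii) — checking that the fibration projection to $B\htp C_1$ coincides, up to homotopy, with the holonomy-reading equivalence from the path-space analysis — is precisely the kind of compatibility check the paper handles via diagram \eqref{eq:fi2} in Proposition \ref{th:nonnegative-connection}, and it is correctly executed here.
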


Both statements are proved by the same strategy as Proposition \ref{th:nonnegative-connection}. The relevant results about path spaces are: for Proposition \ref{th:grow-elliptic}, Lemma \ref{th:path}(i) or Lemma \ref{th:elliptic-to-elliptic} (depending on whether the relevant interval contains an integer or not); and for Proposition \ref{th:mixed-continuation}, Lemma \ref{th:short-path}.

\begin{prop} \label{th:elliptic-disc}
Let $S$ be a disc, with the boundary parametrized clockwise. As boundary condition, fix $a \in \Omega^1(S^1,\frakg)$ with elliptic holonomy and $\mathrm{rot}(a) \in (0,1) \cup (1,2)$. Then $\scrA_{\geq 0}(S,a)$ is weakly contractible.
\end{prop}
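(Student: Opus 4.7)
The plan is to follow the blueprint of Proposition \ref{th:nonnegative-connection-3}, adapting it to the elliptic case. First I would enlarge the space to $\scrA_{\geq 0}(S,\tilde{C})$, in which only the lifted conjugacy class of the boundary holonomy is prescribed; the original space $\scrA_{\geq 0}(S,a)$ then appears (up to homotopy) as the fibre of the evaluation-of-boundary map $\scrA_{\geq 0}(S,\tilde{C}) \rightarrow C$. Since an elliptic conjugacy class $C$ is contractible (it is an open disc in the parametrization of Example \ref{th:2-elliptic}), it suffices to show that this evaluation is a weak homotopy equivalence.

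To get at $\scrA_{\geq 0}(S,\tilde{C})$, I would use exactly the retraction device from Proposition \ref{th:nonnegative-connection-3}: pick $\ast \in S \setminus \partial S$ and a family $\phi_t: S \rightarrow S$ with $\phi_0 = \mathit{id}$, $\phi_t|\partial S = \mathit{id}$, $\phi_t(\ast) = \ast$, and $\mathrm{det}(D\phi_t) \geq 0$, such that $\phi_1$ contracts a neighbourhood of $\ast$ to that point; pullback along $\phi_t^*$ identifies $\scrA_{\geq 0}(S,\tilde{C})$, up to weak equivalence, with its subspace of connections that are trivial near $\ast$. Applying Lemma \ref{th:connections-on-the-cylinder} to the annulus swept out between $\ast$ and $\partial S$ (with its induced orientation convention, which flips the sign as in the proof of Proposition \ref{th:nonnegative-connection-3}), the holonomy around concentric loops produces a weak homotopy equivalence between that subspace and the nonnegative-path space $\scrP_{\geq 0}(G,\mathit{id},C)^r$, where $r = \mathrm{rot}(a)$; under this equivalence, boundary evaluation corresponds to endpoint evaluation. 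The whole problem thus reduces to showing that $\scrP_{\geq 0}(G,\mathit{id},C)^r \htp C$ by endpoint evaluation, for $r \in (0,1) \cup (1,2)$.

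For $r \in (0,1)$: Figure \ref{fig:traffic} shows that a nonnegative path can leave $\mathit{id}$ only through a nonnegative parabolic or through small positive elliptic rotations; the parabolic route would force the rotation number to cross an integer and never return, which is ruled out by $r < 1$, so the path is elliptic on $(0,1]$. Using Lemma \ref{th:make-positive} to pass to positive paths and restricting to $[\varepsilon,1]$, the result is a family to which Lemma \ref{th:path}(i) applies, showing weak contractibility of the fibre over any endpoint in $C$; the germ at $t = 0$ is controlled by the contractible choice of initial tangent $g'(0) \in \frakg_{>0}$. For $r \in (1,2)$: the path has the structure elliptic $\to$ nonpositive parabolic $\to$ hyperbolic $\to$ nonnegative parabolic $\to$ elliptic (uniquely, since a second excursion would push the rotation number beyond $2$), exactly as in the analysis in Lemma \ref{th:elliptic-to-elliptic}. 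After passing to positive paths, split at a time $T$ where $g(T)$ lies in a fixed auxiliary hyperbolic class $C'$; the left half $g|[0,T]$ is (a positive version of) an element of $\scrP_{\geq 0}(G,\mathit{id},C')^1$, weakly equivalent to $C'$ by the argument of Lemma \ref{th:short-path-3}, while the right half $g|[T,1]$ lies in $\scrP_{\geq 0}(G,C',C)^{r-1}$ and is weakly equivalent to $C'$ by Lemma \ref{th:short-path}. The total space is the fibre product of these two over the contractible $C'$, and endpoint evaluation still gives a weak equivalence onto $C$.

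The main obstacle will be the bookkeeping at the degenerate endpoints of the path interval: at $t = 0$ one has to extract the contractible germ at the identity (a point which does not fit into the elliptic/hyperbolic/parabolic stratification), and in the $r \in (1,2)$ case one has to patch the ``left'' and ``right'' pieces across the hyperbolic band without losing homotopical control at the parabolic interfaces. These are precisely the technicalities already dealt with in the proofs of Lemmas \ref{th:short-path-3} and \ref{th:elliptic-to-elliptic}, so I do not anticipate needing new ideas; however, the positivity-from-nonnegativity upgrade (Lemma \ref{th:make-positive}, invoked at the elliptic endpoint where Figure \ref{fig:traffic} does not freely allow deformations) is essential and must be threaded through each step.
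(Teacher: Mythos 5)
Your overall strategy is exactly the one the paper intends: run the retraction device of Proposition~\ref{th:nonnegative-connection-3} to reduce to the path space $\scrP_{\geq 0}(G,\mathit{id},C)^r$, and then handle $r\in(0,1)$ via Lemma~\ref{th:path}(i) and $r\in(1,2)$ via Lemma~\ref{th:elliptic-to-elliptic}. The $r\in(0,1)$ case is essentially fine. However, in the $r\in(1,2)$ case you re-derive the substance of Lemma~\ref{th:elliptic-to-elliptic} by an explicit split, and this re-derivation has two genuine gaps.

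First, you split ``at a time $T$ where $g(T)$ lies in a fixed auxiliary hyperbolic class $C'$''. But by \eqref{eq:trace-moves}, the trace is \emph{not} monotone along a nonnegative (or even positive) path in the hyperbolic locus: it enters at $\pm 2$, wanders, and comes back to $\pm 2$, visiting some path-dependent interval of traces. A fixed $C'$ may thus be visited several times, or not at all, so the split does not define a map on the whole path space. In the proof of Lemma~\ref{th:elliptic-to-elliptic} the split is taken at a time in $(T_0,T_1)$ chosen \emph{smoothly depending on the path} (not at a fixed conjugacy class), and the target is the whole hyperbolic locus $G_{\mathit{hyp}}$ rather than a fixed $C'$; this is what makes the fibre-product argument go through. (Contrast this with the split at an elliptic class in Lemma~\ref{th:short-path-2}, which is legitimate precisely because Krein's lemma \eqref{eq:trace-decreases} makes the elliptic angle strictly monotone.)

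Second, your structure claim ``elliptic $\to$ nonpositive parabolic $\to$ hyperbolic $\to$ nonnegative parabolic $\to$ elliptic'' omits the codimension-two stratum of positive paths that pass through $\Id$ at rotation number $1$ instead of through the hyperbolic band. Such paths exist (the angle of rotation can reach $\pi$ and the limit in $\mathit{PSL}_2(\bR)$ be $\Id$, as noted in Example~\ref{th:2-elliptic}); for them there is no hyperbolic time at which to split, so your argument simply does not apply. In the proof of Lemma~\ref{th:elliptic-to-elliptic} this is exactly the stratum $\scrS_{\mathit{id}}$, and the weak contractibility of the whole space is obtained by the link computation near $\scrS_{\mathit{id}}$, not merely by analysing $\scrS_{\mathit{open}}$.

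Both gaps disappear if you do what the paper does: after passing to positive paths, restrict to $g|[\epsilon,1]$ for small $\epsilon>0$ (the germ at $t=0$ is controlled by $g'(0)\in\frakg_{>0}$, exactly as you say), and then apply Lemma~\ref{th:elliptic-to-elliptic} as a black box to the resulting path from an elliptic conjugacy class of small rotation number to $C$, where the interval of rotation numbers contains exactly one integer. There is no need to re-open that lemma's internal stratification.
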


One can reduce Proposition \ref{th:elliptic-disc} to Lemma \ref{th:path}(i) or Lemma \ref{th:elliptic-to-elliptic} (depending on which interval $\mathrm{rot}(a)$ lies in), just like Proposition \ref{th:the-disc} was derived (via Lemma \ref{th:short-path-3}) from Lemma \ref{th:short-path}. Our final result follows from Lemma \ref{th:ehe}, in the same way as in Proposition \ref{th:nonnegative-connection-4}:

\begin{prop} \label{th:ehe-pants}
Let $S$ be a pair-of-pants, with boundary parametrizations as in Proposition \ref{th:nonnegative-connection-4} (specialized to $m = 2$). As boundary conditions, take $a_0, a_2 \in \Omega^1(S^1,\frakg)$ with elliptic holonomy, and $a_1 \in \Omega^1(S^1, \frakg)$ with hyperbolic holonomy, such that for some $r \in \bZ$,
\begin{equation}
\mathrm{rot}(a_0) \in (\mathrm{rot}(a_2)+1,r+2), \;\; \mathrm{rot}(a_1) = 1, \;\; \mathrm{rot}(a_2) \in (r,r+1).
\end{equation}
Then $\scrA_{\geq 0}(S, a_0,a_1,a_2)$ is weakly contractible.
\end{prop}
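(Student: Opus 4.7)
The plan is to follow the strategy of Proposition \ref{th:nonnegative-connection-4}, but with Lemma \ref{th:ehe} replacing Lemma \ref{th:pp}. First I would pass to the auxiliary space $\scrA_{\geq 0}(S,\{\tilde{C}_i\})$ in which only the lifted conjugacy classes of the boundary holonomies are prescribed. It fits into a weak fibration
\begin{equation}
\scrA_{\geq 0}(S,\{a_i\}) \longrightarrow \scrA_{\geq 0}(S,\{\tilde{C}_i\}) \longrightarrow C_0 \times C_1 \times C_2,
\end{equation}
so the problem reduces to computing the total space and understanding its map to the product of conjugacy classes.

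To analyze $\scrA_{\geq 0}(S,\{\tilde{C}_i\})$, split $S = S_{\mathit{left}} \cup S_{\mathit{right}}$, with $S_{\mathit{left}}$ a collar of $\partial_0 S$ and $S_{\mathit{right}}$ a deformation retract containing $\partial_1 S$ and $\partial_2 S$. Pulling back along a family of self-maps of $S$ with nonnegative Jacobian determinant which retracts $S_{\mathit{right}}$ onto a $1$-skeleton, one reduces to connections that are flat on $S_{\mathit{right}}$, exactly as in Proposition \ref{th:nonnegative-connection-4}. A flat connection on $S_{\mathit{right}}$ with prescribed lifted holonomies around $\partial_1 S, \partial_2 S$ is classified up to homotopy by the pair $(g_1,g_2) \in C_1 \times C_2$ of holonomies along loops $c_1, c_2$ based at a common point. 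By Lemma \ref{th:connections-on-the-cylinder}, extending such a connection nonnegatively across the collar $S_{\mathit{left}}$, with the desired behaviour over $\partial_0 S$, amounts (up to homotopy) to choosing a nonnegative path from $\tilde{g}_1 \tilde{g}_2$ to some $\tilde{g}_0 \in \tilde{C}_0$. Thus $\scrA_{\geq 0}(S,\{\tilde{C}_i\})$ is weakly equivalent to the space of tuples $(g_0,g_1,g_2,h(t))$ appearing in Lemma \ref{th:ehe}.

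Next I would verify that the hypotheses of Lemma \ref{th:ehe} hold. Choose lifts with $\mathrm{rot}(\tilde{g}_i) = \mathrm{rot}(a_i)$, so that $\mathrm{rot}(\tilde{g}_0) \in (r+1,r+2)$, $\mathrm{rot}(\tilde{g}_1) = 1$, and $\mathrm{rot}(\tilde{g}_2) \in (r,r+1)$. Then $\mathrm{rot}(\tilde{g}_0)$ and $\mathrm{rot}(\tilde{g}_1) + \mathrm{rot}(\tilde{g}_2)$ both lie in the component $(r+1,r+2)$ of $\bR \setminus \bZ$. The elliptic angles in $(0,\pi)$ associated to $\tilde{C}_0$ and $\tilde{C}_2$ are $\theta_0 = \pi(\mathrm{rot}(a_0) - r - 1)$ and $\theta_2 = \pi(\mathrm{rot}(a_2) - r)$, and the hypothesis $\mathrm{rot}(a_0) > \mathrm{rot}(a_2) + 1$ translates precisely into $\theta_0 > \theta_2$. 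Lemma \ref{th:ehe} then gives $\scrA_{\geq 0}(S,\{\tilde{C}_i\}) \htp C_1$, with the equivalence (up to homotopy) given by projection to the hyperbolic holonomy $g_1$.

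Finally, I return to the fibration. Since the elliptic conjugacy classes $C_0, C_2$ are contractible (each being diffeomorphic to $G/\mathit{SO}(2) \iso \bH^2$), the projection $C_0 \times C_1 \times C_2 \to C_1$ is a weak homotopy equivalence. Combined with $g_1$-evaluation, this shows that the total map $\scrA_{\geq 0}(S,\{\tilde{C}_i\}) \to C_0 \times C_1 \times C_2$ is itself a weak homotopy equivalence, so the long exact sequence of the weak fibration forces $\scrA_{\geq 0}(S,\{a_i\})$ to be weakly contractible. The main technical point is the translation of the pair-of-pants extension problem into the path-space setup of Lemma \ref{th:ehe}, together with the rotation-number bookkeeping needed to match conventions; all other steps run parallel to the proof of Proposition \ref{th:nonnegative-connection-4}.
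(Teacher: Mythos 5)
Your proposal is correct and follows precisely the strategy the paper indicates, namely the analogue of the argument for Proposition \ref{th:nonnegative-connection-4} with Lemma \ref{th:ehe} playing the role of Lemma \ref{th:pp}. Your rotation-number and angle bookkeeping is right: with $\mathrm{rot}(a_0)\in(r+1,r+2)$ and $\mathrm{rot}(a_2)\in(r,r+1)$, the identification $\theta_0=\pi(\mathrm{rot}(a_0)-r-1)$, $\theta_2=\pi(\mathrm{rot}(a_2)-r)$ translates the hypothesis $\mathrm{rot}(a_0)>\mathrm{rot}(a_2)+1$ into exactly the condition $\theta_0>\theta_2$ that Lemma \ref{th:ehe} requires, and the common-component condition $\mathrm{rot}(\tilde g_0), \mathrm{rot}(\tilde g_1)+\mathrm{rot}(\tilde g_2)\in(r+1,r+2)$ also checks out. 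One small point you leave implicit at the very end: the weak equivalence produced by Lemma \ref{th:ehe} goes to $C_1$ via the \emph{based} holonomy $g_1$ (around a loop $c_1$ from a common base point), whereas the map in the fibration goes to $C_1$ via the \emph{boundary} holonomy around $\partial_1 S$; these differ by conjugation by the parallel transport along a path from the base point to $\epsilon_1(0)$. This is easily resolved (for instance, by taking the common base point on $\partial_1 S$ itself, so that $c_1$ is literally $\partial_1 S$), but it deserves a sentence, since unlike the case of Proposition \ref{th:nonnegative-connection-4} — where the relevant evaluation is literally the $\partial_0 S$-holonomy $h(1)$ — it is not automatic from Lemma \ref{th:ehe} as stated.
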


\subsection{The formal setup}
We continue in the same axiomatic framework as in Section \ref{sec:tqft}, except that now, the boundary conditions are allowed to have holonomies that are elliptic or hyperbolic. This gives rise to additional cohomology groups $H^*(a)$, whose basic properties can be summarized as follows:

\begin{proposition} \label{th:only-elliptic}
(i) Consider connections $a \in \Omega^1(S^1,\frakg)$ with elliptic holonomy. Up to canonical isomorphism, $H^*(a)$ depends only on $\mathrm{rot}(a) \in \bR \setminus \bZ$. 

(ii) Each such group carries a degree $-1$ endomorphism, the BV operator $\Delta$.

(iii) For $a_0, a_1$ as in Proposition \ref{th:grow-elliptic}, there is a canonical continuation map
$C: H^*(a_1) \rightarrow H^*(a_0)$.

(iv) If $\mathrm{rot}(a) \in (0,1) \cup (1,2)$, there is a canonical element $e \in H^0(a)$.
\end{proposition}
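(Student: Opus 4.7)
The strategy for all four parts parallels Propositions \ref{th:auto}--\ref{th:unit} of the hyperbolic story: translate each contractibility/weak-equivalence result from Section 6.2 (Propositions \ref{th:grow-elliptic}--\ref{th:ehe-pants}) from spaces of connections $\scrA_{\geq 0}$ into statements about spaces of decorations $\scrY_{\geq 0}$, then apply the TQFT axioms. In each case the key input is the fibration
\begin{equation}
\scrA_{\geq 0}(S,\{a_i\}) \longrightarrow \scrY_{\geq 0}(S,\{a_i\}) \xrightarrow{(\arg \rho_i)} (S^1)^m,
\end{equation}
which is the analogue of \eqref{eq:rho-fibration-1} and \eqref{eq:multiple-rho}. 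The interpretive point is that even when the framing coordinates contribute an $S^1$ factor (yielding a BV-type operator in degree $-1$), the resulting chain maps are all homotopic, so they induce a single well-defined map on cohomology.

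For (i), take the annulus $S = [0,1] \times S^1$ with boundary values $a_0,a_1$ both elliptic of common rotation number $r \in \bR \setminus \bZ$. By Proposition \ref{th:grow-elliptic}, $\scrA_{\geq 0}(S,a_0,a_1)$ is weakly contractible, so $\scrY_{\geq 0}(S,a_0,a_1) \htp S^1$ via $\arg(\rho_1)$. In particular the space of decorations is connected, so any two points yield chain maps that are chain-homotopic; the induced map $H^*(a_1)\to H^*(a_0)$ is therefore independent of choice. Composing two such maps (via the gluing and composition axiom) over a concatenation of annuli yields the identity when $a_0 = a_1$ and the chosen decoration is flat with trivial framing, which implies that these maps are isomorphisms and compose correctly. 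Part (ii) is then immediate: take the universal family of framed identity configurations over $P = S^1$, parametrized by $\arg(\rho_1)$, and let $\Delta$ be the degree $-1$ operation it induces.

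For (iii) and (iv), the argument is formally identical. In (iii), Proposition \ref{th:mixed-continuation} gives contractibility of $\scrA_{\geq 0}(S,a_0,a_1)$ on the annulus, hence $\scrY_{\geq 0} \htp S^1$; connectedness of the decoration space again provides a canonical cohomology-level map $C$. In (iv), Proposition \ref{th:elliptic-disc} gives weak contractibility of $\scrA_{\geq 0}(S,a)$ for $S$ the disc; since there is no inner disc to frame, $\scrY_{\geq 0}(S,a) = \scrA_{\geq 0}(S,a)$ is itself contractible, and the TQFT operation associated to any decoration defines a homotopically unique cocycle $e \in C^0(a)$, whose class is the required element.

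The main subtlety I expect is bookkeeping the naturality statements: one needs the isomorphisms of (i) to be compatible with composition (so that ``canonical'' is more than ``uniquely determined up to nothing''), and one needs $C$, $e$, and $\Delta$ to be compatible with those isomorphisms. These follow from the gluing axiom by considering families over products $[0,1] \times [0,1]$ of deformations of decorations, and appealing once more to the contractibility of the relevant $\scrA_{\geq 0}$ spaces to see that any two ways of interpolating the relevant decorations produce homotopic chain-level operations. I would treat these compatibilities in parallel with those omitted in Section 7.2, with no essentially new ingredients needed beyond Propositions \ref{th:grow-elliptic}--\ref{th:elliptic-disc}.
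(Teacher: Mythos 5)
Your strategy matches the paper's: translate each contractibility result of Section 6.2 into the decoration-space fibration, then invoke the TQFT axioms. The paper's one-line proof says parts (i)--(ii) use flat annulus connections, (iii) uses Proposition \ref{th:grow-elliptic}, and (iv) uses Proposition \ref{th:elliptic-disc}; your route through nonnegatively curved connections for (i)--(ii) is equally valid (Proposition \ref{th:grow-elliptic} applies when $\mathrm{rot}(a_0)=\mathrm{rot}(a_1)$, since the interval is empty), and the rest is the same.

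Two citation/terminology slips. In (iii) you invoke Proposition \ref{th:mixed-continuation}, which treats one elliptic and one hyperbolic boundary; but the hypothesis ``as in Proposition \ref{th:grow-elliptic}'' means both $a_0,a_1$ are elliptic, so the relevant result is Proposition \ref{th:grow-elliptic} itself. And in (ii) there is no ``$S^1$ family of framed identity configurations'' --- the identity configuration is by definition the single exceptional picture with $\rho_1=1$ fixed; what you want is the $S^1$ family of (ordinary) $1$-disc configurations with $\arg(\rho_1)$ going once around the circle, whose decoration space you already showed is $\htp S^1$. Neither slip affects the substance, but as written they cite the wrong proposition and the wrong object.
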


Parts (i) and (ii) can be constructed using flat connections on an annulus. The third part, obviously, uses Proposition \ref{th:grow-elliptic}; and (iv) follows from Proposition \ref{th:elliptic-disc}. 
Our next topic is the comparison between the elliptic and hyperbolic groups $H^*(a)$.

\begin{proposition} \label{th:mixed-continuation-2}
(i) Take $a_0,a_1$ as in Proposition \ref{th:mixed-continuation}(i). Then there is a canonical continuation map $C: H^*(a_1) \rightarrow H^*(a_0)$. In the case where $\mathrm{rot}(a_0) = 1$ and $\mathrm{rot}(a_1) \in (0,1)$, this takes the element $e \in H^0(a_1)$ from Proposition \ref{th:only-elliptic}(iv) to the previously constructed element \eqref{eq:e} of the same name in $H^0(a_0)$.

(ii) For $a_0,a_1$ as in Proposition \ref{th:mixed-continuation}(ii), there is a canonical continuation map as before. If $\mathrm{rot}(a_0) \in (1,2)$ and $\mathrm{rot}(a_1) = 1$, this takes the element \eqref{eq:e} in $H^0(a_1)$ to its counterpart from Proposition \ref{th:only-elliptic}(iv).

(iii) Composition of the maps from (i) and (ii), in either order, yields the continuation maps from \eqref{eq:a-continuation},  respectively from Proposition \ref{th:only-elliptic}(iii). Hence, in the limit we get an isomorphism
\begin{equation} \label{eq:lim-lim}
\underrightarrow{\lim}_{r \in \bZ}\, H^*(a^r) \iso \underrightarrow{\lim}_{r \in \bR \setminus \bZ}\, H^*(a^r).
\end{equation}
Here, on the left hand side of \eqref{eq:lim-lim}, we choose for each $r \in \bZ$ a connection $a^r$ with hyperbolic holonomy and rotation number $r$; the direct limit is independent of all choices up to canonical isomorphism, thanks to the properties of continuation maps (Proposition \ref{th:canonical-continuation}). On the right hand side, we do the same for elliptic holonomies.

(iv) The map \eqref{eq:pseudo-bv} factors through the BV operator. More precisely, given $a_0,a_1,a_2$ which, respectively, have hyperbolic, elliptic, and hyperbolic holonomy, and such that $\mathrm{rot}(a_2) < \mathrm{rot}(a_1) < \mathrm{rot}(a_0) = \mathrm{rot}(a_2)+1$, we have a commutative diagram
\begin{equation}
\xymatrix{
\ar@/_1pc/[rrr]_-{D} 
H^*(a_2) \ar[r]^-{C} & H^*(a_1) 
\ar[r]^-{\Delta}
& \ar[r]^-{C} H^{*-1}(a_1) & 
H^{*-1}(a_0).
}
\end{equation}
\end{proposition}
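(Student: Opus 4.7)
The plan is to apply the same TQFT machinery used throughout Section \ref{sec:tqft}, but now with the contractibility results from Subsection 6.2 providing the homotopy input for annuli and discs whose boundary conditions mix elliptic and hyperbolic holonomy. Each new continuation map arises as the operation associated to any point in a weakly contractible space of decorations on an annulus, and each claimed identity comes from comparing two such points via a path in an appropriate (again weakly contractible) decoration space on a glued surface. Since all spaces involved are weakly contractible rather than merely connected, we avoid the $\bZ/r$-ambiguity present elsewhere in the paper and the maps are canonical on the nose, not just up to powers of $\Sigma$.

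For (i) and (ii), Proposition \ref{th:mixed-continuation} gives weak contractibility of $\scrA_{\geq 0}(S,a_0,a_1)$ in both configurations, so any decoration defines the same operation $C$ on cohomology. To check the compatibility with the $e$-classes, I would glue the disc that defines $e \in H^0(a_1)$ (living in the space of Proposition \ref{th:elliptic-disc} for the elliptic side, or Proposition \ref{th:the-disc} for the hyperbolic side) to the annulus defining $C$. The result is a decorated disc with boundary value $a_0$, and since the corresponding disc decoration space is itself weakly contractible, that disc must represent the $e$-class of $H^0(a_0)$ by definition.

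For (iii), the composition of a mixed continuation and its reverse lives on an annulus obtained by gluing two annuli from Proposition \ref{th:mixed-continuation}. I would verify that the resulting decoration space remains weakly contractible in both cases: for the hyperbolic-to-hyperbolic composition via Proposition \ref{th:nonnegative-connection-2}, and for the elliptic-to-elliptic composition via Proposition \ref{th:grow-elliptic}. Either way the glued family can be deformed through $\scrA_{\geq 0}$ to the annulus defining the classical continuation map (from \eqref{eq:a-continuation} or Proposition \ref{th:only-elliptic}(iii)). The direct limit statement \eqref{eq:lim-lim} then follows from cofinality: the integer and non-integer rotation numbers interleave on the real line, and what we have just shown is that the corresponding continuation maps fit into a commuting triangle at every step.

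Part (iv) is where the main work lies. Recall that $D$ was built in Proposition \ref{th:canonical-continuation} from the nontrivial $S^1$-family of framings on a hyperbolic-to-hyperbolic annulus classified by Proposition \ref{th:x-annulus}(iii), while $\Delta$ is built from the analogous $S^1$-family on an elliptic-to-elliptic annulus. The plan is to cut the annulus underlying $D$ into three successive annuli (hyperbolic $\to$ elliptic $\to$ elliptic $\to$ hyperbolic, with intermediate elliptic rotation numbers slightly above $\mathrm{rot}(a_2)$ and slightly below $\mathrm{rot}(a_0)$), then concentrate the full $S^1$-family of framings on the middle piece, leaving the two outer pieces as constant families. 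The outer pieces lie in the weakly contractible spaces of Proposition \ref{th:mixed-continuation} and therefore contribute the plain continuation maps $C$, while the middle piece contributes $\Delta$ by definition. The hardest part will be showing that this redistribution of the $S^1$-winding is homotopic to the original family in $\scrY_{\geq 0}$ of the glued annulus, which amounts to tracking the integer invariant from Proposition \ref{th:x-annulus}(ii)--(iii) through the gluing: the fact that the outer pieces have weakly contractible decoration spaces forces all of the winding to live on the middle piece, so the comparison reduces to matching a single element of $\pi_0$, which can be done by an explicit model.
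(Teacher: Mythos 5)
Your proposal is correct and takes essentially the same route as the paper: parts (i)--(ii) from contractibility of the mixed annulus spaces (Proposition \ref{th:mixed-continuation}) and disc gluing (Propositions \ref{th:nonnegative-connection-3} and \ref{th:elliptic-disc}), part (iii) from gluing plus Propositions \ref{th:nonnegative-connection-2} and \ref{th:grow-elliptic}, and part (iv) by concentrating the $S^1$-rotation of framings on the middle elliptic-to-elliptic annulus. One cosmetic note: in (iv) you introduce two distinct intermediate elliptic rotation numbers, whereas the stated diagram factors through a single $a_1$; this is harmless since elliptic-to-elliptic continuation within a gap containing no integer is an isomorphism commuting with $\Delta$, but using a single $a_1$ on both intermediate circles matches the statement directly.
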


Existence and uniqueneness of these ``mixed'' continuation maps follows from Proposition \ref{th:mixed-continuation}. The statements about the unit elements are consequences of Propositions \ref{th:nonnegative-connection-3} and \ref{th:elliptic-disc}. In part (iii), the composition of continuation maps is given geometrically by connections on the annulus to which Propositions \ref{th:nonnegative-connection-2} or \ref{th:grow-elliptic} apply. The same applies to (iv), except that in that case, we are dealing with families of connections in which the parametrization of one of the boundary components rotates. 

Let's return for a moment to considering just connections with elliptic holonomy. The simplest way to approach the construction of further operations on the associated groups $H^*(a)$ is to significantly constrain the class of connections involved. Concretely, let's consider only those $a = a^r \in \Omega^1(S^1,\frakg)$ of the form 
\begin{equation} \label{eq:standard-connection}
a^r = r \alpha\, \mathit{dt}, \quad r \in \bR \setminus \bZ, \;\; \alpha = \left( \begin{smallmatrix} 0 & -\pi \\ \pi & 0 \end{smallmatrix} \right) \in \frakg_{>0}.
\end{equation}
These obviously satisfy $\mathrm{rot}(a^r) = r$. Similarly, on any surface \eqref{eq:surface}, we consider
\begin{equation} \label{eq:d-beta}
A = \beta \otimes \alpha \in \Omega^1(S,\frakg), \quad \text{for some } \beta \in \Omega^1(S) \text{ with } d\beta \leq 0.
\end{equation}
Additionally, the restriction of $\beta$ to each boundary circle must lie in the class \eqref{eq:standard-connection}, which means that it has constant coefficients (with respect to the given parametrization) and integral in $\bR \setminus \bZ$. The inequality in \eqref{eq:d-beta} expresses the nonnegative curvature condition (see \eqref{eq:curvature} for the sign). The advantage of this framework is that \eqref{eq:d-beta} is a convex condition, hence the choice of $\beta$ always lies in a contractible space (this is what one usually encounters in discussions of the operations on symplectic cohomology, see e.g.\ \cite[Section 8]{seidel07} or \cite{ritter10}). Moreover, the choice of boundary parametrizations \eqref{eq:boundary-parametrizations} now becomes independent of that of the connection. By applying this idea to the disc, annulus, and pair-of-pants, one gets operations
\begin{align}
\label{eq:standard-unit}
& e \in H^0(a^r), && r>0, \\
\label{eq:standard-continuation}
& C: H^*(a^{r_1}) \longrightarrow H^*(a^{r_0}), && r_0 \geq r_1, \\
\label{eq:standard-product}
& \cdot: H^*(a^{r_1}) \otimes H^*(a^{r_2}) \longrightarrow H^*(a^{r_0}), && r_0 \geq r_1+r_2.
\end{align}
The product \eqref{eq:standard-product} is associative and commutative in an appropriate sense, and inserting \eqref{eq:standard-unit} into one of its inputs recovers \eqref{eq:standard-continuation}. Of course, \eqref{eq:standard-unit} and \eqref{eq:standard-continuation} agree with the previously defined structures of the same name, in their domains of definition (here, we are using special choices of nonnegatively curved connections, whereas before we allowed any choice and used more sophisticated methods to prove well-definedness). We will be particularly interested in a further family of operations, the twisted brackets
\begin{equation} \label{eq:twisted-bracket}
[\cdot,\cdot]_c : H^*(a^{r_1}) \otimes H^*(a^{r_2}) \longrightarrow H^{*-1}(a^{r_0}), \quad r_0 \geq r_1+r_2, \;\; c \in \bZ.
\end{equation}
These are obtained from a one-parameter family of two-disc configurations, in which: the two discs move once around each other; the boundary parametrization of the first disc rotates $-c$ times, while that of the second disc remains constant. Concretely, one could take the dependence on the parameter $p \in P = S^1$ to be 
\begin{equation}
\left\{
\begin{aligned}
& \epsilon_1(t) = \zeta_1 e^{2\pi i p} + \rho_1 e^{-2\pi i (t + cp)}, \\
& \epsilon_2(t) = \rho_2 e^{-2\pi i t}.
\end{aligned}
\right.
\end{equation}
In the case $c = 0$, we write $[\cdot,\cdot]$, without the superscript. The bracket for general $c$ is related to that special case as in \eqref{eq:two-brackets}.

\subsection{A family of connections}
Into the enlarged framework including elliptic holonomy, we now re-introduce the differentiation axiom, as in Section \ref{sec:diff-axiom}. This yields Kodaira-Spencer classes for connections with both hyperbolic and elliptic holonomy, related to each other in the same way as the unit elements in Proposition \ref{th:mixed-continuation-2}. 

\begin{lemma} \label{th:bracket-with-elliptic-k}
Fix $r_0, r_1, r_2 \in \bR \setminus \bZ$, such that $r_1>0$ and $r_0 \geq r_1+r_2$, and take $a^{r_i}$ as in \eqref{eq:standard-connection}. Consider the following structures on the associated complexes $C^*(a^{r_i})$:
\begin{equation}
\left\{
\begin{aligned}
& \text{the Kodaira-Spencer cocycle $k \in C^2(a^{r_1})$,} \\
& \text{the continuation map $C: C^*(a^{r_2}) \rightarrow C^*(a^{r_0})$,} \\
& \text{the bracket $[\cdot,\cdot]_c: C^*(a^{r_1}) \otimes C^*(a^{r_2}) \rightarrow C^{*-1}(a^{r_0})$, for some $c \in \bZ$,} \\
& \text{the map $R: C^*(a^{r_2}) \rightarrow C^{*+1}(a^{r_2})$.}
\end{aligned}
\right.
\end{equation}
Then there is a chain homotopy
\begin{equation} \label{eq:rho-homotopy-elliptic}
\begin{aligned}
& \rho_c: C^*(a^{r_2}) \longrightarrow C^*(a^{r_0}), \\ 
& d\rho_c(x) - \rho_c(dx) - C(R(x)) + [k,x]_c  \\ & \quad = d\rho_c(x) - \rho_c(dx) - C \qabla (dx) + d C\qabla(x)  + [k,x]_c= 0.
\end{aligned}
\end{equation}
\end{lemma}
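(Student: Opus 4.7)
The plan is to adapt the proof of Lemma \ref{th:bracket-with-k} to the elliptic restricted framework \eqref{eq:standard-connection}--\eqref{eq:d-beta}. Glue the $k$-disc (the empty configuration with boundary value $a^{r_1}$ and marked point at its center) into the first input of the pair-of-pants family over $S^1$ underlying $[\cdot,\cdot]_c$, with an additional continuation from $a^{r_1+r_2}$ to $a^{r_0}$ built into the output boundary via an appropriate choice of $\beta$ satisfying \eqref{eq:d-beta}. Temporarily forgetting the marked point yields a family over $P_{\mathit{glu}} = S^1$ of decorated annuli with boundary values $(a^{r_0}, a^{r_2})$; restoring the marked point (which sits at the center of the glued-in $k$-disc) gives a TQFT operation equal, up to sign, to $x \mapsto [k,x]_c$.

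Construct separately a family over $P_{\mathit{id}} = S^1$ realizing $C \circ R$: the identity configuration for $a^{r_2}$ with moving marked point $Z_p = e^{2\pi i p}$ (contributing $R$), glued to a $p$-independent continuation annulus with boundary $(a^{r_0}, a^{r_2})$. The goal is to produce a two-parameter family on $P = S^1 \times [0,1]$ interpolating between these two boundary families; its associated operation $\rho_c$ will then satisfy \eqref{eq:rho-homotopy-elliptic} as an instance of the boundary identity \eqref{eq:boundary-sign}.

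The key technical point enabling the interpolation is that the space of decorated annuli with one marked point, fixed boundary forms in the class \eqref{eq:standard-connection}, standard framings \eqref{eq:boundary-parametrizations}--\eqref{eq:boundary-parametrizations-0}, and connection of the form \eqref{eq:d-beta}, is weakly homotopy equivalent to the configuration space of a point in the annulus, which retracts onto a circle: the framings are independent of the connection, the connection space is convex hence contractible, and the marked point retracts to the core. Over $P_{\mathit{glu}}$, the marked point is the center of the (filled-in) first hole of the pair-of-pants, which by Lemma \ref{th:rot-bracket} (see \eqref{eq:exact-framings}) traces a single anticlockwise loop around the remaining hole as $p$ varies, independently of $c$. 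Over $P_{\mathit{id}}$, the marked point $Z_p = e^{2\pi i p}$ also traces a single anticlockwise loop around the inner boundary. These two loops agree up to free homotopy in the annulus, so the desired family over $P$ exists.

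The main obstacle is the bookkeeping for framings and orientations: one must verify that the $c$-twisting of the bracket, which rotates only the parametrization of the first hole, leaves no residual effect on the marked-point loop after gluing (which holds because, by Lemma \ref{th:rot-bracket}, the disc centers are $c$-independent), and that the two marked-point loops are traversed in matching directions so no extra winding is introduced in the homotopy. Once these points are checked, applying the signed boundary identity \eqref{eq:boundary-sign} to $\phi_P = \rho_c$ and noting that the second equality in \eqref{eq:rho-homotopy-elliptic} follows from \eqref{eq:differentiation-1} together with the fact that $C$ is a chain map (so $CR = C(\qabla d - d\qabla) = C\qabla d - dC\qabla$) yields the claim.
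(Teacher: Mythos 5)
Your proof is correct and follows essentially the approach the paper indicates for this lemma: repeat the construction from Lemma~\ref{th:bracket-with-k} in the restricted setting of connections of the form \eqref{eq:d-beta}, where the convexity of the $\beta$-form choice makes the deformation between the two $S^1$-families automatic, so the unitality argument (Remark~\ref{th:cop-out}) is not needed. One small bibliographic slip: the $c$-independence of the disc centers is a direct consequence of the parametrization formula written after \eqref{eq:twisted-bracket}, not of Lemma~\ref{th:rot-bracket} or \eqref{eq:exact-framings} (which concern the hyperbolic and multivalued settings respectively) — but the fact itself is correct and your argument goes through.
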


This is the analogue of Lemma \ref{th:bracket-with-k}. We omit the proof, which uses the same geometric construction as before; the details are actually simpler this time, because they just amount to choosing suitable one-forms \eqref{eq:d-beta}.

\begin{assumption} \label{th:ks-vanishes-2}
For some $r_1 \in \bR^{>0} \setminus \bN$, the Kodaira-Spencer cocycle $k \in C^2(a^{r_1})$ is nullhomologous. Given that, we choose a bounding cochain $\theta \in C^1(a^{r_1})$, $d\theta = k$.
\end{assumption}

\begin{proposition} \label{th:elliptic-connection-2}
Suppose that Assumption \ref{th:ks-vanishes-2} holds. Fix $r_0, r_2 \in \bR \setminus \bZ$ such that $r_0 \geq r_1+r_2$. Then, there is a family of ``connections'' with respect to the continuation map $C: H^*(a^{r_2}) \rightarrow H^*(a^{r_0})$, indexed by $c \in \bZ$. These are maps
\begin{equation} \label{eq:nabla-h-elliptic}
\begin{aligned}
& \nabla_c: H^*(a^{r_2}) \longrightarrow H^*(a^{r_0}), \\
& \nabla_c(fx) = f\, \nabla_c x + (\partial f) C(x).
\end{aligned}
\end{equation}
\end{proposition}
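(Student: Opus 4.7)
I would mimic the construction in Proposition \ref{th:hyperbolic-connection}, using Lemma \ref{th:bracket-with-elliptic-k} as the substitute for Lemma \ref{th:bracket-with-k}. Concretely, given a bounding cochain $\theta \in C^1(a^{r_1})$ with $d\theta = k$ (Assumption \ref{th:ks-vanishes-2}) and a chain-level representative of the twisted bracket $[\cdot,\cdot]_c : C^*(a^{r_1}) \otimes C^*(a^{r_2}) \to C^{*-1}(a^{r_0})$ together with the chain homotopy $\rho_c$ provided by Lemma \ref{th:bracket-with-elliptic-k}, define a cochain-level operator
\begin{equation*}
\nabla_c : C^*(a^{r_2}) \longrightarrow C^*(a^{r_0}), \qquad \nabla_c x \;=\; C(\qabla x) + \rho_c(x) - [\theta, x]_c.
\end{equation*}
Then $\nabla_c$ on cohomology is taken to be the map induced by passing to cohomology; the maps $\nabla_c$ and $\nabla_1$ differ by (continuation of) multiplication with the element $a$ associated to $\theta$, but here $c$ enters only through the internal structure of the bracket, so I would present the family as a single $c$-parametrized construction and verify the stated properties case by case.

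\textbf{Step 1: $\nabla_c$ is a chain map.} This is the core computation. Using that $C$ commutes with $d$, the differentiation axiom \eqref{eq:differentiation-1} in the form $C\qabla d - dC\qabla = CR$, the Leibniz rule \eqref{eq:boundary-sign} for the chain-level bracket (which, since its parameter space $S^1$ has empty boundary, reads $d[\theta, x]_c = -[d\theta, x]_c + [\theta, dx]_c = -[k,x]_c + [\theta,dx]_c$ for $\theta$ of degree $1$), and the defining identity \eqref{eq:rho-homotopy-elliptic} rewritten as $[k,x]_c + dC\qabla x + d\rho_c(x) = C\qabla(dx) + \rho_c(dx)$, one directly computes $d\nabla_c x - \nabla_c(dx) = 0$. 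The three $[\theta, dx]_c$ contributions cancel, and the remaining terms match by \eqref{eq:rho-homotopy-elliptic}. So $\nabla_c$ descends to cohomology.

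\textbf{Step 2: Leibniz property.} The map $C$ and the bracket with $\theta$ are strictly $R$-linear (they come from TQFT operations, which are $R$-linear in inputs), and $\rho_c$ may likewise be arranged to be $R$-linear as it originates from a parametrized TQFT operation. The non-$R$-linearity of $\nabla_c$ therefore comes solely from $\qabla$, for which $\qabla(fx) = f\qabla x + (\partial f)x$. Applying $C$ to this identity yields $C\qabla(fx) = fC\qabla x + (\partial f)C(x)$, which produces exactly the desired Leibniz rule \eqref{eq:nabla-h-elliptic}. Finally, I would note that $\nabla_c$ is well-defined on cohomology independently of the choices (chain-level representatives of $C$, $[\cdot,\cdot]_c$, $\rho_c$, and the representative $k$ within its cohomology class), by a routine homotopy argument parallel to the one briefly indicated after Proposition \ref{th:hyperbolic-connection}; changing $\theta$ by a cocycle modifies $\nabla_c$ by the composition of continuation with the twisted Lie action, just as in Proposition \ref{th:hyperbolic-connection}.

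\textbf{Expected main difficulty.} The only real subtleties are bookkeeping: tracking the sign convention \eqref{eq:boundary-sign} when applying it to the $S^1$-parametrized twisted bracket, and confirming that the auxiliary homotopies $\rho_c$ produced by Lemma \ref{th:bracket-with-elliptic-k} can indeed be chosen $R$-linear (so that the Leibniz calculation goes through verbatim). Both points are standard but deserve one careful pass; no new geometry beyond that already established in Lemma \ref{th:bracket-with-elliptic-k} and the differentiation axiom is required.
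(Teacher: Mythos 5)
Your proposal is correct and takes essentially the same approach as the paper: the paper's proof is simply the formula $\nabla_c x = C(\qabla x) + \rho_c(x) - [\theta,x]_c$ (given in \eqref{eq:elliptic-nabla}), presented as the exact counterpart of Proposition \ref{th:hyperbolic-connection}. You have correctly identified this formula, correctly tracked the sign convention from \eqref{eq:boundary-sign}, and correctly matched it against \eqref{eq:rho-homotopy-elliptic} in the chain-map verification; the Leibniz argument via $R$-linearity of $C$, $\rho_c$, and the bracket is likewise the intended one.
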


This is the counterpart of Proposition \ref{th:hyperbolic-connection}, using the bracket \eqref{eq:twisted-bracket} and the associated homotopy \eqref{eq:rho-homotopy-elliptic}: the chain map underlying \eqref{eq:nabla-h-elliptic} is
\begin{equation} \label{eq:elliptic-nabla}
\nabla_c x = C(\qabla x) + \rho_c(x) - [\theta,x]_c.
\end{equation}
We will not discuss how $\nabla_c$ depends on $c$ (but see \cite[Section 6]{seidel16}, which considers the relation between $c = -1$ and $c = 0$; the general case would work in the same way).

\begin{proposition} \label{th:comparison-of-connections}
Suppose that Assumption \ref{th:ks-vanishes} holds (and hence, so does Assumption \ref{th:ks-vanishes-2} for any $r_1>1$). Take $r_0, r_2 \in \bR \setminus \bZ$ such that $r_1 = r_0 - r_2 \in (1,2)$, and where the interval $(r_0,r_2)$ contains exactly one integer $r$. Fix some $a \in \Omega^1(S^1,\frakg)$ whose holonomy is hyperbolic, and which has rotation number $r$. Then, for appropriately coordinated choices of bounding cochains, \eqref{eq:nabla-h-elliptic} for $c = r-1$ factors through \eqref{eq:nabla-h}, as follows:
\begin{equation} \label{eq:comparison-of-connections}
\xymatrix{
\ar@/_1pc/[rrr]_-{\nabla_c} 
H^*(a^{r_2}) \ar[r]^-{C} & H^*(a) 
\ar[r]^-{\nabla}
& \ar[r]^-{C} H^*(a) & 
H^*(a^{r_0}).
}
\end{equation}
\end{proposition}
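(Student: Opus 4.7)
The plan is to prove the factorization at the chain level. First, I will coordinate the bounding cochains: fix a hyperbolic $a_1$ of rotation number $1$ and $\theta \in C^1(a_1)$ with $d\theta = k$ as in Assumption \ref{th:ks-vanishes}. The mixed continuation $C: C^*(a_1) \to C^*(a^{r_1})$ of Proposition \ref{th:mixed-continuation-2}(ii) sends $k$ to a cocycle cohomologous to the elliptic Kodaira--Spencer cocycle $k^{\mathit{ell}} \in C^2(a^{r_1})$, by the homotopical uniqueness argument following Propositions \ref{th:the-disc} and \ref{th:elliptic-disc}; choose $\kappa \in C^1(a^{r_1})$ with $d\kappa = k^{\mathit{ell}} - C(k)$, and set $\theta^{\mathit{ell}} := C(\theta) + \kappa$. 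This is the meaning of ``suitably correlated choices.''

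Next, write out both sides in chain-level form using \eqref{eq:nabla} and \eqref{eq:elliptic-nabla}. Denoting the two mixed continuations by $C_{\mathit{in}}: C^*(a^{r_2}) \to C^*(a)$ and $C_{\mathit{out}}: C^*(a) \to C^*(a^{r_0})$, one has
\begin{equation*}
C_{\mathit{out}}\nabla C_{\mathit{in}}(y) = C_{\mathit{out}}\qabla C_{\mathit{in}}(y) + C_{\mathit{out}}\rho(C_{\mathit{in}} y) - C_{\mathit{out}}[\theta, C_{\mathit{in}} y],
\end{equation*}
while $\nabla_c(y) = C_{r_2\to r_0}(\qabla y) + \rho_c(y) - [\theta^{\mathit{ell}}, y]_c$, where $C_{r_2\to r_0}$ is the direct elliptic continuation. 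The task is to produce chain homotopies, each realized by a family of decorated surfaces, matching these term by term.

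The essential step is comparing the bracket terms, and this is where the arithmetic hypothesis $c = r-1$ enters. Geometrically, $[\theta, C_{\mathit{in}} y]$ is obtained from an $S^1$-family of two-disc configurations with hyperbolic boundary values $(a, a_1, a)$; because the module rotation number is $r$, Lemma \ref{th:rot-bracket} forces the first disc's framing to rotate $(1-r)$ times as the discs orbit one another. Gluing the mixed continuation annuli $C_{\mathit{in}}$, $C_{\mathit{out}}$ onto the second input and the output, and gluing a continuation annulus realizing $\theta \mapsto C(\theta)$ into the first input, produces an $S^1$-family of two-disc configurations with elliptic boundary values $(a^{r_0}, a^{r_1}, a^{r_2})$, in which the first disc's framing still rotates $(1-r)$ times. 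Since $-c = 1-r$ is precisely the framing rotation defining $[\cdot,\cdot]_c$ in \eqref{eq:twisted-bracket}, the resulting family is homotopic to the one defining $[C(\theta), y]_c$; contractibility of the relevant decoration space (Proposition \ref{th:ehe-pants}) ensures the homotopy is unobstructed. A separate chain homotopy built from $\kappa$ extends this to match $[\theta^{\mathit{ell}}, y]_c = [C(\theta), y]_c + [\kappa, y]_c$. The other two comparisons are analogous but simpler: $C_{\mathit{out}}\qabla C_{\mathit{in}} \simeq C_{r_2\to r_0}\qabla$ holds up to the ``marked point'' correction of \eqref{eq:differentiation-2} applied to the continuation annuli, and $C_{\mathit{out}}\rho C_{\mathit{in}} \simeq \rho_c$ holds because the underlying spaces of annular decorations with a marked point are contractible by Propositions \ref{th:grow-elliptic} and \ref{th:mixed-continuation}, with the corrections from the previous two terms matching up.

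The main obstacle will be the bookkeeping: assembling the auxiliary chain homotopies---including the $\kappa$-correction, the marked-point correction from the differentiation axiom, and the interpolation between $C_{\mathit{out}}C_{\mathit{in}}$ and $C_{r_2\to r_0}$---into a single chain homotopy exhibiting $C\nabla C - \nabla_c$ as a coboundary, with the correct signs mandated by \eqref{eq:boundary-sign}. Conceptually, however, the content of the proposition is concentrated in the rotation-number identity $-c = 1-r$, which the hypothesis $c = r-1$ precisely provides; the rest is a routine application of TQFT gluing and contractibility, entirely parallel to the uniqueness arguments alluded to but not fully written out after Lemma \ref{th:bracket-with-k}.
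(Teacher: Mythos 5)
Your proposal follows essentially the same strategy as the paper's proof: coordinate the bounding cochains by transporting $\theta$ along a hyperbolic-to-elliptic continuation and correcting by a secondary cochain $\kappa$; then reduce the comparison of $\nabla_c$ with $C\nabla C$ to the bracket terms, where $c = r-1$ enters precisely as the equality between the framing rotation forced by Lemma~\ref{th:rot-bracket} in the hyperbolic Gerstenhaber module and the defining framing rotation $-c$ of the twisted bracket $[\cdot,\cdot]_c$, with contractibility coming from Proposition~\ref{th:ehe-pants}. This is exactly the role of the paper's homotopy $\beta^{r_0,1,r_2}_c$. Your identification of $\theta^{\mathit{ell}} = C(\theta) + \kappa$ matches \eqref{eq:kkk3}, and your diagnosis that the $c = r-1$ identity carries all the arithmetic content is correct.

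What you flag as ``bookkeeping'' is, to be fair, a nontrivial part of the paper's argument and worth spelling out. First, the paper does not attempt to compare $C_{\mathit{out}}\rho C_{\mathit{in}}$ directly with $\rho_c$; instead it introduces an intermediate $\rho_{\mathit{broken},c}$, defined by replacing $C^{r_0,r_2}$ in the defining equation of $\rho_c$ by the composite $C^{r_0,r}C^{r,r_2}$, and relates the two versions of $\rho$ by a triangle of homotopies \eqref{eq:magic-triangle} filled in via Proposition~\ref{th:grow-elliptic}. (This is needed because $\rho_c$ and $C_{\mathit{out}}\rho C_{\mathit{in}}$ are right inverses to \emph{different} sources-of-failure, so they cannot be compared by a single contractibility argument of annular decorations.) Second, the passage from $C^{r_0,r_2}\qabla^{r_2}$ to $C^{r_0,r}\qabla^r C^{r,r_2}$ is handled by the differentiation axiom \eqref{eq:differentiation-2}, which produces a concretely geometric correction $C^{r,r_2}_\bullet$; you mention this but it needs to enter the final identity as an honest term. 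Finally, the paper's decisive organizational move is to collect all remaining discrepancies into a single identity \eqref{eq:null-last} that involves \emph{neither $\qabla$ nor $\theta^1$}, so that it becomes a pure TQFT relation, and then to fill it in with a higher homotopy arranged on the pentagon \eqref{eq:magic-pentagon}. Your phrase ``the corrections from the previous two terms matching up'' gestures at this, but without isolating the $\qabla$-free and $\theta$-free core, the claim that the various auxiliary chain homotopies assemble into one is neither automatic nor purely formal. So: correct approach, correct key insight, but the assembly step you defer is the technical heart of the proof, not a routine afterthought.
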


\begin{proof}
Each step in the proof is a fairly straightforward deformation argument, but there are a number of them. Part of the expository challenge is keeping the notation unambiguous, which we will attempt to do by adding rotation numbers as superscripts everywhere. In particular, for the given $r_i \in \bR \setminus \bZ$ ($i = 0,1,2$), we keep the notation $a^{r_i}$ for the connections (with elliptic holonomy) defined in \eqref{eq:standard-connection}; but now also write $a^r = a$ for the connection with (hyperbolic holonomy and) rotation number $r \in \bZ$, chosen arbitrarily, which appears in \eqref{eq:comparison-of-connections}. Additionally, we choose another connection $a^1$ with (hyperbolic holonomy and) rotation number $1$. 
\begin{figure}
\begin{centering}
\begin{picture}(0,0)%
\includegraphics{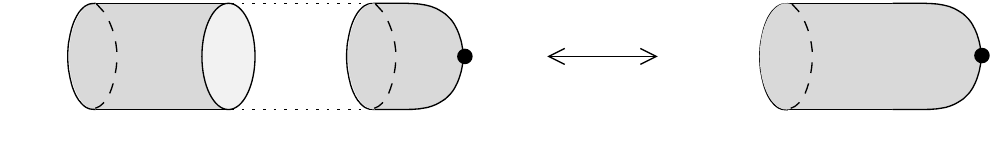}%
\end{picture}%
\setlength{\unitlength}{3355sp}%
\begingroup\makeatletter\ifx\SetFigFont\undefined%
\gdef\SetFigFont#1#2#3#4#5{%
  \reset@font\fontsize{#1}{#2pt}%
  \fontfamily{#3}\fontseries{#4}\fontshape{#5}%
  \selectfont}%
\fi\endgroup%
\begin{picture}(5593,915)(2011,-3214)
\put(4776,-3186){\makebox(0,0)[lb]{\smash{{\SetFigFont{10}{12.0}{\rmdefault}{\mddefault}{\updefault}{glue and deform}%
}}}}
\put(4201,-3186){\makebox(0,0)[lb]{\smash{{\SetFigFont{10}{12.0}{\rmdefault}{\mddefault}{\updefault}{$k^1$}%
}}}}
\put(2701,-3186){\makebox(0,0)[lb]{\smash{{\SetFigFont{10}{12.0}{\rmdefault}{\mddefault}{\updefault}{$C^{r_1,1}$}%
}}}}
\put(2046,-2661){\makebox(0,0)[lb]{\smash{{\SetFigFont{10}{12.0}{\rmdefault}{\mddefault}{\updefault}{$a^{r_1}$}%
}}}}
\put(3601,-2661){\makebox(0,0)[lb]{\smash{{\SetFigFont{10}{12.0}{\rmdefault}{\mddefault}{\updefault}{$a^1$}%
}}}}
\put(6901,-3186){\makebox(0,0)[lb]{\smash{{\SetFigFont{10}{12.0}{\rmdefault}{\mddefault}{\updefault}{$k^{r_1}$}%
}}}}
\put(5951,-2661){\makebox(0,0)[lb]{\smash{{\SetFigFont{10}{12.0}{\rmdefault}{\mddefault}{\updefault}{$a^{r_1}$}%
}}}}
\end{picture}%
\caption{\label{fig:relate-k}The family of surfaces underlying \eqref{eq:kkk2}.}
\end{centering}
\end{figure}

As a preliminary step, we need to explain what ``appropriately coordinated choices of bounding cochains'' means. Two version of the Kodaira-Spencer cocycle are relevant:
\begin{align} \label{eq:kkk}
& k^1 \in C^2(a^1) \quad \;\;\text{(hyperbolic case),} \\
\label{eq:kkk1}
& k^{r_1} \in C^2(a^{r_1}) \quad \text{(elliptic case).} 
\end{align}
These are related by $C^{r_1,1}: C^*(a^1) \rightarrow C^*(a^{r_1})$. More precisely, there is a cochain
\begin{equation} \label{eq:kkk2}
\begin{aligned}
& \kappa^{r_1} \in C^1(a^{r_1}), \\
& k^{r_1} = C^{r_1,1}(k^1) + d\kappa^{r_1},
\end{aligned}
\end{equation}
whose geometric construction is shown schematically in Figure \ref{fig:relate-k}. It relies on Proposition \ref{th:elliptic-disc}, which says that if we glue together the surfaces which define $C^{r_1,1}$ and $k^1$, the outcome is a surface which (together with its nonnegatively curved connection) can be deformed to that underlying $k^{r_1}$. We assume that a choice of bounding cochain $\theta^1 \in C^1(a^1)$ for \eqref{eq:kkk} is given, and then take that for \eqref{eq:kkk1} to be
\begin{equation} \label{eq:kkk3}
\theta^{r_1} = C^{r_1,1}(\theta^1) + \kappa^{r_1} \in C^1(a^{r_1}).
\end{equation}

In our enhanced notation, the definition \eqref{eq:elliptic-nabla} of the ``elliptic'' connection reads as follows:
\begin{equation} \label{eq:elliptic-nabla-2}
\nabla_c^{r_0,r_2} x = C^{r_0,r_2} (\qabla^{r_2} x) + \rho_c^{r_0,r_2}(x) - [\theta^{r_1},x]^{r_0,r_1,r_2}_c.
\end{equation}
Our first task is to break up the continuation map in \eqref{eq:elliptic-nabla-2} into two pieces. As a consequence of Proposition \ref{th:grow-elliptic}, the following diagram of continuation maps is homotopy commutative:
\begin{equation} \label{eq:c-is-cc}
\xymatrix{
\ar@/_1pc/[rrrr]_-{C^{r_0,r_2}}
C^*(a^{r_2}) \ar[rr]^-{C^{r,r_2}} && C^*(a^r) \ar[rr]^-{C^{r_0,r}} && C^*(a^{r_0}).
}
\end{equation}
Using Proposition \ref{th:grow-elliptic} in the same way, one can modify $\rho_c^{r_0,r_2}$ so that it satisfies a version of \eqref{eq:rho-homotopy-elliptic} where the continuation map involved is $C^{r_0,r} C^{r,r_2}$ instead of $C^{r_0,r_2}$. Denote this new version by $\rho_{\mathit{broken},c}^{r_0,r_2}$. On the cohomology level, \eqref{eq:elliptic-nabla-2} can be rewritten as
\begin{equation} \label{eq:elliptic-nabla-3}
\left[\nabla_{c}^{r_0,r_2} x\right] = \left[C^{r_0,r} C^{r,r_2} \qabla^{r_2} x + \rho_{\mathit{broken},c}^{r_0,r_2}(x) - [\theta^{r_1},x]_{c}^{r_0,r_1,r_2}\right] \in H^*(a^{r_0}).
\end{equation}
More precisely, let $\gamma^{r_0,r_2}$ be the chain homotopy which appears in \eqref{eq:c-is-cc}. The relation between $\rho_c^{r_0,r_2}$ and $\rho_{\mathit{broken},c}^{r_0,r_2}$ can be encoded schematically as follows:
\begin{equation} \label{eq:magic-triangle}
\xymatrix{
C^{r_0,r_2} R^{r_2} \ar@{-}[dd]_-{\gamma^{r_0,r_2} R^{r_2}} \ar@{-}[drr]^-{\rho_c^{r_0,r_2}} \\
&& [k^{r_1},\cdot]_c^{r_0,r_1,r_2} \\
C^{r_0,r} C^{r,r_2} R^{r_2} \ar@{-}[urr]_-{\rho_{\mathit{broken},c}^{r_0,r_2}}
}
\end{equation}
Each side of \eqref{eq:magic-triangle} is a homotopy, induced by a family of decorated disc configuration whose parameter space is an annulus $[0,1] \times S^1$ (and over $\{0,1\} \times S^1$, the families can be identified in pairs). There is a cobordism between the union of these three annuli and a family over $T^2$ (essentially, just rounding off the corners where we have identified the boundary circles of the annuli). One then uses Proposition \ref{th:grow-elliptic} to show that that family can be extended over a solid torus, which provides the (degree $-1$) operation that ``fills in the interior of the triangle'' in \eqref{eq:magic-triangle}. The cochain relating the two sides of \eqref{eq:elliptic-nabla} is given by that operation together with $\gamma^{r_0,r_2} \qabla^{r_2}$. We denote the (chain level) right hand side of \eqref{eq:elliptic-nabla-3} by $\nabla_{\mathit{broken},c}^{r_0,r_2}x$.

Take the decorated disc configuration underlying $C^{r,r_2}$, and ``add a marked point in all possible places''. The result defines an operation $C^{r,r_2}_\bullet$, and using \eqref{eq:differentiation-2}, we can rewrite \eqref{eq:elliptic-nabla-3} as
\begin{equation} \label{eq:elliptic-nabla-4}
\nabla_{\mathit{broken},c}^{r_0,r_2}x = C^{r_0,r} \qabla^r C^{r,r_2}(x) + C^{r_0,r} C^{r,r_2}_\bullet(x) + \rho_{\mathit{broken},c}^{r_0,r_2}(x) - [\theta^{r_1},x]_{c}^{r_0,r_1,r_2}.
\end{equation}
Additionally using \eqref{eq:kkk3}, we get
\begin{equation} \label{eq:elliptic-nabla-5}
\begin{aligned}
\nabla_{\mathit{broken},c}^{r_0,r_2}x = &\; C^{r_0,r} \qabla^r C^{r,r_2}(x) + C^{r_0,r} C^{r,r_2}_\bullet(x) + \rho_{\mathit{broken},c}^{r_0,r_2}(x)\\ & \quad - [\kappa^{r_1},x]_c^{r_0,r_1,r_2} - [C^{r_1,1}(\theta^1),x]_{c}^{r_0,r_1,r_2}.
\end{aligned}
\end{equation}
We now want to apply a further chain homotopy, which relates two different versions of the Lie bracket:
\begin{equation}
\begin{aligned}
& \beta^{r_0,1,r_2}_c: C^*(a^1) \otimes C^*(a^{r_2}) \longrightarrow C^{*-2}(a^{r_0}), \\
& [C^{r_1,1}(x_1),x_2]_c^{r_0,r_1,r_2} - C^{r_0,r} [x_1,C^{r_2,r}(x_2)]^{r,1,r} 
\\ & \quad =
d\beta_c^{r_0,1,r_2}(x_1,x_2) + \beta_c^{r_0,1,r_2}(dx_1,x_2) + (-1)^{|x_1|} \beta_c^{r_0,1,r_2}(x_1,dx_2).
\end{aligned}
\end{equation}
This follows by comparing Lemma \ref{th:rot-bracket} with the definition of \eqref{eq:twisted-bracket}, where the equality $c = r-1$ is key, and then using Proposition \ref{th:ehe-pants} to construct the family of connections that interpolates between the two sides. From \eqref{eq:elliptic-nabla-5} we can therefore pass to (the chain homotopic expression)
\begin{equation} \label{eq:elliptic-nabla-6}
\begin{aligned}
\nabla^{\mathit{broken},c}_{r_0,r_2}x \htp & \; C^{r_0,r} \qabla^r C^{r,r_2}(x) + C^{r_0,r} C^{r,r_2}_\bullet(x) + \rho_{\mathit{broken},c}^{r_0,r_2}(x) \\ & \quad - [\kappa^{r_1},x]_c^{r_0,r_1,r_2} - C^{r_0,r} [\theta^1,C^{r,r_2}(x)]^{r,1,r} - \beta_c^{r_0,1,r_2}(k^1,x).
\end{aligned}
\end{equation}

Comparing \eqref{eq:elliptic-nabla-6} with \eqref{eq:nabla} shows that the desired statement concerning \eqref{eq:comparison-of-connections} reduces to the following nullhomotopy of maps $C^*(a^{r_2}) \rightarrow C^*(a^{r_0})$:
\begin{equation} \label{eq:null-last}
C^{r_0,r} C^{r,r_2}_\bullet(x) + \rho_{\mathit{broken},c}^{r_0,r_2}(x) - C^{r_0,r} \rho^{r,r} C^{r,r_2}(x) 
- [\kappa^{r_1},x]_c^{r_0,r_1,r_2} - \beta^{r_0,1,r_2}_c(k^1,x) \htp 0.
\end{equation}
Importantly, this no longer contains $\qabla$ or the bounding cochain $\theta^1$: it is purely a relation between TQFT operations. Each term in \eqref{eq:null-last} comes from a family parametrized by an annulus, and for the nullhomotopy, the same reasoning as in \eqref{eq:elliptic-nabla-3} applies. The analogue of \eqref{eq:magic-triangle} is this:
\begin{equation} \label{eq:magic-pentagon}
\xymatrix{
& \ar@{-}[dl]_-{C^{r_0,r} C^{r,r_2}_\bullet} 
C^{r_0,r} C^{r,r_2} R^{r_2} 
\ar@{-}[dr]^-{\rho^{r_0,r_2}_{\mathit{broken},c}} & \\
\ar@{-}[d]_-{C^{r_0,r}\rho^{r,r}C^{r,r_2}}
C^{r_0,r} R^r C^{r,r_2} && 
[k^{r_1},\cdot]^{r_0,r_1,r_2}_c 
\ar@{-}[d]^-{[\kappa^{r_1},\cdot]^{r_0,r_1,r_2}_c} \\
  C^{r_0,r} [k^1,C^{r,r_2}(\cdot)]^{r,1,r} \ar@{-}@/_2pc/[rr]_-{\beta^{r_0,1,r_2}_c(k^1,\cdot)}
&& [C^{r_1,1}(k^1), \cdot]_c^{r_0,r_1,r_2} 
}
\end{equation}
In order to better visualize the situation, we also include a version of \eqref{eq:magic-pentagon} where the operations are described geometrically, in Figure \ref{fig:pentagon}.
\end{proof}
\begin{figure}
\begin{centering}
\begin{picture}(0,0)%
\includegraphics{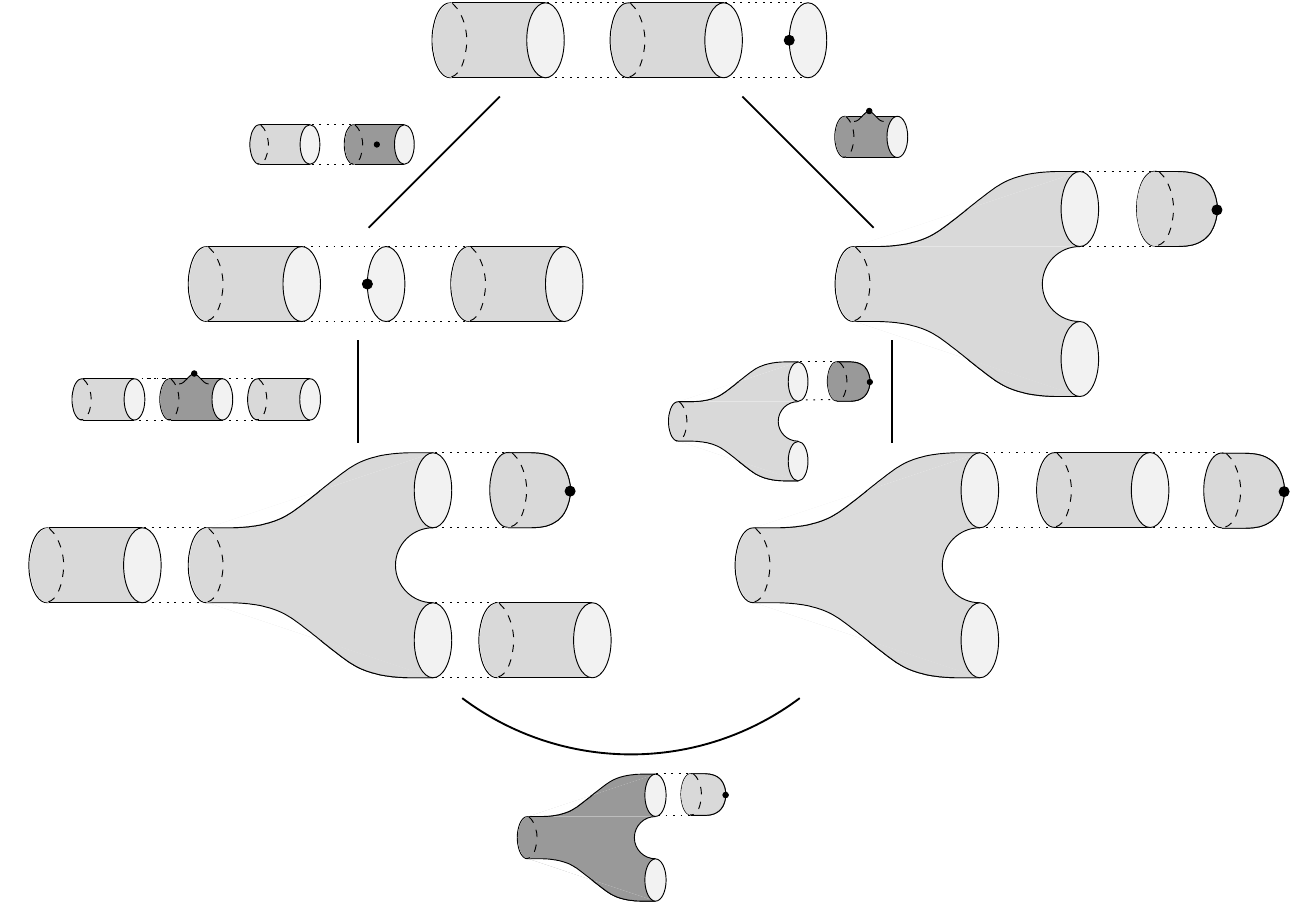}%
\end{picture}%
\setlength{\unitlength}{2368sp}%
\begingroup\makeatletter\ifx\SetFigFont\undefined%
\gdef\SetFigFont#1#2#3#4#5{%
  \reset@font\fontsize{#1}{#2pt}%
  \fontfamily{#3}\fontseries{#4}\fontshape{#5}%
  \selectfont}%
\fi\endgroup%
\begin{picture}(10322,7211)(-1964,-5760)
\put(1251,1064){\makebox(0,0)[lb]{\smash{{\SetFigFont{7}{8.4}{\rmdefault}{\mddefault}{\updefault}{$r_0$}%
}}}}
\put(4051,1064){\makebox(0,0)[lb]{\smash{{\SetFigFont{7}{8.4}{\rmdefault}{\mddefault}{\updefault}{$r_2$}%
}}}}
\put(2701,1064){\makebox(0,0)[lb]{\smash{{\SetFigFont{7}{8.4}{\rmdefault}{\mddefault}{\updefault}{$r$}%
}}}}
\put(1726,-2536){\makebox(0,0)[lb]{\smash{{\SetFigFont{7}{8.4}{\rmdefault}{\mddefault}{\updefault}{$1$}%
}}}}
\put(1726,-3736){\makebox(0,0)[lb]{\smash{{\SetFigFont{7}{8.4}{\rmdefault}{\mddefault}{\updefault}{$r$}%
}}}}
\put(-599,-3136){\makebox(0,0)[lb]{\smash{{\SetFigFont{7}{8.4}{\rmdefault}{\mddefault}{\updefault}{$r$}%
}}}}
\put(-1969,-3136){\makebox(0,0)[lb]{\smash{{\SetFigFont{7}{8.4}{\rmdefault}{\mddefault}{\updefault}{$r_0$}%
}}}}
\put(6151,-3736){\makebox(0,0)[lb]{\smash{{\SetFigFont{7}{8.4}{\rmdefault}{\mddefault}{\updefault}{$r_2$}%
}}}}
\put(3001,-3736){\makebox(0,0)[lb]{\smash{{\SetFigFont{7}{8.4}{\rmdefault}{\mddefault}{\updefault}{$r_2$}%
}}}}
\put(6901,-1486){\makebox(0,0)[lb]{\smash{{\SetFigFont{7}{8.4}{\rmdefault}{\mddefault}{\updefault}{$r_2$}%
}}}}
\put(751,-886){\makebox(0,0)[lb]{\smash{{\SetFigFont{7}{8.4}{\rmdefault}{\mddefault}{\updefault}{$r$}%
}}}}
\put(1426,-886){\makebox(0,0)[lb]{\smash{{\SetFigFont{7}{8.4}{\rmdefault}{\mddefault}{\updefault}{$r$}%
}}}}
\put(2776,-886){\makebox(0,0)[lb]{\smash{{\SetFigFont{7}{8.4}{\rmdefault}{\mddefault}{\updefault}{$r_2$}%
}}}}
\put(4801,1064){\makebox(0,0)[lb]{\smash{{\SetFigFont{7}{8.4}{\rmdefault}{\mddefault}{\updefault}{$r_2$}%
}}}}
\put(4476,-886){\makebox(0,0)[lb]{\smash{{\SetFigFont{7}{8.4}{\rmdefault}{\mddefault}{\updefault}{$r_0$}%
}}}}
\put(3676,-3121){\makebox(0,0)[lb]{\smash{{\SetFigFont{7}{8.4}{\rmdefault}{\mddefault}{\updefault}{$r_0$}%
}}}}
\put(6901,-261){\makebox(0,0)[lb]{\smash{{\SetFigFont{7}{8.4}{\rmdefault}{\mddefault}{\updefault}{$r_1$}%
}}}}
\put(6096,-2517){\makebox(0,0)[lb]{\smash{{\SetFigFont{7}{8.4}{\rmdefault}{\mddefault}{\updefault}{$r_1$}%
}}}}
\put(7497,-2540){\makebox(0,0)[lb]{\smash{{\SetFigFont{7}{8.4}{\rmdefault}{\mddefault}{\updefault}{$1$}%
}}}}
\put(-699,-886){\makebox(0,0)[lb]{\smash{{\SetFigFont{7}{8.4}{\rmdefault}{\mddefault}{\updefault}{$r_0$}%
}}}}
\end{picture}%
\caption{\label{fig:pentagon}The families of surfaces appearing in \eqref{eq:magic-pentagon}. The darker shaded pieces are those one which change along the sides of the diagram.}
\end{centering}
\end{figure}%

\section{Maps to the disc\label{sec:maps-to-the-disc}}
As a toy model for the subsequent construction of Hamiltonian Floer cohomology, we look at maps from Riemann surfaces to the hyperbolic disc, which satisfy an inhomogeneous Cauchy-Riemann equation. The inhomogeneous term comes from a family of infinitesimal automorphisms, hence is not compactly supported on the disc. We are mostly interested in what happens when the maps approach the boundary circle (at infinity).

\subsection{The hyperbolic disc}
Let $B \subset \bC$ be the open unit disc. The holomorphic automorphism group of $B$, and its Lie algebra of holomorphic vector fields, are given by:
\begin{equation} \label{eq:x-gamma}
\begin{aligned}
& G = \mathit{PU}(1,1) = \left\{ \textstyle g = \left(\begin{smallmatrix} a & b \\ \bar{b} & \bar{a} \end{smallmatrix}\right)\; : \;
|a|^2 - |b|^2 = 1 \right\}/\pm\!\Id,
&& 
g \mapsto \rho_g(w) = \frac{aw+b}{\bar{b}w + \bar{a}},
\\
& \frakg = \mathfrak{u}(1,1) = \left\{ \gamma = \left(\begin{smallmatrix} i\alpha & \beta \\ \bar{\beta} & -i\alpha \end{smallmatrix}\right) \;:\;
\alpha \in \bR, \;\beta \in \bC \right\}, &&
\gamma \mapsto X_\gamma = (\bar{\beta} + 2i\alpha w - \beta w^2) \, \partial_w.
\end{aligned}
\end{equation}
The action preserves the hyperbolic metric (in our convention, normalized to have curvature $-4$)
\begin{equation}
g_{\mathit{hyp}} = \frac{1}{(1-|w|^2)^2}  \mathit{dw}\, d\bar{w},
\end{equation}
hence also its symplectic form $\omega_{\mathit{hyp}}$, and distance function $\mathrm{dist}_{\mathit{hyp}}$. There is a unique choice of Hamiltonian function for each vector field $X_\gamma$ which is linear in $\gamma$ and compatible with Poisson brackets:
\begin{equation} \label{eq:hamiltonian}
H_\gamma 
= \frac{1}{1-|w|^2} \left( \half(1+|w|^2)\alpha - \mathrm{im}(\beta w) \right).
\end{equation}

The group action extends smoothly to the closed disc $\bar{B}$. Call $\gamma$ nonnegative if $X_\gamma$ points in nonnegative direction along $\partial \bar{B}$.  In terms of \eqref{eq:x-gamma}, the nonnegative cone defined in this way is
\begin{equation} \label{eq:h-pos}
\frakg_{\geq 0} = \{ \alpha \geq |\beta| \}.
\end{equation}
In terms of \eqref{eq:hamiltonian}, $\gamma$ is nonnegative iff $H_\gamma$ is nonnegative everywhere (moreover, if $\gamma \notin \frakg_{\geq 0}$, $H_\gamma$ is not even bounded below). 

\begin{convention}
As the reader may have noticed, we are switching notation: for the rest of the paper, $G$ and $\frakg$ will be as in \eqref{eq:x-gamma}. This is not a significant issue, since the two groups that appear under that name are actually isomorphic:
\begin{equation} \label{eq:change-g}
\mathit{PSL}_2(\bR) \iso \mathit{PU}(1,1),
\end{equation}
and such an isomorphism is canonical up to inner automorphisms (geometrically, it is induced by a holomorphic automorphism of $\bC \cup \{\infty\} = \bC P^1$ which carries $B$ to the open upper half plane). \eqref{eq:change-g} preserves traces; and the associated isomorphism of Lie algebras relates \eqref{eq:h-pos} to our previous notion of nonnegativity in $\mathfrak{sl}_2(\bR)$. Hence, the entire discussion from Sections \ref{sec:sl2}--\ref{sec:elliptic} carries over immediately to \eqref{eq:x-gamma}.
\end{convention}

Let $S$ be an oriented surface, equipped with a connection $A \in \Omega^1(S,\frakg)$. Via the homomorphisms from $\frakg$ to the Lie algebras of vector fields and of functions (with the Poisson bracket) on $B$, this yields one-forms valued in those Lie algebras. We write them as
\begin{equation}
\begin{aligned} \label{eq:ha}
& X_A \in \smooth(S \times B, T^*S \otimes_{\bR} \bC), \\
& H_A \in \smooth(S \times B, T^*S).
\end{aligned}
\end{equation}
One can apply the same process to the curvature $F_A \in \Omega^2(S,\frakg)$, which yields
\begin{equation} \label{eq:2-curvatures}
\begin{aligned}
& X_{F_A} \in \smooth(S \times B, \Lambda^2_{\bR} T^*S \otimes_{\bR} \bC), \\
& H_{F_A} \in \smooth(S \times B, \Lambda^2_{\bR} T^*S).
\end{aligned}
\end{equation}
What $X_A$ describes is the connection induced by $A$ on the trivial fibre bundle $S \times B \rightarrow S$. Like any connection on a fibre bundle, one can also think of this as given by a complementary subbundle to the fibrewise tangent bundle. The choice of such a subbundle determines a lift of $\omega_{\mathit{hyp}}$ to a fibrewise symplectic form $\omega_{A,\mathit{geom}} \in \Omega^2(S \times B)$, and this satisfies
\begin{equation} \label{eq:domega-a}
d\omega_{A,\mathit{geom}} = -dH_{F_A}.
\end{equation}
On the right hand side of \eqref{eq:domega-a}, we think of $H_{F_A}$ as a two-form on $S \times B$ via the projection $T(S \times B) \rightarrow TS$, and then take its exterior derivative. While $\omega_{A,\mathit{geom}}$ is not itself closed, there is an obvious closed replacement (this is an instance of the general theory of symplectic fibrations, see e.g.\ \cite[Chapter 6]{mcduff-salamon99}):
\begin{equation} \label{eq:top-omega}
\omega_{A,\mathit{top}} = \omega_{A,\mathit{geom}} + H_{F_A}.
\end{equation}
Similarly, the connection determines an almost complex structure $J_A$ on $S \times B$, such that projection to $S$ is $J_A$-holomorphic, and whose restriction to each fibre is the standard complex structure on $B$. This satisfies
\begin{equation} \label{eq:semi-omega}
\omega_{A,\mathit{geom}}(\eta ,J_A \eta) \geq 0 \quad \text{for all $\eta \in T(S \times B)$.}
\end{equation}
By construction, the induced connection on $S \times B \rightarrow S$ extends smoothly to the compactification $S \times \bar{B}$, and so does $J_A$.

One can be more explicit by working in a local complex coordinate $z = s+it$ on $S$. If $A = A_1(s,t)\mathit{ds} + A_2(s,t)\mathit{dt}$ with $A_1(s,t),\, A_2(s,t) \in \frakg$, then by definition
\begin{align}
& X_A = X_{A_1} \mathit{ds} + X_{A_2} \mathit{dt}, \\
& H_A = H_{A_1} \mathit{ds} + H_{A_2} \mathit{dt}, \\
& X_{F_A} = \big( X_{\partial_t A_1} - X_{\partial_s A_2} + X_{[A_1,A_2]} \big) \, \mathit{ds} \wedge \mathit{dt}, \\ 
& H_{F_A} = \big( H_{\partial_t A_1} - H_{\partial_s A_2} + \omega_{\mathit{hyp}}(X_{A_1},X_{A_2}) \big) \,\mathit{ds} \wedge \mathit{dt}.
\end{align}
The subbundle of $T(S \times B)$ which defines the connection on $S \times B \rightarrow S$ is
\begin{equation} \label{eq:horizontal-subspace}
\bR (\partial_s + X_{A_1} \partial_w) \oplus \bR (\partial_t + X_{A_2} \partial_w).
\end{equation}
The two-form $\omega_{A,\mathit{geom}}$ is characterized by having \eqref{eq:horizontal-subspace} as its nullspace. This means that
\begin{align}
& \omega_{A,\mathit{geom}} = \omega_{\mathit{hyp}} - dH_{A_1} \wedge \mathit{ds} - dH_{A_2} \wedge \mathit{dt} - \omega_{\mathit{hyp}}(X_{A_1},X_{A_2}) \mathit{ds} \wedge \mathit{dt}, \\
& \omega_{A,\mathit{top}} = \omega_{\mathit{hyp}} - d(H_{A_1} \mathit{ds}) - d(H_{A_2} \mathit{dt}).
\end{align}
The associated almost complex structure is similarly given by
\begin{equation} \label{eq:j-a}
\left\{
\begin{aligned}
& J_A (\partial_w) = i\partial_w, \\
& J_A (\partial_s + X_{A_1} \partial_w) = \partial_t + X_{A_2} \partial_w.
\end{aligned}
\right.
\end{equation}

Suppose that $A$ is nonnegatively curved. This means that, for any oriented basis $(\xi_1,\xi_2)$ in $TS$, $H_{F_A}(\xi_1,\xi_2)$ is a nonnegative function; and that along $\partial\bar{B}$, $X_{F_A}(\xi_1,\xi_2)$ points in clockwise direction. From \eqref{eq:top-omega} and \eqref{eq:semi-omega}, it then follows that
\begin{equation} \label{eq:o-o}
0 \leq \omega_{A,\mathit{geom}}(\eta,J_A \eta) \leq \omega_{A,\mathit{top}}(\eta, J_A \eta).
\end{equation}

\begin{lemma} \label{th:levi}
If $A$ has nonnegative curvature, the hypersurface $S \times \partial \bar{B} \subset S \times \bar{B}$ is Levi convex (meaning that its Levi form with respect to $J_A$ is nonnegative).
\end{lemma}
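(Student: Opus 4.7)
The plan is to compute the Levi form of $M = S \times \partial\bar{B}$ directly, using the defining function $\rho(z,w) = |w|^2 - 1$ (so the interior $S \times B$ corresponds to $\{\rho < 0\}$). The key observation that will make the calculation transparent is that the Levi form will turn out to be, up to a positive multiplicative constant, the numerator $\alpha - \mathrm{Im}(\beta w)$ of the Hamiltonian $H_{F_A(\partial_s,\partial_t)}$ from \eqref{eq:hamiltonian} evaluated at the boundary; nonnegativity will then follow at once from \eqref{eq:h-pos}.

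First I would identify the complex tangent space $T^{\mathbb{C}}M$. Since $G = \mathit{PU}(1,1)$ preserves $\partial\bar{B}$, each fundamental vector field $X_\gamma$ is tangent to $\partial\bar{B}$, so the horizontal distribution from \eqref{eq:horizontal-subspace} lies inside $TM$. This distribution is $J_A$-invariant by the definition \eqref{eq:j-a} and has real dimension two, while $TM$ has dimension three, so the horizontal distribution must coincide with the maximal $J_A$-invariant subspace of $TM$; that is, $T^{\mathbb{C}}M$ equals the horizontal distribution. Fix a local complex coordinate $z = s+it$ on $S$ and let $e_1, e_2$ denote the real horizontal lifts of $\partial_s, \partial_t$; then $(e_1, e_2)$ spans $T^{\mathbb{C}}M$ and $J_A e_1 = e_2$.

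Define $\alpha_J(X) := d\rho(J_A X)$, so $\ker(\alpha_J|_M) = T^{\mathbb{C}}M$, and Levi convexity from the side $\{\rho<0\}$ amounts to $-d\alpha_J(e_1,e_2)|_M \geq 0$. A direct calculation using \eqref{eq:x-gamma} gives $d\rho(e_i) = -2\rho\,\mathrm{Re}(w\beta_i)$, where $(\alpha_i, \beta_i)$ are the parameters of $A_i$; in particular $d\rho(e_i)$ is a scalar multiple of $\rho$ on a neighbourhood of $M$, and hence the $e_i$-directional derivative terms in Cartan's formula vanish on $M$, leaving
\[
-d\alpha_J(e_1,e_2)\big|_M \;=\; d\rho\bigl(J_A [e_1,e_2]\bigr)\big|_M.
\]
The bracket $[e_1, e_2]$ is then computed by the classical curvature-as-vertical-bracket identity, which (up to the overall sign fixed by one's convention) yields a purely vertical vector field $\pm X_{F_A(\partial_s,\partial_t)}^{\mathrm{real}}$ plus a horizontal remainder that is tangent to $M$ and so is annihilated by $d\rho \circ J_A$. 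Applying $J_A$ (which on fibres acts as the standard complex structure on $B$) and then $d\rho$, and using the identity $X_\gamma|_{|w|=1} = 2iw\bigl(\alpha - \mathrm{Im}(\beta w)\bigr)$ which is immediate from \eqref{eq:x-gamma} at the boundary, a short computation yields
\[
d\rho\bigl(J_A [e_1,e_2]\bigr)\big|_M \;=\; 4\bigl(\alpha_F - \mathrm{Im}(\beta_F w)\bigr),
\]
where $(\alpha_F, \beta_F)$ parameterize $F_A(\partial_s,\partial_t) \in \frakg$.

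The nonnegative curvature hypothesis asserts $F_A(\partial_s,\partial_t) \in \frakg_{\geq 0}$, which by \eqref{eq:h-pos} is the condition $\alpha_F \geq |\beta_F|$; since $|\mathrm{Im}(\beta_F w)| \leq |\beta_F|$ on $|w|=1$, we get $\alpha_F - \mathrm{Im}(\beta_F w) \geq 0$ pointwise on $\partial\bar{B}$, so $\mathcal{L}_\rho \geq 0$ on $T^{\mathbb{C}}M$. The argument is an essentially mechanical computation, and the only real pitfall is keeping sign conventions consistent across the formulas for $X_\gamma$, the Lie bracket of fundamental vector fields (which intertwines with the Lie algebra bracket on $\frakg$ up to a convention-dependent sign), the operator $d^c$, and the definition of the Levi form itself; one can check that with consistent choices the sign works out as claimed.
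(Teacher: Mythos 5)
Your argument is correct and reaches the same final inequality as the paper, but by a genuinely different route. The paper computes $-d(d\psi\circ J_A)$ directly from the explicit formula for $J_A$ (their display \eqref{eq:ddc}) and then restricts to the horizontal distribution to read off $L_\psi = \mathrm{im}(\bar w X_{F_A})$. You instead notice that $\alpha_J(e_i) = d\rho(J_Ae_i)$ is $O(\rho)$, so Cartan's formula collapses the Levi form on $M$ to a single term $d\rho(J_A[e_1,e_2])$, which you then evaluate via the classical curvature-as-bracket-of-horizontal-lifts identity. Your version is more structural: it isolates exactly where the curvature enters and bypasses most of the $dd^c$ computation. Two minor points. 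First, the ``horizontal remainder'' you allow for in $[e_1,e_2]$ does not exist: since $\pi_*[e_1,e_2]=[\partial_s,\partial_t]=0$, the bracket of the horizontal lifts is purely vertical (your argument is unaffected since you correctly note any such remainder would be annihilated anyway). Second, the sign caveat you flag at the end is a real one and deserves a sharper formulation: the identity $[e_1,e_2]=\pm X_{F_A(\partial_s,\partial_t)}$ depends on $\gamma\mapsto X_\gamma$ being a Lie algebra anti-homomorphism, whereas the formula $X_\gamma=(\bar\beta+2i\alpha w-\beta w^2)\partial_w$ as printed in the paper actually yields a homomorphism (one can check $[X_{\gamma_1},X_{\gamma_2}]=X_{[\gamma_1,\gamma_2]}$ directly), which would contaminate $[e_1,e_2]$ with a spurious $2X_{[A_1,A_2]}$ term; this is almost certainly a $\beta\leftrightarrow\bar\beta$ typo in the paper (the generator of $\rho_{\exp(t\gamma)}$ is $\beta+2i\alpha w-\bar\beta w^2$, which does give an anti-homomorphism), and with that correction your argument goes through cleanly and the final answer $d\rho(J_A[e_1,e_2])|_M = 4(\alpha_F-\mathrm{Im}(\beta_F w))\geq 0$ matches the paper's up to the overall factor coming from $\rho = 2\psi-1$.
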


\begin{proof}
Take $\psi(z,w) = \half |w|^2: S \times \bar{B} \rightarrow \bR$. In local coordinates on $S$ as before, one gets
\begin{equation} \label{eq:ddc}
\begin{aligned}
-d(d\psi \circ J_A)| (S \times \partial \bar{B}) = &\;
- \mathrm{im}(X_{A_1} d\bar{w}) \wedge \mathit{ds} 
-\mathrm{im}(X_{A_2} d\bar{w}) \wedge \mathit{dt} 
\\ & 
+ \mathrm{im}(\bar{w} X_{\partial_t A_1}) \mathit{ds} \wedge \mathit{dt} 
- \mathrm{im}(\bar{w} X_{\partial_s A_2}) \mathit{ds} \wedge \mathit{dt} 
\\ &
- \mathrm{im}(\bar{w}\, dX_{A_1}) \wedge \mathit{ds} 
- \mathrm{im}(\bar{w}\, dX_{A_2}) \wedge \mathit{dt}.
\end{aligned}
\end{equation}
The Levi form $L_\psi$ is obtained by restricting \eqref{eq:ddc} to the space $T(S \times \partial \bar{B}) \cap J_A T(S \times \partial \bar{B})$, which is \eqref{eq:horizontal-subspace}. From \eqref{eq:ddc}, one gets
\begin{equation} \label{eq:levi}
L_\psi = \mathrm{im}( \bar{w} X_{F_A} ).
\end{equation}
Let's spell out the meaning of this. $X_{F_A}$, defined in \eqref{eq:2-curvatures}, is a complex-valued skew bilinear form on $TS$. In \eqref{eq:levi}, we identify that space with \eqref{eq:horizontal-subspace} by pullback. The desired property follows immediately from \eqref{eq:levi}.
\end{proof}

\subsection{The Cauchy-Riemann equation}
Now assume that $S$ is a (connected) Riemann surface. We will consider the inhomogeneous Cauchy-Riemann equation
\begin{equation} \label{eq:inhomogeneous}
\left\{
\begin{aligned} 
& u: S \longrightarrow B, \\
& \big( Du - X_A(u) \big)^{0,1} = 0.
\end{aligned}
\right.
\end{equation}
More precisely, $X_A$ is being evaluated at points $\sigma(z) = (z,u(z))$, and the $(0,1)$ part is taken with respect to the given complex structures on the source and target. Equivalently, this is the equation for $\sigma$ to be a pseudo-holomorphic section:
\begin{equation} \label{eq:homogeneous}
\left\{
\begin{aligned}
& \sigma = (\mathit{id},u): S \longrightarrow S \times B, \\
& D\sigma \circ i = J_A(\sigma) \circ d\sigma.
\end{aligned}
\right.
\end{equation}
In local coordinates on $S$, \eqref{eq:inhomogeneous} becomes
\begin{equation} \label{eq:concrete-inho}
\partial_s u + i \partial_t u = X_{A_1(s,t)}(u) + i X_{A_2(s,t)}(u).
\end{equation}
As one would expect from the appearance of connections, there is an action of the group of gauge transformations. Namely, if $A^\dag = \Phi_*A$ and $u^\dag(z) = \rho_{\Phi(z)}(u(z))$ for some $\Phi \in \smooth(S,G)$, then
\begin{equation}
D u^\dag - X_{A^\dag}(u^\dag) = D\rho_{\Phi} (Du - X_A(u)),
\end{equation}
where on the right hand side, $D\rho_{\Phi}$ is the derivative of $\rho_{\Phi(z)} \in \mathit{Aut}(B)$ at the point $u(z)$. Hence, passing from $(u,A)$ to $(u^\dag,A^\dag)$ transforms solutions of one equation \eqref{eq:inhomogeneous} to the other. Equivalently, the gauge transformation determines a diffeomorphism of $S \times B$ which is fibrewise a hyperbolic isometry, and that diffeomorphism relates the almost complex structures $J_A$ and $J_{A^\dag}$.

We also want to quickly review the notions of energy for solutions of \eqref{eq:inhomogeneous}. The geometric and topological energies are, respectively,
\begin{align} \label{eq:geometric-energy}
& E_{\mathit{geom}}(u) = \half \int_S |du - X_A(u)|^2_{\mathit{hyp}}, 
\\
& \label{eq:topological-energy}
E_{\mathit{top}}(u) = \int_S u^*\omega_{\mathit{hyp}} - d (H_A(u))
\end{align}
(for general maps $u$ and noncompact $S$, the integral \eqref{eq:topological-energy} may not converge, but we'll ignore that issue as it will irrelevant for our applications). The difference between the two notions is given by a curvature term:
\begin{equation} \label{eq:e-e}
E_{\mathit{top}}(u) = E_{\mathit{geom}}(u) + \int_S H_{F_A}(u).
\end{equation}
In particular, if $A$ is nonnegatively curved, 
\begin{equation}
E_{\mathit{geom}}(u) \leq E_{\mathit{top}}(u).
\end{equation}
%
In other words, the situation is as follows: the two notions of energy are obtained by integrating $\omega_{A,\mathit{geom}}$ and $\omega_{A,\mathit{top}}$ over $\sigma$. Then, \eqref{eq:top-omega} explains \eqref{eq:e-e}, and the behaviour for nonnegatively curved connections follows from \eqref{eq:o-o}.

\subsection{Convergence and compactness\label{subsec:schwarz}}
If the connection $A$ is flat, one can use a local gauge transformation to reduce \eqref{eq:inhomogeneous} to $\bar\partial u^\dag = 0$, hence to elementary complex analysis.

\begin{lemma} \label{th:schwarz}
Suppose that $S = B_r \subset \bC$ is the open disc of radius $r$, and that the connection $A$ is flat. Then for any solution of \eqref{eq:inhomogeneous}, we have
\begin{equation}
|du(z) - X_A(z,u(z))|_{\mathit{hyp}} \leq \frac{r}{r^2-|z|^2}.
\end{equation}
\end{lemma}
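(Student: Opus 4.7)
The plan is to reduce \eqref{eq:inhomogeneous} to the classical Schwarz--Pick lemma via a gauge transformation, as suggested by the remark preceding the statement. Since $A$ is flat and $B_r$ is simply connected, standard ODE integration along paths produces a gauge transformation $\Phi \in \smooth(B_r,G)$ with $\Phi_* A = 0$ (unique once a basepoint value is chosen). Set $u^\dag(z) = \rho_{\Phi(z)}(u(z))$. The gauge covariance identity recalled just before the lemma,
\begin{equation}
Du^\dag - X_{\Phi_*A}(u^\dag) = D\rho_\Phi \bigl(Du - X_A(u)\bigr),
\end{equation}
specializes to $Du^\dag = D\rho_\Phi(Du - X_A(u))$. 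In particular the $(0,1)$-part of $Du^\dag$ vanishes, so $u^\dag: B_r \to B$ is a genuine holomorphic map.

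Now the Schwarz--Pick lemma applies: with the normalization of $g_{\mathit{hyp}}$ from \eqref{eq:x-gamma} (curvature $-4$, hyperbolic metric on $B_r$ equal to $r^2(r^2-|z|^2)^{-2}|dz|^2$), any holomorphic $B_r \to B$ is distance-decreasing, giving
\begin{equation}
|du^\dag(z)|_{\mathit{hyp}} \;\leq\; \frac{r}{r^2-|z|^2}.
\end{equation}
To transport this back, observe that for each $z$ the map $\rho_{\Phi(z)}: B \to B$ is a hyperbolic isometry, hence its derivative preserves $|\cdot|_{\mathit{hyp}}$ pointwise on fibres. Consequently
\begin{equation}
|du(z) - X_A(z,u(z))|_{\mathit{hyp}} \;=\; |D\rho_\Phi(du - X_A(u))(z)|_{\mathit{hyp}} \;=\; |du^\dag(z)|_{\mathit{hyp}},
\end{equation}
and combining the two displays yields the claimed inequality.

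There is no genuine obstacle in this argument; the only points requiring attention are purely bookkeeping. One must verify that the constant chosen in Schwarz--Pick matches the curvature normalization (a direct computation via the conformal map $z \mapsto z/r$), and one must interpret $|du - X_A(u)|_{\mathit{hyp}}$ correctly as the hyperbolic norm of the complex-linear map $T_zS \to T_{u(z)}B$, evaluated with the standard Euclidean structure on $S$; this is automatic from \eqref{eq:concrete-inho} since that equation forces $du - X_A(u)$ to be $\bC$-linear, hence determined by its value on $\partial_s$.
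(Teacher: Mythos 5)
Your argument is correct and is precisely the paper's proof: gauge-trivialize the flat connection on the simply connected disc $B_r$, observe that $u^\dagger = \rho_\Phi(u)$ is then holomorphic, apply the Schwarz--Pick lemma with the stated normalizations, and transport the bound back using the fact that each $\rho_{\Phi(z)}$ is a hyperbolic isometry. The bookkeeping points you flag (curvature normalization, interpretation of $|\cdot|_{\mathit{hyp}}$ as Euclidean-on-source, hyperbolic-on-target, and $\bC$-linearity of $du - X_A(u)$) are all handled correctly.
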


After a gauge transformation, this is just the Schwarz Lemma (holomorphic maps between hyperbolic Riemann surfaces can't increase distances).

\begin{lemma} \label{th:hyp-cylinder}
Suppose that $S = (-l/2,l/2) \times S^1$ is the open cylinder of length $l$, and that the connection $A$ is flat, with hyperbolic holonomy $g \in G$ around $\{0\} \times S^1$. Write $\mathrm{tr}(g) = \pm(\lambda+\lambda^{-1})$, for some $\lambda>1$. Then, a solution of \eqref{eq:inhomogeneous} can exist only if
\begin{equation} \label{eq:l-ineq}
l \leq \frac{\pi}{\log(\lambda^2)}.
\end{equation}
\end{lemma}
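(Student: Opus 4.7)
The plan is to reduce the inhomogeneous equation to the classical Schwarz--Pick lemma by trivializing $A$ via a gauge transformation on the universal cover. Concretely, I would first lift everything to $\tilde S = (-l/2,l/2) \times \bR$ along $p(s,t) = (s, t \bmod 1)$; since $\tilde S$ is simply connected and $p^*A$ is flat, there exists a gauge transformation $\Psi: \tilde S \to G$ with $\Psi_*(p^*A) = 0$, normalized by $\Psi(0,0) = \Id$. A parallel-transport calculation shows that $\Psi$ is twisted-equivariant with respect to the deck translation $\tau(s,t) = (s, t+1)$:
\begin{equation*}
\Psi(s, t+1) = g^{-1}\Psi(s, t),
\end{equation*}
where $g$ is the holonomy of $A$ around $\{0\} \times S^1$. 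Consequently the map $v(s,t) = \rho_{\Psi(s,t)}(\tilde u(s,t))$ is genuinely holomorphic, $\bar\partial v = 0$, and satisfies $v(s, t+1) = \rho_{g^{-1}}(v(s, t))$.

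Next, I would apply Schwarz--Pick to $v: \tilde S \to B$, using throughout the hyperbolic metric of curvature $-4$ that the paper has adopted. Identifying $\tilde S$ with $\{0 < \mathrm{im}(z) < \pi\}$ via $z \mapsto i\pi(z + l/2)/l$ and pulling back the standard metric of curvature $-4$ on the latter (which is $|dz|^2/(4\sin^2(\mathrm{im}(z)))$) gives
\begin{equation*}
g_{\tilde S} = \frac{(\pi/(2l))^2}{\cos^2(\pi s/l)}\,(ds^2 + dt^2).
\end{equation*}
In particular the hyperbolic distance from $(0,0)$ to $(0,1)$ in $\tilde S$ equals $\pi/(2l)$, and by Schwarz--Pick the same upper bound holds for $\mathrm{dist}_{\mathit{hyp}}(v(0,0), v(0,1))$ in $B$.

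For the matching lower bound, I observe that with $\mathrm{tr}(g) = \pm(\lambda + \lambda^{-1})$ and $\lambda > 1$, the translation length of $g^{-1}$ on $B$ in the curvature $-4$ normalization is $\log \lambda$, and by definition this is the infimum of $\mathrm{dist}_{\mathit{hyp}}(x, \rho_{g^{-1}}(x))$ over $x \in B$. Applied at $x = v(0,0)$ this gives
\begin{equation*}
\log \lambda \;\leq\; \mathrm{dist}_{\mathit{hyp}}\bigl(v(0,0), \rho_{g^{-1}}(v(0,0))\bigr) \;=\; \mathrm{dist}_{\mathit{hyp}}\bigl(v(0,0), v(0,1)\bigr) \;\leq\; \frac{\pi}{2l},
\end{equation*}
and rearranging yields $l \leq \pi/(2\log\lambda) = \pi/\log(\lambda^2)$, as claimed. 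No serious analytic obstacle appears; the only care needed is consistent bookkeeping of the curvature $-4$ normalization in both the Schwarz--Pick bound and the translation length computation. The conceptual content of the argument is that the holonomy forces $v$ to send two points of $\tilde S$ that lie at hyperbolic distance $\pi/(2l)$ to a pair of $g^{-1}$-translates, which cannot be closer than the translation length of $g$.
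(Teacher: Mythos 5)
Your proposal is correct and takes essentially the same route as the paper: gauge-trivialize on the universal cover, reduce to a holomorphic map into $B$, and apply the Schwarz--Pick lemma to compare the girth $\pi/(2l)$ of the source cylinder with the translation length $\log\lambda$ of the hyperbolic holonomy. The only (cosmetic) difference is that you keep the whole estimate on the universal cover and compute the translation length directly, whereas the paper descends to the quotient cylinder $B/\rho_g \cong (0,\pi/\log(\lambda^2))\times S^1$ and cites the waist comparison from Kobayashi; the equivariance identity $\Psi(s,t+1)=g^{-1}\Psi(s,t)$ and the curvature $-4$ bookkeeping both check out.
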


\begin{proof}
We again apply a gauge transformation to trivialize the connection, but this time working on the universal cover $(-l/2,l/2) \times \bR$. After doing that and then again quotienting out by deck transformations, a solution \eqref{eq:inhomogeneous} turns into a holomorphic map
\begin{equation}
S \longrightarrow B/\rho_g \iso (0,\pi/\log(\lambda^2)) \times S^1,
\end{equation}
which induces an isomorphism of fundamental groups. Such a map can't exist if the source cylinder is longer than the target one; this follows from the Schwarz Lemma, given that the hyperbolic metric on a cylinder of length $l$ has a ``waist'' of girth $\pi/2l$; see e.g.\ \cite[p.~13]{kobayashi05} (taking into account our scaling convention for the metric).
%
\end{proof}

\begin{lemma} \label{th:elliptic-cylinder}
Suppose that $S = (-l/2,l/2) \times S^1$ as before, and that the connection $A$ is flat, with elliptic holonomy $g \in G$ around $\{0\} \times S^1$. Write $\mathrm{tr}(g) = \pm 2\cos(\theta)$, for some $\theta \in (0,\pi)$. Let $w \in B$ be the unique fixed point of $\rho_g$. Then, any solution of \eqref{eq:inhomogeneous} satisfies
\begin{equation}
\sinh(2\,\mathrm{dist}_{\mathit{hyp}}(u(0,0),w)) \leq \frac{\sinh(\pi/2l)}{\sin \theta}.
\end{equation}
\end{lemma}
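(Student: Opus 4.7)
The strategy mirrors that of Lemma~\ref{th:hyp-cylinder}: pull everything back to the universal cover and trivialize the flat connection, reducing to a genuinely holomorphic map, then invoke a Schwarz-type inequality. The difference for elliptic holonomy is that $B/\rho_g$ is not a smooth annulus, so instead of pushing $u$ down one keeps it as an equivariant holomorphic map $\tilde u:\tilde S \to B$ on $\tilde S = (-l/2,l/2)\times\bR$.

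First I would lift $A$ and $u$ to $\tilde S$. Since $\tilde S$ is simply connected and $A$ is flat, there is a $\Phi:\tilde S\to G$ with $\Phi_*A = 0$; the deck translation $t\mapsto t+1$ forces $\Phi(s,t+1) = g\,\Phi(s,t)$, so the gauge-transformed map $\tilde u(s,t) = \rho_{\Phi(s,t)}(u(s,t))$ is holomorphic and satisfies $\tilde u(s,t+1)=\rho_g(\tilde u(s,t))$. After absorbing an inner conjugation into $\Phi$, one may arrange $w=0$ and $\rho_g(z)=e^{2i\theta}z$; the sign ambiguity in $\mathrm{tr}(g)=\pm 2\cos\theta$ is irrelevant since both signs descend to the same element of $\mathit{PU}(1,1)$.

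Next I would equip $\tilde S$ with the unique complete hyperbolic metric of curvature $-4$. The biholomorphism $\Phi_0(z)=e^{i\pi z/l}$ sends $\tilde S$ onto the right half-plane $\{\mathrm{re}\,w>0\}$, and pulling back its standard hyperbolic metric $|dw|^2/(4(\mathrm{re}\,w)^2)$ yields
\begin{equation*}
g_{\tilde S} = \frac{(\pi/l)^2}{4\cos^2(\pi s/l)}(ds^2+dt^2).
\end{equation*}
The Ahlfors–Schwarz lemma, applied to the holomorphic map $\tilde u$ between two complete Riemann surfaces of the same constant curvature $-4$, then gives $\tilde u^*g_{\mathit{hyp}}\le g_{\tilde S}$; in particular $\tilde u$ is distance-non-increasing.

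Apply this to the curve $\gamma(t)=(0,t)$, $t\in[0,1]$, whose $g_{\tilde S}$-length is exactly $\pi/(2l)$. Its image in $B$ joins $p:=\tilde u(0,0)$ to $\rho_g(p)=e^{2i\theta}p$, so $\mathrm{dist}_{\mathit{hyp}}(p,e^{2i\theta}p)\le \pi/(2l)$. The standard identity $\sinh(\mathrm{dist}_{\mathit{hyp}}(z_1,z_2)) = |z_1-z_2|/\sqrt{(1-|z_1|^2)(1-|z_2|^2)}$ gives
\begin{equation*}
\sinh(\mathrm{dist}_{\mathit{hyp}}(p,e^{2i\theta}p)) = \frac{2|p|\sin\theta}{1-|p|^2} = \sin\theta\cdot\sinh(2\,\mathrm{dist}_{\mathit{hyp}}(p,0)),
\end{equation*}
and the claim follows, since $\mathrm{dist}_{\mathit{hyp}}(p,0)=\mathrm{dist}_{\mathit{hyp}}(u(0,0),w)$ (all gauge/conjugation maps are hyperbolic isometries). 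There is no serious obstacle; the only delicate points are keeping track of the gauge transformation so that the equivariance takes the correct form $\rho_g$, and choosing $\Phi_0$ so that the ``waist'' $\{s=0\}$ has $g_{\tilde S}$-length precisely $\pi/(2l)$, matching the constant in the statement.
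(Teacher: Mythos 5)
Your argument is correct and follows the same route as the paper's proof: trivialize the flat connection on the universal cover to obtain a holomorphic map $\tilde u$ satisfying $\tilde u(s,t+1)=\rho_g(\tilde u(s,t))$, apply the Schwarz(--Pick) lemma to bound $\mathrm{dist}_{\mathit{hyp}}(\tilde u(0,0),\tilde u(0,1))$ by the waist girth $\pi/2l$ of the strip, and then use the isoceles-triangle trigonometric identity $\sin\theta=\sinh(\mathrm{dist}_{\mathit{hyp}}(p,\rho_g p))/\sinh(2\,\mathrm{dist}_{\mathit{hyp}}(p,w))$ to conclude. The only cosmetic difference is that you carry out the half-plane metric computation and the normalization $w=0$ explicitly, whereas the paper cites Kobayashi and states the identity directly.
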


\begin{proof}
After the usual gauge transformation, we get a holomorphic map $u^\dag: (-l/2,l/2) \times \bR \rightarrow B$, which satisfies $u^\dag(0,0) = u(0,0)$ and $u^\dag(s,t+1) = \rho_g(u^\dag(s,t))$. In the hyperbolic metric on the domain, the distance between the points $(0,0)$ and $(0,1)$ is $\pi/2l$, as in Lemma \ref{th:hyp-cylinder}. By hyperbolic trigonometry, the distance between $p = u^\dag(0,0)$ and $q = u^\dag(0,1)$ satisfies
\begin{equation}
\sin \theta = \frac{\sinh( \mathrm{dist}_{\mathit{hyp}}(p,q))}{\sinh(2\,\mathrm{dist}_{\mathit{hyp}}(p,w))}.
\end{equation}
The rest is once again Schwarz' Lemma.
\end{proof}

\begin{lemma} \label{th:flat-convergence}
Let $S$ be any Riemann surface, equipped with a flat connection $A$, and $(u_k)$ a sequence of solutions of \eqref{eq:inhomogeneous}. Suppose that there is a sequence of points $z_k$, contained in a compact subset of $S$, such that $u_k(z_k)$ goes to $\partial \bar{B}$. Then $u_k$ goes to $\partial \bar{B}$, in the sense of uniform convergence on compact subsets. 
\end{lemma}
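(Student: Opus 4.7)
The plan is to reduce locally to the Schwarz--Pick theorem via a flat gauge trivialization, to show that both ``$u_k(z) \to \partial\bar{B}$'' and ``$u_k(z)$ stays in a compact subset of $B$'' propagate to small neighbourhoods, and then to chain such propagations along a path in $S$.

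First, I would establish a local propagation statement. Around any $z_0 \in S$, pick a disc neighbourhood $D \ni z_0$ with a gauge transformation $\Phi: D \to G$ trivializing $A$, so that $u_k^\dag = \rho_\Phi \circ u_k: D \to B$ is genuinely holomorphic. By Schwarz--Pick, $\mathrm{dist}_{\mathit{hyp}}(u_k^\dag(z), u_k^\dag(z_0))$ is bounded by the hyperbolic distance in $D$ between $z$ and $z_0$, hence uniformly bounded on a sub-disc $U \subset D$. From this I would extract two consequences on $U$: (a) if $|u_k(z_0)| \to 1$, then $|u_k^\dag(z_0)| \to 1$ (since $\rho_{\Phi(z_0)}$ extends continuously to $\partial\bar{B}$), and the Schwarz--Pick bound then forces $|u_k^\dag| \to 1$ uniformly on $U$ because hyperbolic balls of bounded radius centred near $\partial\bar{B}$ have vanishing Euclidean diameter; translating back via the function $(z, w) \mapsto |\rho_{\Phi(z)^{-1}}(w)|$, which is uniformly continuous on $\bar{U} \times \bar{B}$ and equals $1$ on $\bar{U} \times \partial\bar{B}$, one concludes $|u_k| \to 1$ uniformly on $U$. (b) Symmetrically, if along a subsequence $u_k(z_0)$ stays in a compact $L \subset B$, then $u_k^\dag(z_0) \in \rho_{\Phi(z_0)}(L)$, Schwarz--Pick puts $u_k^\dag(U)$ in a compact of $B$, and applying $\rho_{\Phi(z)^{-1}}$ pointwise (with $\Phi(\bar{U})$ precompact in $G$) keeps $u_k(U)$ in a compact of $B$.

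Next I would argue by contradiction: if the conclusion failed on some compact $K \subset S$, then after passing to a subsequence there would be $y_j \in K$ with $u_{k_j}(y_j)$ in a compact of $B$; extracting further so $y_j \to y_* \in K$, part (b) gives that $u_{k_j}(y_*)$ still lies in a compact of $B$. Extracting yet another subsequence with $z_{k_j} \to z_* \in S$, part (a) applied at $z_*$ yields $u_{k_j}(z_*) \to \partial\bar{B}$. Since $S$ is connected and hence path-connected, join $y_*$ to $z_*$ by a path $\gamma$, cover it by finitely many propagation neighbourhoods $U_1 \ni y_*,\dots,U_N \ni z_*$ with $U_i \cap U_{i+1} \neq \emptyset$, and iterate: starting from $u_{k_j}(y_*)$ bounded, apply (b) on $U_1$, transfer via a point of $U_1 \cap U_2$, apply (b) on $U_2$, and so on. After $N$ steps one concludes that $u_{k_j}(z_*)$ is bounded in $B$, contradicting (a). The main technical point is the careful setup in the first step, ensuring that the $z$-dependent gauge twist (by hyperbolic, not Euclidean, isometries) correctly preserves both forms of boundary behaviour; once this is arranged, Schwarz--Pick does the real work and the rest is a routine connectedness argument.
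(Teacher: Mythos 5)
Your proof is correct, but it takes a genuinely different route from the paper's. The paper passes to the universal cover to get a single global gauge transformation (using flatness and simple connectivity), so the $u_k$ become globally defined bounded holomorphic functions; it then applies Montel's theorem to extract a locally uniform limit $u_\infty^\dag$, and uses the open mapping theorem to see that the limit is constant once it touches $\partial\bar{B}$; a standard subsequence argument finishes. You instead work entirely locally: trivialize $A$ on small discs, use Schwarz--Pick to get a hyperbolic displacement bound, observe that this bound propagates both ``near $\partial\bar{B}$'' and ``in a compact of $B$'' from a point to a slightly smaller disc (with the $z$-dependent gauge twist absorbed uniformly because $\rho_g$ preserves $\partial\bar{B}$ and $\Phi$ has precompact image), and then chain along a path to obtain a contradiction. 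Your version is more quantitative and avoids passing to the universal cover; the paper's is shorter and more purely qualitative, relying on Montel and the open mapping theorem rather than an explicit propagation lemma. Two minor points you should make explicit when writing this up: (i) in the chaining step you apply your propagation statement (b) with varying base point $y_j$ that only converges to $y_*$, so you need the radius of the propagation subdisc $U$ to be bounded below uniformly as the base point ranges over a compact set (which Schwarz--Pick does give, since the bound depends only on the hyperbolic radius of $U$ in $D$), and (ii) the covering $U_1,\dots,U_N$ of the path and the intermediate transfer points must all be chosen before any further subsequence extractions so that a single subsequence carries both the ``bounded at $y_*$'' and ``escapes at $z_*$'' conclusions simultaneously. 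Neither is a gap, just something to pin down in the writeup.
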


\begin{proof}
Since we can always replace $S$ by its universal cover, let's assume that it is simply-connected. After a gauge transformation, we get a sequence of bounded holomorphic functions $u_k^\dag$. This sequence has a convergent (on compact subsets) subsequence, and hence, so does $u_k$. Let $u_\infty^\dag, u_\infty: S \rightarrow \bar{B}$ be the limits. $u_\infty^\dag$ is a holomorphic function which meets $\partial \bar{B}$, hence must be constant at a point of $\partial \bar{B}$, by the open mapping theorem. It follows that $u_\infty$ also take values in $\partial \bar{B}$. By same argument, every subsequence of $(u_k)$ has a subsequence with the desired behaviour. From this, the result follows.
\end{proof}


To go beyond the flat case, we need to replace holomorphic function theory by more flexible analytic tools. We work with a fixed $S$ and connection $A$. Bounds for derivatives will be with respect to some metric on $S$ (the choice is irrelevant, since we will always work on compact subsets), and the Euclidean metric on $B$.

\begin{lemma} \label{th:elliptic}
Given a compact subset $K \subset S$ and an $r \geq 0$, there is a constant $C$ such that $\|u|K\|_{C^r} \leq C$ for all solutions $u$ of \eqref{eq:inhomogeneous}.
\end{lemma}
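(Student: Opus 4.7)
The starting observation is that the Euclidean $C^0$-bound is automatic: since $u$ takes values in $B \subset \bC$, we have $|u(z)| < 1$ pointwise, independent of which solution we consider. Moreover, from \eqref{eq:x-gamma} the vector field $X_\gamma$ is a polynomial of degree $2$ in $w$, so $X_A(z,w)$ extends smoothly to all of $S \times \bar{B}$; in particular, on any compact $K \subset S$, the inhomogeneity $X_A(z,u(z))$ appearing on the right hand side of \eqref{eq:inhomogeneous} is automatically bounded, with bound depending only on $K$ and $A$ (not on the solution $u$).

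From here, the proof is a routine elliptic bootstrap for the inhomogeneous Cauchy-Riemann operator. Fix an auxiliary compact $K' \subset S$ with $K \subset \mathrm{int}(K')$, and a cutoff $\chi$ supported in $K'$ with $\chi \equiv 1$ near $K$. For any $p \in (2,\infty)$, the standard Calder\'on-Zygmund estimate for the $\bar\partial$-operator, applied to $\chi u$, gives
\begin{equation*}
\|u\|_{W^{1,p}(K)} \leq C_p \bigl( \|\bar\partial u\|_{L^p(K')} + \|u\|_{L^p(K')} \bigr) \leq C_p',
\end{equation*}
where the right-hand side is uniformly bounded by the previous paragraph. Sobolev embedding (for $p>2$) then yields a uniform bound on $\|u\|_{C^{0,\alpha}(K)}$ for some $\alpha \in (0,1)$.

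Once $u$ is uniformly H\"older, so is the composition $z \mapsto X_A(z,u(z))$. Schauder estimates for $\bar\partial$ then give a uniform bound on $\|u\|_{C^{1,\alpha}(K)}$, after possibly shrinking $K$ (or enlarging $K'$ at the start). Iterating this bootstrap $r$ times, using that $X_A$ is smooth jointly in $(z,w)$ and polynomial (hence globally Lipschitz of every order) in the $w$-variable on $\bar{B}$, one obtains a uniform $\|u\|_{C^r(K)}$-bound.

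There is no real obstacle to speak of: the only place where the argument could conceivably fail---non-compactness of the target---is neutralized at the very first step by the fact that $u$ lands in the bounded region $B$, together with the smooth extension of $X_A$ across $\partial\bar B$. In particular, no Gromov-style bubbling analysis is needed: there are no $J_A$-holomorphic spheres in the fibre $B$, and the a priori $C^0$-bound precludes any energy concentration in the target direction.
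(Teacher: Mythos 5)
Your proof is correct and follows essentially the same route as the paper: an automatic $C^0$-bound since $u$ lands in $B$, a uniform bound on the inhomogeneity $X_A(z,u)$ coming from its smooth extension to $S\times\bar B$, and then a standard local elliptic bootstrap for $\bar\partial$. The paper's sketch uses the same two ingredients (with the bootstrap phrased via $W^{k,p}$ rather than Schauder, an inessential variant) and also notes the equivalent reformulation as regularity for $J_A$-holomorphic sections, where again the extension of $J_A$ over $S\times\bar B$ is what neutralizes noncompactness.
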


\begin{proof}[Sketch of proof]
Let's work in local coordinates on $S$. There is a uniform bound on the right hand side of \eqref{eq:concrete-inho}, and of course on $u$ itself. By elliptic regularity, we get a $W^{1,p}$ bound on $u$ (for any $1<p<\infty$). From there, one proceeds by bootstrapping.
\end{proof}

One can also view Lemma \ref{th:elliptic} as a consequence of the regularity theory for pseudo-holomorphic maps (see e.g.\ \cite[Appendix B]{mcduff-salamon-big}) to \eqref{eq:homogeneous}; the noncompactness of $S \times B$ does not matter here, since $J_A$ extends to $S \times \bar{B}$.
%

\begin{lemma} \label{th:levi-apply}
Suppose that $A$ has nonnegative curvature. Let $(u_k)$ be a sequence of solutions of \eqref{eq:inhomogeneous}. Suppose that $z_k$ is a sequence of points, contained in a compact subset of $S$, such that $u_k(z_k)$ goes to $\partial \bar{B}$. Then $u_k$ goes to $\partial \bar{B}$, in the sense of uniform convergence on compact subsets.
\end{lemma}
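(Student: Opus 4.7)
\textbf{Proof plan for Lemma \ref{th:levi-apply}.}

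The plan is to pass to a $C^\infty_{\mathrm{loc}}$ limit and then invoke the fact that Levi convex hypersurfaces act as barriers for pseudo-holomorphic sections. First, I would use Lemma \ref{th:elliptic} to extract, by Arzel\`a--Ascoli and a diagonal procedure, a subsequence (still denoted $u_k$) converging in $C^\infty_{\mathrm{loc}}(S,\bar B)$ to a map $u_\infty \colon S \to \bar B$; since the right-hand side of \eqref{eq:concrete-inho} extends smoothly to $\bar B$, the limit $u_\infty$ still satisfies \eqref{eq:inhomogeneous}, and equivalently $\sigma_\infty=(\mathrm{id},u_\infty)\colon S\to S\times\bar B$ is $J_A$-holomorphic. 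After passing to a further subsequence, $z_k \to z_\infty$ in the compact set containing the $z_k$, and by continuity $u_\infty(z_\infty)\in\partial\bar B$; in other words, $\sigma_\infty$ touches the hypersurface $N = S\times\partial\bar B$ at the interior point $z_\infty$.

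The second step is the core of the argument: I want to conclude that the image $u_\infty(S)$ is entirely contained in $\partial\bar B$. This follows from the standard maximum principle for $J$-holomorphic curves at a Levi convex boundary hypersurface, which in our setting reads: a connected pseudo-holomorphic map into $S\times\bar B$ that meets $N$ at an interior point of its domain must lie entirely in $N$. Concretely, set $f = \psi\circ\sigma_\infty = \tfrac12 |u_\infty|^2$. Pseudo-holomorphicity of $\sigma_\infty$ gives $dd^c(\psi\circ\sigma_\infty) = \sigma_\infty^*\,dd^c\psi$, so the usual first-order argument at an interior maximum shows that $d\sigma_\infty(T_{z_\infty}S)$ lies in the maximal $J_A$-complex subspace of $T_{\sigma_\infty(z_\infty)}N$; combined with the nonnegativity of the Levi form $L_\psi = \mathrm{im}(\bar w X_{F_A})$ from Lemma \ref{th:levi}, this means that $f$ satisfies a local differential inequality of the form $\Delta f \geq -C(|f|+|\nabla f|)$ near $z_\infty$ on $\{f = \tfrac12\}$. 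The strong maximum principle (together with unique continuation / Aronszajn's theorem, or the similarity principle, applied to the nonlinear elliptic equation \eqref{eq:inhomogeneous}) then forces $f \equiv \tfrac12$ on the connected surface $S$, i.e.\ $u_\infty(S)\subset \partial\bar B$.

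Finally, I would pass from this limit statement to the uniform convergence statement by the usual subsequence trick: if $u_k$ did not converge to $\partial\bar B$ uniformly on some compact $K\subset S$, there would be $\varepsilon>0$ and a subsequence with $\sup_{z\in K}\mathrm{dist}(u_k(z),\partial\bar B)\geq\varepsilon$; applying Steps 1--2 to this subsequence produces a $C^\infty_{\mathrm{loc}}$-limit $u_\infty$ with $u_\infty(K)\subset\partial\bar B$, which contradicts the uniform lower bound. The main obstacle I anticipate is the rigorous deployment of the Levi convex maximum principle in the almost complex setting: although Lemma \ref{th:levi} produces the pointwise Levi nonnegativity, turning this into the barrier property really relies on standard but nontrivial pseudoconvexity machinery for almost complex structures (this is presumably the place where the author appeals to Sukhov's expertise, per Remark \ref{th:sukhov}), and it is the only step that does not reduce to elementary calculations.
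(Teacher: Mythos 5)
Your overall structure matches the paper's exactly: extract a $C^\infty_{\mathrm{loc}}$-limit $u_\infty$ using Lemma \ref{th:elliptic}, observe that $\sigma_\infty = (\mathrm{id}, u_\infty)$ is a $J_A$-holomorphic section touching the Levi convex hypersurface $S \times \partial\bar{B}$, conclude $u_\infty$ maps into $\partial\bar{B}$, and then run the subsequence trick from Lemma \ref{th:flat-convergence}. The paper cites \cite[Corollary 4.7]{diederich-sukhov08} for the middle step; you try to argue it directly.

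That direct argument, as sketched, would not go through. Levi convexity of the hypersurface gives nonnegativity of the Levi form only on the complex tangent space of the hypersurface itself; it does \emph{not} give plurisubharmonicity of $\psi = \tfrac{1}{2}|w|^2$ on an open neighbourhood of $S \times \partial\bar{B}$ inside $S \times \bar{B}$ (indeed the $J_A$-Hessian of $\psi$ can pick up negative contributions from $X_A$ in directions transverse to the hypersurface). So you cannot deduce an open-set differential inequality $\Delta f \geq -C(|f| + |\nabla f|)$ for $f = \psi \circ \sigma_\infty$ --- the phrase ``near $z_\infty$ on $\{f = \tfrac12\}$'' is a symptom of exactly this: a differential inequality that lives only on a level set has no content for a strong maximum principle or Aronszajn-type unique continuation. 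The actual mechanism behind \cite[Corollary 4.7]{diederich-sukhov08}, which the paper spells out in Remark \ref{th:sukhov}, is different: one reduces to a pseudo-holomorphic disc $\sigma$ meeting the hypersurface tangentially at a single boundary point, replaces the defining function $r$ by $-(-r)^\theta$ with $\tfrac12 < \theta < 1$ (this is where the nontrivial almost-complex pluripotential theory of Diederich--Sukhov enters, producing strict plurisubharmonicity from mere Levi convexity), and then derives a contradiction from the Hopf lemma applied to the negative subharmonic function $h = -(-r)^\theta \circ \sigma$, using the quadratic vanishing $r(\sigma(z)) \gtrsim -|z-1|^2$ furnished by first-order tangency. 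You correctly flagged this as the one genuinely nontrivial ingredient and correctly located it in the Diederich--Sukhov circle of ideas; but the concrete inequality-plus-unique-continuation route you propose is not the argument that closes the gap.
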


\begin{proof}
Lemma \ref{th:elliptic} shows that, after passing to a subsequence, the $u_k$ converge to some $u_\infty: S \rightarrow \bar{B}$. Consider the $J_A$-holomorphic map $\sigma_\infty(z) = (z,u_\infty(z))$. We know from Lemma \ref{th:levi} that $S \times \partial \bar{B}$ is Levi convex, which by \cite[Corollary 4.7]{diederich-sukhov08} implies that $\sigma_\infty^{-1}(S \times \partial \bar{B})$ is open and closed. But by assumption, that subset is also nonempty, which shows that $u_\infty$ takes values in $\partial \bar{B}$. The rest is as in Lemma \ref{th:flat-convergence}.
\end{proof}

\begin{remark} \label{th:sukhov}
The expository paper \cite{gaussier-sukhov11} provides helpful additional discussion of the result from \cite{diederich-sukhov08} which we have used. The strategy of proof goes as follows. Let $\bar{N}$ be a domain with smooth Levi convex boundary in an almost complex manifold, and $N$ its interior. Suppose that there is a pseudo-holomorphic map from a connected Riemann surface to $\bar{N}$, which meets the boundary but is not entirely contained in it. After restriction to a suitably chosen part of $S$, we get a pseudo-holomorphic map defined on the unit disc,
\begin{equation}
\left\{
\begin{aligned}
& \sigma: \bar{B} \longrightarrow \bar{N}, \\
& \sigma^{-1}(\partial \bar{N}) = \{1\}, \text{ and $\sigma$ is tangent to $\partial \bar{N}$ at that point.}
\end{aligned}
\right.
\end{equation}
Near $\sigma(1)$, there is a smooth local defining function $r$ for $\partial \bar{N}$, such that $-(-r)^\theta$ is (strictly) plurisubharmonic in $N$; the constant $0 < \theta < 1$ can be chosen arbitrarily \cite[Theorem 2.4]{gaussier-sukhov11}, and we will just need $\theta > 1/2$. After shrinking the domain further if necessary, we may assume that $r \circ \sigma$ is well-defined. Since it has vanishing derivative at $z = 1$, we have $r(\sigma(z)) \gtrsim -|z-1|^2$. Hence, $h = -(-r)^\theta \circ \sigma$ is a negative subharmonic function on $B$, which extends continuously to $\bar{B}$ and vanishes exactly at $1 \in \partial \bar{B}$, such that
\begin{equation}
h(z) \gtrsim -|z-1|^{2\theta}.
\end{equation}
But that contradicts the Hopf Lemma for such functions.
\end{remark}

\section{Floer cohomology\label{sec:floer}}
This section sets up Hamiltonian Floer cohomology theory for Lefschetz fibrations, and the operations on it. The basic approach is classical \cite{hofer-salamon95}. The issues that are specific to our context arise from the noncompactness of the symplectic manifold. Compared to previous versions of the same formalism (see e.g.\ \cite{seidel14b}), the main difference is that at infinity, our Hamiltonian automorphisms are modelled on isometries of the hyperbolic disc (following Section \ref{sec:maps-to-the-disc}), rather than area-preserving diffeomorphisms of the Euclidean plane. Even though the distinction may fundamentally be a technical one, the hyperbolic model turns out to be very helpful for our constructions. 

\subsection{Geometry of the target space}
Let $(E^{2n},\omega_E)$ be a symplectic manifold, together with a trivialization of its anticanonical bundle (for some compatible almost complex structure), and a proper map \eqref{eq:lefschetz} to the hyperbolic disc. Suppose that $\bar{N} \subset \bar{B}$ of $\partial \bar{B}$ is a closed collar neighbourhood. We then call $N = \bar{N} \cap B$ a neighbourhood of infinity (for instance, $\{ |w| \geq r\}$ for some $r<1$ is such a neighbourhood). Our main condition is:
\begin{itemize} \itemsep.5em
\item
For some neighbourhood of infinity $N = N_E$, there is a (necessarily unique) $\omega_E$-orthogonal splitting of the tangent space at any point $x \in \pi^{-1}(N)$ into horizontal and vertical parts,
\begin{equation} \label{eq:te-h-v}
\left\{
\begin{aligned}
& TE_x = TE_x^h \oplus TE_x^v, \\ 
& \mathit{TE}_x^v = \mathit{ker}(D\pi_x), \\
& D\pi_x: \mathit{TE}_x^h \longrightarrow TB_{\pi(x)} \text{ is a symplectic isomorphism.}
\end{aligned}
\right.
\end{equation}
\end{itemize}
In other words: $x$ is a regular point of $\pi$; $\mathit{TE}_x^v$ is a symplectic subspace; and the symplectic form on its $\omega_E$-orthogonal complement agrees with the pullback of $\omega_{\mathit{hyp}}$. As a consequence, \eqref{eq:lefschetz} restricts to a symplectic fibre bundle over $N$ with a preferred connection, given by $\mathit{TE}^h$, which is flat. Using that structure, one can construct a canonical compactification
\begin{equation} \label{eq:compactification}
\pi: \bar{E} \longrightarrow \bar{B}.
\end{equation}
The total space $\bar{E}$ is a compact manifold with boundary, whose interior is $E$, and with $\partial \bar{E} = \pi^{-1}(\partial \bar{B})$. The compactification again has the property that, over $\bar{N}$, it is a symplectic fibre bundle with a preferred flat connection. In other words, the splitting in \eqref{eq:te-h-v} extends to points of $\partial\bar{E}$, and so does the symplectic structure on the vertical part. Of course, $\omega_E$ itself does not extend to $\bar{E}$, because of the $\omega_{\mathit{hyp}}$ component, but there are slightly different symplectic forms that do:
\begin{equation} \label{eq:omega-bar-e}
\omega_{\bar{E}} = \omega_E - \pi^* (\psi \omega_{\mathit{hyp}}) + \pi^* (\psi \omega_{\bar{B}}), 
\end{equation}
where $\omega_{\bar{B}}$ is any (positive) symplectic form on $\bar{B}$, and $\psi: \bar{B} \rightarrow [0,1]$ is a function which vanishes outside $N$, and equals $1$ near $\partial \bar{B}$.
\begin{itemize}
\item
We denote by $\scrJ(E)$ the space of compatible almost complex structures $J$ on $E$ with the following two additional properties. First, there is some neighbourhood of infinity $N = N_J \subset N_E$, such that $\pi$ is $J$-holomorphic at all points of $\pi^{-1}(N)$. Secondly, $J$ extends to $\bar{E}$, and that extension is compatible with \eqref{eq:omega-bar-e}.
\end{itemize}
The last-mentioned condition (compatibility with $\omega_{\bar{E}})$ is independent of the choices made in \eqref{eq:omega-bar-e}. Even before imposing that condition, we already knew that $J$ preserves the extension of the splitting \eqref{eq:te-h-v} to points $x \in \partial \bar{E}$, and is compatible with the orientation of $T\bar{E}^h_x \iso T\bar{B}_{\pi(x)}$. Hence, the additional requirement is that the extension should be compatible with the symplectic structure on $T\bar{E}_x^v$.
We want to make one more observation concerning such almost complex structures. Because the hyperbolic metric scales up to infinity as one approaches $\partial\bar{B}$, there is a constant $C = C_J >0$ such that
\begin{equation} \label{eq:omega-bar-e-2}
\omega_{\bar{E}}(X,JX) \leq C\, \omega_E(X,JX) \quad \text{for all $X \in \mathit{TE}$.}
\end{equation}
In particular, the $\omega_E$-energy of non-constant $J$-holomorphic spheres is bounded below by a positive constant (since the same holds for $\omega_{\bar{E}}$, which lives on a compact space).
\begin{itemize} \itemsep.5em
\item For any $\gamma \in \frakg$, let $\scrH(E,\gamma)$ be the space of functions $H \in \smooth(E,\bR)$ whose restriction to $\pi^{-1}(N)$, for some neighbourhood of infinity $N = N_H \subset N_E$, equals the pullback of the function $H_\gamma$ from \eqref{eq:hamiltonian}. Similarly, given $a = a_t \mathit{dt} \in \Omega^1(S^1,\frakg)$, we write $\scrH(E,a)$ for the space of $H \in \smooth(S^1 \times E, \bR)$ such that $H_t \in \scrH(E,a_t)$ for all $t \in S^1$.
\end{itemize}
On $\pi^{-1}(N)$, the Hamiltonian vector field $X$ associated to $H \in \scrH(E,\gamma)$ is the unique horizontal lift of $X_\gamma$. Hence, $X$ extends smoothly to $\bar{E}$, and is tangent to $\partial\bar{E}$. 

\begin{remark} \label{th:families-of-neighbourhoods}
In our application, we will often encounter almost complex structures or functions which depend on additional parameters (i.e.\ families of such objects). In that case, it is always understood that the relevant neighbourhoods of infinity can be chosen locally constant with respect to the parameter. Equivalently, if $P$ is the parameter space, we are considering a closed collar neighbourhood of $P \times \partial \bar{B}$ inside $P \times \bar{B}$, and then taking its intersection with $P \times B$.
\end{remark}

Given $H = (H_t) \in \scrH(E,a)$ and its time-dependent vector field $X = (X_t)$, we will consider one-periodic orbits
\begin{equation}
\label{eq:periodic-orbit}
\left\{
\begin{aligned}
& x: S^1 \longrightarrow E, \\
& dx/dt = X_t.
\end{aligned}
\right.
\end{equation}

\begin{lemma}
If $a$ has nontrivial holonomy, all solutions of \eqref{eq:periodic-orbit} are contained in a compact subset of $E$. 
\end{lemma}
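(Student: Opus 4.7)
The plan is to argue by extending the dynamics to the compactification $\bar{E}$ and studying the flow's fixed-point set there.

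First, I will extend $X_t$ to a smooth vector field on $\bar{E}$ which is tangent to $\partial\bar{E}$ (this was observed just after the definition of $\scrH(E,\gamma)$). Let $\Phi_t: \bar{E} \to \bar{E}$ be the resulting time-dependent flow, and $F = \mathrm{Fix}(\Phi_1) \subset \bar{E}$, which is closed. Since $\partial\bar{E}$ is flow-invariant, $E$ is too, and solutions of \eqref{eq:periodic-orbit} are exactly points of $F \cap E$. The key claim will be that $F \cap E$ is closed in $\bar{E}$, hence compact; then the image of any periodic orbit, being a subset of the compact set $\bigcup_{t\in[0,1]}\Phi_t(F \cap E) \subset E$, lies in a fixed compact subset of $E$.

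Suppose on the contrary that some sequence $x_k \in F \cap E$ converges to $x_\infty \in F \cap \partial\bar{E}$. By continuity of the flow on $\bar{E}$, the orbits $t \mapsto \Phi_t(x_k)$ converge uniformly on $[0,1]$ to $t \mapsto \Phi_t(x_\infty)$, which stays in $\partial\bar{E}$ and hence projects into $\partial\bar{B}$. Fix a closed neighborhood of infinity $U \subset N_H$; for $k$ large, $\pi(\Phi_t(x_k)) \in U \subset N_H$ for all $t \in [0,1]$, so the whole orbit $\Phi_\cdot(x_k)$ lies in $\pi^{-1}(N_H)$. On that region $X_t$ is the horizontal lift of $X_{a_t}$ with respect to the (flat) symplectic connection, so $u_k(t) := \pi(\Phi_t(x_k))$ is a one-periodic orbit of $X_{a_t}$ on $B$. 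Consequently $u_k(0)$ is a fixed point of $\rho_g$, where $g \in G$ is the (by hypothesis nontrivial) holonomy of $a$.

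The contradiction will now come from the classification of fixed points of a nontrivial $\rho_g$ on $\bar{B}$: hyperbolic or parabolic $g$ has no fixed points in $B$ (only on $\partial\bar{B}$), while elliptic $g$ has a single fixed point $p \in B$ which is bounded away from $\partial\bar{B}$. In the first case $u_k(0) \in B$ cannot exist; in the second, $u_k(0) = p$ for all large $k$, contradicting $u_k(0) = \pi(x_k) \to \pi(x_\infty) \in \partial\bar{B}$. I expect the only mildly delicate step is the uniform-convergence argument that forces the orbits $\Phi_\cdot(x_k)$ to live inside $\pi^{-1}(N_H)$ for all $t$ simultaneously; everything else reduces to the elementary geometry of $\mathit{PU}(1,1)$ acting on $\bar{B}$.
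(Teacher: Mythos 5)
Your proof is correct and follows essentially the same route as the paper's: assume orbits escape to $\partial\bar{E}$, project over a neighbourhood of infinity to $B$, and derive a contradiction from the fact that one-periodic orbits of $X_{a_t}$ on $B$ correspond to fixed points of the (nontrivial) holonomy $\rho_g$, of which there is at most one in $B$. The paper simply compresses the compactness/uniform-convergence chase that you have spelled out explicitly.
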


\begin{proof}
Otherwise, there would have to be a sequence of one-periodic orbits in $E$ which converge to $\partial \bar{E}$. By projection, the same would have to be true for the corresponding ODE on the hyperbolic disc,
\begin{equation}
\label{eq:downstairs-periodic-orbit}
\left\{
\begin{aligned}
& y: S^1 \longrightarrow B, \\
& dy/dt = X_{a_t}.
\end{aligned}
\right.
\end{equation}
But that is impossible, since such $y$ correspond to fixed points of the holonomy of $a$ acting on $B$ (of which there is at most one).
\end{proof}

\begin{remark}
When describing our results in Section \ref{sec:results}, we included a Lefschetz (nondegeneracy of critical points) condition, since that puts them into a familiar context. In fact, that condition is irrelevant for the constructions in this paper, and therefore doesn't appear in the definition we have just given.
\end{remark}

\subsection{Basic transversality and compactness}
In spite of the constraints imposed on the almost complex structures and Hamiltonian functions, the transversality results underlying the traditional construction of Floer theory still hold. Here is a sample of the kind of arguments that are required:

\begin{lemma} \label{th:j-spheres}
For generic $J \in \scrJ(E)$, all simple (non-multiply-covered) $J$-holomorphic spheres are regular.
\end{lemma}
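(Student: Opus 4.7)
The plan is to adapt the standard universal moduli space / Sard--Smale argument (as in McDuff--Salamon, Chapter~3) to the constrained space $\scrJ(E)$. The key structural point is that admissibility of $J$ restricts the allowed perturbations: on $\pi^{-1}(N_J)$ for some neighbourhood of infinity $N_J$, the map $\pi$ must be $J$-holomorphic and the fibrewise almost complex structure must be $\omega_E|_{\mathit{TE}^v}$-compatible. Hence $J$ is freely perturbable on $\pi^{-1}(B \setminus N_J)$, but only fibrewise on $\pi^{-1}(N_J)$.

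The argument splits into two cases according to a given simple $J$-holomorphic sphere $u: S^2 \to E$. \emph{Case (a):} $u$ meets $\pi^{-1}(B \setminus N_J)$ for some admissible $N_J$. Since simple pseudo-holomorphic spheres have a dense set of injective points, such a point can be chosen in the free-perturbation region, and the classical universal moduli space argument gives regularity for generic $J$. \emph{Case (b):} $u$ is contained in $\pi^{-1}(N_J)$ for every admissible $N_J$. Then $\pi \circ u: S^2 \to B$ is a holomorphic map from the sphere into the hyperbolic disc $B$, hence constant; so $u$ factors through a smooth fibre $F_w = \pi^{-1}(w)$, and its image is a simple $(J|F_w)$-holomorphic sphere in the closed symplectic manifold $(F_w, \omega_E|F_w)$. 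Here one perturbs $J$ by a section of $\mathit{End}(\mathit{TE}^v)$ supported in a small neighbourhood inside the fibre direction, extended to nearby fibres via the (flat) symplectic connection; such a perturbation preserves the splitting \eqref{eq:te-h-v}, the holomorphicity of $\pi$, and the $\omega_{\bar E}$-compatibility.

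Standard transversality applied fibrewise, together with a parametric Sard--Smale argument over $w$, then yields a residual subset of $\scrJ(E)$ for which every simple sphere contained in a fibre near infinity is regular. To combine this with case (a), one exhausts $E$ by a countable family of nested precompact open sets of the form $\pi^{-1}(B \setminus N_{J,n})$ with $N_{J,n}$ shrinking to $\partial\bar B$, applies the case (a) argument to each (using also the uniform lower bound on $\omega_E$-energy of non-constant $J$-holomorphic spheres coming from \eqref{eq:omega-bar-e-2}, so that only finitely many homology classes need be treated at each stage), and intersects the resulting countable family of residual sets.

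The main obstacle will be the careful execution of case (b): namely, checking that the allowed infinitesimal deformations of $J$ within $\scrJ(E)$ are rich enough to cover the cokernel of the linearized $\bar\partial$-operator along a fibrewise simple sphere, and organizing the parametric transversality argument in a non-compact family of fibres (over $w \in N_J$) in a way compatible with the residual intersection taken as $N_J$ shrinks. Case (a) and the global assembly are essentially routine.
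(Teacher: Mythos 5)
Your case decomposition is the same as the paper's, as is the reduction of case~(b) to spheres contained in a single smooth fibre $F_w$. (You invoke hyperbolicity of $B$ to conclude that a holomorphic map $\bC P^1 \to B$ is constant; the paper uses a degree-zero argument, but this is an immaterial difference, and both are correct.) However, the crux of case~(b) --- which you yourself flag as ``the main obstacle'' --- is not actually resolved by your sketch, and the sentence claiming that ``standard transversality applied fibrewise, together with a parametric Sard--Smale argument over $w$, then yields'' regularity is doing precisely the work you admit you have not checked. Parametric transversality for spheres in a variable fibre controls a linearized operator whose domain is an extension of $C^\infty(\bC P^1, u^*TF)$ by $T_wB \iso \bC$ and whose target is $\Omega^{0,1}\!\otimes u^*TF$. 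That is not the operator $D_{E,u}$, whose target is $\Omega^{0,1}\!\otimes u^*TE$, and it is not a priori clear that surjectivity of the former implies surjectivity of the latter; one must control the horizontal part of the cokernel, which vertical perturbations of $J$ cannot touch directly.

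The paper closes this gap with the short homological-algebra argument encoded in the diagram \eqref{eq:snake}: there is a commutative diagram with exact columns whose top row is the parametric fibrewise linearization you invoke, whose middle row is $D_{E,u}$, and whose bottom row is $\bar\partial\colon C^\infty(\bC P^1,\bC)/\text{constants}\to C^\infty(\bC P^1,\Omega^{0,1})$. The decisive observation --- and the one missing from your proposal --- is that this bottom operator is \emph{invertible}, because $\bC P^1$ carries no nonconstant holomorphic functions and has vanishing Dolbeault cohomology in bidegree $(0,1)$; a diagram chase then upgrades surjectivity of the top row to surjectivity of $D_{E,u}$. Without this (or some equivalent identification of the horizontal part of $\mathrm{coker}\, D_{E,u}$ with $\mathrm{coker}\,\bar\partial = 0$), case~(b) is not finished. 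The rest of your sketch --- energy quantization via \eqref{eq:omega-bar-e-2}, the fact that vertical perturbations preserve the splitting \eqref{eq:te-h-v} and admissibility, and the global assembly --- is sound, though the countable exhaustion by shrinking collars is heavier machinery than necessary for the stated lemma.
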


\begin{proof}[Sketch of proof]
A given $J$ comes with some $N = N_J$ over which $D\pi$ is $J$-holomorphic. Genericity is understood to be with respect to perturbations which keep that subset fixed. This means that on $\pi^{-1}(N)$, we may change only the $TE^v$ component of $J$. Let $u: \bC P^1 \rightarrow E$ be a simple $J$-holomorphic sphere, and $v = \pi(u)$. At any point $z$ such that $v(z) \in N$, we have $\bar\partial v = 0$. Hence, one of the following applies:

(i) There is a point where $v(z) \notin N$. Since the choice of almost complex structure is free (except for the condition of $\omega_E$-compatibility) near $u(z)$, transversality for such $u$ follows from the standard argument.

(ii) There is a point where $v(z) \in N \setminus \partial N$. Note that the degree of $v$ over any point $w \in B$ is the same, hence zero. It follows that for each $w \in N \setminus \partial N$, $v^{-1}(w)$ is open and closed. Under our assumption, this means that $u$ must lie in a single fibre $F = E_w$. A parametrized transversality argument, using the fact that the almost complex structure in vertical direction can be varied freely, shows generic regularity for such maps. To explain the relevance of parametrized transversality, consider the commutative diagram (with exact columns)
\begin{equation} \label{eq:snake}
\xymatrix{
0 \ar[d] && 0 \ar[d] \\
\ar[d]
\{ \xi \in \smooth(\bC P^1, u^*TE) \,:\, D\pi(\xi) \text{ is constant} \}
\ar[rr] && \smooth(\bC P^1, \Omega^{0,1} \otimes_{\bC} u^*TF)\ar[d] \\
\ar[d]^-{D\pi}
\smooth(\bC P^1, u^*TE) \ar[rr]^-{D_{E,u}} && \smooth(\bC P^1, \Omega^{0,1} \otimes_{\bC} u^*TE) \ar[d]^-{D\pi} \\
\ar[d] \smooth(\bC P^1, \bC)/\mathit{constants} \ar[rr]^-{\bar\partial} && \smooth(\bC P^1,\Omega^{0,1}) \ar[d]
\\ 0 && 0.
}
\end{equation}
Here, $D_{E,u}$ is the linearization of $u$ as a pseudo-holomorphic map into $E$. The top $\rightarrow$ is the linearization of $u$ as a pseudo-holomorphic map into a variable fibre (its domain is an extension of $\smooth(\bC P^1,u^*TF)$ by $\bC = TB_w$). The parametrized transversality theory shows that the top $\rightarrow$ is onto for generic $J$. But since the bottom $\rightarrow$ is invertible, we get the same surjectivity result for $D_{E,u}$.

(iii) $v(z) \in \partial N$ for all $z$. In that case, $v$ is holomorphic, hence must again be constant. The same strategy as in (ii) applies.
\end{proof}

A similar result holds for one-periodic orbits:

\begin{lemma} \label{th:1-periodic}
Suppose that $a \in \Omega^1(S^1,\frakg)$ has nontrivial holonomy. Then, for generic choice of $H \in \scrH(E,a)$, all one-periodic orbits \eqref{eq:periodic-orbit} are nondegenerate.
\end{lemma}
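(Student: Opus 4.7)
The strategy is the standard parametrized transversality argument; the only delicate point is to confirm that every one-periodic orbit meets the region of $E$ on which $H$ can be freely perturbed, namely the complement of $\pi^{-1}(N_H)$. Granted that, a Sard--Smale argument on a suitable universal moduli space produces a residual set of admissible Hamiltonians with the required property.

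First I would show that, after shrinking $N_H$ if necessary, no one-periodic orbit of $X_H$ is entirely contained in $\pi^{-1}(N_H)$. Indeed, if $x \subset \pi^{-1}(N_H)$ then on that region $X_H$ is by construction the horizontal lift of $X_{a_t}$, so the projection $y = \pi \circ x$ is a one-periodic orbit of $X_{a_t}$ in $B$, equivalently a fixed point in $B$ of the time-one holonomy $g \in G$ of $a$. Since $a$ has nontrivial holonomy, $g$ is hyperbolic, parabolic, or elliptic. In the hyperbolic and parabolic cases every fixed point of $g$ lies on $\partial \bar{B}$; in the elliptic case there is a unique fixed point $w \in B$, and we may shrink $N_H$ (which is at our disposal, and need only be \emph{some} neighbourhood of infinity inside $N_E$) so as to exclude $w$. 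Either way, $g$ has no fixed point in $N_H$, a contradiction. Hence every one-periodic orbit visits the open set $U = E \setminus \pi^{-1}(N_H)$, where $H$ may be freely perturbed.

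Second, I would set up the universal moduli space
\begin{equation*}
\scrM \;=\; \{\, (H,x) \,:\, H \in \scrH(E,a),\; x \text{ satisfies } \eqref{eq:periodic-orbit}\,\}
\end{equation*}
as a Banach manifold, working with a Floer-type Banach completion of the class of perturbations of a fixed $H_0 \in \scrH(E,a)$ which are supported in $U$ (admissible because all orbits lie in a fixed compact subset of $E$ by the previous lemma). The linearisation of the defining equation is $\dot x \mapsto \nabla_t \dot x - \nabla_{\dot x} X_t$ on $W^{1,p}$-sections of $x^* TE$, together with the contribution $\dot H \mapsto X_{\dot H_t}(x(t))$ of a perturbation $\dot H$. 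Surjectivity onto the (finite-dimensional) cokernel of the first term is the standard Floer argument: by the first step there is some $t_0$ with $x(t_0) \in U$, so any desired variation of the equation in a small neighbourhood of $(t_0, x(t_0))$ can be realised by a $\dot H$ supported there; unique continuation for the linear ODE on $x^*TE$ then forces any putative cokernel element to vanish identically.

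The rest is routine: the projection $\scrM \to \scrH(E,a)$ is Fredholm of index zero, and Sard--Smale identifies its regular values as the (residual) set of $H$ for which every one-periodic orbit is nondegenerate; a standard approximation passes from the auxiliary Banach space of perturbations back to the Fr\'echet space $\scrH(E,a)$ of smooth Hamiltonians. The main obstacle is the first step — the geometric input ensuring that each one-periodic orbit passes through the free region — and it is precisely there that the nontriviality of the holonomy of $a$ enters.
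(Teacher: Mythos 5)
Your proof is correct and follows essentially the same approach as the paper's: first shrink the neighbourhood of infinity $N$ so that no one-periodic orbit of $X_{a_t}$ on $B$ lies entirely over $N$ (automatic in the hyperbolic/parabolic case since the holonomy acts freely on the open disc, and arranged by shrinking $N$ to exclude the unique interior fixed point in the elliptic case), and then invoke the standard universal-moduli-space transversality argument using perturbations supported away from $\pi^{-1}(N)$. The paper phrases this more tersely but the geometric input and the conclusion are the same; you simply spell out the Sard--Smale machinery that the paper treats as ``clear''.
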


\begin{proof}
As before, we need to be a bit more precise concerning the statement itself. Each $H_t \in \scrH(E,a_t)$ comes with some neighbourhood of infinity $N_{H_t}$, but we can fix an $N$ which is smaller than all of them (see Remark \ref{th:families-of-neighbourhoods}, and take into account the compactness of the parameter space $P = S^1$). In the case where the holonomy is elliptic, we assume that \eqref{eq:downstairs-periodic-orbit} has no solutions satisfying $y(t) \in N$ for all $t$ (for parabolic or hyperbolic holonomy, this condition would be vacuous, since their holonomy acts fixed point freely). That being given, it is clear that perturbing $H$ only outside $\pi^{-1}(N)$ is sufficient to achieve transversality.
\end{proof}

Let's keep the assumption that $a$ has nontrivial holonomy, and choose $H \in \scrH(E,a)$, satisfying the nondegeneracy condition from Lemma \ref{th:1-periodic}. Suppose also that we have a family $J = (J_t)$ of almost complex structures in $\scrJ(E)$. Given one-periodic orbits $x_{\pm}$ as in \eqref{eq:periodic-orbit}, we consider Floer trajectories
\begin{equation} \label{eq:e-floer}
\left\{
\begin{aligned}
& u: \bR \times S^1 \longrightarrow E, \\
& \partial_s u + J_t(u) (\partial_t u - X_t(u)) = 0, \\
& \textstyle \lim_{s \rightarrow \pm\infty} u(s,t) = x_{\pm}(t).
\end{aligned}
\right.
\end{equation}
The pair $\Phi = (J,H)$ will be called a ``Floer datum'', since this is the structure required to write down Floer's equation \eqref{eq:e-floer} (nondegeneracy of one-periodic orbits is part of the definition of Floer datum).

\begin{lemma} \label{th:floer-transversality}
For generic choice of $J$ (keeping $H$ fixed), all solutions of \eqref{eq:e-floer} are regular. Moreover, if $u$ is a solution whose linearized operator $D_u$ has index $\leq 2$, then $u(s,t)$ never lies on a nontrivial $J_t$-holomorphic sphere.
\end{lemma}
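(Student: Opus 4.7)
The proof will follow the standard Floer-theoretic transversality scheme (compare \cite{hofer-salamon95}), modeled on the approach of Lemma \ref{th:j-spheres}. The essential new feature is that, since $H$ is fixed and $J$ is constrained at infinity, transversality must be extracted from perturbations of $J$ alone, and only the vertical-tangential component $J^v$ can be varied freely over $\pi^{-1}(N_J)$.

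My plan is to set up the universal moduli space over a Banach manifold of admissible $(J_t)$ and apply the Sard--Smale theorem. The crucial geometric input is the classical somewhere-injectivity lemma: any non-stationary solution $u$ of \eqref{eq:e-floer} admits a dense open set of points $(s_0,t_0) \in \bR \times S^1$ with $\partial_s u(s_0,t_0) \neq 0$ and $u^{-1}(u(s_0,t_0)) = \{(s_0,t_0)\}$. Two cases then arise, parallel to Lemma \ref{th:j-spheres}. In case (i), some such injective point satisfies $u(s_0,t_0) \notin \pi^{-1}(N_J)$, so the standard local perturbation of $J_{t_0}$ yields surjectivity of the universal linearization. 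In case (ii), every injective point maps into $\pi^{-1}(\overline{N_J})$, so by density the entire trajectory does; then $v = \pi \circ u$ solves the inhomogeneous Cauchy--Riemann equation \eqref{eq:inhomogeneous} on $\bR \times S^1$. Lifting to the universal cover $\bR \times \bR \iso \bC$ and applying a gauge transformation that trivializes the flat connection $a_t\,dt$, the lift $v^\dag$ becomes a bounded holomorphic map $\bC \to B$ and is therefore constant by Liouville. Hence $v(s,t) = y(t)$ is a downstairs one-periodic orbit, and $u$ takes values in the family of fibres $\{E_{y(t)}\}_{t \in S^1}$. Trivializing this family by parallel transport along the flat horizontal connection at infinity, $u$ becomes a Floer trajectory inside a single fibre $F$; transversality then follows by the classical argument applied fibrewise via perturbations of $J^v$, in the spirit of the snake diagram \eqref{eq:snake} of Lemma \ref{th:j-spheres}(ii).

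For the second statement, a dimension count suffices. By Lemma \ref{th:j-spheres} and the assumption $c_1(E) = 0$, for generic $J$ the moduli space of triples (a $t \in S^1$, a simple $J_t$-holomorphic sphere, a marked point on it) has dimension $2n - 3$, so its image $Z \subset S^1 \times E$ has codimension at least $4$; images of multiply-covered spheres lie in $Z$ as well. The evaluation map $\mathrm{ev}: \mathcal{M}(x_-,x_+;J,H)/\bR \times S^1 \to S^1 \times E$, $([u],t) \mapsto (t,u(0,t))$, has source of dimension $\mathrm{ind}(D_u) + 1$. A joint universal moduli space combining Floer trajectories with simple holomorphic spheres sharing a common point then has expected dimension $\mathrm{ind}(D_u) - 3$; applying Sard--Smale to perturbations of $J$ acting on both sides, this universal moduli is empty for generic $J$ whenever $\mathrm{ind}(D_u) \leq 2$.

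The main obstacle is case (ii): the reduction of Floer trajectories contained at infinity to ones living in a single family of fibres. Once this rigidity is established via the Liouville-type argument on the universal cover, the remaining analysis is a routine adaptation of the classical Floer transversality theory together with the above dimension count.
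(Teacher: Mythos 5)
The paper's proof is shorter than yours, and its key observation is one that your proposal misses: the asymptotics of the Floer trajectory prevent your case (ii) from occurring at all. Specifically, the paper first shrinks $N$ so that $N \subset N_{J_t} \cap N_{H_t}$ for all $t$ and, additionally, no one-periodic orbit of $H$ is entirely contained in $\pi^{-1}(N)$ (for hyperbolic holonomy this is automatic since the holonomy has no fixed point in $B$; for elliptic holonomy one shrinks $N$ so as to avoid the fibre over the unique fixed point, exactly as in the proof of Lemma~\ref{th:1-periodic}). Since $u(s,\cdot) \to x_\pm$ as $s \to \pm\infty$, and $x_\pm$ are not contained in $\pi^{-1}(N)$, the trajectory $u$ must leave $\pi^{-1}(N)$. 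One then perturbs $J$ outside $\pi^{-1}(N)$ and invokes the standard transversality of \cite{floer-hofer-salamon94,hofer-salamon95}. Your argument instead works with $N_J$ directly and tries to handle the case where the whole trajectory sits inside $\pi^{-1}(\overline{N_J})$; the paper's choice of $N$ is precisely designed to make that scenario impossible, so there is nothing left to prove.

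Beyond being unnecessary, your case~(ii) reasoning has a concrete gap. From the hypothesis that $u$ maps into $\pi^{-1}(\overline{N_J})$, you conclude that $v = \pi \circ u$ solves the inhomogeneous equation~\eqref{eq:inhomogeneous}. But that descent requires the Hamiltonian to be of the form $H_{a_t}$ near the image of $u$, which holds only on $\pi^{-1}(N_H)$; there is no reason for $N_H$ to contain $N_J$. So the Liouville argument on the universal cover does not directly apply unless one has first shrunk everything to a common $N$ as the paper does — at which point the asymptotic observation above already disposes of the case. Moreover, the fibrewise transversality you then invoke (via an analogue of the snake diagram~\eqref{eq:snake}) is substantially more delicate for Floer trajectories in a cylindrical family of fibres than for spheres in a single fibre, and would require real work to make rigorous; with the paper's argument, none of it is needed. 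Finally, you should also note that even within case~(ii), the conclusion would contradict the nondegeneracy setup: the limits $x_\pm$ would be confined to $\pi^{-1}(\overline{N_J})$ over the downstairs orbit, which the choice of Floer datum is designed to exclude.

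Your treatment of the second assertion — the dimension count showing that Floer trajectories of index $\leq 2$ miss the codimension-$4$ locus swept out by marked simple $J_t$-holomorphic spheres and their multiple covers — is essentially the standard argument from \cite{floer-hofer-salamon94,hofer-salamon95} and matches what the paper invokes.
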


This is essentially the transversality result from \cite{floer-hofer-salamon94,hofer-salamon95}. Applied to solutions $u(s,t) = x(t)$ ($D_u$ is invertible in that case, because of the nondegeneracy of $x$), the result includes the statement that for any one-periodic orbit, $x(t)$ should not lie on a nontrivial $J_t$-holomorphic sphere; this is a version of Lemma \ref{th:j-spheres} including transversality of evaluations. For the remaining part of Lemma \ref{th:floer-transversality}, one starts with a fixed $\Phi = (J,H)$, and takes some neighbourhood of infinity $N$ which is smaller than $N_{J_t}$ and $N_{H_t}$ for any $t$, and which has the same property concerning \eqref{eq:downstairs-periodic-orbit} as in the proof of Lemma \ref{th:1-periodic}. Then, since the orbits $x_{\pm}$ aren't  contained in $\pi^{-1}(N)$, the maps $u$ also can't be contained in that subset, which means that perturbing $J$ outside $\pi^{-1}(N)$ gives one enough freedom.

To show that Floer trajectories can't escape to infinity, we will combine elementary complex analysis (as in Section \ref{subsec:schwarz}) and pseudo-holomorphic curve theory. Some basic ingredients are:

\begin{itemize}
\itemsep0.5em

\item The notion of energy of a Floer trajectory,
\begin{equation}
E(u) = \int_{\bR \times S^1} \|\partial_s u\|^2_{J_t} = \int_{\bR \times S^1} u^*\omega_E - \int_{S^1} H_t(x_+(t))\, \mathit{dt}  + \int_{S^1} H_t(x_-(t))\, \mathit{dt}.
\end{equation}

\item The ``gauge transformation'' which makes Floer's equation homogeneous. Namely,
let $(\phi_t)$ be the Hamiltonian isotopy generated by $(X_t)$. The change of variables $u(s,t) = \phi_t(u^\dag(s,t))$ turns \eqref{eq:e-floer} into
\begin{equation} \label{eq:u-star}
\left\{
\begin{aligned}
& u^\dag: \bR^2 \longrightarrow E, \\
& u^\dag(s,t) = \phi_1(u^\dag(s,t+1)), \\ 
& \partial_s u^\dag + J_t^\dag(u^\dag) \partial_t u^\dag = 0, \\
& \textstyle \lim_{s \rightarrow \pm \infty} u^\dag(s,t) = x^\dag_{\pm},
\end{aligned}
\right.
\end{equation}
where $J_t = (\phi_t)_* J_t^\dag$, and $x^\dag_{\pm} = x_{\pm}(0)$. Outside a compact subset, $\phi_t$ covers an isotopy in $\mathit{Aut}(B)$. Hence, $J_t^\dag$ again belongs to $\scrJ(E)$. 

\item Projection to the base. Wherever $u$ is suffficiently close to $\partial\bar{E}$, the projection $v = \pi(u)$ satisfies a special case of \eqref{eq:inhomogeneous}, where $A$ is the pullback of $a$ to the cylinder (hence flat):
\begin{equation} \label{eq:downstairs-floer}
\partial_s v + i(\partial_t v - X_{a_t}(v)) = 0.
\end{equation}
\end{itemize}

\begin{lemma} \label{th:e-bound-0}
Suppose that $(u_k)$ is a sequence of solutions of \eqref{eq:e-floer}, with an upper bound on the energy. Assume that there are points $(s_k,t_k) \in \bR \times S^1$ such that $|s_k|$ is bounded, and $u_k(s_k,t_k) \rightarrow \partial \bar{E}$. Then, the $u_k$ converge to $\partial \bar{E}$ uniformly on compact subsets. 
\end{lemma}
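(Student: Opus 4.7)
The strategy is to extract a limit in the compactification $\bar{E}$ and then propagate the boundary touching point via the base projection.

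First I would extract a convergent subsequence in $\bar{E}$. After the gauge transformation \eqref{eq:u-star}, Floer's equation becomes a genuine pseudo-holomorphic equation, whose twisted almost complex structure extends smoothly to $\bar{E}$ (the Hamiltonian vector field $X_t$ is tangent to $\partial\bar{E}$ even though $H_t$ itself is unbounded there). Since $\bar{E}$ is compact and \eqref{eq:omega-bar-e-2} turns the assumed $\omega_E$-energy bound into a bound on $\omega_{\bar{E}}$-energy, Gromov compactness applies: passing to a subsequence, $u_k$ converges in $C^\infty_{\mathrm{loc}}$, away from a finite set $\Sigma \subset \bR \times S^1$ of bubble points, to a continuous map $u_\infty : \bR \times S^1 \to \bar{E}$ that is smooth on the complement of $\Sigma$ and solves the extended Floer equation there. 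Extracting a further subsequence so that $(s_k,t_k) \to (s_\infty,t_\infty)$, we either have $u_\infty(s_\infty, t_\infty) \in \partial\bar{E}$ directly, or $(s_\infty,t_\infty) \in \Sigma$ is a bubble point where a pseudo-holomorphic sphere in $\bar{E}$ attaches that touches $\partial\bar{E}$.

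Next I would show $u_\infty$ is entirely contained in $\partial\bar{E}$ using the base projection $v_\infty := \pi\circ u_\infty$. By the properness of $\pi$ and continuity of $u_\infty$, there is some point $z_0 \in \bR \times S^1$ with $v_\infty(z_0) \in \partial\bar{B}$ (for a bubble tree, project the component that hits $\partial\bar{E}$; its projection is a holomorphic sphere in $\bar{B}$ meeting $\partial\bar{B}$, hence constant in $\partial\bar{B}$, so its attaching point $z_0$ qualifies). On a small neighborhood $W$ of $z_0$, $v_\infty$ takes values in $\bar{N}$ where $\pi$ is holomorphic for the chosen almost complex structure and $H$ is the pullback of $H_{a_t}$; hence $v_\infty|W$ satisfies the inhomogeneous Cauchy--Riemann equation \eqref{eq:downstairs-floer} with flat connection. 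The proof of Lemma \ref{th:flat-convergence}---passing to the universal cover, gauge-trivializing to a bounded holomorphic function, and invoking the open mapping theorem---then forces $v_\infty|W \equiv \partial\bar{B}$. Thus $v_\infty^{-1}(\partial\bar{B})$ is open in $\bR \times S^1$; it is closed by continuity, nonempty by construction, and the cylinder is connected, so $v_\infty \equiv \partial\bar{B}$, and consequently $u_\infty$ takes values in $\partial\bar{E} = \pi^{-1}(\partial\bar{B})$.

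Finally I would conclude by a standard subsequence-of-subsequence argument: every subsequence of $(u_k)$ admits a further subsequence converging in $C^\infty_{\mathrm{loc}}$ (off finitely many bubble points) to a map with image in $\partial\bar{E}$. Given any compact $K \subset \bR \times S^1$ and any neighborhood $U$ of $\partial\bar{E}$ in $\bar{E}$, it follows that eventually $u_k(K) \subset U$, which is the uniform convergence to $\partial\bar{E}$ claimed in the lemma.

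\textbf{Main obstacle.} The chief technical worry is bubbling at the point where the escape is detected: $(s_\infty,t_\infty)$ could itself be a bubble point, in which case the limit $u_\infty$ is not directly evaluated there. This is resolved by projecting the bubble tree to $\bar{B}$; over $\bar{N}$ the projection is holomorphic, so spheres project to constants, and a bubble touching $\partial\bar{E}$ yields a base attaching point $z_0$ with $v_\infty(z_0) \in \partial\bar{B}$, which is all that the base argument requires. Once this is in hand, the rest is the elementary open--closed dichotomy on the connected cylinder.
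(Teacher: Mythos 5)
Your proof takes essentially the same route as the paper: gauge-transform via \eqref{eq:u-star}, apply Gromov compactness to maps into $(\bar{E},\omega_{\bar{E}})$ using \eqref{eq:e-e} and \eqref{eq:omega-bar-e-2}, push the boundary-touching forward through the Gromov limit by projecting to $\bar{B}$ and using the flat-case open-mapping argument of Lemma \ref{th:flat-convergence}, then conclude by the subsequence-of-subsequences trick. The only point worth tightening is the step where a bubble touching $\partial\bar{E}$ is said to yield a base point $z_0$ on the cylinder directly: since the offending bubble can be deep in the tree and $\pi$ is $J$-holomorphic only over $\bar{N}$, one should first run the open--closed argument on that sphere to show it lies entirely in $\partial\bar{E}$ and then propagate down to the principal component by a component-by-component induction, exactly as the paper formulates it.
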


\begin{proof}
Gromov compactness (on compact subsets of the domain) can be applied to the $u_k$, seen as maps taking values in $\bar{E}$. This may seem suspicious, but is justified by the following argument. Consider the pseudo-holomorphic maps $u_k^\dag$ as in \eqref{eq:u-star}. On each compact subset of $\bR^2$, we have an upper bound on their energies with respect to $\omega_E$, hence by \eqref{eq:e-e} also for $\omega_{\bar{E}}$. When applying Gromov compactness, one thinks of the latter symplectic form on $\bar{E}$.

With this in mind, the Gromov limit (of a subsequence of the $u_k$) consists of a principal component $u_\infty: \bR \times S^1 \rightarrow \bar{E}$, which solves the Cauchy-Riemann equation from \eqref{eq:e-floer}, together with bubble components which are pseudo-holomorphic spheres (unlike the usual compactification of Floer trajectory spaces, the notion of Gromov limit we are working with here ignores any components which might split off over the ends $s \rightarrow \pm\infty$). By assumption, one of our components must intersect $\partial \bar{E}$. If the principal component does that, then by looking at its projection to $\bar{B}$ (as in Lemma \ref{th:flat-convergence}), one concludes that $u_\infty(\bR \times S^1) \subset \partial \bar{E}$. The same argument applies to bubble components. Because the Gromov limit is connected, a component-by-component induction shows that all of it lies in $\partial \bar{E}$. This implies that a subsequence of the $u_k$ converges to $\partial \bar{E}$ on compact subsets. Going from that to the entire sequence works as in Lemma \ref{th:flat-convergence}.
\end{proof}
%

\begin{lemma} \label{th:e-bound-1}
Assume that the holonomy of $a$ is elliptic or hyperbolic. Suppose that $(u_k)$ is a sequence of solutions of \eqref{eq:e-floer}, with an upper bound on the energy. Then, there is a compact subset of $E$ which contains the images of all the $u_k$.
\end{lemma}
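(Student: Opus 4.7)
The plan is to argue by contradiction: suppose there is a sequence of points $(s_k, t_k) \in \bR \times S^1$ with $u_k(s_k, t_k) \to \partial \bar{E}$. Since the Hamiltonian $H$ and almost complex structure $J$ are $s$-independent, Floer's equation \eqref{eq:e-floer} is $s$-translation invariant, and I would exploit this by setting $\tilde{u}_k(s, t) = u_k(s + s_k, t)$. The translated maps are still solutions with the same energy bound, and satisfy $\tilde{u}_k(0, t_k) \to \partial \bar{E}$. The points $(0, t_k)$ are trivially bounded in the $s$-coordinate, so Lemma \ref{th:e-bound-0} applies directly and yields $\tilde{u}_k \to \partial \bar{E}$ uniformly on compact subsets of $\bR \times S^1$.

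Next I would project to the base. Choose a neighbourhood of infinity $N \subset B$ small enough that on $\pi^{-1}(N)$ the datum $(J, H)$ has its model form, so that $\pi$ is $(J_t, i)$-holomorphic and $H_t$ pulls back $H_{a_t}$. For any fixed $L > 0$ and $k$ large, uniform convergence pushes $\tilde{u}_k([-L, L] \times S^1)$ into $\pi^{-1}(N)$. On that subcylinder, the compatibility of $J$ with $\pi$ and the fact that the Hamiltonian vector field of $H_t$ is the horizontal lift of $X_{a_t}$ force the base projection $\tilde{v}_k = \pi \circ \tilde{u}_k$ to solve the flat-connection equation \eqref{eq:downstairs-floer}, with the connection pulled back from $a$ to $(-L, L) \times S^1$.

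Now I would invoke the cylindrical Schwarz-type bounds of Section \ref{subsec:schwarz}. In the hyperbolic case, with holonomy of eigenvalues $\lambda^{\pm 1}$, Lemma \ref{th:hyp-cylinder} forces $2L \leq \pi/\log(\lambda^2)$, contradicting the arbitrariness of $L$. In the elliptic case (holonomy angle $\theta \in (0, \pi)$, unique fixed point $w \in B$), Lemma \ref{th:elliptic-cylinder} bounds $\sinh\bigl(2\,\mathrm{dist}_{\mathit{hyp}}(\tilde{v}_k(0,0), w)\bigr)$ by $\sinh(\pi/(4L))/\sin\theta$, which tends to $0$ as $L \to \infty$; on the other hand, uniform convergence also gives $\tilde{u}_k(0, 0) \to \partial \bar{E}$, hence $\tilde{v}_k(0, 0) \to \partial \bar{B}$ and thus the distance to $w$ must blow up. Either way, a contradiction.

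The substantive work is really done by Lemma \ref{th:e-bound-0}, whose proof supplies the Levi-convexity plus Gromov compactness on $\bar{E}$ that upgrades pointwise escape into uniform convergence on compact sets; once one has that, the translation, projection, and application of the cylinder estimate are essentially mechanical. The hypothesis that the holonomy of $a$ is elliptic or hyperbolic (rather than parabolic or trivial) enters precisely at the last step: it is what makes the quotient $B/\langle \rho_g \rangle$ either a proper cylinder of finite length or a punctured disc with a distinguished fixed point, so that the downstairs Schwarz lemma has genuine geometric content.
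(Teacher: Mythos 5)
Your proof is correct and follows the same route as the paper's: translate in $s$ so the bad points stay in a bounded region, apply Lemma \ref{th:e-bound-0} to upgrade to uniform convergence to $\partial\bar{E}$ on compact subsets, project to the base, and then contradict Lemma \ref{th:hyp-cylinder} or \ref{th:elliptic-cylinder}. One small stylistic note: in the elliptic case the remark that the $\sinh$ bound ``tends to $0$ as $L\to\infty$'' is a red herring — a single fixed $L$ already produces a compact subset of $B$ containing $\tilde{v}_k(0,0)$ for $k\gg 0$, contradicting $\tilde{v}_k(0,0)\to\partial\bar{B}$, and one cannot legitimately send $L\to\infty$ and $k\to\infty$ simultaneously anyway.
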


\begin{proof}
Let's suppose that on the contrary, there are (possibly after passing to a subsequence) points $(s_k,t_k)$ such that $u_k(s_k,t_k) \rightarrow \partial \bar{E}$. After translation in $s$-direction, we may assume that $|s_k|$ is bounded. Lemma \ref{th:e-bound-0} implies that, on each relatively compact subset of $\bR \times S^1$, $v_k = \pi(u_k)$, for $k \gg 0$, is a solution of \eqref{eq:downstairs-floer}. In the hyperbolic case, one takes that subset to be a sufficiently long finite cylinder $(-l/2,l/2) \times S^1$, and gets a contradiction to Lemma \ref{th:hyp-cylinder}. In the slightly more complicated elliptic case, Lemma \ref{th:elliptic-cylinder} shows that $v_k(0,0)$ must lie in a specific compact subset of $B$, which leads to a contradiction to Lemma \ref{th:e-bound-0}.
\end{proof}

\subsection{General surfaces\label{subsec:worldsheets}}
We now extend \eqref{eq:e-floer} to other Riemann surfaces, building a familiar kind of TQFT framework (compare e.g.\ \cite{schwarz95}). Let $S$ be a punctured (connected, but not closed) Riemann surface. We arbitrarily divide the set $I$ of punctures into subsets $I_{\mathit{in}}$ and $I_{\mathit{out}}$. The surface should be decorated with the following additional data:
\begin{itemize} \itemsep.5em
\item Tubular ends surrounding the punctures, with pairwise disjoint images,
\begin{equation}
\left\{
\begin{aligned}
& \epsilon_i: [0,\infty) \times S^1 \hookrightarrow S, && i \in I_{\mathit{in}}, \\
& \epsilon_i: (-\infty,0] \times S^1 \hookrightarrow S, && i \in I_{\mathit{out}}.
\end{aligned}
\right.
\end{equation}

\item
A connection $A \in \Omega^1(S,\frakg)$ which is nonnegatively curved and compatible with the tubular ends, meaning that
\begin{equation} \label{eq:end-a}
\epsilon_i^*A = a_i = a_{i,t} \mathit{dt}
\end{equation}
for some $a_i \in \Omega^1(S^1,\frakg)$. Moreover, each $a_i$ should have nontrivial holonomy.
\end{itemize}
We write $Y = (\{\epsilon_i\},A)$, and refer to $(S,Y)$ as a decorated surface (this is closely related to, but not exactly the same as, the notion from Section \ref{sec:tqft}; see Section \ref{sec:end} for further discussion). 
The connection $A$ gives rise to an equation \eqref{eq:inhomogeneous} for maps $S \rightarrow B$, which has the form \eqref{eq:downstairs-floer} on each end. The corresponding construction for maps to $E$ requires the following additional choices:
\begin{itemize}
\itemsep0.5em
\item
For each end, we fix a Floer datum $\Phi_i = (J_i,H_i)$ in the class determined by $a_i$.
\item
Let $J = (J_z)_{z \in S}$ be a family of almost complex structures, $J_z \in \scrJ(E)$, which on the tubular ends satisfies the following asymptotic condition:
\begin{equation} \label{eq:exponential-asymptotics}
J_{\epsilon_i(s,t)} \longrightarrow J_{i,t} \quad \text{exponentially fast as $s \rightarrow \pm\infty$.}
\end{equation}
\end{itemize}
We need to make the last-mentioned condition more precise, in several respects. The difference between $J_{\epsilon_i(s,t)}$ and $J_{i,t}$ is measured after extension to the compactification $\bar{E}$. ``Exponentially fast'' means that in the $C^r$ sense for any $r$, the difference between the structures is bounded by a constant times $e^{-\lambda |s|}$, for some $\lambda>0$. Finally, for each end there should be a neighbourhood of infinity $N = N_{J,i} \subset N_E$, such that all $J_{\epsilon_i(s,t)}$ make $\pi$ pseudo-holomorphic on $\pi^{-1}(N)$.
\begin{itemize}
\item
Take $K \in \smooth(S \times E, T^*S)$, thought of as a one-form on $S$ with values in $\smooth(E,\bR)$. This must be such that $K(\xi) \in \scrH(E, A(\xi))$ for all $\xi \in TS$. Over the ends, we require the following (this time, a strict equality, rather than an asymptotic condition):
\begin{equation}
\epsilon_i^*K = H_{i,t} \mathit{dt}.
\end{equation}
\end{itemize}
We will refer to the pair $\Pi = (J,K)$ as a ``perturbation datum''. Let $X_K \in \smooth(S \times E, T^*S \otimes_{\bR} TE)$ be the Hamiltonian-vector-field-valued one-form associated to $K$. We will consider maps
\begin{equation} \label{eq:e-equation}
\left\{
\begin{aligned}
& u: S \longrightarrow E, \\
& (Du - X_K(u))^{0,1} = 0, \\
&\textstyle  \lim_{s \rightarrow \pm\infty} u(\epsilon_i(s,t)) = x_i(t).
\end{aligned}
\right.
\end{equation}
The Cauchy-Riemann equation in \eqref{eq:e-equation} says that $Du - X_K(u): TS_z \rightarrow TE_{u(z)}$ should be complex-linear with respect to $J_{z}$. On each tubular end, setting $u_i(s,t) = u(\epsilon_i(s,t))$, this equation reduces to
\begin{equation} \label{eq:semi-floer}
\partial_s u_i + J_{\epsilon_i(s,t)} (\partial_t u_i - X_{i,t}(u_i)) = 0.
\end{equation}
The limits $x_i$ are one-periodic orbits of $H_i$. In local coordinates $z=s+it$ on $S$ in which $K = K_1 \mathit{ds} + K_2 \mathit{dt}$, the equation is
\begin{equation}
\partial_s u + J_{s,t} \partial_t u = X_{K_1(s,t)} + J_{s,t} X_{K_2(s,t)}.
\end{equation}

\begin{lemma} \label{th:transversality-on-s}
For generic choice of perturbation datum $\Pi$, keeping $\Phi_i$ fixed, the spaces of solutions of \eqref{eq:e-equation} are regular. Moreover, if $u$ is a solution whose linearized operator $D_u$ has index $\leq 1$, then $u(z)$ never lies on a nontrivial $J_z$-holomorphc sphere.
\end{lemma}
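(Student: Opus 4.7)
The plan is a standard Sard--Smale parametric transversality argument, complicated by the two rigidities in $\Pi = (J, K)$: it is fixed on the tubular ends (by $\Phi_i$), and constrained over a neighbourhood of infinity $N \subset B$ (where $\pi$ must be $J$-holomorphic and $K$ is the pullback of $H_A$). I would form a Banach manifold $\scrP$ of admissible data (with a Floer-type norm allowing exponential decay at the ends) together with the universal moduli space $\tilde\scrM = \{(\Pi, u) : u \text{ solves } \eqref{eq:e-equation}\}$, whose linearization splits as $\tilde D(\xi, \delta\Pi) = D_u\xi + T_\Pi(\delta\Pi)$ with $D_u$ Fredholm. Surjectivity of $\tilde D$ at every point of $\tilde\scrM$ then yields the regularity statement via Sard--Smale.

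Surjectivity reduces to showing that any $L^2$ element $\eta$ in the cokernel vanishes identically. Orthogonality against $D_u\xi$ makes $\eta$ a distributional solution of $D_u^*\eta = 0$, hence smooth with exponential decay at the ends and satisfying unique continuation. The heart of the argument is locating a \emph{free point} $z_0 \in S$ at which perturbations of $\Pi$ generate enough directions in $T_\Pi(\delta\Pi)(z_0)$ to force $\eta(z_0) = 0$. The requirements are (i) $z_0$ lies outside all tubular ends, where $J$ and $K$ are otherwise unrestricted; (ii) $u(z_0) \notin \pi^{-1}(N)$, so $J$ and $K$ can be freely varied at $(z_0, u(z_0))$; (iii) $z_0$ is somewhere-injective for $u$, with $(Du - X_K(u))(z_0) \neq 0$. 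Condition (iii) follows from unique continuation for the pseudo-holomorphic section $\sigma = (\mathrm{id}, u)$ of $S \times E \to S$ together with the density of injective points, as in the gauge twist \eqref{eq:u-star} on the ends. Given (i)--(iii), compactly supported perturbations of $K$ near $(z_0, u(z_0))$---augmented if necessary by vertical perturbations of $J$ to kill a $J$-linear piece of $\eta$---force $\eta(z_0) = 0$, whence $\eta \equiv 0$ by unique continuation.

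The main obstacle is (ii). The asymptotes $x_i$ project under $\pi$ to one-periodic orbits of $X_{a_{i,t}}$ on $B$, which must be fixed points of the holonomy $g_i$---a unique interior point in the elliptic case, none at all in the hyperbolic case (a hyperbolic element of $G$ fixes only points of $\partial\bar B$). In either case the $x_i$ lie in a compact subset of $E$ disjoint from $\pi^{-1}(\bar N)$ for $N$ sufficiently small, and exponential convergence on the ends forces $u$ to exit $\pi^{-1}(N)$ near each end. If nevertheless the complement of the ends had $u$-image entirely inside $\pi^{-1}(N)$, then $v = \pi \circ u$ would solve the base equation \eqref{eq:inhomogeneous} while taking values in $N$, with asymptotes outside $N$---a contradiction. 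Hence an interior $z_0$ with $u(z_0) \notin \pi^{-1}(N)$ exists, and one may arrange (iii) at such a point.

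For the bubbling assertion, I combine Lemma \ref{th:j-spheres} with the Calabi--Yau hypothesis $c_1(E) = 0$: the unparametrized marked moduli of simple $J_z$-holomorphic spheres, for any class $\beta$, is then a manifold of dimension $2n + 2c_1(\beta) - 4 = 2n - 4$. A dimension count at the universal level, for the fibred product of $\{(\Pi, u, z)\}$ (dim $\dim\scrP + \mathrm{ind}(D_u) + 2$) against $\{(\Pi, z, [s, w])\}$ (dim $\dim\scrP + 2n - 2$) under the incidence $u(z) = s(w)$ (total codimension $\dim\scrP + 2 + 2n$ from the diagonals in $\scrP$, in $S$, and in $E$), yields a fibred dimension $\mathrm{ind}(D_u) - 2 \leq -1$ over $\scrP$ when $\mathrm{ind}(D_u) \leq 1$. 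So for a generic $\Pi$ no such incidence configuration exists; multiply covered spheres factor through simple ones (handled class by class thanks to the uniform energy lower bound from \eqref{eq:omega-bar-e-2}), and the conclusion follows.
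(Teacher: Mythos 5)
Your overall scaffolding (Sard--Smale on a universal moduli space, annihilating a cokernel element $\eta$ at a ``free point'') is the standard one, and your bubble-avoidance dimension count in the final paragraph is essentially correct. However, the heart of your argument---locating a free point---contains a genuine gap. You insist that $z_0$ lie \emph{outside all tubular ends} and satisfy $u(z_0)\notin\pi^{-1}(N)$, and you try to guarantee such a point by contradiction: ``if the complement of the ends had $u$-image entirely inside $\pi^{-1}(N)$, then $v=\pi\circ u$ would solve \eqref{eq:inhomogeneous} while taking values in $N$, with asymptotes outside $N$---a contradiction.'' This is not a contradiction. Nothing in the inhomogeneous Cauchy--Riemann equation prevents $v$ from taking values in $N$ over the compact core while its asymptotes lie outside $N$; the transition simply occurs along the ends, where you have no control. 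So a free point of the kind you want need not exist, and your argument stalls there.

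What you are missing is that $J$ is \emph{not} fixed on the ends: the asymptotic constraint \eqref{eq:exponential-asymptotics} ties it to $J_{i,t}$ only in the limit, so at finite $(s,t)$ one may vary $J$ with a compactly supported, exponentially small perturbation. The correct argument therefore splits into cases following \cite[Lemma 4.1]{floer-hofer-salamon94}: on each end, either $\partial_s u_i$ has a discrete zero set or vanishes identically. If some end is of the first kind, then for $|s|\gg 0$ on that end one simultaneously has $\partial_s u_i\neq 0$ and $u_i(s,t)\notin\pi^{-1}(N)$ (because $u_i\to x_i\notin\pi^{-1}(N)$), and at such a point perturbing $J$ alone gives a nonzero variation $(\delta J)(Du-X_K)j$---the nonvanishing of $\partial_s u_i$ is exactly what makes this useful. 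Only when $\partial_s u_i\equiv 0$ on \emph{all} ends is one forced into the interior: then $u(\epsilon_i(0,t))=x_i(t)\notin\pi^{-1}(N)$, and by openness some nearby interior points also map outside $\pi^{-1}(N)$, where $(J,K)$ can be varied freely as you describe. A lesser point: your condition (iii) is both unnecessary and not guaranteed. $K$-perturbations at an interior free point generate arbitrary vector fields along $u$, so $(Du-X_K)(z_0)\neq 0$ is not needed there, and it is not implied by unique continuation (parallel sections, $Du\equiv X_K(u)$, do exist); somewhere-injectivity is automatic for the graph $\sigma=(\mathit{id},u)$ and carries no content.
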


\begin{proof}[Sketch of proof]
Let's fix a suitably small neighbourhood of infinity $N$, so that none of the $x_i$ is contained in $\pi^{-1}(N)$. Over each end, one of the following applies (this is a version of \cite[Lemma 4.1]{floer-hofer-salamon94} with additional $s$-dependence of the almost complex structure, but has the same proof):

(i) The set of points where $\partial_s u_i$ vanishes is discrete. If there is at least one end with this property, there are points $(s,t)$ where $\partial_s u_i \neq 0$ and $u_i(s,t) \notin \pi^{-1}(N)$. Transversality can then be achieved by perturbing the family $J_z$ near $z = \epsilon_i(s,t)$.

(ii) $\partial_s u_i$ vanishes identically. This means that $u_i(s,t) = x_i(t)$. In particular, there are $t$ such that $z =\epsilon_i(0,t)$ satisfies $u(z) \notin \pi^{-1}(N)$. Since this is an open condition, there are also points $z \in S$ which are disjoint from the tubular ends and such that $u(z) \notin \pi^{-1}(N)$. We can then achieve transversality by varying $(J,K)$ near such a point.
\end{proof}

We will now tackle the issue of compactness. This is based on similar ideas as in the case of Floer trajectories:
\begin{itemize} \itemsep.5em
\item There are two notions of energy for solutions of \eqref{eq:e-equation}, paralleling \eqref{eq:geometric-energy} and \eqref{eq:topological-energy} from our toy model discussion:
\begin{align}
& E_{\mathit{geom}}(u) = \half \int_S |Du - X_K(u)|^2, \\
& E_{\mathit{top}}(u) = \int_S u^*\omega_E - d(K(u)).
\end{align}
These are related by a curvature term,
\begin{equation} \label{eq:e-e-e}
E_{\mathit{top}}(u) = E_{\mathit{geom}}(u) + \int_S R_K(u).
\end{equation}
We do not want to spell out the definition of $R_K$, but on a suitable neighbourhood of infinity, it agrees with the pullback of the corresponding term in \eqref{eq:e-e}. Since we are assuming that $A$ is nonnegatively curved, it follows that the integrand in \eqref{eq:e-e-e} is bounded below. Moreover, it vanishes identically on the tubular ends. It follows that there is a constant $C = C_K$ such that for all $u$,
\begin{equation} \label{eq:e-e-bounds}
E_{\mathit{geom}}(u) \leq E_{\mathit{top}}(u) + C.
\end{equation}

\item To any perturbation datum $\Pi = (J,K)$ one can associate an almost complex structure $J_\Pi$ on the trivial fibre bundle $S \times E \rightarrow S$, such that \eqref{eq:e-equation} turns into the equation for $J_\Pi$-holomorphic sections. In local coordinates on $S$, this is given by the same kind of formula as in \eqref{eq:j-a}, 
\begin{equation}
\left\{ 
\begin{aligned}
& J_\Pi(Y) = J_{s,t} Y \quad \text{for $Y \in TE$,} \\
& J_\Pi(\partial_s + X_{K_1}) = \partial_t + X_{K_2}.
\end{aligned}
\right.
\end{equation}
Near $\partial \bar{E}$, the vector fields $X_{K_1}, X_{K_2} \in \smooth(E,TE)$ are the horizontal lifts of the corresponding vector fields $X_{A_1}, X_{A_2} \in \smooth(B,TB)$. Hence, they extend smoothly to $\bar{E}$, and so does $J_\Pi$.

\item At any point $z \in S$ such that $u(z)$ is sufficiently close to $\partial \bar{E}$, the projection $v = \pi(u)$ satisfies the equation \eqref{eq:inhomogeneous} associated to $A$. 
\end{itemize}

\begin{lemma} \label{th:u-bound-0}
Suppose that $(u_k)$ is a sequence of solutions of \eqref{eq:e-equation}, with an upper bound on the topological energy. Suppose that there are points $z_k$, contained in a compact subset of $S$, such that $u_k(z_k) \rightarrow \partial \bar{E}$. Then, the $u_k$ converge to $\partial \bar{E}$ uniformly on compact subsets. 
\end{lemma}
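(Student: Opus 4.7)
The plan is to mimic the strategy of Lemma \ref{th:e-bound-0}, replacing the elementary complex-analytic ingredients (which sufficed there because $a$ was flat on each end) by the Levi-convex argument of Lemma \ref{th:levi-apply}. The key observation is that \eqref{eq:e-e-bounds} converts the hypothesis on $E_{\mathit{top}}$ into a uniform bound on $E_{\mathit{geom}}(u_k)$, and then \eqref{eq:omega-bar-e-2} turns the $u_k$, viewed as $J_\Pi$-holomorphic sections $\sigma_k=(\mathit{id},u_k)$ of the trivial bundle $S\times \bar E\to S$, into a sequence with uniformly bounded energy on compact subsets of $S$ measured against the closed form coming from $\omega_{\bar E}$. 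Since $\bar{E}$ is compact and $J_\Pi$ extends smoothly to $S\times\bar E$, Gromov compactness applies and, after passing to a subsequence, yields a stable pseudo-holomorphic limit consisting of a principal component $u_\infty: S\to\bar E$ solving \eqref{eq:e-equation}, together with finitely many bubble trees of $J_z$-holomorphic spheres in $\bar E$ attached at interior points of $S$ (no Floer-type splitting over the ends is relevant here, since we are only working on compact subsets).

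After a further extraction we may assume $z_k\to z_\infty$, and the Gromov limit at $z_\infty$ must meet $\partial\bar E$. It then suffices to prove that any component of the limit which touches $\partial\bar E$ is in fact contained in $\partial\bar E$: connectedness of the bubble tree propagates this through the whole limit, in particular to $u_\infty$, and the proof concludes as in Lemma \ref{th:flat-convergence} by running the subsequence argument in reverse. For the principal component, let $U\subset S$ be the open subset on which $u_\infty$ enters the neighbourhood of $\partial\bar E$ where $\pi$ is $J_\Pi$-holomorphic and the perturbation one-form $X_K$ is the horizontal lift of $X_A$; on $U$ the projection $v_\infty=\pi\circ u_\infty$ satisfies \eqref{eq:inhomogeneous}. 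Since $A$ is nonnegatively curved, Lemma \ref{th:levi} makes $S\times\partial\bar B\subset S\times\bar B$ Levi convex, and the Diederich--Sukhov result invoked in Lemma \ref{th:levi-apply} implies that $\{z\in U: v_\infty(z)\in\partial\bar B\}$ is open and closed in $U$; combined with the fact that $U$ itself is open and $u_\infty$ is continuous, this forces $u_\infty(S)\subset \partial\bar E$.

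For a bubble $w:\bC P^1\to\bar E$ that is $J_z$-holomorphic (for the appropriate $z\in S$) and meets $\partial\bar E$, the definition of $\scrJ(E)$ ensures that $\pi$ is $J_z$-holomorphic on the preimage of some neighbourhood of $\partial\bar B$; hence $\pi\circ w$ is a holomorphic sphere in $\bar B$ meeting $\partial\bar B$ on an open subset of $\bC P^1$, and by the open mapping theorem it must be a constant point of $\partial\bar B$, so $w$ lies in the fibre over that point, which is contained in $\partial\bar E$. The main technical point I expect to require care is exactly this propagation step for bubbles: one needs to know that the "meets $\partial\bar E$" condition opens up a full two-dimensional neighbourhood on the sphere (rather than just an isolated point), so that the projection argument applies. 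This is handled by noting that the Diederich--Sukhov argument already outputs that the preimage of $\partial\bar B$ is open and closed for any $J_z$-holomorphic map, so the same step works uniformly for bubbles as for the principal component, and both reduce to the single Levi-convexity input from Lemma \ref{th:levi}.
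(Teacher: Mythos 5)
Your proposal follows essentially the same strategy as the paper: pass from the topological to the geometric energy bound via \eqref{eq:e-e-bounds}, apply Gromov compactness to the $J_\Pi$-holomorphic sections with values in the compact $S\times\bar E$, obtain a limit consisting of a principal component plus bubbles, and then use the Levi-convexity argument (for the principal component) and the open-mapping argument (for the sphere bubbles, since $\pi$ is $J_z$-holomorphic near $\partial\bar E$) together with connectedness of the Gromov limit to force everything into $\partial\bar E$. The only cosmetic difference is that you invoke \eqref{eq:omega-bar-e-2} to justify Gromov compactness where the paper offers a $W^{1,2}$-bound argument or a tame-form trick, and you spell out the bubble case where the paper refers back to Lemma \ref{th:e-bound-0}; neither constitutes a genuinely different route.
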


\begin{proof}
By \eqref{eq:e-e-bounds}, we have an upper bound on the geometric energy. From there on,
the proof follows the same strategy as its Floer-theoretic counterpart, Lemma \ref{th:e-bound-0}. This time, the applicability of Gromov compactness has to be explained in a slightly different way. Think of our maps as $J_\Pi$-holomorphic sections $S \rightarrow S \times \bar{E}$. From the bound on the geometric energy (and the fact that $X_K$ extends smoothly to $\bar{E}$), we get a $W^{1,2}$-bound on those sections, on each fixed compact subset of $S$. This already suffices for Gromov compactness, without having to introduce a symplectic structure on $S \times \bar{E}$ (see e.g.\ \cite{zinger17} for a suitable exposition). Alternatively, one can argue by the following familiar trick: since $J_\Pi | \{z\} \times \bar{E}$ is compatible with \eqref{eq:omega-bar-e} for each $z$, one can add a large positive two-form on $S$ to create a symplectic form on $S \times \bar{E}$ which at least tames $J_\Pi$. 

The Gromov limit (of a subsequence, in the sense of convergence on compact subsets) consists of a solution $u_\infty: S \rightarrow \bar{E}$ of the same $\bar\partial$-equation as in \eqref{eq:e-equation}, together with bubble components which are $J_z$-holomorphic spheres. By projecting to $\bar{B}$ and using the same convexity argument as in Lemma \ref{th:levi-apply}, one sees that $u_\infty^{-1}(\partial \bar{E})$ is open and closed. The same applies to the bubble components, as in Lemma \ref{th:e-bound-0}, and we again conclude that under our assumptions on $u_k$, the entire Gromov limit must lie in $\partial \bar{E}$.
\end{proof}

\begin{lemma} \label{th:u-bound-1}
Assume that the $a_i$ have elliptic or hyperbolic holonomies. Suppose that $(u_k)$ is a sequence of solutions of \eqref{eq:e-equation}, with an upper bound on the topological energy. Then, there is a compact subset of $E$ which contains the images of all the $u_k$.
\end{lemma}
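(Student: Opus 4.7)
The plan is to mirror the proof of Lemma \ref{th:e-bound-1} but with additional care about the geometry of the domain $S$. Argue by contradiction: assume that (after passing to a subsequence) there are $z_k \in S$ with $u_k(z_k) \to \partial \bar{E}$. By a further subsequence, either $(z_k)$ stays inside a compact subset of $S$ (Case A), or it escapes to infinity along exactly one tubular end $\epsilon_i$ (Case B). Both cases will be reduced to the one-dimensional Schwarz-Pick estimates of Lemmas \ref{th:hyp-cylinder} and \ref{th:elliptic-cylinder}, applied to the projected maps $v_k = \pi(u_k)$ on long cylinders in the ends, where the connection $A$ is flat (equal to the $t$-dependent connection $a_i$).

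In Case A, since the topological energies are uniformly bounded, Lemma \ref{th:u-bound-0} applies directly: the whole sequence $u_k$ converges to $\partial \bar{E}$ uniformly on every compact subset of $S$. In particular, for any fixed $l>0$ and any end $\epsilon_i$, the images $u_k \circ \epsilon_i([0,l] \times S^1)$ (for an incoming end; $[-l,0]$ for an outgoing one) are eventually contained in $\pi^{-1}(N_{J,i})$. Thus for $k$ large, $v_k \circ \epsilon_i$ is a solution on that cylinder of the inhomogeneous equation \eqref{eq:inhomogeneous} for the flat connection $a_i$. Choosing $l$ larger than the bound from Lemma \ref{th:hyp-cylinder} (in the hyperbolic case) yields a direct contradiction. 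In the elliptic case, Lemma \ref{th:elliptic-cylinder} with $l$ large forces $v_k(\epsilon_i(l/2, 0))$ to lie in an arbitrarily small fixed compact set of $B$, again contradicting the assertion that $u_k \circ \epsilon_i$ converges to $\partial \bar{E}$ at that point.

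In Case B, write $z_k = \epsilon_i(s_k, t_k)$ with $|s_k| \to \infty$ and (after passing to a subsequence) $t_k \to t_\ast$, and translate: set $\tilde u_k(s,t) = u_k(\epsilon_i(s + s_k, t))$ on a fixed cylinder $[-l/2, l/2] \times S^1$, which makes sense for $k$ sufficiently large. These maps satisfy Floer's equation for the Hamiltonian $H_{i,t}$ and the almost complex structures $J_{\epsilon_i(s+s_k,t)}$, which by \eqref{eq:exponential-asymptotics} converge in $C^\infty$ to $J_{i,t}$ uniformly on the cylinder. The geometric energies are controlled by the topological energies via \eqref{eq:e-e-bounds}, hence are uniformly bounded on this cylinder. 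Repeating the Gromov compactness / Levi convexity argument of Lemma \ref{th:u-bound-0} with these $C^\infty$-convergent structures (the only novelty is that the limit $\tilde u_\infty$ is $J_{i,t}$-holomorphic with respect to the Hamiltonian $H_{i,t}$), the fact that $\tilde u_k(0, t_k) \to \partial \bar{E}$ with $t_k \to t_\ast$ forces the whole principal component $\tilde u_\infty$ to lie in $\partial \bar{E}$, and by the same component-by-component induction, so does every bubble. Then $\pi \circ \tilde u_\infty$ is a solution of \eqref{eq:downstairs-floer} on $[-l/2, l/2] \times S^1$ for the flat connection $a_i$, and choosing $l$ large gives the same contradiction with Lemma \ref{th:hyp-cylinder} or \ref{th:elliptic-cylinder} as in Case A.

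The one real technical point is the adaptation of Lemma \ref{th:u-bound-0} in Case B: one needs Gromov compactness with base points escaping to infinity along a tubular end and with $s$-varying almost complex structures. However, the exponential asymptotics \eqref{eq:exponential-asymptotics} make the translated structures $J_{\epsilon_i(\,\cdot\, + s_k,\,\cdot\,)}$ $C^\infty$-converge on every fixed compact set to a smooth limit, which is exactly what standard Gromov compactness needs; the Levi-convexity input from Lemma \ref{th:levi} is unaffected by translation, since the convexity of $\pi^{-1}(N) \cap \{|w|=r\}$ in $\bar E$ is a property of each $J_{i,t}$ individually. Hence the argument goes through verbatim, completing the proof.
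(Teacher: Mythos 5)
Your proposal is correct and follows essentially the same two-case dichotomy as the paper's own proof (base points staying in a compact set, versus escaping along a tubular end), with the same reduction via $v_k = \pi(u_k)$ to the Schwarz--Pick estimates of Lemmas~\ref{th:hyp-cylinder} and~\ref{th:elliptic-cylinder}. The only difference is that you spell out the Gromov-compactness/Levi-convexity argument for the translated maps in Case~B, which the paper leaves at the level of ``a version of the argument from Lemma~\ref{th:u-bound-0} can be applied''; your elaboration, resting on the exponential convergence \eqref{eq:exponential-asymptotics}, is exactly the content that was being hinted at there.
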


\begin{proof}
This is modelled on Lemma \ref{th:e-bound-1}. Let's suppose that, on the contrary (after passing to a subsequence), there are $z_k$ such that $u_k(z_k) \rightarrow \partial \bar{E}$. 

(i) If  (again after passing to a subsequence) the $z_k$ are contained in a compact subset, we can apply Lemma \ref{th:u-bound-0}, which implies that $u_k \rightarrow \partial \bar{E}$ on compact subsets. In particular, this convergence holds for the restriction of $u_k$ to any fixed finite cylinder $\epsilon_i((s-l/2,s+l/2) \times S^1) \subset S$, which yields a sequence of solutions of \eqref{eq:semi-floer} that approach $\partial\bar{E}$. By taking $l$ to be sufficiently large, and looking at $v_k = \pi(u_k)$, one obtains a contradiction with Lemma \ref{th:hyp-cylinder} or Lemma \ref{th:elliptic-cylinder}.

(ii) The other case one has to deal with is where $z_k = \epsilon_i(s_k,t_k)$ for some $|s_k| \rightarrow \infty$. Consider the translated versions
\begin{equation} \label{eq:translated}
u_{k,i}(\cdot+s_k,\cdot): (-|s_k|,|s_k|) \times S^1 \longrightarrow E.
\end{equation}
By definition, the image of $(0,t_k)$ under \eqref{eq:translated} approaches $\partial \bar{E}$. Moreover, these maps satisfy a sequence of Cauchy-Riemann equations which, as $k \rightarrow \infty$, converge to an equation \eqref{eq:e-floer}. A version of the argument from Lemma \ref{th:u-bound-0} can be applied to this sequence; after that, one proceeds as in case (i).
\end{proof}

\begin{figure}
\begin{centering}
\begin{picture}(0,0)%
\includegraphics{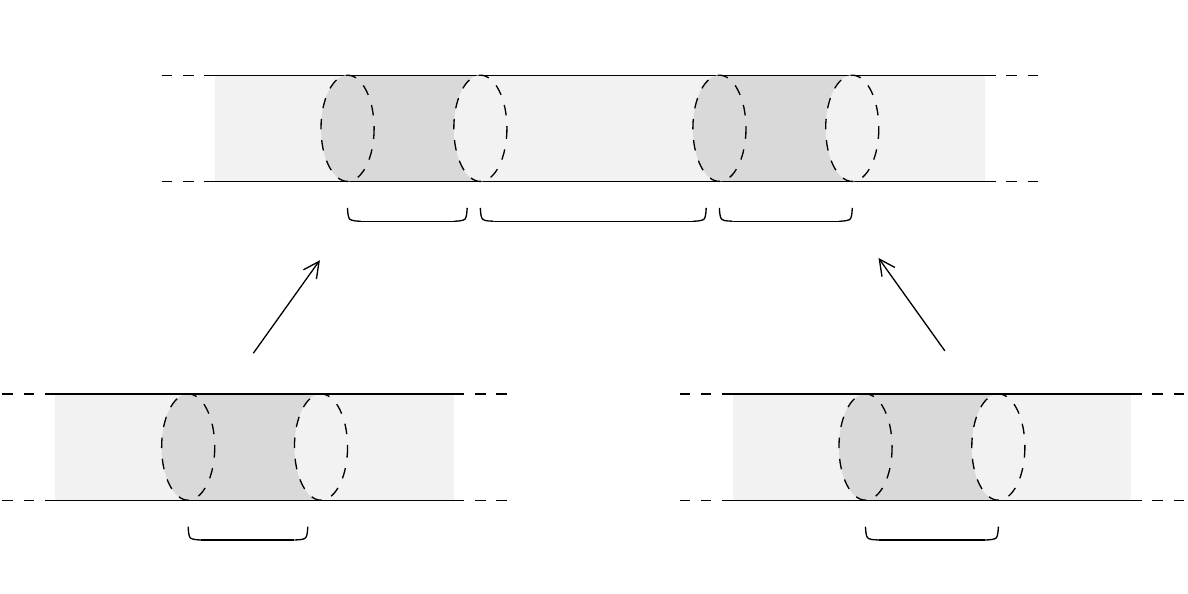}%
\end{picture}%
\setlength{\unitlength}{3355sp}%
\begingroup\makeatletter\ifx\SetFigFont\undefined%
\gdef\SetFigFont#1#2#3#4#5{%
  \reset@font\fontsize{#1}{#2pt}%
  \fontfamily{#3}\fontseries{#4}\fontshape{#5}%
  \selectfont}%
\fi\endgroup%
\begin{picture}(6699,3336)(-2261,-3439)
\put(2851,-3361){\makebox(0,0)[lb]{\smash{{\SetFigFont{10}{12.0}{\rmdefault}{\mddefault}{\updefault}{$l_{\mathit{right}}$}%
}}}}
\put(1876,-1561){\makebox(0,0)[lb]{\smash{{\SetFigFont{10}{12.0}{\rmdefault}{\mddefault}{\updefault}{$l_{\mathit{right}}$}%
}}}}
\put(976,-316){\makebox(0,0)[lb]{\smash{{\SetFigFont{10}{12.0}{\rmdefault}{\mddefault}{\updefault}{$S_k$}%
}}}}
\put(751,-1561){\makebox(0,0)[lb]{\smash{{\SetFigFont{10}{12.0}{\rmdefault}{\mddefault}{\updefault}{$l_{\mathit{neck},k}$}%
}}}}
\put(-1049,-3361){\makebox(0,0)[lb]{\smash{{\SetFigFont{10}{12.0}{\rmdefault}{\mddefault}{\updefault}{$l_{\mathit{left}}$}%
}}}}
\put(-149,-1561){\makebox(0,0)[lb]{\smash{{\SetFigFont{10}{12.0}{\rmdefault}{\mddefault}{\updefault}{$l_{\mathit{left}}$}%
}}}}
\put(-1649,-2161){\makebox(0,0)[lb]{\smash{{\SetFigFont{10}{12.0}{\rmdefault}{\mddefault}{\updefault}{$S_{\mathit{left}}$}%
}}}}
\put(1876,-2161){\makebox(0,0)[lb]{\smash{{\SetFigFont{10}{12.0}{\rmdefault}{\mddefault}{\updefault}{$S_{\mathit{right}}$}%
}}}}
\end{picture}%
\caption{\label{fig:stretch}Notation from the neck-stretching process.}
\end{centering}
\end{figure}
A frequently encountered class of generalizations is that where, instead of maps defined on a single Riemann surface, one has a family of surfaces. If the parameter space for the family is compact, the previous arguments carry over straightforwardly. A slightly more interesting situation is ``neck-stretching'', where the Riemann surfaces split into pieces. Let's consider the simplest example of such behaviour, which is one infinite cylinder splitting into two (the general case differs mainly in being even more cumbersome to formulate). The basic situation is: 
\begin{itemize} \itemsep.5em
\item
Fix $a_{\mathit{left}}, a_{\mathit{neck}}, a_{\mathit{right}} \in \Omega^1(S^1,\frakg)$, all of which have nontrivial holonomy.

\item
Fix constants $l_{\mathit{left}}, l_{\mathit{right}} > 0$, and a sequence $l_{\mathit{neck},k} > 0$, $l_{\mathit{neck},k} \rightarrow \infty$. These govern the length of various pieces of our cylinder, see Figure \ref{fig:stretch}.

\item
Let's write $S_k$ ($k = 1,2,\dots$), $S_{\mathit{left}}$ and $S_{\mathit{right}}$ for copies of $\bR \times S^1$. We will think of these as mapped to each other as follows:
\begin{equation} \label{eq:sigma-embeddings}
\left\{
\begin{aligned}
& \sigma_{\mathit{left}}: S_{\mathit{left}} \longrightarrow S_k, && \sigma_{\mathit{left}}(s,t) = (s-l_{\mathit{left}}/2-l_{\mathit{neck},k}/2,t), \\
& \sigma_{\mathit{right}}: S_{\mathit{right}} \longrightarrow S_k, && \sigma_{\mathit{right}}(s,t) = (s+l_{\mathit{right}}/2+l_{\mathit{neck},k}/2,t).
\end{aligned}
\right.
\end{equation}
We give $S_{\mathit{left}}$ tubular ends, whose images are $(-\infty,-l_{\mathit{left}}/2] \times S^1$ and $[l_{\mathit{left}}/2,\infty) \times S^1$; and correspondingly for $S_{\mathit{right}}$. The ends of $S_k$ have image $(-\infty,-l_{\mathit{left}}-l_{\mathit{neck},k}/2] \times S^1$ and $[l_{\mathit{right}}+l_{\mathit{neck},k}/2,\infty) \times S^1$. 

\item
Each $S_k$ carries a nonnegatively curved connection $A_k$ which restricts to $a_{\mathit{left}}$, $a_{\mathit{right}}$ on its tubular ends, and to $a_{\mathit{neck}}$ on $[-l_{\mathit{neck},k}/2, l_{\mathit{neck},k}/2] \times S^1$. Similarly, $S_{\mathit{left}}$ carries a nonnegatively curved connection $A_{\mathit{left}}$ which restricts to $a_{\mathit{left}}$, $a_{\mathit{neck}}$ on the tubular ends. Moreover, the pullbacks of $A_k$ by \eqref{eq:sigma-embeddings} converge to $A_{\mathit{left}}$ on compact subsets. On $S_{\mathit{right}}$, we have a connection $A_{\mathit{right}}$ with corresponding properties.
\end{itemize}
When setting up the pseudo-holomorphic curve theory, the perturbation data have to be similarly correlated:
\begin{itemize} \itemsep.5em \parindent0em \parskip1em
\item We suppose that Floer data $\Phi_{\mathit{left}} = (J_{\mathit{left},t},H_{\mathit{left},t})$, $\Phi_{\mathit{neck}} = (J_{\mathit{neck},t},H_{\mathit{neck},t})$, and $\Phi_{\mathit{right}} = (J_{\mathit{right},t},H_{\mathit{right},t})$ have been chosen. The Hamiltonians should lie in the classes specified by the previous $a$'s. We also choose perturbation data $\Pi_k = (J_k,K_k)$, $\Pi_{\mathit{left}} = (J_{\mathit{left}}, K_{\mathit{left}})$, and $\Phi_{\mathit{right}} = (J_{\mathit{right}}, K_{\mathit{right}})$, whose behaviour over the ends is governed by the appropriate Floer data. 

\item For the inhomogeneous terms $K_k$ which are part of $\Pi_k$, we additionally prescribe the behaviour on the neck:
\begin{equation}
K_{k,s,t} = 
H_{\mathit{neck},t} \mathit{dt} \quad \text{for $-l_{\mathit{neck},k}/2 \leq s \leq l_{\mathit{neck},k}/2$.}
\end{equation}
The pullback of $K_k$ by \eqref{eq:sigma-embeddings} should converge on compact subsets to $K_{\mathit{left}}$, $K_{\mathit{right}}$.

\item The requirements for the families of almost complex $J_k$ are the most complicated part of the definition, because of the asymptotic condition \eqref{eq:exponential-asymptotics}. Pulling back $J_k$ by $\sigma_{\mathit{left}}$, and then restricting to $(-\infty, -l_{\mathit{left}}/2] \times S^1$, should yield a family of almost complex structures which converges to $J_{\mathit{left}}$; and that convergence should be in any $C^r$ topology with sufficiently small exponential weights over the end $s \rightarrow -\infty$. Take the same pullback, and restrict to it $[-l_{\mathit{left}}/2, l_{\mathit{left}}/2+l_{\mathit{neck,k}}/2 + C] \times S^1$ for some constant $C$, which yields a family of almost complex structures parametrized by increasingly large parts of $S_{\mathit{left}}$. As $k \rightarrow \infty$, that family should converge to $J_{\mathit{left}}$; and that convergence should be in any $C^r$ topology with sufficiently small exponential weights (where the weight concerns the direction $s \rightarrow +\infty$). Parallel requirements apply to the pullback by $\sigma_{\mathit{right}}$.
\end{itemize}

This notion of neck-stretching is designed to allow suitable gluing arguments to go through, while preserving our previous transversality and compactness statements. More precisely, the appropriate version of Lemma \ref{th:transversality-on-s} says that, keeping the $\Phi$'s fixed, a generic perturbation of the entire sequence $\Pi_k = (J_k,K_k)$ (within the allowed class) makes the associated moduli spaces regular for all $k$. Similarly, one gets a version of Lemma \ref{th:u-bound-1}, assuming that the holonomies involved (of $a_{\mathit{left}}, a_{\mathit{neck}}, a_{\mathit{right}}$) are elliptic or hyperbolic. The strategy of proof remains the same, with the breakdown into cases understood in an appropriate way. Namely, take maps $u_k: S_k \rightarrow E$ and points $z_k \in S_k$ such that $u_k(z_k) \rightarrow \partial \bar{E}$. (i) If (possibly after passing to a subsequence) the $z_k$ have preimages under \eqref{eq:sigma-embeddings} which lie in a compact subset of $S_{\mathit{left}}$, then one considers a Gromov limit which is a solution of an equation \eqref{eq:e-equation} involving $\Pi_{\mathit{left}}$; and similarly for $S_{\mathit{right}}$. (ii) In all other cases, one gets a limit which is a Floer trajectory for one of our chosen Floer data.

\subsection{Formal aspects of Floer cohomology}
The rest of the construction of Floer cohomology is standard, and we will keep the discussion brief. From now on, we will only allow $a \in \Omega^1(S^1,\frakg)$ with elliptic or hyperbolic holonomy. We also restrict our notion of decorated surface, by requiring the same property for all $a_i$.

Fix an $a$. Choose an associated Floer datum $\Phi = (H,J)$, which satisfies the transversality statement from Lemma \ref{th:floer-transversality}. We define a $\bZ$-graded cochain complex over the Novikov field \eqref{eq:novikov}:
\begin{equation}
\mathit{CF}^*(E,H) = \bigoplus_x \bK_x,
\end{equation}
where the sum is over the (finitely many) one-periodic orbits \eqref{eq:periodic-orbit}. Each $\bK_x$ is an orientation space, which means that it is identified with $\bK$ in a way which is canonical up to sign. To simplify the notation, we make an arbitrary choice of such an identification, and denote the resulting basis element in $\mathit{CF}^*(E,H)$ simply by $x$. The grading is given by Conley-Zehnder indices (which are well-defined since we have chosen a trivialization of the anticanonical bundle of $E$). The Floer differential is
\begin{equation} \label{eq:floer-diff}
d(x_+) = \sum_{u} \pm q^{E(u)} x_-,
\end{equation}
where the sum is over all solutions of $u$ of \eqref{eq:e-floer} which are isolated up to translation. Lemma \ref{th:e-bound-1}, together with more standard compactness arguments, shows that there are only finitely many such $u$ whose energy is lower than any given bound, hence that \eqref{eq:floer-diff} makes sense. The sign with which each $u$ contributes is determined by gluing of determinant lines. One defines $\mathit{HF}^*(E,H)$ to be the cohomology of $(\mathit{CF}^*(E,H),d)$.

Suppose that we have a decorated surface $(S,Y)$, and for each end, a Floer datum $\Phi_i$ whose Floer complex is well-defined. Choose a perturbation datum $\Pi$ satisfying Lemma \ref{th:transversality-on-s}. Then, counting solutions of \eqref{eq:e-equation} yields a chain map of even degree,
\begin{equation} \label{eq:floer-gamma}
\begin{aligned}
& \phi: \bigotimes_{i \in I_{\mathit{in}}} \mathit{CF}^*(E,H_i) \longrightarrow
\Big( \bigotimes_{i \in I_{\mathit{out}}} \mathit{CF}^*(E,H_i) \Big) [n(\chi(S)+|I_{\mathit{in}}|-|I_{\mathit{out}}|)], \\
& \phi(\otimes_{i \in I_{\mathit{in}}} x_i) = \sum_u \pm q^{E_{\mathit{top}}(u)} (\otimes_{i \in I_\mathit{out}} x_i).
\end{aligned}
\end{equation}
Here, the sum is over solutions with limits $\{x_i\}$ which are isolated; the topological energy is used, since that is locally constant in families of solutions; and there is a sign, exactly as in \eqref{eq:floer-diff}. The same holds more generally if we have a family of such surfaces parametrized by a closed oriented manifold $P$ (the Floer data are parameter-independent, but the perturbation data obviously change). The only difference is that the degree of the associated map $\phi_P$ is $\mathrm{dim}(P)$ less than in \eqref{eq:floer-gamma}. 

The next simplest case is where the parameter space is a compact oriented manifold with corners. In that case, the associated operation is no longer a chain map, but instead a nullhomotopy for the operations associated to the boundary faces. Let's suppose for simplicity that exactly one end is an output, say $I_{\mathit{out}} = \{0\}$ and $I_{\mathit{in}} = \{1,\dots,m\}$; this will be the case in all our applications. Then, the precise expression is exactly as in \eqref{eq:boundary-sign}.

It remains to to take a brief look at gluing. A simple case would be where we have a parameter space $P$ with $\partial P = P_1 \times P_2$. The associated family of surfaces is defined only over $P \setminus \partial P$, and as we approach the boundary, it stretches along a neck, with the limit being two families over $P_1$ and $P_2$, respectively. The behaviour of the perturbation data must be as discussed at the end of Section \ref{subsec:worldsheets}. One can then still define an operation associated to $P$, and that satisfies (under the same one-output assumption as before)
\begin{equation} \label{eq:boundary-stretching}
\begin{aligned}
& (-1)^{\mathrm{dim}(P)} d\phi_P(x_1,\dots,x_m) - \phi_P(dx_1,\dots,x_m) - \cdots 
- (-1)^{|x_1|+\cdots+|x_{m-1}|} \phi_P(x_1,\dots,dx_m) \\ & \qquad + (-1)^{(\mathrm{dim}(P_1) + |x_1| + \cdots + |x_i|) \mathrm{dim}(P_2)} \phi_{P_1}(x_1,\dots,\phi_{P_2}(x_{i+1},\dots,x_{i+m_2}), \dots, x_m) = 0,
\end{aligned}
\end{equation}
where $m = m_1+m_2-1$; note that $(-1)^{\mathrm{\dim}(P)} = (-1)^{|\phi_P|}$. As a final example, consider a mixture of the two previously encountered situations, where $\partial P = P_0 \sqcup -(P_1 \times P_2)$ (the minus sign indicates orientation-reversal). The associated family of surfaces should extend over $P_0$, but on the other boundary component, we have neck-stretching as before. Then, one gets 
\begin{equation} \label{eq:boundary-stretching-2}
\begin{aligned}
& (-1)^{\mathrm{dim}(P)} d\phi_P(x_1,\dots,x_m) - \phi_P(dx_1,\dots,x_m) - \cdots \\ & \qquad \cdots
- (-1)^{|x_1|+\cdots+|x_{m-1}|} \phi_P(x_1,\dots,dx_m) + \phi_{P_0}(x_1,\dots,x_m) \\ & \qquad - (-1)^{(\mathrm{dim}(P_1) + |x_1| + \cdots + |x_i|) \mathrm{dim}(P_2)} \phi_{P_1}(x_1,\dots,\phi_{P_2}(x_{i+1},\dots,x_{i+m_2}), \dots, x_m) = 0.
\end{aligned}
\end{equation}
In the simplest case where all the $P_k$ are closed and $m _k = 1$, this means that $(-1)^{\mathrm{dim}(P)}\phi_P$ yields a chain homotopy 
\begin{equation}
\phi_{P_0} \htp (-1)^{|\phi_1| \cdot |\phi_2|} \phi_{P_1}\phi_{P_2}.
\end{equation}
Comparing this with our previous formal TQFT discussion, \eqref{eq:boundary-stretching} and \eqref{eq:boundary-stretching-2} are a combination of \eqref{eq:boundary-sign} and \eqref{eq:tqft} (the difference reflects the fact that ``gluing'' is an asymptotic process in the present context). Of course, much more complicated degenerations, where several ends are being stretched, are routinely used in Floer theory (and we will do the same). The discussion above was intended merely as a reminder, including sign conventions. 

\section{Conclusion\label{sec:end}}

While Floer cohomology is not strictly speaking an instance of the TQFT formalism from Sections \ref{sec:tqft}--\ref{sec:elliptic}, translating the formal arguments into Floer-theoretic ones is mostly a straightforward task (compare \cite{seidel16}, which was structured in the same way). We will give a brief sketch of how this leads to the operations from Propositions \ref{th:operations-1}--\ref{th:operations-3}. After that, we will discuss connections, and how the material from Section \ref{sec:diff-axiom} gives rise to the proofs of Propositions \ref{th:diff-q} and \ref{th:two-connections}.

\subsection{Floer cohomology groups associated to Lefschetz fibrations}
In Section \ref{sec:floer}, we explained how, given a connection $a \in \Omega^1(S^1,\frakg)$ with elliptic or hyperbolic holonomy, one chooses a compatible Floer datum $\Phi = (H,J)$ and obtains a Floer cochain complex $\mathit{CF}^*(E,H)$. We want to make a few additional comments concerning that construction:
\begin{itemize} \itemsep.5em
\item
Suppose that $a$ has constant coefficients, which are sufficiently small. Then, one can choose $H$ with the same properties, and a version of the argument from \cite{hofer-salamon95} yields an isomorphism between the Morse and Floer cochain complexes,
\begin{equation} \label{eq:pss}
\mathit{CM}^*(E,H) \iso \mathit{CF}^*(E,H).
\end{equation}

\item
The time variable $t \in S^1$ in the definition of Floer cohomology can be shifted. Given $a = a_t \mathit{dt}$, consider ${}^\tau a = a_{t+\tau} \mathit{dt}$, where $\tau \in S^1$ is an arbitrary constant. If $\Phi = (H_t,J_t)$ is a Floer datum for $a$, ${}^\tau \Phi = (H_{t+\tau}, J_{t+\tau})$ is a Floer datum for ${}^\tau a$. One-periodic orbits and Floer trajectories can be identified accordingly, leading to an isomorphism of chain complexes 
\begin{equation} \label{eq:e1}
\mathit{CF}^*(E,{}^\tau H) \iso \mathit{CF}^*(E,H). 
\end{equation}

\item
In a similar spirit, let's consider reversing the orientation of $S^1$, which in the notation of \eqref{eq:a01-cover} means passing from $a$ to $a^{-1}$. Given a Floer datum $\Phi = (H_t,J_t)$ for $a$, consider $\Phi^{-1} = (-H_{-t}, J_{-t})$. The flow of the new Hamiltonian is inverse to the old one, and one can similarly relate Floer trajectories by setting $u^{-1}(s,t) = u(-s,-t)$. Taking signs and grading into account, the outcome is an isomorphism of chain complexes 
\begin{equation} \label{eq:e2}
\mathit{CF}^*(E,H^{-1}) \iso \mathit{CF}^{2n-*}(E,H)^\vee. 
\end{equation}
\end{itemize}
Let's move on to the maps \eqref{eq:floer-gamma} between Floer complexes. Following a standard strategy, one uses them first of all to construct continuation maps which show that, up to canonical isomorphism, the choice of Floer datum doesn't affect Floer cohomology. That involves taking a cylinder $S = \bR \times S^1$, with $A$ the pullback of $a$, and a perturbation datum which agrees with one choice of Floer datum on each end. We refer to \cite{salamon-zehnder92} for the original exposition of this process. Once it has been carried out, one can write the Floer cohomology groups as $\mathit{HF}^*(E,a)$. It is elementary to show that \eqref{eq:e1} and \eqref{eq:e2} are compatible with continuation maps. Therefore, one has canonical isomorphisms
\begin{align}
\label{eq:rotate-time}
& \mathit{HF}^*(E,{}^\tau a) \iso \mathit{HF}^*(E,a) \quad \text{for any $\tau \in S^1$,} \\
\label{eq:flip-time}
& \mathit{HF}^*(E,a^{-1}) \iso \mathit{HF}^{2n-*}(E,a)^\vee.
\end{align}

\begin{example} \label{th:fractional-rotation-2}
The isomorphism \eqref{eq:rotate-time} is particularly interesting for a connection which is a $\mu$-fold pullback, in the sense of \eqref{eq:a01-cover}, for $\mu>1$. In that case, taking $\tau \in (\mu^{-1}\bZ)/\bZ$ yields ${}^\tau a^\mu = a^\mu$, resulting in a $\bZ/\mu$-action on $\mathit{HF}^*(E,a^\mu)$. Note that even though we are working with a connection which has a $\bZ/\mu$-symmetry, the associated Floer datum does not have to obey the same restriction (thereby avoiding equivariant transversality problems). Concretely, the chain maps underlying the action are
\begin{equation}
\xymatrix{
\mathit{CF}^*(E,H) \ar[r]^-{\eqref{eq:e1}}_-{\iso} & \mathit{CF}^*(E,{}^\tau H)  \ar[rr]^-{\text{continuation}}_-{\htp} && \mathit{CF}^*(E,H),
}
\end{equation}
where ${}^\tau H \neq H$ in general; this is not a (chain level) isomorphism, but only a homotopy equivalence. Similarly, the group composition law is satisfied only up to homotopy.
\end{example}

Given $a_0,a_1 \in \Omega^1(S^1,\frakg)$ with the same rotation number, one can find a nonnegatively curved connection $A \in \Omega^1(S,\frakg)$ which agrees with the pullbacks of $a_0$ and $a_1$ on the ends (in the hyperbolic case, this is Proposition \ref{th:nonnegative-connection}; in the elliptic case, the holonomies are conjugate, and one can choose $A$ to be flat).
Via \eqref{eq:floer-gamma}, that gives rise to a map
\begin{equation} \label{eq:hf-map}
\mathit{HF}^*(E,a_1) \longrightarrow \mathit{HF}^*(E,a_0).
\end{equation}
To understand these maps, it is convenient to allow a little more freedom in the choice of $A$. Namely, let's require that
\begin{equation} \label{eq:rotate-one-end}
A_{s,t} = \begin{cases} a_0 = a_{0,t} \mathit{dt} & s \ll 0, \\
{}^\tau a_1 = a_{1,t+\tau} \mathit{dt} & s \gg 0, \text{ for some $\tau \in S^1$,}
\end{cases}
\end{equation}
and also choose the perturbation datum to have the correspondingly rotated asymptotic behaviour as $s \rightarrow +\infty$. In view of \eqref{eq:rotate-time}, this still gives rise to maps \eqref{eq:hf-map}. An equivalent formulation would be to say that we still consider $S$ as a surface with tubular ends, but where the parametrization of one end has been modified; with respect to that, it still carries a connection that reduces to $a_0$ and $a_1$ over the ends. The implications depend on the kind of connections involved:
\begin{itemize}
\itemsep.5em
\item For connections with hyperbolic holonomy and rotation number $r \in \bZ \setminus \{0\}$, there is a $\bZ/r$ worth of choices of \eqref{eq:hf-map}, by Proposition \ref{th:x-annulus}. A gluing argument (with geometrically nontrivial content, since the curvature condition means that one can't just reverse orientation on $S$ to construct the connection underlying the inverse map) shows that all such maps are isomorphisms. The outcome is just as in Propositions \ref{th:auto} and \ref{th:well-defined}. Hence, we may write $\mathit{HF}^*(E,r)$ for the associated Floer cohomology group, which is well-defined up to isomorphism (up to canonical isomorphism if one is willing to additionally fix a choice of one-parameter subgroup, to which the holonomy of all connections will belong), and carries a canonical $\bZ/r$-action. If we choose our connection to be an $r$-fold covering, the $\bZ/r$-action is that from Example \ref{th:fractional-rotation-2} (compare Example \ref{th:fractional-rotation}).

\item What we've said above also applies to the $r = 0$ case, but there is an additional quirk. The abstract theory (Proposition \ref{th:auto}) predicts that $\mathit{HF}^*(E,0)$ has an endomorphism of degree $-1$, obtained by treating $\tau$ as a variable in \eqref{eq:rotate-one-end}, and looking at the corresponding parametrized problem (this is the BV operator in Hamiltonian Floer cohomology). However, by looking at the specific choice of connection and Floer datum giving rise to \eqref{eq:pss}, one sees that the BV operator vanishes.

\item For connections with elliptic holonomy, each group $\mathit{HF}^*(E,r)$ depends only on $r \in \bR \setminus \bZ$, up to canonical isomorphism, and carries a BV operator; compare Proposition \ref{th:only-elliptic}(i) and (ii). 
\end{itemize}

\begin{remark} \label{th:geometric2}
The vanishing of the BV operator on $\mathit{HF}^*(E,0)$, and the duality \eqref{eq:flip-time}, can fail in closely related geometric situations. Namely, suppose that we considered a map $\pi : E \rightarrow B$ with noncompact fibres, and which itself has a noncompact critical locus (say, of complex Morse-Bott type). We could use a version of Floer cohomology which ``wraps'' in fibre direction. In this context, neither property would hold any more, even though the formal TQFT framework remains relevant. Therefore, the failure of the abstract theory to predict these behaviours was to be expected.
\end{remark}

Along the same lines, one can consider continuation maps $\mathit{HF}^*(E,r_1) \rightarrow \mathit{HF}^*(E,r_0)$ which increase the rotation number, $r_0 > r_1$. After adapting the corresponding TQFT results, we find that such maps are canonically defined in  the following instances:
\begin{itemize}
\itemsep.5em
\item $r_1 \in \bZ$, and $r_0 = r_1+1$ (hyperbolic to hyperbolic, meaning the holonomies of the connections concerned); compare Proposition \ref{th:canonical-continuation}.
\item $r_0,r_1 \in \bR \setminus \bZ$, and $(r_1,r_0)$ contains no integer (elliptic to elliptic); see Proposition \ref{th:only-elliptic}(iii). In this case, the continuation map turns out to be an isomorphism (as in Remark \ref{th:geometric2}, this is not a formal TQFT property, but standard from a geometric viewpoint). 

\item $r_0,r_1 \in \bR \setminus \bZ$, and $(r_1,r_0)$ contains exactly one integer (elliptic to elliptic). The corresponding TQFT result is again Proposition \ref{th:only-elliptic}(iii)

\item $r_0 \in \bZ$, and $r_1 \in (r_0-1,r_0)$ (elliptic to hyperbolic), see Proposition \ref{th:mixed-continuation-2}(i).

\item $r_1 \in \bZ$, and $r_0 \in (r_1,r_1+1)$ (hyperbolic to elliptic), see Proposition \ref{th:mixed-continuation-2}(ii).
\end{itemize}
Moreover, as long as one stays within this list of cases, the composition of continuation maps is again a continuation map. In particular, since $\mathit{SH}^*(E)$ is defined as a direct limit over $\mathit{HF}^*(E,r)$ for $r \notin \bZ$, Proposition \ref{th:limit} follows immediately, as in Proposition \ref{th:mixed-continuation-2}(iii). Along the same lines, we get maps \eqref{eq:continuation-2}, which factor through the BV operator as in Proposition \ref{th:mixed-continuation-2}(iv).

\subsection{The algebraic structure of Floer cohomology}
Consider operations induced by surfaces other than the cylinder. The translation from the TQFT framework happens in the same way as before: we replace compact surfaces with boundary \eqref{eq:surface} by their analogues with tubular ends (extending the connections in the obvious way), and choose perturbation data. The naive TQFT gluing operation is replaced by neck-stretching. Then,
\begin{itemize} \itemsep.5em
\item Proposition \ref{th:gerstenhaber-algebra} carries over to give $\mathit{HF}^*(E,1)$ the structure of a Gerstenhaber algebra structure, proving Proposition \ref{th:operations-1}. One can show as in Addendum \ref{th:all-a} that this is well-defined (compatible with the canonical isomorphisms between different choices made in constructing the Floer cohomology group).

\item Proposition \ref{th:gerstenhaber-module} similarly yields a Gerstenhaber module structure on any $\mathit{HF}^*(E,r)$, which is Proposition \ref{th:operations-1b} (and moreover, this module satisfies the additional relation from Proposition \ref{th:strange-relation}). As before, this structure is compatible with the canonical isomorphisms that relate different constructions of $\mathit{HF}^*(E,1)$, and also with the corresponding non-canonical isomorphisms for $\mathit{HF}^*(E,r)$.

\item We can define all the $\mathit{HF}^*(E,r)$, $r \in \{1,2,\dots\}$, using covers $a^r$ of the same underlying connection $a^1 = a$ with rotation number $1$. Then, by proceeding as in Proposition \ref{th:bigraded-algebra}, we get the algebra structure from Proposition \ref{th:operations-2}. The same observation as in Example \ref{th:fractional-rotation-2} applies: using $a^r$ is useful because it makes it easier to characterize which connections on surfaces to use (via Addendum \ref{th:partial-fix}), but the Floer data do not have to be $\bZ/r$-invariant (or related to each other for different $r$).
\end{itemize}
The remaining item is Proposition \ref{th:operations-3}, which deserves a little more discussion. The main point can be summarized as follows: the original TQFT construction, in Proposition \ref{th:bigraded-lie}, used \eqref{eq:covering-axiom}, which was formulated as a strict chain level identity. This is potentially suspicious in a Floer-theoretic context, where everything depends on additional choices; however, we will ultimately only need the cohomology level relation, which is unproblematic.
Fix positive integers $r_i$ ($i = 0,1,2$), with $r_0 = r_1+r_2-1$. Choose connections $a^{r_i}$ with rotation number $r_i$ and which are $r_i$-fold covers (not necessarily of the same connection). Also, choose corresponding Floer data $\Phi_i$. To define
\begin{equation} \label{eq:bi-bracket}
[\cdot,\cdot]: \mathit{HF}^*(E,r_1)^{\bZ/r_1} \otimes \mathit{HF}^*(E,r_2)^{\bZ/r_2} \longrightarrow \mathit{HF}^{*-1}(E,r_0)^{\bZ/r_0}, 
\end{equation}
we start with a family of surfaces parametrized by $p \in P = S^1$. Each such surface $S_p$ is a pair-of-pants, equipped with tubular ends
\begin{equation}
\left\{
\begin{aligned}
&
\epsilon_{0,p}: (-\infty,0] \times S^1 \longrightarrow S_p, \\
&
\epsilon_{1,p},\epsilon_{2,p}: [0,\infty) \times S^1 \longrightarrow S_p.
\end{aligned}
\right.
\end{equation}
of which those for $i = 1,2$ are determined only up to rotations $\epsilon_{i,p}(s,t) \mapsto \epsilon_{i,p}(s,t+k_i r_i^{-1})$, $k_i \in \bZ/r_i$. The $p$-dependence follows the model from \eqref{eq:exact-framings}. One can arrange that there are isomorphisms
\begin{equation} \label{eq:p-shift-map}
\left\{
\begin{aligned}
& \phi_{0,p}: S_p \longrightarrow S_{p+r_0^{-1}}, \\
& \phi_{0,p} \epsilon_{0,p}(s,t) = \epsilon_{0,p+r_0^{-1}}(s,t-r_0^{-1}), \\
& \phi_{0,p} \epsilon_{i,p} = \epsilon_{i,p+r_0^{-1}} \quad \text{for $i = 1,2$,}
\end{aligned}
\right.
\end{equation}
which generate a $\bZ/r_0$-action on the family; compare \eqref{eq:rotated-conf}. We equip our surfaces with connections $A_p$ satisfying $\epsilon_{i,p}^*A_p = a_i$, and which are compatible with \eqref{eq:p-shift-map}. Again following \eqref{eq:exact-framings}, we consider the $\bZ/r_1 \times \bZ/r_2$-covering $\tilde{P} \rightarrow P$ associated to the element
\begin{equation}
(1-r_2, 1-r_1) = (-r_0,-r_0) \in H^1(P;\bZ/r_1 \times \bZ/r_2) = \bZ/r_1 \times \bZ/r_2.
\end{equation}
The pullback family of surfaces $S_{\tilde{p}}$ comes with well-defined tubular ends $\epsilon_{i,\tilde{p}}$ for $i = 0,1,2$, removing the previous ambiguity. The covering group acts on that family, fractionally rotating the ends. Moreover, the $\bZ/r_0$-action from \eqref{eq:p-shift-map} also lifts to $\tilde{P}$, resulting in a total action of $\bZ/r_0 \times \bZ/r_1 \times \bZ/r_2$. Because that action rotates the tubular ends, it is not compatible with our given Floer data, which means that there is no meaningful notion of equivariant perturbation datum. Instead, we make an arbitrary choice of perturbation datum, and then divide the resulting operation by $r_1r_2$; the outcome is invariant under the action of $\bZ/r_0 \times \bZ/r_1 \times \bZ/r_2$ on the source and target Floer cohomology groups, and we restrict to the invariant parts to get \eqref{eq:bi-bracket}. This follows \eqref{eq:divide-operation}, except that we remain on the cohomology level (since the Floer cochain complexes do not carry group actions; see Example \ref{th:fractional-rotation-2}). 

Now consider the composition of two such brackets, say
\begin{equation} \label{eq:composition-bracket}
\begin{aligned}
& \mathit{HF}^*(E,a^{r_1})^{\bZ/r_1} \otimes \mathit{HF}^*(E,a^{r_2})^{\bZ/r_2} \otimes \mathit{HF}^*(E,a^{r_3})^{\bZ/r_3} \\ & \qquad
\xrightarrow{[\cdot,\cdot] \otimes \mathit{id}} \mathit{HF}^{*-1}(E,a^{r_1+r_2-1})^{\bZ/r_1+r_2-1}
\otimes \mathit{HF}^*(E,a^{r_3})^{\bZ/r_3} \\\ & \qquad \xrightarrow{[\cdot,\cdot]} \mathit{HF}^{*-2}(E,a^{r_1+r_2+r_3-2})^{\bZ/r_1+r_2+r_3-2}.
\\
\end{aligned}
\end{equation}
Let's write $\tilde{P}_1$ and $\tilde{P}_2$ for the parameter spaces that occur in the two steps of \eqref{eq:composition-bracket}. Both spaces carry actions of $\bZ/r_0$, where $r_0 = r_1+r_2-1$. To prove the Jacobi identity following the strategy from Proposition \ref{th:bigraded-lie}, one has to relate \eqref{eq:composition-bracket} to the operation derived from the quotient space $\tilde{P}_1 \times_{\bZ/r_0} \tilde{P}_2$. For that, one argues as follows: gluing writes \eqref{eq:composition-bracket} as an operation arising from a family of surfaces over $\tilde{P}_1 \times \tilde{P}_2$ (for an arbitrary choice of perturbation data). On that family, the $\bZ/r_0$-action no longer affects the tubular ends, and hence, we can choose the perturbation data to be equivariant. In other words, our family descends to $\tilde{P}_1 \times_{\bZ/r_0} \tilde{P}_2$, including tubular ends; we can make a generic choice of perturbation data on that quotient space, and then pull that choice back to $\tilde{P}_1 \times \tilde{P}_2$. For those specific choices, we have an analogue of \eqref{eq:covering-axiom} on the Floer cochain level. Hence, the corresponding Floer cohomology level identity holds.

\subsection{The differentiation axiom revisited\label{subsec:diff-axiom-2}}
We will now implement differentiation with respect to the Novikov parameter $q$ in Floer theory. This closely follows \cite[Section 9]{seidel16}. As a preliminary step, it is convenient to modify slightly the definition of the Floer differential and of the operations \eqref{eq:floer-gamma}; the new version is isomorphic to the original one (admittedly, the isomorphism is canonical only up to powers of $q$, but that is hopefully a venial sin; in any case, all previous constructions also go through with the new version, without any other modifications). Let's fix an $\bR$-cycle of codimension two, which means a formal finite linear combination
\begin{equation}
\Omega = \sum_j m_j\Omega_j,
\end{equation}
where $\Omega_j \subset E$ is a properly embedded codimension two submanifold, and the multiplicities are $m_j \in \bR$. This should be such that the Poincar\'e dual to $\Omega$ is the symplectic class $[\omega_E]$. When carrying out any Floer-theoretic construction, we assume that the one-periodic orbits of the relevant Hamiltonians are disjoint from $\Omega$; and when counting maps $u: S \rightarrow E$ satisfying a Cauchy-Riemann equation with suitable asymptotics, we replace the expression $q^{\int_u \omega_E}$ with the intersection number
\begin{equation} \label{eq:usual-power}
q^{u \cdot \Omega} = q^{\sum_j m_j (u \cdot \Omega_j)}. 
\end{equation}
The naive differentiation operator on any Floer cochain group is
\begin{equation}
\begin{aligned}
& \qabla: \mathit{CF}^*(E,H) \longrightarrow \mathit{CF}^*(E,H), \\
& \qabla(\textstyle \sum_x c_x x) = \sum_x (\partial_q c_x) x,
\end{aligned}
\end{equation}
where the coefficients are $c_x \in \bK$. Let's assume that our Floer datum $\Phi = (H,J)$ has been chosen in such a way that the isolated solutions $u$ of Floer's equation intersect $\Omega$ transversally, and no $J_t$-holomorphic sphere intersects $\Omega$ (these conditions are satisfied generically, thanks to transversality of evaluations). Then, one can write
\begin{equation} \label{eq:define-r}
\begin{aligned}
& R = \qabla d - d \qabla: \mathit{CF}^*(E,H) \longrightarrow \mathit{CF}^{*+1}(E,H), \\
& R (x_+) = \sum_{x_-} \Big( \sum_u \pm (u \cdot \Omega) q^{u \cdot \Omega-1} \Big) x_-
\end{aligned}
\end{equation}
as an operation associated to a certain parametrized moduli space. Namely, for each $\Omega_j$, we consider pairs $(p,u)$ consisting of $p \in P = S^1$ and a Floer trajectory $u$, such that at the point $\zeta_p = (0,-p)$, we have $u(\zeta_p) \in \Omega_j$. We count those those solutions with multiplicity $q^{-1}m_j$ times as the usual power \eqref{eq:usual-power} of $q$, and add up over $j$; see \cite[Section 9a]{seidel16}. In future, whenever such a weighted count is used, we will refer to it by the shorthand notation ``counting those $u$ such that $u(\zeta_p) \in q^{-1}\Omega$''.

For expository reasons, let's briefly consider the Floer-theoretic analogue of \eqref{eq:e}, namely the unit class in $\mathit{HF}^0(E,1)$. This is constructed as follows. Choose some $a_1 \in \Omega^1(S^1,\frakg)$ with $\mathrm{rot}(a_1) = 1$. Take $S = \bC$, considered as a Riemann surface with an output end. We equip that with a nonnegatively curved connection which reduces to $a_1$ on the end. Choose a Floer datum $\Phi_1 = (H_1,J_1)$ associated to $a_1$, and a corresponding perturbation datum. Counting solutions of the resulting equation \eqref{eq:e-equation} then yields a cocycle representative for the unit in $\mathit{CF}^0(E,H_1)$. Now let's additionally fix a point $\zeta \in S$, and impose the condition
\begin{equation}
u(\zeta) \in q^{-1}\Omega
\end{equation}
on the solutions of our equation \eqref{eq:e-equation}. The outcome is the analogue of the Kodaira-Spencer class in $\mathit{HF}^2(E,1)$, represented by a Floer cocycle
\begin{equation} \label{eq:floer-k-s}
k \in \mathit{CF}^2(E,H_1).
\end{equation}
In parallel with Lemma \ref{th:bracket-with-k}, one has a chain homotopy
\begin{equation} \label{eq:geometric-rho}
\begin{aligned}
& \rho: \mathit{CF}^*(E,H_2) \longrightarrow \mathit{CF}^*(E,H_2), \\
& d\rho(x) - \rho(dx) - R(x) + [k,x] = 0.
\end{aligned}
\end{equation}
The context for this is that one takes a connection $a_2$ with hyperbolic holonomy and arbitrary rotation number $r$, as well as an associated Floer datum $(H_2,J_2)$, chosen so that the operator $R$ on $\mathit{CF}^*(E,H_2)$ admits the enumerative interpretation described above. The definition of \eqref{eq:geometric-rho} involves a family of Riemann surfaces with marked point $(S_p,\zeta_p)$, parametrized by $p = (p_1,p_2) \in P =[0,\infty) \times S^1$, which for $p_1 = 0$ reduce to those defining \eqref{eq:define-r}. At the other end $p_1 \rightarrow \infty$, we ``pull out'' the marked point so that the surface degenerates into two pieces, one defining the Lie bracket and the other \eqref{eq:floer-k-s}. The process is shown schematically in Figure \ref{fig:pull-out}. Of course, our surfaces need to carry connections, and those are constructed exactly as in Lemma \ref{th:bracket-with-elliptic-k}. In particular, this construction constrains how the ends of the limiting pair-of-pants ($p_1 = \infty$) change depending on the remaining parameter $p_2 \in S^1$ (see Lemma \ref{th:rot-bracket}).
\begin{figure}
\begin{centering}
\begin{picture}(0,0)%
\includegraphics{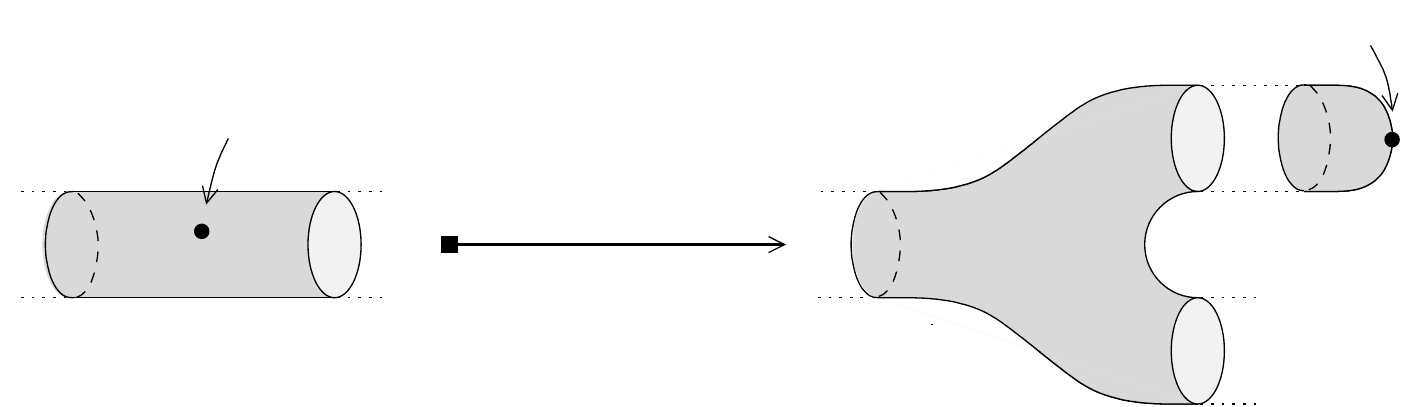}%
\end{picture}%
\setlength{\unitlength}{3355sp}%
\begingroup\makeatletter\ifx\SetFigFont\undefined%
\gdef\SetFigFont#1#2#3#4#5{%
  \reset@font\fontsize{#1}{#2pt}%
  \fontfamily{#3}\fontseries{#4}\fontshape{#5}%
  \selectfont}%
\fi\endgroup%
\begin{picture}(7910,2283)(-239,-1723)
\put(1876,-886){\makebox(0,0)[lb]{\smash{{\SetFigFont{10}{12.0}{\rmdefault}{\mddefault}{\updefault}{$r$}%
}}}}
\put(3451,-661){\makebox(0,0)[lb]{\smash{{\SetFigFont{10}{12.0}{\rmdefault}{\mddefault}{\updefault}{$p_1 \rightarrow \infty$}%
}}}}
\put(2251,-661){\makebox(0,0)[lb]{\smash{{\SetFigFont{10}{12.0}{\rmdefault}{\mddefault}{\updefault}{$p_1 = 0$}%
}}}}
\put(7126,389){\makebox(0,0)[lb]{\smash{{\SetFigFont{10}{12.0}{\rmdefault}{\mddefault}{\updefault}{$q^{-1}\Omega$}%
}}}}
\put(-154,-886){\makebox(0,0)[lb]{\smash{{\SetFigFont{10}{12.0}{\rmdefault}{\mddefault}{\updefault}{$r$}%
}}}}
\put(826,-136){\makebox(0,0)[lb]{\smash{{\SetFigFont{10}{12.0}{\rmdefault}{\mddefault}{\updefault}{$q^{-1}\Omega$}%
}}}}
\put(6751,-1486){\makebox(0,0)[lb]{\smash{{\SetFigFont{10}{12.0}{\rmdefault}{\mddefault}{\updefault}{$r$}%
}}}}
\put(4396,-886){\makebox(0,0)[lb]{\smash{{\SetFigFont{10}{12.0}{\rmdefault}{\mddefault}{\updefault}{$r$}%
}}}}
\put(6771,-291){\makebox(0,0)[lb]{\smash{{\SetFigFont{10}{12.0}{\rmdefault}{\mddefault}{\updefault}{$1$}%
}}}}
\end{picture}%
\caption{\label{fig:pull-out}The geometry behind \eqref{eq:geometric-rho}, with the rotation numbers of the connections associated to the ends. Note that there is another para\-meter $p_2 \in S^1$, not shown here.}
\end{centering}
\end{figure}%

The corresponding construction for connections with elliptic holonomy, which is the subject of \cite{seidel16}, is slightly more complicated. In that case, one can't express $R$ itself as a Lie bracket, but has to combine it with a continuation map, so as to obtain the analogue of \eqref{eq:rho-homotopy-elliptic}:
\begin{equation} \label{eq:geometric-rho-c}
\begin{aligned}
& \rho_c: \mathit{CF}^*(E,H_2) \longrightarrow \mathit{CF}^*(E,H_0), \\
& d\rho_c(x)- \rho_c(dx) - C(R(x)) + [k,x]_c = 0.
\end{aligned}
\end{equation}
The context for this equation is as follows. Throughout, we work with connections of the form \eqref{eq:standard-connection} and \eqref{eq:d-beta}. We fix numbers $r_i$ as in Lemma \ref{th:bracket-with-elliptic-k}, and Floer data $(H_i,J_i)$ that are compatible with the connections $r_i \alpha\, \mathit{dt}$. This means that on a neighbourhood of infinity, the Hamiltonian vector field of $H_i$ is (time-independent, and) the horizontal lift of the infinitesimal rotation of $B$ with speed $r_i$ (this is a version of the standard setup for symplectic cohomology). The family of surfaces which defines \eqref{eq:geometric-rho-c} is parametrized by $p = (p_1,p_2) \in P = \bR \times S^1$, and splits as we approach either end $p_1 \rightarrow \pm\infty$ (see Figure \ref{fig:pullout-2}). The parameter $c \in \bZ$ affects only the geometry of the limit $p_1 \rightarrow +\infty$. In that limit, the cylinder splits into a pair-of-pants and a copy of $\bC$. The copy of $\bC$ always carries the data defining a version of the Kodaira-Spencer cocycle, lying in $\mathit{CF}^2(E,H_1)$, which is independent of the remaining parameter $p_2$. In contrast, the pair-of-pants does depend on $p_2$ and on $c$ (taking the tubular ends into account). It is convenient to write it as
\begin{equation}
S_{p_2} = (\bR \times S^1) \setminus \{(0,-p_2)\},
\end{equation}
with coordinates $(s,t)$. Then, the two ends asymptotic to $s = \pm \infty$ can be taken to be standard, hence independent of $p_2$; while the parametrization of the end asymptotic to $(0,p_2)$ rotates $(c+1)$ times as $p_2$ goes around the circle.
\begin{figure}
\begin{centering}
\begin{picture}(0,0)%
\includegraphics{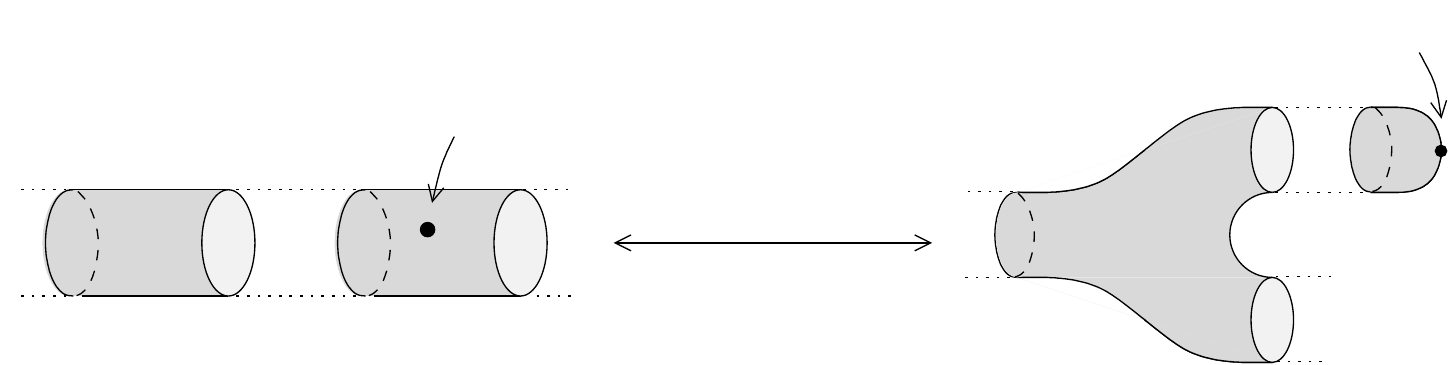}%
\end{picture}%
\setlength{\unitlength}{3355sp}%
\begingroup\makeatletter\ifx\SetFigFont\undefined%
\gdef\SetFigFont#1#2#3#4#5{%
  \reset@font\fontsize{#1}{#2pt}%
  \fontfamily{#3}\fontseries{#4}\fontshape{#5}%
  \selectfont}%
\fi\endgroup%
\begin{picture}(8181,2058)(-1064,-1498)
\put(3451,-661){\makebox(0,0)[lb]{\smash{{\SetFigFont{10}{12.0}{\rmdefault}{\mddefault}{\updefault}{$p_1 \rightarrow \infty$}%
}}}}
\put(4318,-863){\makebox(0,0)[lb]{\smash{{\SetFigFont{10}{12.0}{\rmdefault}{\mddefault}{\updefault}{$r_0$}%
}}}}
\put(2446,-661){\makebox(0,0)[lb]{\smash{{\SetFigFont{10}{12.0}{\rmdefault}{\mddefault}{\updefault}{$p_1 \rightarrow -\infty$}%
}}}}
\put(6301,-1336){\makebox(0,0)[lb]{\smash{{\SetFigFont{10}{12.0}{\rmdefault}{\mddefault}{\updefault}{$r_2$}%
}}}}
\put(6301,-361){\makebox(0,0)[lb]{\smash{{\SetFigFont{10}{12.0}{\rmdefault}{\mddefault}{\updefault}{$r_1$}%
}}}}
\put(6526,389){\makebox(0,0)[lb]{\smash{{\SetFigFont{10}{12.0}{\rmdefault}{\mddefault}{\updefault}{$q^{-1}\Omega$}%
}}}}
\put(1201,-136){\makebox(0,0)[lb]{\smash{{\SetFigFont{10}{12.0}{\rmdefault}{\mddefault}{\updefault}{$q^{-1}\Omega$}%
}}}}
\put(2101,-886){\makebox(0,0)[lb]{\smash{{\SetFigFont{10}{12.0}{\rmdefault}{\mddefault}{\updefault}{$r_2$}%
}}}}
\put(501,-886){\makebox(0,0)[lb]{\smash{{\SetFigFont{10}{12.0}{\rmdefault}{\mddefault}{\updefault}{$r_2$}%
}}}}
\put(-1049,-886){\makebox(0,0)[lb]{\smash{{\SetFigFont{10}{12.0}{\rmdefault}{\mddefault}{\updefault}{$r_0$}%
}}}}
\end{picture}%
\caption{\label{fig:pullout-2}The geometric construction of \eqref{eq:geometric-rho-c}.}
\end{centering}
\end{figure}%

In the proof of Proposition \ref{th:two-connections} (modelled on that of Proposition \ref{th:comparison-of-connections}), a variant $\rho_{\mathit{broken},c}$ of \eqref{eq:geometric-rho-c} occurs, in which the continuation map is replaced by a composition of two such maps. As far as the Floer-theoretic construction is concerned, the only difference is that in the limit $p_1 \rightarrow -\infty$, we have a ``double stretching'' of two necks as the same time (the relative speed at which this happens is irrelevant, as long as both necks stretch unboundedly). The comparison between the two constructions is given by a higher homotopy, which is the counterpart of \eqref{eq:magic-triangle}. The three-dimensional parameter space involved is drawn schematically in Figure \ref{fig:triangle2}. Topologically, this space can be thought of as a triangle, with a vertex and the opposite side removed, times $S^1$. There are two codimension $1$ boundary faces (along which the surfaces do not degenerate), plus one more such face at infinity, which would appear when compactifying it (and along which a standard stretching process happens). We want to pay particular attention to what happens at the vertices of our triangle:
\begin{itemize}
\itemsep.5em
\item 
Near the vertex on the right (as drawn in Figure \ref{fig:triangle2}), the parameter space has local coordinates $(p_1,p_2,p_3) \in \bR \times[0,1] \times S^1$, where $p_1 \gg 0$. We can choose the Riemann surfaces and all their auxiliary data to be independent of $p_2$, while preserving regularity of the parametrized moduli space. In the region where that holds, no isolated points of that moduli space can occur; and any one-dimensional components that occur are of the form $\{p_1\} \times [0,1] \times \{p_3\}$.

\item
Near the top left vertex, we have local coordinates $(p_1,p_2,p_3) \in \bR \times (-\infty,0] \times S^1$, where $p_1 \ll 0$ and $p_2$ is small. Here, $p_2 = 0$ is the adjacent boundary face, and $p_1 = -\infty$ the ``face at infinity''. We again choose all data to be $p_2$-independent, with similar consequences as before. 

\item
Near the bottom left vertex, we have local coordinates $(p_1,p_2,p_3) \in \bR \times [0,\infty) \times S^1$, where $p_1 \ll 0$ and $p_2$ is small. Here, we have two ``gluing parameters'' or stretching lengths. Along the ``face at infinity'' $p_1 = -\infty$, the $p_2$ parameter governs the gluing of the left two of the three cylinders (the last cylinder remains the same). Along the boundary face $p_2 = 0$, we carry out both gluing processes simultaneously (with some relation between the gluing parameters). This is, roughly speaking, half of the more standard process near a codimension two corner (since we do not consider a region where only the right two cylinders are glued together).
\end{itemize}
\begin{figure}
\begin{centering}
\begin{picture}(0,0)%
\includegraphics{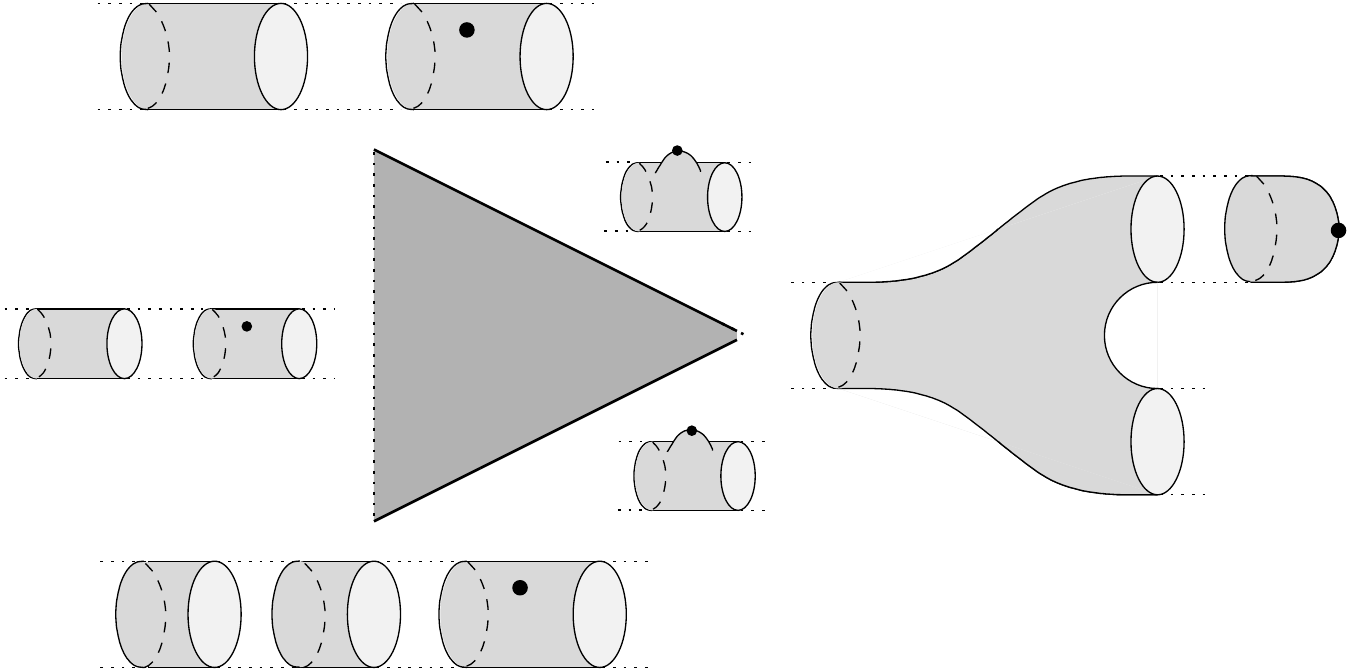}%
\end{picture}%
\setlength{\unitlength}{3355sp}%
\begingroup\makeatletter\ifx\SetFigFont\undefined%
\gdef\SetFigFont#1#2#3#4#5{%
  \reset@font\fontsize{#1}{#2pt}%
  \fontfamily{#3}\fontseries{#4}\fontshape{#5}%
  \selectfont}%
\fi\endgroup%
\begin{picture}(7607,3777)(-9311,-3451)
\put(-6974,-1636){\makebox(0,0)[lb]{\smash{{\SetFigFont{10}{12.0}{\rmdefault}{\mddefault}{\updefault}{parameter space}%
}}}}
\end{picture}%
\caption{\label{fig:triangle2}Relating $\rho_c$ and $\rho_{\mathit{broken},c}$. As in Figure \ref{fig:pull-out}, there is an additional $S^1$ degree of freedom, which we have not represented.}
\end{centering}
\end{figure}

For the other parameter spaces appearing in the proof of Proposition \ref{th:two-connections} (in particular that from Figure \ref{fig:pentagon}), similar considerations apply when translating them into Floer-theoretic moduli spaces.
 

\begin{references}
Nonnegative paths and the rotation number quasimorphism generalize to linear symplectic groups 
(see \cite{lalonde-mcduff97, slimowitz01} and \cite{barge-ghys92, salamon-zehnder92, bensimon-salamon10}, respectively). The original definition of the rotation number as \eqref{eq:rotation}, due to Poincar{\'e}, applies to orientation-preserving homeomorphisms of the circle (for a basic exposition, see \cite{ghys01}; and for a proof of the quasimorphism property with the optimal bound \eqref{eq:quasimorphism}, see \cite[Proposition 4.2]{zhuang08}). Example \ref{th:three-classes} will turn out to be a special case of a general result (Corollary \ref{th:teichmuller1b}).
\end{references}

\begin{references}
The discussion above is taken from \cite{lalonde-mcduff97, slimowitz01} (see more specifically \cite[Lemma 2.2]{slimowitz01}), and we will continue to fundamentally follow those references, while making changes and adding material as required for our intended applications. Specifically, Lemma \ref{th:path} below is a special case of \cite[Proposition 2.4]{lalonde-mcduff97} (our proof fills in some technical details); and Lemmas \ref{th:short-path}--\ref{th:short-path-2} can be viewed as modified versions of \cite[Theorem 3.3]{slimowitz01} (note however that \cite[Theorem 3.6]{slimowitz01} is incorrect).
\end{references}

\begin{references}
Corollary \ref{th:mw}, proved in \cite[Theorem 3.3]{goldman88}, is a version of the Milnor-Wood inequality \cite{milnor58, wood71}. The Milnor-Wood inequality has numerous generalizations (to other Lie groups, and to homeomorphisms of the circle \cite{wood71}). The proof used here applies to linear symplectic groups. One can consider results such as those in \cite{entov01, lalonde-teleman14}, proved using pseudo-holomorphic sections of symplectic fibrations over surfaces, as nonlinear analogues.
\end{references}

\begin{references}
Theorem \ref{th:teichmuller} is a version of \cite[Theorem 3.4]{goldman88}. The results from Teichm\"uller theory needed for Corollary \ref{th:teichmuller1b} can be proved using Fenchel-Nielsen coordinates, see e.g.\ \cite{abikoff80} (or for another proof, \cite{luo07}). Even though we will not need that, it is worth while pointing out that Higgs bundle methods can be used to determine the topology of other components of the representation space (as long as the Euler number is nonzero). That idea was pioneered by Hitchin \cite{hitchin87} for closed surfaces, and has been generalized to surfaces with boundary by Mondello \cite{mondello16}.
\end{references}

\begin{thebibliography}{10}
\bibitem{abikoff80}
W.~Abikoff.
\newblock {\em The real analytic theory of {T}eichm{\"u}ller space}, volume 820
  of {\em Lecture Notes in Math.}
\newblock Springer, 1980.

%
\bibitem{barge-ghys92}
J.~Barge and E.~Ghys.
\newblock Cocycles d'{E}uler et de {M}aslov.
\newblock {\em Math. Ann.}, 294:235--265, 1992.

\bibitem{bensimon-salamon10}
G.~Ben~Simon and D.~Salamon.
\newblock Homogeneous quasimorphisms on the symplectic linear group.
\newblock {\em Israel J. Math.}, 175:221--224, 2010.

\bibitem{diederich-sukhov08}
K.~Diederich and A.~Sukhov.
\newblock Plurisubharmonic exhaustion functions and almost complex {S}tein
  structures.
\newblock {\em Michigan Math. J.}, 56:331--355, 2008.

\bibitem{entov01}
M.~Entov.
\newblock Commutator length of symplectomorphisms.
\newblock {\em Comment. Math. Helv.}, 79:58--104, 2004.

\bibitem{floer-hofer-salamon94}
A.~Floer, H.~Hofer, and D.~Salamon.
\newblock Transversality in elliptic {M}orse theory for the symplectic action.
\newblock {\em Duke Math. J.}, 80:251--292, 1995.

\bibitem{gaussier-sukhov11}
H.~Gaussier and A.~Suhkov.
\newblock Levi-flat filling of real two-spheres in symplectic manifolds {(I)}.
\newblock {\em Ann. Fac. Sci. Toulouse}, XX:515--539, 2011.

\bibitem{getzler94b}
E.~Getzler.
\newblock {B}atalin-{V}ilkovisky algebras and 2d topological field
  theories.
\newblock {\em Commun. Math. Phys}, 159:265--285, 1994.

\bibitem{ghys01}
E.~Ghys.
\newblock Groups acting on the circle.
\newblock {\em Enseign. Math.}, 47:329--407, 2001.

\bibitem{goldman88}
W.~Goldman.
\newblock Topological components of spaces of representations.
\newblock {\em Invent. Math.}, 93:557--607, 1988.

\bibitem{hitchin87}
N.~Hitchin.
\newblock The self-duality equations on a {R}iemann surface.
\newblock {\em Proc. London Math. Soc.}, (3) 55:58--126, 1987.

\bibitem{hofer-salamon95}
H.~Hofer and D.~Salamon.
\newblock Floer homology and {N}ovikov rings.
\newblock In {\em The {F}loer memorial volume},
  pages 483--524. Birkh{\"a}user, 1995.

\bibitem{kobayashi05}
S.~Kobayashi.
\newblock {\em Hyperbolic manifolds and holomorphic mappings. {A}n
  introduction}.
\newblock World Scientific, 2nd edition, 2005.

\bibitem{kowalzig-kraehmer14}
N.~Kowalzig and U.~Kraehmer.
\newblock Batalin-{V}ilkovisky structures on $\mathit{{E}xt}$ and $\mathit{{T}or}$.
\newblock {\em J. Reine Angew. Math.}, 697:159--219, 2014.

\bibitem{lalonde-mcduff97}
F.~Lalonde and D.~McDuff.
\newblock Positive paths in the linear symplectic group.
\newblock In {\em The {A}rnold-{G}elfand mathematical seminars}, pages
  361--387. Birkh\"auser, 1997.

\bibitem{lalonde-teleman14}
F.~Lalonde and A.~Teleman.
\newblock The {$g$}-areas and the commutator length.
\newblock {\em Intern. J. Math.} vol.\ 24, 2013.

\bibitem{luo07}
F.~Luo.
\newblock On {T}eichm\"uller spaces of surfaces with boundary.
\newblock {\em Duke Math. J.}, 139:463--482, 2007.

\bibitem{mcduff-salamon99}
D.~McDuff and D.~Salamon.
\newblock {\em Introduction to symplectic topology}.
\newblock Oxford University Press, 2nd edition, 1999.

\bibitem{mcduff-salamon-big}
D.~McDuff and D.~Salamon.
\newblock {\em {$J$}-holomorphic curves and symplectic topology}.
\newblock Amer. Math. Soc., 2004.

\bibitem{mclean12}
M.~McLean.
\newblock Symplectic homology of {L}efschetz fibrations and {F}loer homology of
  the monodromy map.
\newblock {\em Selecta Math.}, 18:473--512, 2012.

\bibitem{milnor58}
J.~Milnor.
\newblock On the existence of a connection with curvature zero.
\newblock {\em Comment. Math. Helv.}, 32:215--223, 1958.

\bibitem{mondello16}
G.~Mondello.
\newblock Topology of representation spaces of surface groups in
  {$\mathit{PSL}_2(\mathbb{R})$} with assigned boundary monodromy and nonzero Euler number.
\newblock {\em Pure Appl. Math. Quarterly}, 12:399--462, 2016.

\bibitem{ritter10}
A.~Ritter.
\newblock Topological quantum field theory structure on symplectic cohomology.
\newblock {\em J. Topology}, 6:391--489, 2013.

\bibitem{salamon-zehnder92}
D.~Salamon and E.~Zehnder.
\newblock Morse theory for periodic solutions of {H}amiltonian systems and the
  {M}aslov index.
\newblock {\em Comm. Pure Appl. Math.}, 45:1303--1360, 1992.

\bibitem{salvatore-wahl03}
P.~Salvatore and N.~Wahl.
\newblock Framed discs operads and {B}atalin-{V}ilkovisky algebras.
\newblock {\em Quarterly J. Math.}, 54:213--231, 2003.

\bibitem{schwarz95}
M.~Schwarz.
\newblock {\em Cohomology operations from {$S^1$}-cobordisms in {F}loer
  homology}.
\newblock PhD thesis, {ETH} {Z}{\"u}rich, 1995.

\bibitem{seidel07}
P.~Seidel.
\newblock A biased survey of symplectic cohomology.
\newblock In {\em Current Developments in {M}athematics ({H}arvard, 2006)},
  pages 211--253. Intl.\ Press, 2008.

\bibitem{seidel12b}
P.~Seidel.
\newblock Fukaya {$A_\infty$}-structures associated to {L}efschetz fibrations.
  {I}.
\newblock {\em J. Symplectic Geom.}, 10:325--388, 2012.

\bibitem{seidel14b}
P.~Seidel.
\newblock {F}ukaya {$A_\infty$}-categories associated to {L}efschetz
  fibrations. {II}.
\newblock In {\em Algebra, {G}eometry and {P}hysics in the 21st
  {C}entury ({K}ontsevich {F}estschrift)}, pages 295--364. Birkh{\"a}user, 2017.

\bibitem{seidel15}
P.~Seidel.
\newblock Fukaya {$A_\infty$}-structures associated to {L}efschetz fibrations.
  {II} 1/2.
\newblock {\em Adv. Theor. Math. Phys.}, 20:883--944, 2016.

\bibitem{seidel16}
P.~Seidel.
\newblock Fukaya {$A_\infty$}-structures associated to {L}efschetz fibrations.
  {III}.
\newblock {\em J. Differential Geom.}, to appear.

\bibitem{slimowitz01}
J.~Slimowitz.
\newblock The positive fundamental group of {${\rm Sp}(2)$} and {${\rm
  Sp}(4)$}.
\newblock {\em Topology Appl.}, 109:211--235, 2001.

\bibitem{sylvan16}
Z.~Sylvan.
\newblock On partially wrapped {F}ukaya categories.
\newblock Preprint arXiv:1604.02540, 2016.

\bibitem{wood71}
J.~Wood.
\newblock Bundles with totally disconnected structure group.
\newblock {\em Comment. Math. Helv.}, 46:257--273, 1971.

\bibitem{zhuang08}
D.~Zhuang.
\newblock Irrational stable commutator length in finitely presented groups.
\newblock {\em J. Mod. Dyn.}, 2:499--507, 2008.

\bibitem{zinger17}
A.~Zinger.
\newblock Notes on {$J$}-holomorphic maps.
\newblock Preprint ArXiv:1706.00331, 2017.

\end{thebibliography}

\end{document}